\documentclass[11pt]{article}

\usepackage[letterpaper, margin=1in]{geometry}

\usepackage{amsmath,amssymb,amsthm,mathtools}
\usepackage{graphicx}
\usepackage{subcaption}
\usepackage[numbers]{natbib}
\usepackage[colorlinks=true,citecolor=blue,linkcolor=blue]{hyperref}
\usepackage{microtype}

\usepackage[utf8]{inputenc} 
\usepackage[T1]{fontenc}    
\usepackage{hyperref}       
\usepackage{url}            
\usepackage{booktabs}       
\usepackage{amsfonts}       
\usepackage{nicefrac}       
\usepackage{microtype}      
\usepackage{xcolor}         

\usepackage{amsmath, amssymb}  
\usepackage{graphicx}          
\usepackage{natbib}            
\usepackage[ruled,vlined]{algorithm2e}  

\hypersetup{
    colorlinks=true,
    linkcolor=ultramarine,
    citecolor=ultramarine,
    filecolor=ultramarine,
    urlcolor=ultramarine,
    pdfview=FitH,
}

\usepackage{mathrsfs}                   
\DeclareSymbolFont{rsfs}{U}{rsfs}{m}{n}
\DeclareSymbolFontAlphabet{\mathrsfs}{rsfs}
\usepackage[mathscr]{euscript}         
\usepackage{dsfont}                     
\usepackage{bm}                         
\usepackage{mathtools}

\DeclareMathOperator{\EE}{\mathbb{E}}
\DeclareMathOperator{\PP}{\mathbb{P}}
\newcommand{\R}{\mathbb{R}}
\DeclareMathOperator{\tr}{\mathrm{Tr}}
\newcommand\abs[1]{\left\lvert#1\right\rvert}
\newcommand\norm[1]{\left\lVert#1\right\rVert}
\DeclarePairedDelimiter\inner{\langle}{\rangle}
\DeclareMathOperator*{\argmin}{arg\,min}

\usepackage{amsthm}

\makeatletter
\@ifundefined{theorem}{\newtheorem{theorem}{Theorem}[section]}{}
\@ifundefined{lemma}{\newtheorem{lemma}[theorem]{Lemma}}{}
\@ifundefined{claim}{}{}
\@ifundefined{proposition}{\newtheorem{proposition}[theorem]{Proposition}}{}
\@ifundefined{corollary}{\newtheorem{corollary}[theorem]{Corollary}}{}

\@ifundefined{definition}{\newtheorem{definition}[theorem]{Definition}}{}
\@ifundefined{example}{\newtheorem{example}[theorem]{Example}}{}
\@ifundefined{assumption}{\newtheorem{assumption}[theorem]{Assumption}}{}
\@ifundefined{remark}{\newtheorem{remark}[theorem]{Remark}}{}
\makeatother

\newcommand{\upright}{\rmfamily\mdseries\upshape}

\usepackage{enumitem}                   
\usepackage{multirow}                   
\usepackage{rotating}                   

\definecolor{babyblue}{rgb}{0.63, 0.79, 0.95}
\definecolor{ultramarine}{rgb}{0.07, 0.04, 0.56}

\usepackage[color=babyblue]{todonotes}
\usepackage{verbatim}
\newcommand{\bgm}[1]{\textcolor{cyan}{#1}}

\title{High-dimensional Limit of SGD\\
for Diagonal Linear Networks}

\author{
\begin{tabular}{cc}
Bego\~na Garc\'ia Malaxechebarr\'ia & Courtney Paquette \\
University of Washington & McGill University \& Google DeepMind \\
\texttt{begogar9@uw.edu} & \texttt{courtney.paquette@mcgill.ca} \\[1.2em]
Maryam Fazel & Dmitriy Drusvyatskiy \\
University of Washington \& Amazon Inc. & University of California, San Diego \\
\texttt{mfazel@uw.edu} & \texttt{ddrusvyatskiy@ucsd.edu}
\end{tabular}
}
\date{}

\begin{document}

\maketitle

\begin{abstract}
  Understanding the behavior of stochastic gradient methods is a central problem in modern machine learning. Recent work has highlighted diagonal linear networks as a simplified yet expressive setting for analyzing the optimization and generalization properties of neural models. In this work, we show that in the high-dimensional regime, stochastic gradient descent on diagonal linear networks is well-approximated by continuous dynamics governed by a stochastic differential equation (SDE), which explicitly decouples the drift from the gradient noise. We further derive a deterministic partial differential equation whose solution propagates the relevant state of the iterates and characterizes the time evolution of a broad class of observable statistics, including the risk, curvature, and other metrics for optimality. 
  Finally, we show that, under a suitable parametrization, the stochastic dynamics are globally well posed and converge exponentially fast to zero risk with high probability, yielding a fully explicit non-asymptotic description of their long-time behavior. Numerical simulations corroborate our theoretical findings.
\end{abstract}

\section{Introduction}
Diagonal linear networks serve as an appealing and analytically tractable model for investigating various phenomena that are observed in deep learning. For example, diagonal linear networks display intriguing forms of implicit bias, whereby stochastic optimization algorithms tend to favor low-complexity solutions even in the absence of explicit regularization \citep{pesme2021implicit,andriushchenko2023sgd,even2023sgd}.
Nonetheless, analyzing SGD for diagonal linear networks is still challenging. In particular, classical analyses of SGD are frequently too coarse, relying on worst-case guarantees that hold only for very small stepsizes. In practice, however, the phenomena of interest emerge only when using relatively large stepsizes. The central difficulty stems from the intrinsic randomness of SGD: the data used at each iteration are random, and the update rule itself is randomized through the sampling of indices that determine the stochastic gradient. 
This motivates the development of sharper analytical frameworks for SGD—ones that capture the evolution of key quantities such as risk and curvature, while remaining mathematically tractable.

One approach is to study gradient flow or diffusion-based approximations  \citep{pesme2021implicit, vivien2022label}. Gradient flow emerges in the limit of a vanishing stepsize, while diffusion-based approximations incorporate stochasticity through an added noise term. Importantly, these methods can often only be rigorously justified as accurate approximations of SGD when the stepsize is extremely small ($\gamma \to 0$)—a regime that is largely unrealistic in practice.

Indeed, SGD for modern applications is typically run with relatively large stepsizes and in high-dimensional settings, where neither the stepsize nor the stochasticity can be treated as infinitesimal. The perspective adopted in this paper is to instead exploit high dimensionality—namely, the large number of parameters—as the primary simplifying mechanism. In the high-dimensional limit, concentration phenomena emerge, yielding exact, deterministic expressions for the risk (and other quantities of interest) at every iteration, even when the stepsize is large.



\begin{figure*}[t!]
    \centering
    \includegraphics[width=0.32\textwidth]{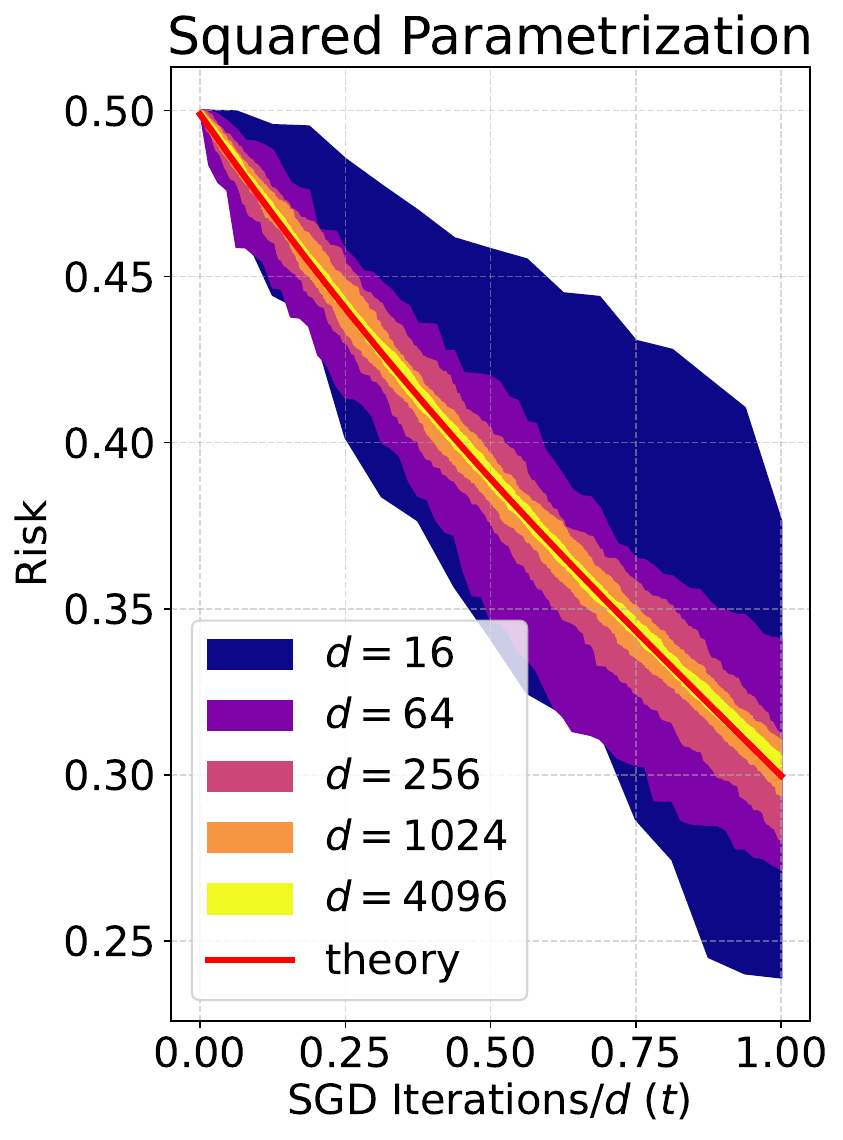} 
    \includegraphics[width=0.32\textwidth]{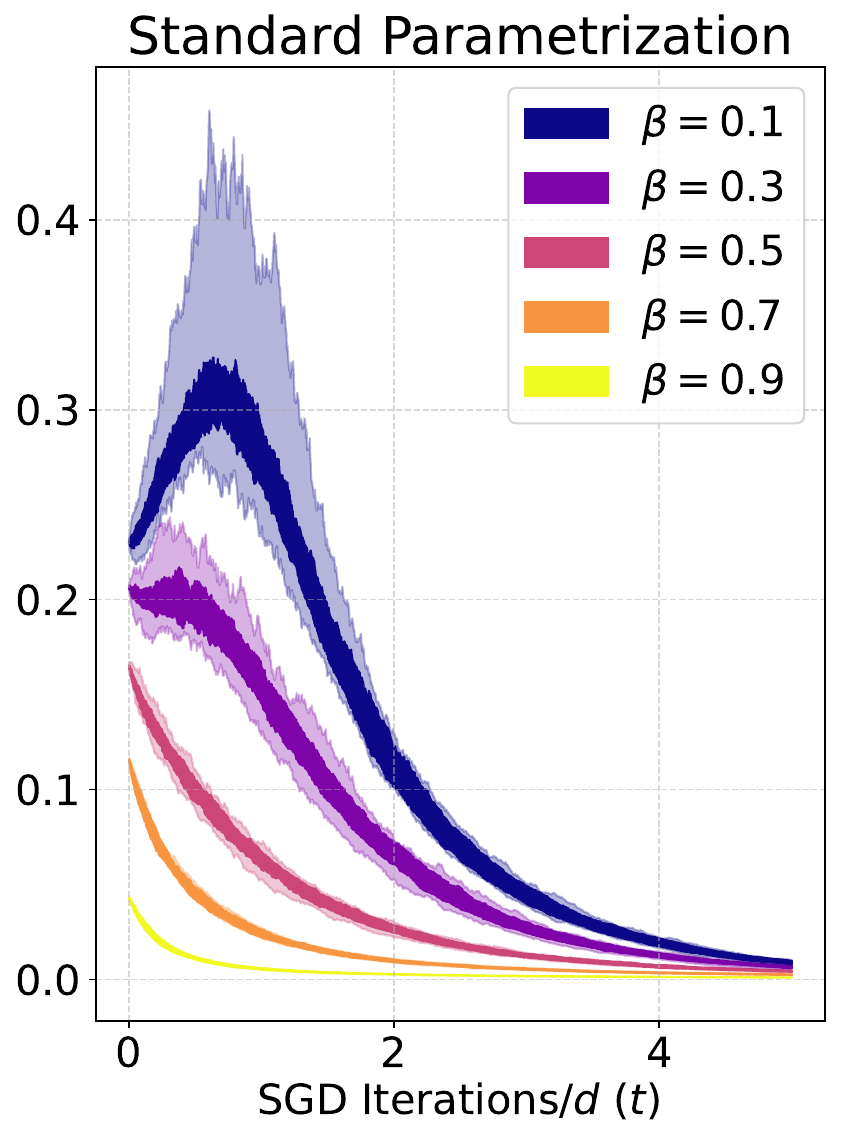} 
    \includegraphics[width=0.32\textwidth]{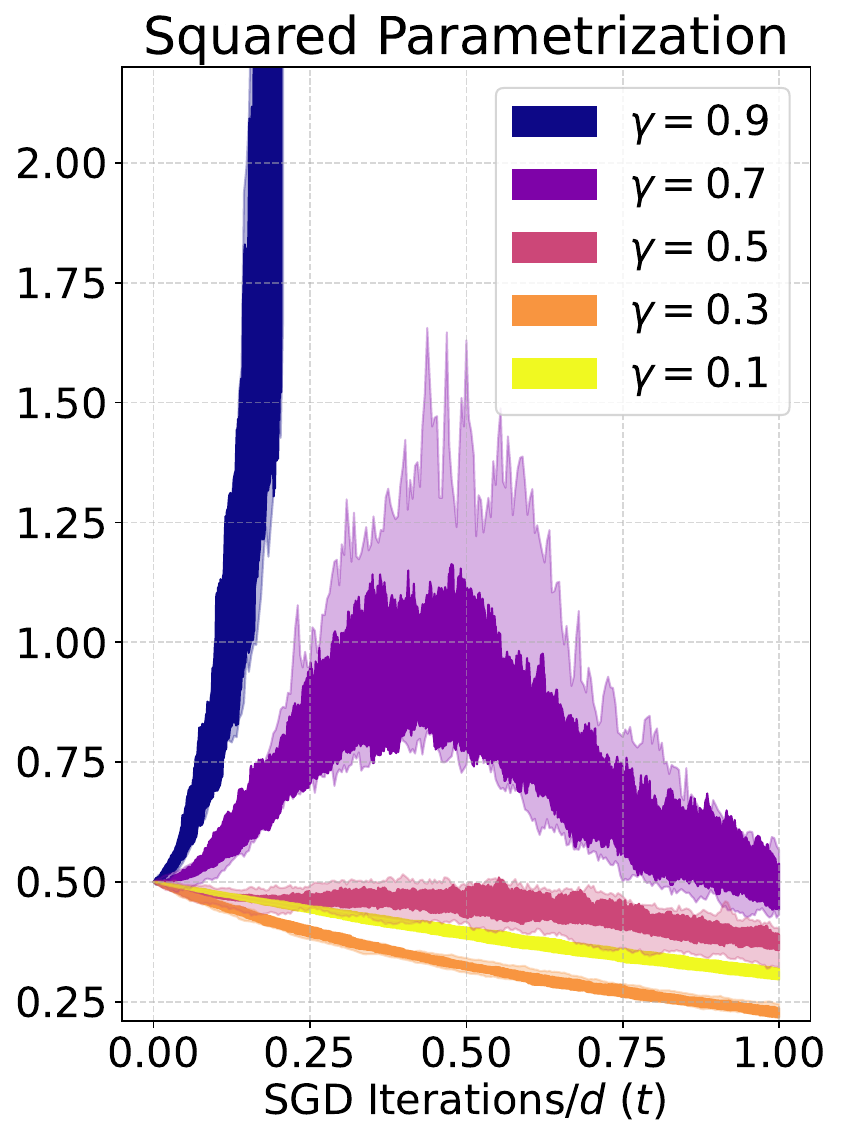} 
    \caption{\textbf{Three views of empirical risk dynamics for SGD on a diagonal linear network.}
\emph{Left:} Covariance $K=I_d$. As $d$ increases, the risk trajectory of SGD concentrates around a deterministic limit (red) described in Theorem~\ref{thm: statistic}.
\emph{Middle:} Power-law covariance spectrum. The homogenized SGD (transparent) from Theorem~\ref{thm: statistic} closely tracks SGD (opaque) over a range of power-law exponents $\beta$ in dimension $d=10^3$.
\emph{Right:} Covariance $K=I_d$ in dimension $d=10^3$. Varying the stepsize $\gamma$ reveals distinct convergence/divergence regimes; the homogenized prediction remains accurate even for stepsizes above the convergence threshold. Left and right panels use the parametrization (\ref{ex: squaredParam}), whereas the middle one uses the parametrization (\ref{ex: hadamardParam}). See Appendix~\ref{app: numerics} for simulation details.} \label{fig: risk} 
\end{figure*}

Specifically, in this work, we derive a continuous-time stochastic differential equation (SDE) that tracks discrete-time SGD in this high-dimensional, large stepsize regime for diagonal linear networks. Using this SDE as a foundation, we obtain a deterministic PDE that propagates the relevant state of the iterates and thereby predicts the time evolution of a broad class of key observable statistics. \emph{To the best of our knowledge, our result provides the first deterministic description of the discrete-time SGD trajectory in high dimensions with fixed, non-vanishing stepsizes for diagonal linear networks, placing them among the first nonlinearly parametrized models admitting such a description beyond gradient-flow or stochastic-gradient-flow limits.} 

This effectively allows for the analysis of SGD \emph{without running it} (See Figure~\ref{fig: risk}): solving the deterministic evolution provides learning curves without the need for expensive Monte Carlo simulations across mini-batch realizations. While the PDE offers theoretical insight, the SDE remains a powerful practical tool—it is significantly cheaper to simulate, and It\^o calculus provides a direct route to closed evolution equations for many statistics of interest, which we validate numerically.

Setting the stage, our work targets a general class of problems where the risk exhibits the form: 
\begin{equation}\label{eqn:func_class}
\mathcal{R}(x) = \EE_a  \left[f \left( \frac{1}{\sqrt{d}} \begin{pmatrix}
\psi(x) \\
\beta^*
\end{pmatrix}^{\!\top} a\right)\right] \qquad\text{ for } a\sim \mathcal{N}(0, K).     
\end{equation}
Here $K \in \R^{d \times d}$ is the covariance of the data,  $\beta^*\in \R^d$ should be thought of as a ground truth vector, 
$\psi(x)$ maps the features $x \in \R^{2d}$ into $\R^d$, and $f$ is a loss such as mean-square error or logistic. 
The primary example of \eqref{eqn:func_class} is the mean-squared error of a {two-layer} \textit{diagonal linear network},  given by
\begin{equation}\label{eqn:dnn1}
    \min_{u,v\in \R^d} \frac{1}{2d}\EE_a  \inner{u\odot v - \beta^*, a}^2,
\end{equation}
where we set $x=(u,v)$ with $u,v\in \R^d$ and $u\odot v$ is the Hadamard product. Thus, even though the network is linear in its prediction, the optimization variables enter the risk nonlinearly. This nonlinear parametrization is precisely what makes the high-dimensional closure problem substantially more delicate than in linear or generalized linear models. It is standard to check that \eqref{eqn:dnn1} can be  re-parametrized by a difference of squares \citep{nacson2022implicit, pesme2021implicit}:
\begin{equation}\label{eq: squaredParam}
    \min_{u,v\in \R^d} \frac{1}{4d}\EE_a  \inner{u^2 - v^2 - \beta^*, a}^2.
\end{equation}
 Diagonal linear networks, in both formulations, have recently garnered  attention due to their ability to recover sparse solutions \citep{pesme2021implicit, vaskevicius2019implicit, kanade2023statistical, haochen2021shape}.

To solve \eqref{eqn:func_class},  we run \textit{streaming} stochastic gradient descent under a deterministic stepsize schedule $\gamma_k$, that is, at each iteration we generate a fresh sample of data points $a_{k+1} \sim \mathcal{N}(0,K)$ and update the iterates according to the rule
\[
x_{k+1} = x_k - \gamma_k \, \nabla_x \Psi (x_k; a_{k+1}),
\]
where $\Psi(x,a)$ is the integrand in equation \eqref{eqn:func_class}.
To analyze SGD, we introduce a stochastic differential equation, called the \textit{homogenized SGD}:
\begin{equation}\label{eq: sdeFormula}
\mathrm{d} \mathscr{X}_t = - \gamma(t) \, d \, \nabla \mathcal{R}(\mathscr{X}_t) \mathrm{d}t + \gamma(t) \sqrt{\EE_a \left[\nabla f\left(\tfrac{1}{\sqrt{d}}
\psi(\mathscr{X}_t)
^{\!\top} a\right)^2\right]} \, \nabla \psi(\mathscr{X}_t)^\top  \sqrt{K}\, \mathrm{d}\mathfrak{B}_t,
\end{equation}
where the risk $\mathcal{R}$ and inner map $\psi$ are defined in \eqref{eqn:func_class}, the initial conditions are given by $\mathscr{X}_0 = x_0$, and $\mathrm{d}\mathfrak{B}_t$ is the differential of a standard Brownian motion in $\R^d$. In this work, we rigorously show that the homogenized SGD behaves like SGD in high dimensions, even with stepsizes at or above the convergence threshold (see Fig.~\ref{fig: risk}).

To enable a comparison between SGD and homogenized SGD, we extend the discrete-time iterate sequence $\{x_k\}$ to continuous time. The 
$k$-th iterate of SGD corresponds to the continuous time parameter 
$t$ in homogenized SGD via $k = \lfloor td \rfloor$. Thus,
when $t = 1$, SGD has completed  $d$ updates.

\paragraph{Main Contributions.} 
Our first main result shows that SGD and homogenized SGD are close in the sense that for a wide class of functions $\varphi$ (statistics) the values $\varphi(x_{\lfloor td \rfloor})$ and $\varphi(\mathscr{X}_t)$
along the two trajectories are uniformly close. Two particularly informative examples are the risk $\mathcal{R}(x)$ and the scaled Hessian-trace statistic $\frac{1}{d}\tr\!\big(\nabla^2 \mathcal{R}(x)\big)$, a scalar measure of curvature (often interpreted as a notion of sharpness) along the optimization path. See Figures~\ref{fig: risk} and~\ref{fig: statistics} for an illustration.

\begin{theorem}[Informal]{
Under mild conditions, formalized in Theorem~\ref{thm: statistic}, for any statistic $\varphi$ satisfying Assumption~\ref{ass: statistic}, any $\varepsilon \in (0, \tfrac{1}{2})$ and any $T>0$, there exists a constant $C$ (independent of $d$) such that \textit{with overwhelming probability}\footnote{We say an event holds \textit{with overwhelming probability (w.o.p.)} if there is a function $\omega \colon \mathbb{N} \to \R$ with $\omega(d)/\log d \to \infty$ so that the event holds with probability at least $1- e^{-\omega(d)}$.} the estimate holds:
\begin{equation}\label{eq: sdeConcentration}
\sup_{0\leq t \leq T} \abs{\varphi(x_{\lfloor td \rfloor}) - \varphi(\mathscr{X}_t)} \leqslant C d^{-\varepsilon}.
\end{equation}}
\end{theorem}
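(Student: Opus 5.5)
The plan follows the homogenized-SGD comparison framework of Collins-Woodfin, Paquette et al. The idea is to compare the two processes through a finite-dimensional family of statistics that closes under the dynamics, rather than coupling $x_k$ and $\mathscr{X}_t$ pathwise.

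\textbf{Step 1: reduce to a closed family of resolvent-type statistics.} For a diagonal network, $\psi(x)=u\odot v$ or $u^2-v^2$. The risk and its gradient depend on $x$ only through the quadratic forms $\frac1d\psi(x)^\top K\psi(x)$, $\frac1d\psi(x)^\top K\beta^*$ and $\frac1d\beta^{*\top}K\beta^*$. The Itô drift also involves $\nabla\psi$, which is diagonal.

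So I will work with the statistics
\[
\mathcal{S}(x,z)=\tfrac1d\,\psi(x)^\top R(z)\psi(x), \qquad \tfrac1d\,\psi(x)^\top R(z)\beta^*,
\]
and their coordinatewise analogues such as $\frac1d\sum_i R(z)_{ii}u_i^2$. Here $R(z)=(K-z)^{-1}$ and $z$ ranges over a contour $\Gamma$ enclosing the spectrum of $K$.

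By Cauchy's integral formula, any $\varphi$ satisfying Assumption~\ref{ass: statistic} (e.g.\ $\mathcal{R}$, or $\frac1d\tr\nabla^2\mathcal{R}$, which is diagonal-coordinate based) is a Lipschitz function of finitely many such statistics. Hence it suffices to prove \eqref{eq: sdeConcentration} for this family, uniformly over a $d^{-C}$-net of $z\in\Gamma$, using Lipschitz continuity in $z$.

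\textbf{Step 2: Doob decomposition for SGD.} For each statistic $q$, I will write
\[
q(x_{k+1})-q(x_k)=\EE[\,\cdot\mid\mathcal{F}_k]+\Delta M_k.
\]
Taylor-expanding to second order in the step $-\gamma_k\nabla\Psi(x_k;a_{k+1})$, the conditional mean equals $\frac1d\times$ (drift of $q$ under the homogenized SGD generator) plus an error. The error comes from three sources:
\begin{itemize}
\item third-order Taylor terms;
\item the non-Gaussian fourth moments of $a$, which are handled by Wick's formula and contribute $O(d^{-1})$ per step, i.e.\ $O(d^{-2})$ after the $1/d$ scaling;
\item replacing $\nabla f(\cdot)^2$ evaluated at a sample by its expectation, which is where the diffusion coefficient in \eqref{eq: sdeFormula} comes from.
\end{itemize}
Because the statistic is quadratic in $\psi$ and $\psi$ is quadratic in $x$, the Hessian-noise term $\frac{\gamma^2}{2}\tr(\nabla^2 q\,\nabla\psi^\top K\nabla\psi)\,\EE f'^2$ reproduces exactly the Itô correction of \eqref{eq: sdeFormula}.

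The martingale increments are sub-exponential with scale $O(d^{-1})$. Freedman/Azuma then gives $\sup_{k\le Td}|M_k|\le d^{-1/2+\varepsilon'}$ w.o.p. The Itô integral for $\mathscr{X}_t$ is handled the same way.

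\textbf{Step 3: stopping times and a priori bounds.} All of the above requires $\|x_k\|/\sqrt d$ and $\|\mathscr{X}_t\|/\sqrt d$ to stay bounded. I will introduce stopping times $\tau_M=\inf\{k:\|x_k\|^2/d>M\}$ and the analogous one for the SDE. On $[0,\tau_M]$ every error term is polynomially controlled, and I will show that for large $M$ (depending on $T$) neither stopping time triggers before $T$ w.o.p.

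This is the \textbf{main obstacle}. The dynamics are cubic in $x$ (gradient $\sim\psi\,\nabla\psi$), so the deterministic limit may blow up, and with large stepsize one cannot use a Lyapunov descent argument. I will first try to bound $\frac1d\|\psi(x)\|^2$ through the closed ODE system of Step 1 itself. The closed system has locally Lipschitz coefficients, so on any $[0,T]$ where its solution exists we get a bound, and both processes are near it up to $\tau_M$. A bootstrap/continuity argument then extends this to all of $[0,T]$. The assumption "$T$ within the existence interval" presumably appears in the formal Theorem~\ref{thm: statistic}. One must also control individual coordinates $\|x\|_\infty$, which I would do by applying the same martingale argument to each coordinate with a union bound.

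\textbf{Step 4: Gronwall.} Both SGD and homogenized SGD now satisfy the same integral equation for the statistic vector $\mathcal{Q}(t)$ on the net, up to errors of size $d^{-1/2+\varepsilon'}$. Lipschitzness of the drift on the bounded region, combined with Gronwall's inequality, yields
\[
\sup_{t\le T}\|\mathcal{Q}^{\text{SGD}}(t)-\mathcal{Q}^{\text{hSGD}}(t)\|\le C d^{-\varepsilon},
\]
w.o.p. Transferring this bound to $\varphi$ through Step 1 gives \eqref{eq: sdeConcentration}.
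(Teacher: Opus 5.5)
\textbf{Gap: the statistics in Step 1 do not close under the dynamics, so the Gronwall comparison in Step 4 cannot be run.}

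Resolvents of $K$ alone only record the spectral position $K_{ii}$ of each coordinate. Every application of the generator, however, produces new polynomial weights in $(u_i,v_i)$.

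For instance, take $\psi=u^2-v^2$ (the case $u\odot v$ is the same). The gradient part of the drift gives $\dot\psi_i\propto -K_{ii}(u_i^2+v_i^2)(\psi_i-\beta^*_i)$. Hence the drift of $\frac1d\sum_i R(z)_{ii}\psi_i^2$ is proportional to $\frac1d\sum_i R(z)_{ii}K_{ii}(u_i^2+v_i^2)\psi_i(\psi_i-\beta^*_i)$. This weight has degree six in $(u_i,v_i)$, and it is not a function of $\psi_i$ and $K_{ii}$. Its own drift has degree eight, and so on.

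So no finite collection of statistics $\frac1d\sum_i R(z)_{ii}\,p(u_i,v_i)$ with fixed polynomials $p$ is invariant. The drift of your vector $\mathcal Q$ is therefore not a Lipschitz function of $\mathcal Q$. Knowing that the SGD and homogenized-SGD generators agree up to $o(1)$ at a common point $x$ then gives no control of $\mathcal Q(x_{\lfloor td\rfloor})-\mathcal Q(\mathscr X_t)$, because the two processes sit at different points. This is exactly the breakdown of finite-dimensional ODE closure that the paper stresses for the nonlinear parametrization.

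\textbf{The missing idea.} The paper tracks the whole joint empirical law of $(u_i,v_i,\beta^*_i,K_{ii})$, weighted by the columns of $W=[\psi\mid\beta^*\mid\mathds{1}_d]$. It does so through the four-parameter resolvent $S(x,z)=\frac1d W^\top R(z_1;\operatorname{diag}(u))R(z_2;\operatorname{diag}(v))R(z_3;\operatorname{diag}(\beta^*))R(z_4;K)W$.

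All factors are diagonal; this is where diagonal $K$ is used. Consequently every term generated by It\^o/Doob (powers of $\operatorname{diag}(u)$, $\operatorname{diag}(v)$ and of the resolvents) can be rewritten in terms of $S$ itself, via $DR(z;D)=zR(z;D)-I$, $R^2=-\partial_zR$, $R^3=\tfrac12\partial_z^2R$ and $\frac{1}{2\pi i}\oint q(w)R(w;D)\,\mathrm dw=q(D)$.

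The closed system is then a partial integro-differential equation in $z\in\mathbb C^4$. Your finite-dimensional Gronwall step must be replaced by a stability lemma for approximate solutions of that equation. That lemma also needs Lipschitz control of $\partial_{z_i}S$ and $\partial_{z_i}^2S$, and a net over the product contour.

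\textbf{The $\ell^\infty$ bounds in Step 3.} The construction above shows why these bounds are indispensable: $\Gamma_1$ and $\Gamma_2$ must enclose the spectra of $\operatorname{diag}(u)$ and $\operatorname{diag}(v)$. The paper does not prove them here; they are Assumption~\ref{ass: nonexplosive}, a hypothesis of Theorem~\ref{thm: statistic}.

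Your Step 3 route to them would not work as stated, because a single coordinate is not concentrated. Under homogenized SGD, $u_i$ has diffusion coefficient $\gamma\sqrt{I(B(\mathscr X_t))}\,(\nabla_u\psi)_{ii}\sqrt{K_{ii}}$, which is of order one. A per-coordinate martingale bound combined with a union bound over $d$ coordinates, at overwhelming probability, therefore does not give a $d$-independent level $M$.
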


Thus, in the high-dimensional regime, SGD is accurately described by the homogenized SGD dynamics. In particular, the SDE approximation is both accurate and analytically tractable, providing a practical tool to study the behavior of SGD while allowing fixed (and potentially large) stepsizes. This contrasts with the classical small stepsize diffusion approximation, which instead analyzes the regime where the stepsize is infinitesimally small for any fixed dimension.



Our second main result goes a step further. We show that in high dimensions, the randomness becomes negligible at the level of $\varphi$. 
Specifically, the processes $\varphi(x_{\lfloor td \rfloor})$ and $\varphi(\mathscr{X}_t)$ concentrate uniformly over $t\in [0,T]$ around a {\em deterministic function} $\phi(t)$ (see Fig.~\ref{fig: risk}). 
The function $\phi(t)$ is given explicitly in terms of the solution of a deterministic partial integro-differential equation (see \eqref{eq: detEquivVarphi} and \eqref{eq:  PDESystem} for details). 
In other words, $\phi(t)$ yields a deterministic prediction for $\varphi$ for large $d$, allowing one to analyze these statistic curves without averaging over SGD noise. 

\begin{theorem}[Informal]
Under mild conditions, formalized in Theorem~\ref{thm: statistic}, for any statistic $\varphi$ satisfying Assumption~\ref{ass: statistic}, any $\varepsilon \in (0, \tfrac{1}{2})$ and any $T>0$, there exists a constant $C$ (independent of $d$) such that with overwhelming probability the estimate holds:
\begin{equation}\label{eq: deterministicConcentration}
\sup_{0\leq t \leq T} \biggl(  \abs{\varphi(x_{\lfloor td \rfloor}) - \phi(t)} + \abs{\varphi(\mathscr{X}_t) - \phi(t)} \biggr) \leqslant C d^{-\varepsilon}
\end{equation}
where $\phi(t)$ is a deterministic function  defined in \eqref{eq: detEquivVarphi} and \eqref{eq:  PDESystem}.
\end{theorem}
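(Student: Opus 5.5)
We will deduce the statement from the first informal theorem (SGD versus homogenized SGD) and a separate concentration result for homogenized SGD around a deterministic equivalent. By the triangle inequality,
\[
\abs{\varphi(x_{\lfloor td \rfloor}) - \phi(t)} \le \abs{\varphi(x_{\lfloor td \rfloor}) - \varphi(\mathscr{X}_t)} + \abs{\varphi(\mathscr{X}_t) - \phi(t)}.
\]
The first term is already bounded by $Cd^{-\varepsilon}$ w.o.p., uniformly on $[0,T]$, by \eqref{eq: sdeConcentration}. So it suffices to show that $\sup_{t\le T}\abs{\varphi(\mathscr{X}_t)-\phi(t)}\le Cd^{-\varepsilon}$ w.o.p. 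The union of two overwhelming-probability events is again overwhelming.

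\textbf{Step 1: a closed system for the relevant statistics.} For homogenized SGD we exploit the diagonal structure. Because $\psi$ acts coordinatewise (e.g.\ $\psi(u,v)=u\odot v$ or $u^2-v^2$), the drift $d\,\nabla\mathcal{R}$ and the diffusion coefficient depend on $\mathscr{X}_t$ only through finitely many low-dimensional overlaps, such as $\frac1d\psi(\mathscr{X})^\top K\psi(\mathscr{X})$ and $\frac1d\psi(\mathscr{X})^\top K\beta^*$. By Gaussianity, $\EE f$ and $\EE \nabla f^2$ are functions of these quadratic forms. We encode the state as an empirical measure over the spectrum of $K$ (equivalently, resolvent-type quantities $\frac1d\sum_i g(\mathscr{X}_{t,i},\beta^*_i,\lambda_i)$). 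We then apply It\^o's formula to each test function $g$ in the class from Assumption~\ref{ass: statistic}. The drift of $\frac1d\sum_i g$ becomes an expression in the same family of statistics plus the overlaps. This produces the PDE system \eqref{eq: PDESystem}, whose solution defines $\phi$ via \eqref{eq: detEquivVarphi}.

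\textbf{Step 2: martingale concentration.} For each such statistic, $\varphi(\mathscr{X}_t)=\varphi(x_0)+\int_0^t \mathcal{D}(\mathscr{X}_s)\,ds+M_t$. The martingale $M_t$ has quadratic variation of order $\gamma^2\frac1{d}\norm{\nabla\varphi}^2\cdot\norm{\nabla\psi}^2\norm{K}$. Under the growth assumptions this is $O(d^{-1})$ times polynomial moments of the state. A Freedman/BDG inequality with a stopping time at which the norm $\frac1d\norm{\mathscr{X}_t}^2$ exceeds a large constant then gives $\sup_t\abs{M_t}\le d^{-1/2+\delta}$ w.o.p. The Itô correction term, of size $\gamma^2\tr(\cdots)/d$, is a deterministic function of the statistics up to the same error.

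\textbf{Step 3: Gr\"onwall.} Let $\Delta(t)$ be the deviation of the finitely many driving statistics from their deterministic counterparts. Then $\Delta(t)\le \int_0^t L\,\Delta(s)ds + Cd^{-\varepsilon}$ on the stopped event. Gr\"onwall gives $\Delta\le Ce^{LT}d^{-\varepsilon}$. We also need to show that the stopping time exceeds $T$ w.o.p., using an a priori moment bound on $\frac1d\norm{\mathscr{X}_t}^2$, which itself follows from the same It\^o argument.

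\textbf{Main obstacle.} The hardest step is Step 1, the closure. The nonlinear parametrization makes the drift of a generic $\frac1d\sum_i g$ involve new coordinatewise functions, so the system is infinite-dimensional. The closure is only possible through a PDE/measure-valued formulation, and it must hold for nondiagonal $K$. We will first try diagonalizing $K$ together with the assumption that $\beta^*$ is delocalized in its eigenbasis. If that fails, we will fall back to handling $K$ diagonal and general $K$ through the resolvent. A secondary difficulty is the local Lipschitz (polynomial-growth) constants, which are controlled by the stopping-time argument.
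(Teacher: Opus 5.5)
Your overall architecture is the paper's: an It\^o/Doob decomposition of a family of statistics, $O(d^{-1})$ quadratic variation for the martingale, Gr\"onwall, and a stopping-time argument. However, the two steps that carry the weight are either missing or would fail as written.

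\textbf{Closure and stability.} Steps~1 and~3 are inconsistent. You note that the closure is infinite-dimensional, but then run Gr\"onwall on ``finitely many driving statistics.'' The polynomial statistics of Assumption~\ref{ass: statistic} do not close, because the cubic drift raises the degree. The missing idea is a concrete closed object together with a stability theory for it. The paper uses the resolvent statistic $S(x,z)=\frac1d W(x)^\top\Omega(x,z)W(x)$, $z\in\Gamma$, where $\Omega$ is a product of four \emph{commuting} diagonal resolvents. The drift closes because $\operatorname{diag}(u)R(z_1;\operatorname{diag}(u))=z_1R-I$, $R^2=-\partial_{z_1}R$ and $R^3=\tfrac12\partial_{z_1}^2R$, with polynomial factors produced by Cauchy integrals over $\Gamma_i$. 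Consequently the vector field $\mathscr F$ contains $\partial_{z_i}$ and $\partial_{z_i}^2$, which are not bounded in $\norm{\cdot}_\Gamma$. So an inequality $\Delta(t)\le\int_0^tL\Delta(s)\,\mathrm ds+Cd^{-\varepsilon}$ does not follow from what you wrote. The paper obtains it in two ways:
\begin{itemize}
\item it builds a derivative-Lipschitz condition into the notion of an $(\varepsilon,M,T)$-approximate solution (Definition~\ref{def: approximateSolution}(i)) and proves Proposition~\ref{prop: stability};
\item it makes the martingale bounds uniform over the continuum $z\in\Gamma$ via a union bound over a $d^{-\delta}$-mesh and Lemma~\ref{lem: net_argument}.
\end{itemize}
This closure uses diagonality of $K$ essentially. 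Your first plan (diagonalizing a general $K$, with $\beta^*$ delocalized) would destroy the coordinatewise structure of $\psi$. It is also unnecessary, since Assumption~\ref{ass: data} already takes $K$ diagonal.

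\textbf{Non-explosion.} You claim the stopping time exceeds $T$ w.o.p.\ by ``an a priori moment bound on $\frac1d\norm{\mathscr X_t}^2$, which itself follows from the same It\^o argument.'' This fails. The drift grows cubically, and for large stepsizes the dynamics can diverge (cf.\ the divergence regime in Figure~\ref{fig: risk}, right), so no such bound holds in this generality. That is exactly why non-explosiveness is a \emph{hypothesis} of Theorem~\ref{thm: statistic} (Assumption~\ref{ass: nonexplosive}). The paper verifies it only in the special setting of Theorem~\ref{thm:hp_decay_main}, by an entropy-barrier argument. Moreover, an averaged $\ell_2$ bound is the wrong norm. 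Both $S(x,z)$ and the contour formula \eqref{eq: detEquivVarphi} for $\phi$ require the spectra of $\operatorname{diag}(u)$ and $\operatorname{diag}(v)$ to stay inside the fixed contour $C_{2M}(0)$, i.e.\ an $\ell_\infty$ bound. The paper converts that bound into control of the stopping time via $\norm{S(x,\cdot)}_\Gamma\le C(\norm{x}_\infty^4+1)$ in Corollary~\ref{cor: boundedNConcentration}.

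\textbf{The SGD half.} Reducing the SGD half to \eqref{eq: sdeConcentration} is admissible only as a black box. In the paper both informal theorems come from the same formal result. The SGD--HSGD comparison is itself proved by showing that $S(x_{\lfloor td\rfloor},\cdot)$ is an approximate solution of the same PDE and then applying stability. That requires a Doob decomposition, Gaussian conditioning of the Hessian term, a Taylor-remainder bound and truncated Azuma bounds. So the discrete-time work is deferred rather than avoided.
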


Thus the function $\phi$ provides a tractable, deterministic description of the evolution of a broad class of statistics $\varphi(x)$ along the SGD trajectory in the high-dimensional regime, even for \emph{large} stepsizes. We therefore expect this theorem to be useful for analyzing fine-grained properties of SGD such as instability, saddle point avoidance/escape, progressive sharpening, implicit bias, etc.

\begin{figure*}[t!]
    \centering
    \includegraphics[width=0.64\textwidth]{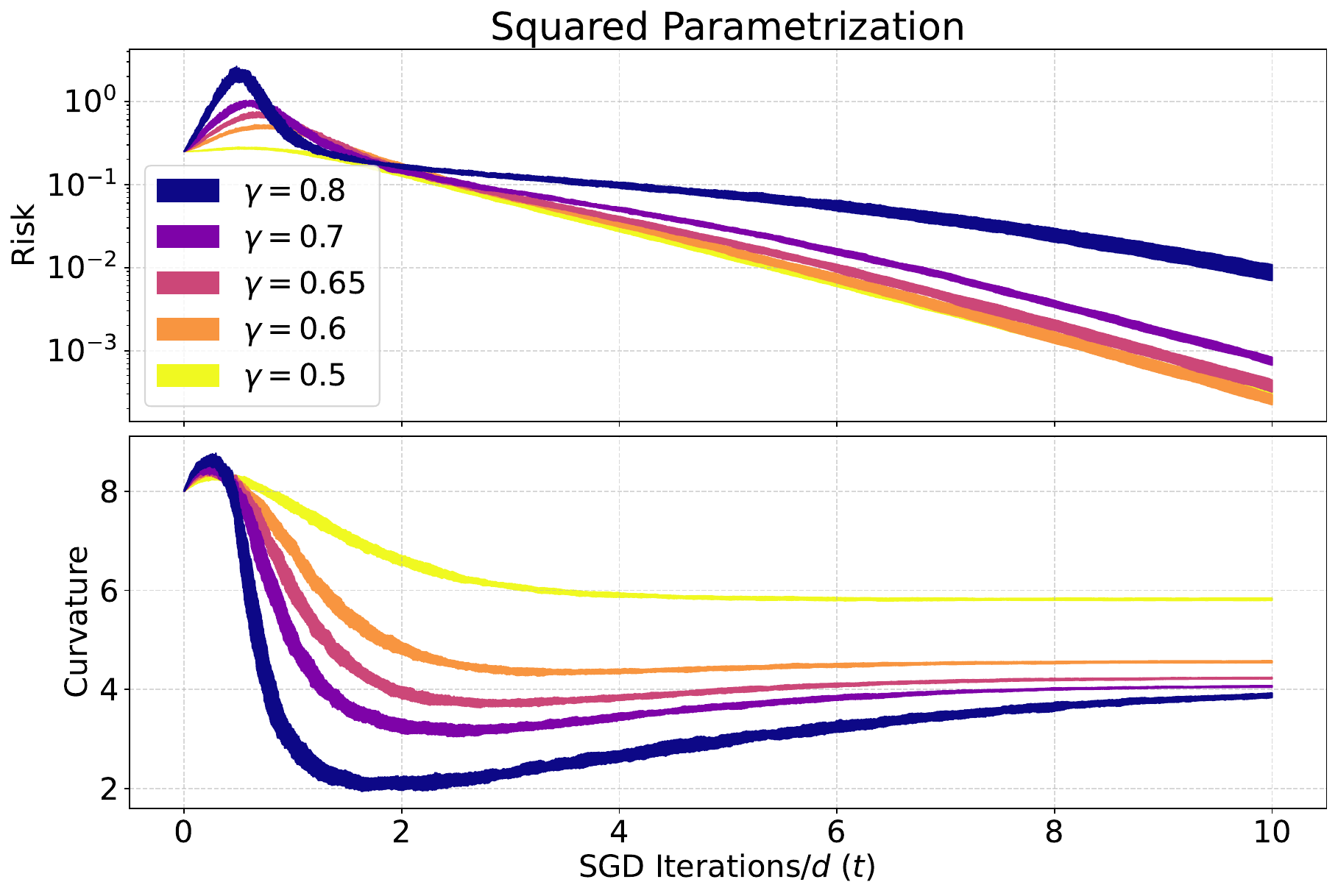} 
    \includegraphics[width=0.32\textwidth]{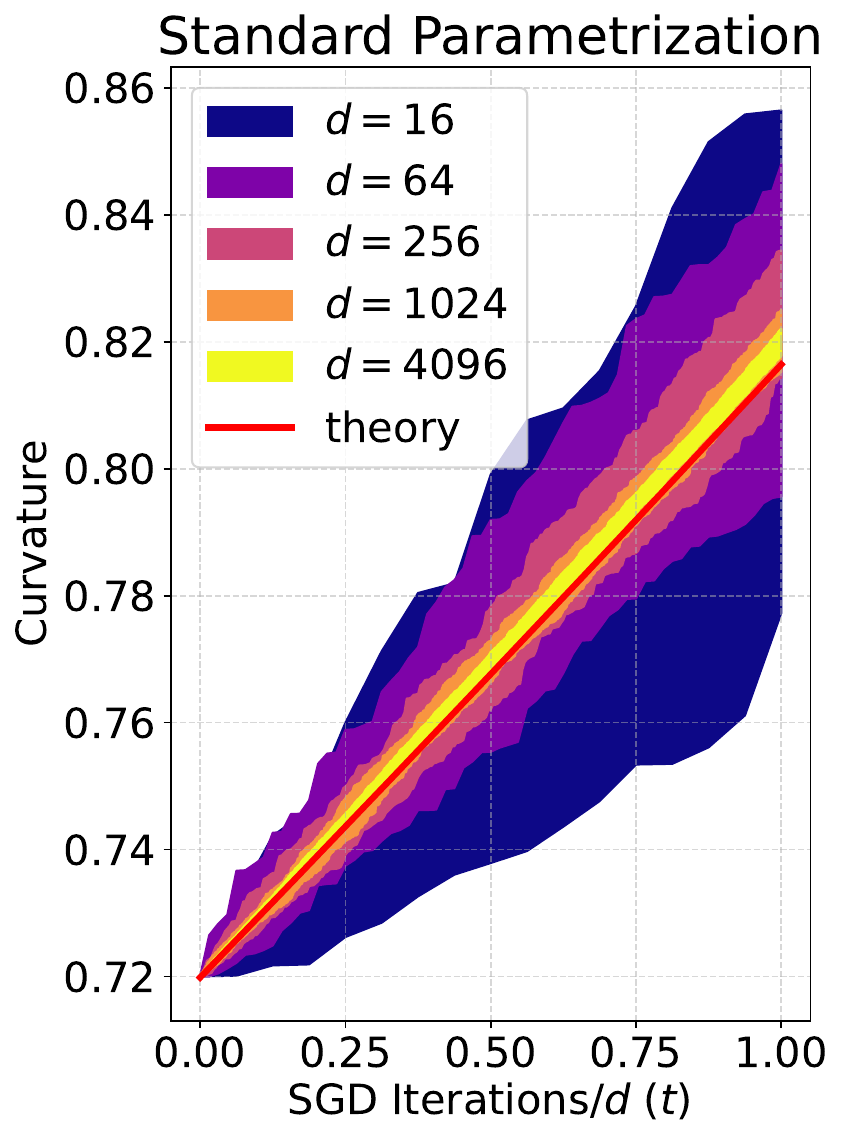} 
    \caption{\textbf{Curvature dynamics for SGD on a diagonal linear network.} \emph{Left:} The evolution of the curvature measured by the scaled trace of the Hessian $\frac{1}{d}\tr(\nabla^2\mathcal{R})$ is shown alongside the empirical risk $\mathcal{R}$, illustrating ``flat'' progress in which the risk increases sharply accompanied by a marked drop in curvature as we vary the stepsize $\gamma$.
    \emph{Right:} As the dimension $d$ increases, the curvature dynamics of SGD concentrate around a deterministic limit (shown in {red}), as proven in Theorem~\ref{thm: statistic}. See Appendix~\ref{app: numerics} for simulation details.}
    \label{fig: statistics}
\end{figure*}

Lastly, our final result complements this picture by establishing global linear convergence of the stochastic dynamics themselves. 

\begin{theorem}[Informal]
Consider homogenized SGD \eqref{eq: sdeFormula} for diagonal linear networks under the squared parametrization \eqref{eq: squaredParam}, initialized at $\mathscr{U}_{0,i}=\mathscr{V}_{0,i}=1$ for $i=1,\dots,d$. Then there exists a numerical constant $c>0$ such that for any stepsize $\gamma\in (0,c)$, the risk $\mathcal{R}(\mathscr{X}_t)$ converges to zero exponentially fast. More precisely, for any $\delta\in(0,1)$, there exist numerical constants $C,\mu>0$ such that with probability at least $1-\delta$ we have
$\mathcal{R}(\mathscr{X}_t) \leqslant C e^{-\mu t}$ for all $t\geqslant 0.$
\end{theorem}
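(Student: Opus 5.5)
Write $u=\mathscr U_t$, $v=\mathscr V_t$ and $\beta=u^2-v^2$. With $f(z)=\tfrac12 z^2$ and the $\tfrac1{4d}$ normalization of \eqref{eq: squaredParam}, the residual is $r=\beta-\beta^*$ and the risk is $\mathcal R=\tfrac1{4d}\,r^\top K r$. The plan is to compute \eqref{eq: sdeFormula} explicitly for this model and then work with the scalar process $\mathcal R(\mathscr X_t)$ via It\^o's formula. The gradient is
\[
\nabla_u\mathcal R=\tfrac1d\,u\odot Kr,\qquad \nabla_v\mathcal R=-\tfrac1d\,v\odot Kr,
\]
so the drift is $-\gamma(u\odot Kr,\,-v\odot Kr)$. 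The noise coefficient is $\gamma\sqrt{\tfrac{1}{d}r^\top K r}\,\cdot\,\mathrm{diag}(u,-v)\sqrt K\,\mathrm d\mathfrak B_t$, up to the constant coming from $\psi$, i.e.\ of size $\gamma\sqrt{\mathcal R}$. Two features matter. The noise is multiplicative and vanishes at zero risk. And the coordinates cannot cross zero, since $\mathrm d\log u_i$ is well defined while $u_i>0$. Starting from $u_0=v_0=\mathbf 1$ we have $\beta_0=0$. Apply It\^o's formula to $\beta_i=u_i^2-v_i^2$ and then to $\mathcal R$. This gives
\[
\mathrm d\mathcal R=-\tfrac{\gamma}{d}\,(Kr)^\top\mathrm{diag}\!\big(4(u^2+v^2)\big)(Kr)\,\mathrm dt+\gamma^2\,\mathcal R\cdot(\text{Itô correction})\,\mathrm dt+\mathrm dM_t .
\]
Here the Itô correction is of order $\tfrac1d\tr\!\big(K\,\mathrm{diag}(u^2+v^2)\,K\big)$, and the martingale $M_t$ has quadratic variation $\lesssim \gamma^2\mathcal R^2\,(\text{same weights})$. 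We expect the correction to be $O(\gamma^2\mathcal R)$ with constants depending on $\|K\|$ and on bounds on $u,v$.

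The first key step is a \emph{two-sided control of the weights} $u_i^2+v_i^2$ on a good event. We need a lower bound $u_i^2+v_i^2\ge c_0$, so that the drift gives $-\tfrac{\gamma}{d}\,c_0\lambda_{\min}(K)\,r^\top K r\le -\mu_0\gamma\mathcal R$. We also need an upper bound, to control the Itô correction. For this we use the conserved-type quantity $u_iv_i$. For gradient flow, $\mathrm d(u_iv_i)/\mathrm dt=0$ exactly, so $u_i^2+v_i^2\ge 2u_iv_i=2$. For the SDE, It\^o's formula gives $\mathrm d(u_iv_i)=-\gamma^2(\text{noise cross-term})\,u_iv_i\,\mathrm dt+\mathrm dN$. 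Hence $\log(u_iv_i)$ drifts by at most $O(\gamma^2\!\int_0^t\mathcal R_s\,\mathrm ds)$ plus a martingale with quadratic variation $O(\gamma^2\!\int\mathcal R_s\,\mathrm ds)$. We control the upper bound in the same way through $\log u_i,\log v_i$, whose increments are bounded by $\gamma\int\sqrt{\cdot}\,\mathcal R$-type quantities.

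The second step is a \emph{bootstrap/stopping-time argument}. Let $\tau$ be the first time either some weight leaves $[c_0,C_0]$ or $\mathcal R_t>2\mathcal R_0 e^{-\mu t}$. Before $\tau$, the comparison above gives $\mathrm d\mathcal R\le-(\mu_0\gamma-C\gamma^2)\mathcal R\,\mathrm dt+\mathrm dM$. So for $\gamma<c$ small we have $\mathrm d\log\mathcal R\le-\mu\,\mathrm dt+\mathrm d\tilde M$ with $\langle\tilde M\rangle_t\le C\gamma^2 t$. An exponential martingale (Bernstein/Freedman) inequality then shows that, with probability $\ge1-\delta$, $\log\mathcal R_t\le\log\mathcal R_0-\mu t/2+C_\delta$ for all $t$. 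Meanwhile $\int_0^\infty\mathcal R_s\,\mathrm ds\le C\mathcal R_0/\mu$ is finite, so the weight drifts above are bounded uniformly in time. By choosing $c$ small, the weights stay strictly inside $[c_0,C_0]$ on this event, hence $\tau=\infty$. Global well-posedness follows as well: the coefficients are locally Lipschitz and no blow-up occurs before $\tau$.

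The main obstacle is that the noise in $\log\mathcal R$ need not be small relative to the drift. Its quadratic variation is $O(\gamma^2)$, while the drift is $O(\gamma)$, and closing the argument uniformly over infinite time requires the high-probability bound to hold for all $t$ simultaneously. I would handle this first with the supermartingale $e^{\lambda(\log\mathcal R_t+\mu t/2)}$ and Ville's maximal inequality. If the Itô correction, which depends on $\tfrac1d\tr$ of weighted $K^2$, cannot be bounded by $C\gamma^2\mathcal R$ uniformly in $d$, a fallback is to work with $r^\top K r$ normalized by $\lambda_{\min}(K)$ and assume $K$ well conditioned. Such an assumption is presumably part of the formal theorem.
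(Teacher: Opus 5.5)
\textbf{The gap is the upper half of your two-sided weight control.} Work in the isotropic setting $K=I_d$, $\beta^*=\mathds 1_d$, to which the paper's proof is restricted. There $\mathrm d\log\mathscr U_{t,i}=(-\gamma r_i-2\gamma^2\mathcal R)\,\mathrm dt+2\gamma\sqrt{\mathcal R}\,\mathrm d\mathfrak B_{t,i}$ with $r_i=\mathscr U_{t,i}^2-\mathscr V_{t,i}^2-1$; for diagonal $K$ the drift is $-\gamma(Kr)_i$.

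Integrability of $\mathcal R$ controls only the average $\frac1d\sum_i r_i^2=4\mathcal R$. A single $|r_i|$ is bounded only by $2\sqrt{d\mathcal R}$, so nothing $d$-free bounds $\sup_i\gamma\int_0^t|r_i|\,\mathrm ds$. Your claim that the increments of $\log u_i,\log v_i$ are controlled by $\int\mathcal R$-type quantities therefore fails coordinatewise, and you cannot show that your stopping time on $\max_i(\mathscr U_{t,i}^2+\mathscr V_{t,i}^2)\le C_0$ never fires.

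The missing idea is the restoring structure of each coordinate. Set $\widetilde{\mathscr Z}_{t,i}=\log(\mathscr U_{t,i}/\mathscr V_{t,i})$ and $P_t=\mathscr U_{t,i}\mathscr V_{t,i}=e^{-4\gamma^2 I_t}$. Then $r_i=2P_t\sinh\widetilde{\mathscr Z}_{t,i}-1$ and $\mathrm d\widetilde{\mathscr Z}_{t,i}=(-4\gamma P_t\sinh\widetilde{\mathscr Z}_{t,i}+2\gamma)\,\mathrm dt+4\gamma\sqrt{\mathcal R}\,\mathrm d\mathfrak B_{t,i}$. Lemma~\ref{lem:remove_coordinate_barrier} bounds $|\widetilde{\mathscr Z}_{t,i}|$ by a pathwise barrier argument on $\widetilde{\mathscr Z}_{t,i}-4\gamma N_{t,i}$, where $N_{t,i}=\int_0^t\sqrt{\mathcal R}\,\mathrm d\mathfrak B_{s,i}$. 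This requires a union bound over the $d$ martingales, which puts $\log(4d/\eta)$ into the constants.

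This is not merely technical. The coordinates receive independent noise, so at fixed $\gamma$ the maximum $\max_i(\mathscr U_{t,i}^2+\mathscr V_{t,i}^2)$ should grow slowly with $d$. A bootstrap that needs every weight in a window $[c_0,C_0]$ therefore cannot yield the $d$-free step-size threshold you are after. Your lower bound is fine: $\mathrm d(\mathscr U_{t,i}\mathscr V_{t,i})$ has no martingale part at all (cf.\ Lemma~\ref{lem:product_identity}), which is why $c_0$ is uniform in $i$.

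\textbf{The choice of Lyapunov function matters for the same reason.} For $\mathcal R$, the It\^o correction is $\frac{4\gamma^2\mathcal R}{d}\sum_i(\mathscr U_{t,i}^2+\mathscr V_{t,i}^2)^2$ plus a cross term. Without a uniform maximum, you need a fourth-moment average that nothing in your plan controls.

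The paper instead works with the empirical entropy $H_t$. This is a Bregman divergence in $(\mathscr U_{t,i}^2,\mathscr V_{t,i}^2)$ to the moving target $(\tfrac{\rho(I_t)+1}{2},\tfrac{\rho(I_t)-1}{2})$, whose moving-target terms cancel by the product identity. By Proposition~\ref{prop:entropy_sde}, its drift is $-8\gamma\mathcal R+4\gamma^2\mathcal R\big[\frac1d\sum_i(\mathscr U_{t,i}^2+\mathscr V_{t,i}^2)+\rho(I_t)\big]$. No weight multiplies the first-order term, and the correction involves only an average, which Lemma~\ref{lem:UV_by_entropy} bounds by $2H_t+2\sqrt5\log 2$. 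Per-coordinate information enters only through the coercivity $\frac{m_{L_\ast}}{4}H_t\le\mathcal R\le\frac{M_{L_\ast}}{4}H_t$ under the coordinate barrier, and that barrier is then removed by the log-ratio argument above. If you want to keep $\mathcal R$, you need an additional Lyapunov estimate for $\frac1d\sum_i(\mathscr U_{t,i}^2+\mathscr V_{t,i}^2)^2$ that exploits the same restoring drift.

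\textbf{Your quadratic-variation estimate misses a factor $1/d$.} Because the coordinates are driven by independent Brownian motions, the martingale part of $\log\mathcal R$ has bracket rate $\frac{16\gamma^2}{d}\cdot\frac{\sum_i r_i^2 w_i^2}{\sum_i r_i^2}$ with $w_i=\mathscr U_{t,i}^2+\mathscr V_{t,i}^2$; compare $\mathrm d\langle M\rangle_t=\frac{64\gamma^2}{d}\mathcal R^2\,\mathrm dt$ for $H_t$. So the obstacle you single out is mild. Your Ville/exponential-supermartingale step is exactly the one used for $\log H_t$ in Theorem~\ref{thm:stopped_explicit_confidence_decay}. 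The $1/d$ is also why $\delta$ enters the paper's step-size threshold $\bar\gamma_{H,L,\delta}$ only through $\theta_\delta/d$.
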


\subsection{Literature Review}

\paragraph{Continuous-time and stochastic-process approximations of SGD.}
A common approach to analyzing SGD is to replace the discrete-time iteration by a continuous-time process. Gradient flow arises in the infinitesimal-stepsize limit, but suppresses both finite-stepsize effects and stochasticity. Stochastic gradient flow and diffusion-type SDE approximations incorporate noise and can capture solution-selection phenomena not explained by gradient flow \citep{pesme2021implicit}. Related viewpoints include studying SGD augmented with label noise as a proxy for isolating stochastic effects during training \citep{vivien2022label}. These diffusion approximations are typically justified in small-stepsize limits, often at fixed dimension. In high-dimensional settings, stochastic-process descriptions can reveal additional regularization mechanisms, including biases induced by parameter-dependent noise \citep{andriushchenko2023sgd} and repulsive interactions in eigenvalue dynamics that promote rank deficiency \citep{varre2024sgd}. Beyond diffusion approximations, dynamical mean field theory (DMFT) provides another route to tracking high-dimensional training dynamics, including for SGD in Gaussian mixture classification and related batching regimes \citep{mignacco2020dynamical,celentano2021highdimensional,gerbelot2024rigorous}.

\paragraph{High-dimensional deterministic limits for SGD dynamics.}
Complementary to continuous-time approximations, a long line of work derives deterministic dynamical descriptions of learning in high-dimensional teacher--student and random-feature models. Early studies of multi-index and soft-committee models established ODE characterizations of the risk dynamics and showed how algorithmic choices, such as the stepsize, affect convergence and generalization \citep{saad1995dynamics,biehl1994online,biehl1995learning,saad1995exact}. Building on this statistical-physics tradition, \citet{goldt2019dynamics}
rigorously justified the resulting finite-dimensional ODE description for online
SGD in two-layer teacher--student networks with large input dimension and
finitely many hidden units, including overparameterized students and the case
where both layers are trained. Parallel mathematical work developed systematic
scaling-limit techniques for high-dimensional online learning dynamics, often
based on empirical-measure and martingale decompositions
\citep{wang2017scaling,wang2019solvable}. More recent work emphasizes that generalization depends on the interaction between algorithm, architecture, and data distribution \citep{goldt2019dynamics}, and develops general techniques (e.g., martingale arguments) for proving high-dimensional limits \citep{wang2017scaling,wang2019solvable}. Comparisons across limiting regimes and refinements of these ODE descriptions are studied in \citep{arnaboldi2023highdimensional}. Other variants show how changes in the geometry or constraints of the dynamics can lead to signal-locking and dimension-robust behavior \citep{arous2022highdimensional,arnaboldi2023escaping,damian2023smoothing}. Streaming and multi-pass SGD have also been analyzed through deterministic descriptions based on integral equations and homogenization-type methods \citep{mousavi-hosseini2023neural,paquette2021sgd,lee2022trajectory,paquette2025homogenization}. Recent high-dimensional analyses have also begun to address simplified
attention/transformer-like architectures, including sequence single-index
models and single-layer attention models trained by SGD or few-step gradient
procedures \citep{arnaboldi2026asymptotics,barnfield2026highdimensional}; these works, however, address low-dimensional order-parameter dynamics,
gradient-flow approximations, or few-step training rather than
trajectory-level deterministic equivalents for discrete-time SGD in diagonal
parametrizations.

\paragraph{Covariance structure and finite-stepsize effects.}
Many classical high-dimensional analyses assume isotropic data, but non-identity covariance can qualitatively change the training dynamics and is important when the population covariance is unknown. Prior work derives equations of motion under Gaussian-equivalence principles for multi-index and random-feature models \citep{goldt2020modeling}, with extensions to deeper architectures \citep{goldt2022gaussian}. Covariance structure has also been linked to plateau phenomena and slow phases in learning \citep{yoshida2019datadependence}. Recent work derives exact high-dimensional limits for linear regression and analyzes how non-identity covariance reshapes the optimization landscape and convergence behavior \citep{collins–woodfin2024highdimensional,collins-woodfin2024hitting}; related extensions to adaptive algorithms are studied in \citep{balasubramanian2025highdimensional,collins-woodfin2024high}. 
These results are especially relevant at finite stepsizes, where stochasticity, data geometry, and algorithmic parameters can interact beyond the small-stepsize limits captured by diffusion approximations.

\paragraph{Implicit bias, flatness, and the role of stepsize.}
A related theme is the implicit bias of gradient-based optimization. In separable logistic regression, gradient descent converges in direction to the max-margin classifier \citep{soudry2018implicit}, and in modern overparameterized settings SGD is often viewed as providing an implicit regularization mechanism that can improve generalization with little or no explicit regularization \citep{zhang2017understanding}. One influential hypothesis links this behavior to SGD's preference for flatter minima \citep{he2019control} and to empirical correlations between flatness and generalization \citep{keskar2017largebatch}. Related evidence appears in structured recovery problems, where flat minima can coincide with exact recovery under suitable conditions \citep{ding2024flat}. Stepsize and batch size are key levers in this picture: larger stepsizes and smaller batch sizes have been argued to encourage exploration of flatter regions \citep{jastrzębski2019relation}, and the batch-size-to-learning-rate ratio can affect generalization \citep{he2019control}. Initialization also plays an important role in determining the implicit bias: the initialization scale can control the transition between kernel/lazy and rich/active regimes \citep{woodworth2020kernel}, while the relative shape of the initialization can further affect the limiting solution selected by gradient methods \citep{azulay2021implicit}. Other proposed mechanisms include landscape-smoothing interpretations \citep{kleinberg2018alternative}, random-walk models on random landscapes \citep{hoffer2017train}, and explanations based on batch variability and ``catapult'' effects \citep{zhu2024catapults}. At the same time, recent evidence cautions that sharpness and flatness can be tightly entangled with optimization hyperparameters such as the stepsize, and need not correlate monotonically with generalization across settings \citep{andriushchenko2023modern}.

\paragraph{Diagonal linear neural networks.}
Diagonal linear networks provide a minimal yet expressive model for studying implicit bias, stochastic optimization, and solution selection in overparameterized settings. They allow precise comparisons between SGD and gradient descent or gradient flow, including regimes where SGD exhibits favorable implicit bias \citep{pesme2021implicit}. In sparse feature learning, empirical work suggests that large stepsizes alone may be insufficient for sparsification without stochasticity \citep{andriushchenko2023sgd}. Complementary theory shows that, in sparse regression with diagonal linear networks, large stepsizes can systematically improve SGD while harming gradient descent \citep{even2023sgd}.

\subsection{High-dimensional Model Structure}
We will consider objective functions $\mathcal{R}$ defined as the expectation of a composition of a simple outer function $f\colon \R^2 \to \R$ and an inner map $\psi\colon \R^{2d}\to \R^d$, given by
\begin{equation}\label{eq: riskSetting}
\mathcal{R}(x) := \EE_a  \Psi (x; a), \quad \text{for} \quad a\sim \mathcal{D} \subset \R^d, \quad \text{where} \quad \Psi (x; a) := f \left( \frac{1}{\sqrt{d}} \begin{pmatrix}
\psi(x) \\
\beta^*
\end{pmatrix}^{\!\top} a\right).
\end{equation}
In the probabilistic analysis, it is important to treat $f$ as a function of both components of its input \( r = (r_1, r_2) \in \R^2 \). However, in the optimization context, we often regard \( f \) as a function of a single real variable \( r_1 \), with the second component \( r_2 \) considered a fixed (but possibly random) parameter. 

We impose the following mild Lipschitzness assumption on $f$. Throughout, the symbol $\|\cdot\|$ will denote the standard $\ell_2$-norm on vectors and the Frobenius norm on matrices. 

\begin{assumption}[Pseudo-Lipschitz continuity of $f$]\upright\label{ass: fPseudoLipschitz} The outer function $f\colon \R^2 \to \R$ is $\alpha$-pseudo-Lipschitz with constant $L(f)$. That is, for all $r, \hat{r}\in \R^2$, it holds:
\begin{equation*}
\abs{f(r)-f(\hat{r})} \leqslant L(f) \norm{r - \hat{r}}(1+\norm{r}^\alpha + \norm{\hat{r}}^\alpha).
\end{equation*}
\end{assumption}

In words, pseudo-Lipschitzness stipulates polynomial growth of Lipschitz constants on balls with growing radii. Next we summarize our assumption on the data vector $a\sim \mathcal{D}$.

\begin{assumption}[Data]\upright\label{ass: data}
We consider data samples \( a \sim \mathcal{D} \) drawn from a Gaussian distribution \( \mathcal{N}(0, K) \), where the covariance  \( K \in \R^{d \times d} \) is diagonal. The entries of \( K \) are uniformly bounded, meaning its operator norm satisfies $\norm{K}_{\mathrm{op}} \leqslant \bar{K}$, for some constant $\bar{K}>0$ independent $d$. In addition, we assume that the trace of \( K \) scales linearly with the dimension $d$, that is, $\tr(K) = \Theta(d)$. 
\end{assumption}
\begin{remark}\upright
The Gaussian assumption is used later both for concentration and for explicit
conditional moment calculations. In particular, for a fixed parameter value
$x$, the proof conditions the data vector $a$ on its projection onto the
low-dimensional subspace generated by $\psi(x)$ and $\beta^*$. Gaussian
conditioning gives closed-form conditional means and covariances, which are used
to identify the limiting drift and diffusion terms. The same assumption also
provides concentration bounds for the linear and quadratic forms appearing in
the martingale estimates. Some of these concentration steps may extend to more general light-tailed data, but the explicit conditional moment calculations used to identify the limiting drift and diffusion explicitly rely on Gaussianity.
\end{remark}
\begin{remark}\upright
Observe that the trace condition $\tr(K) = \Theta(d)$ reflects the assumption that the total variance of the input distribution remains proportional to the ambient dimension. Our main results are calibrated for this high-dimensional regime. In settings where the trace of \( K \) grows more slowly than \( d \), alternative stepsize scalings for SGD may be required to obtain nontrivial limiting behavior.    
\end{remark}

Our main target application is that of a diagonal linear network, in either parametrization \eqref{eqn:dnn1} and \eqref{eq: squaredParam}. In order to treat both parametrizations simultaneously,  we assume that the inner function $\psi$ has a special quadratic form. 

\begin{assumption}[Diagonal Linear Neural Networks]\upright\label{ass: diagonalNeuralNetworks}
The inner function $\psi \colon \R^{2d} \to \R^d$ is \\ component-wise quadratic, meaning that each output coordinate for $i\in \{1, \dots, d\}$ is given by 
\begin{equation}
\psi_i(u,v)
=
\begin{pmatrix}
u_i & v_i
\end{pmatrix}
\mathcal{Q}
\begin{pmatrix}
u_i \\
v_i
\end{pmatrix}
+
l^\top
\begin{pmatrix}
u_i \\
v_i
\end{pmatrix}
+c,  
\end{equation}
where $\mathcal{Q}
=
\begin{pmatrix}
q_{11} & q_{12}\\
q_{12} & q_{22}
\end{pmatrix}
\in \R^{2\times 2}$ is symmetric, $l= \begin{pmatrix}
    l_1 \\
    l_2
\end{pmatrix} \in \R^2$, and $c\in \R$.
\end{assumption}

Concretely, the formulations \eqref{eqn:dnn1} and 
\eqref{eq: squaredParam}
correspond to setting $\psi(u,v) = u\odot v$ for $\mathcal{Q} = \begin{pmatrix}
    0 & \frac{1}{2}\\
    \frac{1}{2} & 0
\end{pmatrix}$, and $\psi(u,v) = u^2 -v^2$ for $\mathcal{Q} = \begin{pmatrix}
    1 & 0\\
    0 & -1
\end{pmatrix}$, respectively; while setting $l$ and $c$ to be zero in both cases.
Next,  we assume that the ground truth vector $\beta^*$ has bounded entries.

\begin{assumption}[Parameter Scaling]\upright\label{ass: parameterScaling}
The entries of the signal $\beta^* \in \R^d$ are uniformly bounded, that is, $\norm{\beta^*}_\infty \leqslant C$ for some $C>0$ independent of $d$. 
\end{assumption}

In what follows, for a parameter vector $x=(u,v)\in \R^{2d}$, we introduce the block matrix 
\begin{equation}\label{eqn:stacked_notation}
W(x) := [\, \psi(x) \, |  \,  \beta^* \, | \, \mathds{1}_d \, ]\in \R^{d\times 3},    
\end{equation}
and let $B(x)$ be the covariance matrix of the random vector $\frac{1}{\sqrt{d}} W(x)^\top a \in \R^3$, or more explicitly
\begin{equation}\label{eq: defB}
B(x)\;:=\;\frac1d\,W(x)^\top K\,W(x)\in\R^{3\times 3}.
\end{equation}
 Since $\mathcal{R}(x)$ depends on 
$x$ only through the Gaussian
$\frac{1}{\sqrt{d}} W(x)^\top a$, the learning dynamics can be summarized by the  matrix $B(x)$: there exists a function $h\colon \R^{3\times 3}\to\R$ such that the risk decomposes as $\mathcal{R}(x) =  h\left(B(x)\right)$. Explicitly, for both formulations \eqref{eqn:dnn1} and 
\eqref{eq: squaredParam}, we obtain the same function
\begin{equation*}
h\left(B\right) = \frac{1}{2}\left(B_{11} - B_{12} - B_{21} + B_{22}\right).
\end{equation*}

The next two assumptions simply require that (i) the risk is a smooth function of this summary (so we may differentiate through the expectation to obtain the gradient) and (ii) the corresponding second-moment quantity controlling the size of the stochastic gradient noise is equally well-behaved.

\begin{assumption}[Risk Representation]\upright\label{ass: riskRepresentation} There 
is an open set $\mathcal{U}\subseteq \R^{3 \times 3}$ such that $B(x_0) \in \mathcal{U}$, and provided that $B(x) \in \mathcal{U}$, the map $x\to \mathcal{R}(x) :=  h\left(B(x)\right)$ is differentiable and satisfies
\begin{equation*}
\nabla \mathcal{R}(x) = \EE_a \nabla_x \Psi(x ; a).  
\end{equation*}
Furthermore, $h$ is continuously differentiable on $\mathcal{U}$ and its derivative $\nabla h$ is $\alpha$-pseudo-Lipschitz, that is, there is a constant $L(h)>0$, so that for all $B, \hat{B}\in \mathcal{U}$, it holds:
\begin{equation}\label{eqn:norm_pseudo_h}
\norm{\nabla h(B) - \nabla h(\hat{B})} \leqslant L \norm{B-\hat{B}}(1 + \norm{B}^\alpha + \norm{\hat{B}}^\alpha ).
\end{equation}
\end{assumption}

We note that the commutation of expectation and gradient holds trivially on $\mathcal{U} = \R^{3 \times 3}$ once $\Psi$ is continuously differentiable. Moreover, the choice of the Frobenius norm in \eqref{eqn:norm_pseudo_h} is essentially arbitrary, and it can be replaced by the operator norm due to equivalence of norms on $\R^{3\times 3}$.


Our arguments will rely on the evolution of the second moment of the gradient $\EE_a \left[\nabla_{r_1} f\left( r\right)^2\right]$, where we set $r:=\frac{1}{\sqrt{d}} \begin{pmatrix}
\psi(x) \\
\beta^*
\end{pmatrix}^{\!\top} a$. Since this quantity depends on $x$ only through the Gaussian $\frac{1}{\sqrt{d}} W(x)^\top a$, analogous to the risk, there exists a function $I\colon \R^{3 \times 3}\to \R$ satisfying $\EE_a [\nabla_{r_1} f(r)^2] = I\left(B(x)\right)$. In particular, for the two formulations \eqref{eqn:dnn1} and 
\eqref{eq: squaredParam}, the function $I$ is simply
\[
I(B) = 2 h\left(B\right).
\]

\begin{assumption}[Pseudo-Lipschitzness of Square Gradients]\upright\label{ass: squaredGradientsPseudoLipschitz}
The function $I$ is $\alpha$-pseudo-Lipschitz with constant $L(I)>0$, that is, for all $B, \hat{B}\in \mathcal{U}$,
\begin{equation*}
\abs{I(B) - I(\hat{B})} \leqslant L \norm{B-\hat{B}}(1 + \norm{B}^\alpha + \norm{\hat{B}}^\alpha ).
\end{equation*}
\end{assumption}

\begin{remark}\upright
All the typical losses satisfy the requisite assumptions, including logistic regression and the mean square error (see Appendix~\ref{app: examples}). We also note that Assumptions~\ref{ass: riskRepresentation} and~\ref{ass: squaredGradientsPseudoLipschitz} are nearly satisfied for $L$-smooth objectives $f$, and a version of the main theorem holds under just this
assumption (albeit with a weaker conclusion) (see [\cite{collins-woodfin2024hitting}, Appendix B]).
\end{remark}

\subsection{Algorithm Formulation}
We consider the widely used \textit{streaming stochastic gradient descent} (SGD) algorithm. At each iteration $k$, the algorithm generates a fresh data point $a_{k+1} \sim \mathcal{N}(0, K)$ and updates the iterates $x_k \in \R^{2d}$ using a stepsize $\gamma_k$ according to the recurrence
\begin{equation}\label{eq:  sgd_rule}
x_{k+1} = x_k - \gamma_k \, \nabla_x \Psi (x_k; a_{k+1}), 
\end{equation}
where $\nabla_x$ is the usual gradient operator with respect to the $x$ variable. We impose the following assumption on the stepsize, which stipulates that $\gamma_k$ is a bounded deterministic function of $k/d$.

\begin{assumption}[Stepsize]\upright\label{ass: learningRate}
There exists a constant $\bar{\gamma} <\infty$ and a deterministic scalar function $\gamma\colon [0,\infty) \to [0,\infty)$ which is bounded by $\bar{\gamma}$ and satisfies the equation $\gamma_k = \gamma\left(\frac{k}{d}\right)$.
\end{assumption}

We always work in a formulation where the entries of the iterates $\{x_k\}$ remain bounded, independent of dimension. Within the class of high-dimensional representations, we note that the
random variable $\frac{1}{\sqrt{d}} \left(\psi(x)\right)^\top a$ should not carry dimension dependence, as otherwise the outer function $f$ (which can very well be non-linear) degenerates to its behavior at infinity.

The functions $h$ and $I$ will allow us to construct closed, deterministic dynamics that describe
the high-dimensional limit of stochastic gradient descent. To condense the notation, we use
\begin{equation*}
r_k := \frac{1}{\sqrt{d}} \begin{pmatrix}
\psi(x_k) \\
\beta^*
\end{pmatrix}^{\!\top} a_{k+1}
\in \R^2,
\end{equation*}
so that the SGD update \eqref{eq: sgd_rule} simplifies as follows: 
\begin{equation}\label{eq:  sgd_rule_update}
x_{k+1} = x_k - \frac{\gamma_k}{\sqrt d}\,
\nabla_{r_1}f(r_k)\,
(\nabla\psi(x_k))^\top a_{k+1}.
\end{equation}
Note that the stepsize $\gamma_k$ is scaled in a way that SGD will behave well across different dimensions; without the factor of $\sqrt{d}$ in \eqref{eq:  sgd_rule_update}, the algorithm would degenerate to pure noise or to gradient flow as dimension increases. For any fixed dimension, note however that the stepsize $\gamma_k$ can be arbitrarily large (as long as it is bounded uniformly across dimensions). 

\section{High-dimensional Diffusion Approximation for SGD}\label{sec: sde}
We begin by introducing our stochastic differential equation (SDE), called \textit{homogenized SGD}. Specifically, we approximate SGD by the diffusion $(\mathscr{X}_t)_{t\geqslant 0}$ solving the SDE:
\begin{equation}\label{eq: homogenizedSGD}
\mathrm{d} \mathscr{X}_t = - \gamma(t) d \nabla \mathcal{R}(\mathscr{X}_t) \mathrm{d}t + \gamma(t) \sqrt{I\left(B(\mathscr{X}_t) \right)} \, \left(\nabla \psi(\mathscr{X}_t)\right)^\top  \sqrt{K} \mathrm{d}\mathfrak{B}_t,
\end{equation}
with initial condition $\mathscr{X}_0 = x_0$ and $\mathrm{d}\mathfrak{B}_t$ the differential of a standard Brownian motion in $\R^d$. In the diffusion coefficient in \eqref{eq: homogenizedSGD}, the quantity \(I(B(x))\) captures the effective scalar magnitude of the stochastic gradient noise, while \((\nabla\psi(x))^\top\sqrt K\) describes how this noise propagates through the nonlinear parametrization and the data covariance. This structure is not imposed by covariance matching; it arises from the high-dimensional concentration argument used to identify the limiting drift and diffusion.
\begin{remark}[Non-diagonal covariance]\upright
The homogenized SDE \eqref{eq: homogenizedSGD} admits a natural extension to general covariance matrices $K$ in the mean-squared error setting introduced in Section~\ref{subsec: meanSquaredSetting}, where the diffusion is determined by the full covariance of the stochastic gradient. The key point is that in the diagonal $K$ setting, the last two terms arising in the conditional covariance decomposition of the Hessian contribution are effectively negligible for the class of statistics considered in this work, whereas for non-diagonal covariance matrices these terms appear to remain non-negligible in the high-dimensional limit, contributing an additional rank-one fluctuation term to the covariance structure of the noise. Indeed, an application of Isserlis/Wick formula yields
\begin{equation*}
\EE_a\!\left[\inner{\psi(\mathscr X_t)-\beta^\star,a}^2 aa^\top\right]
=
((\psi(\mathscr X_t)-\beta^\star)^\top K (\psi(\mathscr X_t)-\beta^\star))K
+
(K(\psi(\mathscr X_t)-\beta^\star))(K(\psi(\mathscr X_t)-\beta^\star))^\top.
\end{equation*}
This leads formally to the SDE
\begin{equation}\label{eq:nonDiagonalSquareLossSDE}
\mathrm{d} \mathscr{X}_t
=
-\gamma(t)\, d\, \nabla \mathcal{R}(\mathscr{X}_t)\,\mathrm{d}t
+
\gamma(t)
\left(
I \left(B(\mathscr X_t)\right)
\left(\nabla \psi(\mathscr{X}_t)\right)^\top
K
\nabla \psi(\mathscr{X}_t)
+
d\,\nabla \mathcal R(\mathscr X_t)
\left(\nabla \mathcal R(\mathscr X_t)\right)^\top
\right)^{1/2}
\mathrm d\mathfrak B_t,
\end{equation}
where in this case $\mathrm d \mathfrak B_t$ denotes the differential of a standard Brownian motion in $\mathbb R^{2d}$.

The main obstruction to extending our deterministic equivalent result to general covariance $K$ is therefore not the SDE approximation itself, but rather the closure of the deterministic equations for the statistics. When $K$ is non-diagonal, the covariance couples the coordinates through non-commuting resolvent terms, and the finite-dimensional PDE closure used in this work no longer applies. Nevertheless, our numerical experiments suggest that the same high-dimensional concentration phenomenon persists beyond the diagonal setting; see Figure~\ref{fig: nondiagK}.
\end{remark}
\begin{figure}[t!]
    \centering
    \includegraphics[width=0.45\textwidth]{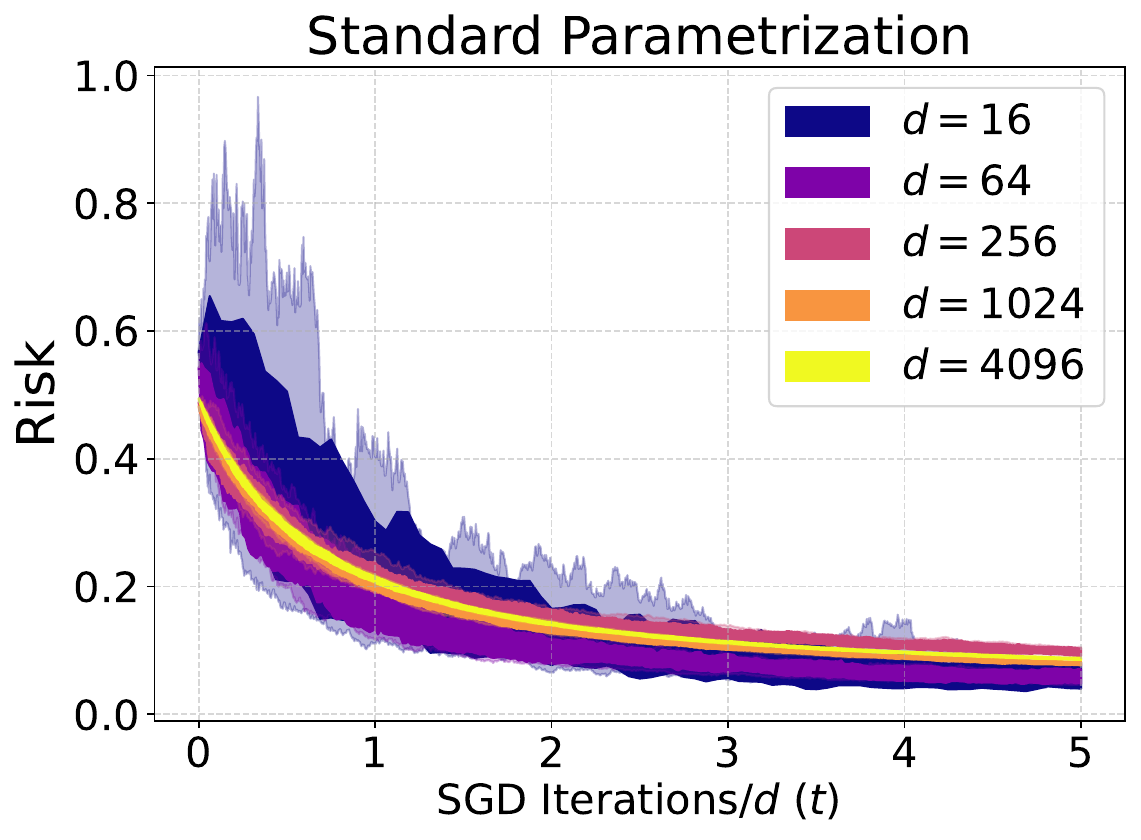} 
    \includegraphics[width=0.45\textwidth]{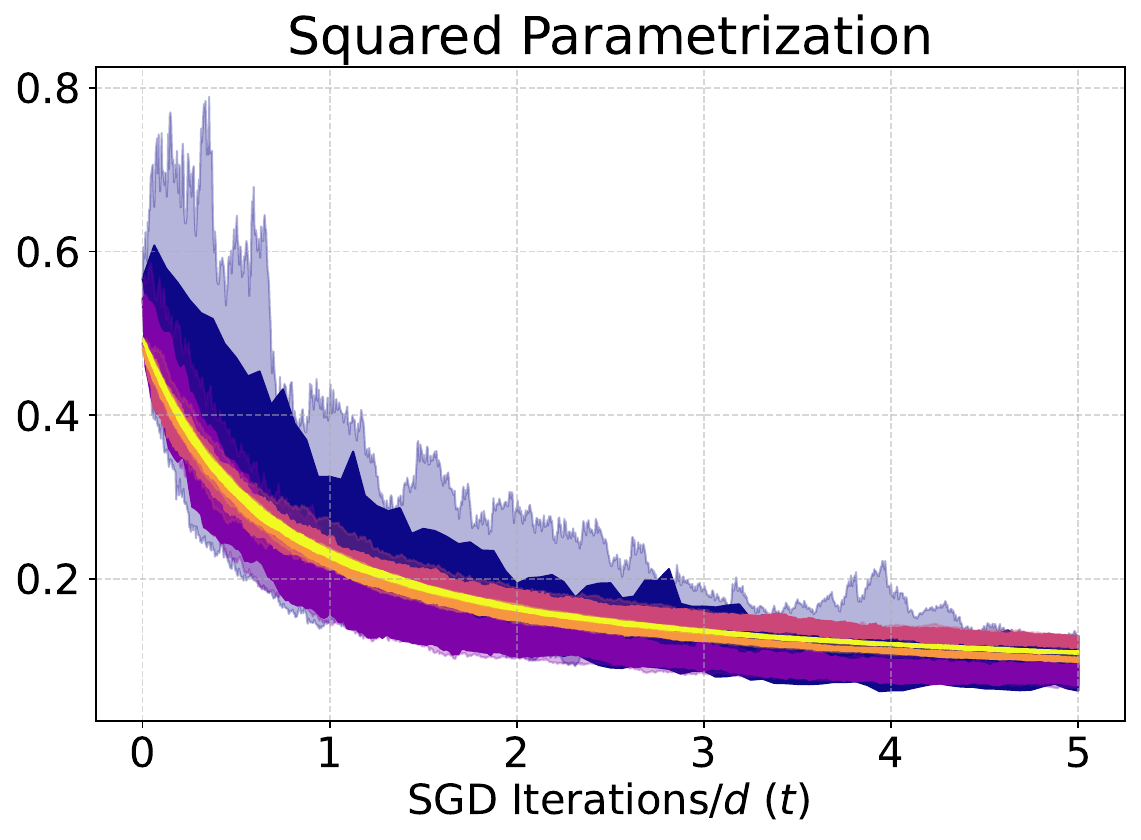} 
    \caption{\textbf{Risk concentration of SGD and the homogenized SDE under non-diagonal covariance on a diagonal linear network.}
    As the dimension $d$ increases, the risk trajectories of SGD (opaque) concentrate around the prediction of the non-diagonal homogenized SDE \eqref{eq:nonDiagonalSquareLossSDE} (transparent), suggesting that the same high-dimensional concentration phenomenon persists beyond the diagonal covariance setting. The covariance matrix $K$ is sampled from a Marchenko--Pastur ensemble. See Appendix~\ref{app: numerics} for simulation details.}
    \label{fig: nondiagK}
\end{figure}
The SDE~\eqref{eq: homogenizedSGD} is useful because it describes the evolution of many quantities of interest by direct It\^o calculus. Concretely, given any sufficiently regular statistic $\varphi$, It\^o's formula gives a decomposition:
\begin{equation}\label{eqn:ito}
    \mathrm{d}\varphi(\mathscr{X}_t)
    = \underbrace{\mathcal{L}\varphi(t,\mathscr{X}_t)}_{\text{drift}}
    \mathrm{d}t
    +
    \underbrace{\nabla\varphi(\mathscr{X}_t)^\top \sigma(t,\mathscr{X}_t)\,\mathrm{d}\mathfrak{B}_t}_{\text{Brownian / martingale term}},
\end{equation}
where $\mathcal{L}$ is the drift–diffusion operator (first‑order drift contribution plus second‑order It\^o correction) and $\sigma$ is the diffusion matrix coefficient of the SDE.

\subsection{Relation to Prior SDE and Homogenization Approximations}
A key conceptual distinction between our framework and classical diffusion (weak-approximation) approaches lies in the regime under which the continuous-time limit becomes accurate. Traditional diffusion approximations  \citep{kushner1978weak, li2019stochastic, malladi2022sdes, compagnoni2025adaptive} operate at fixed dimension with accuracy controlled by the small-stepsize limit $\gamma \to 0$ (after a time rescaling). In that setting, the drift dominates and the stochastic fluctuations vanish, yielding weak convergence of the discrete algorithm to its ODE/SDE formulation.

Our aim is not to provide a black-box homogenization theorem for arbitrary stochastic systems, but to obtain a closed finite-stepsize, high-dimensional description for a nonlinear overparameterized model. The diagonal linear network is a natural minimal target: it is simple enough to admit an exact resolvent closure, but already exhibits the interaction between nonconvex parametrization, stochastic gradients, and finite stepsize effects. Existing small-stepsize diffusion approximations and more general homogenization viewpoints do not directly yield a closed deterministic description for the nonlinear feature map $\psi(x)$ considered here; the closure instead requires tracking a multi-resolvent statistic and leads to the PDE system \eqref{eq: PDESystem}.

In the diagonal linear network setting defined in \eqref{eq: squaredParam}, \cite{pesme2021implicit} follows this diffusion viewpoint and studies SGD via stochastic gradient flow (SGF), a perturbed-gradient-flow SDE whose diffusion term is calibrated so that an Euler discretization matches the covariance of the SGD noise:
\begin{equation}\label{eq: homogenizedSGF}
\mathrm{d} \mathscr{X}_t = - d \nabla \mathcal{R}(\mathscr{X}_t) \mathrm{d}t +  \sqrt{\gamma(t) I\left(B(\mathscr{X}_t) \right)} \, \left(\nabla \psi(\mathscr{X}_t)\right)^\top  \sqrt{K}\, \mathrm{d}\mathfrak{B}_t.
\end{equation}
The two SDEs are closely related in form, but they correspond to different
limiting regimes. Homogenized SGD \eqref{eq: homogenizedSGD} has drift and noise
both proportional to $\gamma(t)$, whereas the covariance-matched diffusion
\eqref{eq: homogenizedSGF} has drift independent of $\gamma(t)$ and noise of
order $\sqrt{\gamma(t)}$. Equivalently, a deterministic time change in
\eqref{eq: homogenizedSGD} produces the algebraic scaling of
\eqref{eq: homogenizedSGF}. However, this does not make the two approximations
equivalent for the discrete-time dynamics studied here: \eqref{eq:
homogenizedSGF} is justified in the small-stepsize regime $\gamma\to0$ at fixed
dimension, while \eqref{eq: homogenizedSGD} is justified in the high-dimensional
regime $d\to\infty$ at fixed stepsize. Thus the two SDEs describe different
approximation limits of SGD. Our approach departs from covariance matching: our diffusion coefficient is derived from a high-dimensional limit theory \citep{collins-woodfin2024hitting}, where approximation accuracy improves as $d \to \infty$ while the stepsize $\gamma$ remains fixed. Here the effective dynamics of the algorithm are controlled by high-dimensional concentration rather than vanishing stepsizes. The resulting SDE model provides an increasingly precise description of the discrete-time risk curves as $d$ grows. Empirically, we find that it tracks SGD more closely than the covariance-matched diffusion model at large stepsizes (see Figure~\ref{fig: sdes}), as our theory predicts. Shrinking $\gamma$ drives the high-dimensional dynamics toward \eqref{eq: homogenizedSGF}, whereas increasing $d$ strengthens the approximation without changing the limiting dynamics.

\begin{figure}[t!]
    \centering
    \includegraphics[width=0.45\textwidth]{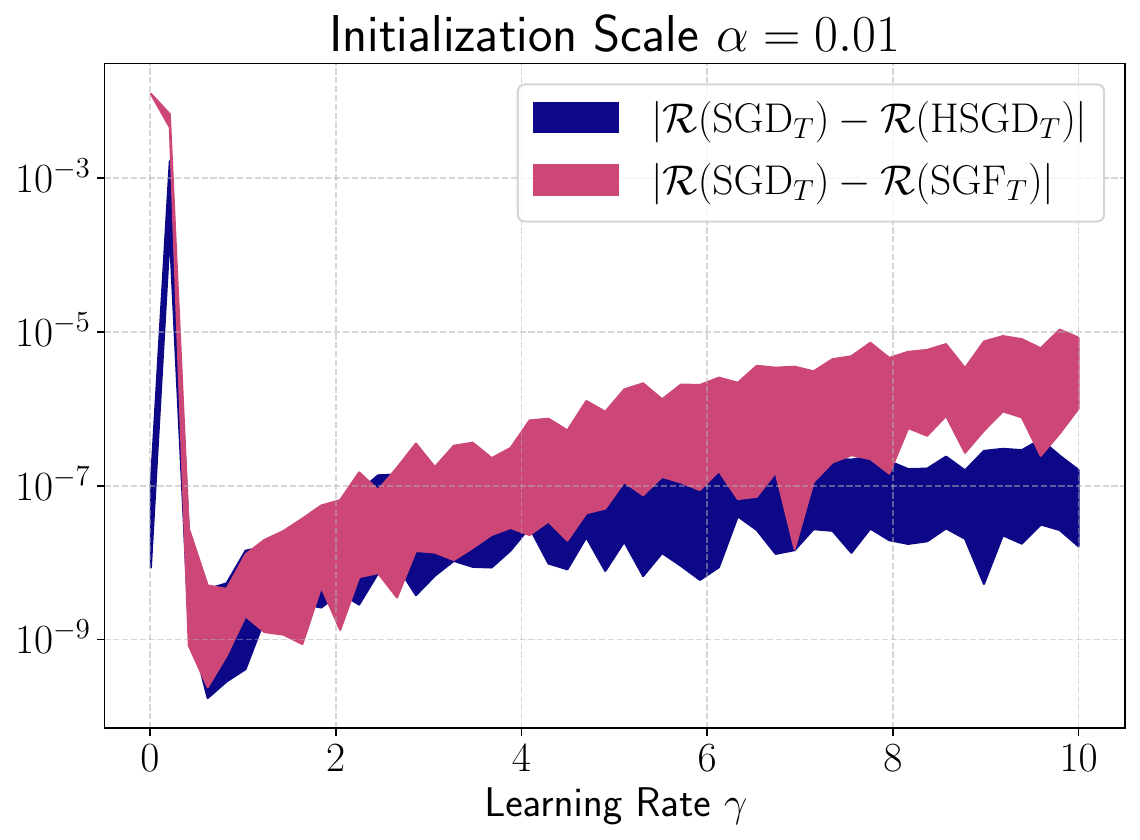} 
    \includegraphics[width=0.45\textwidth]{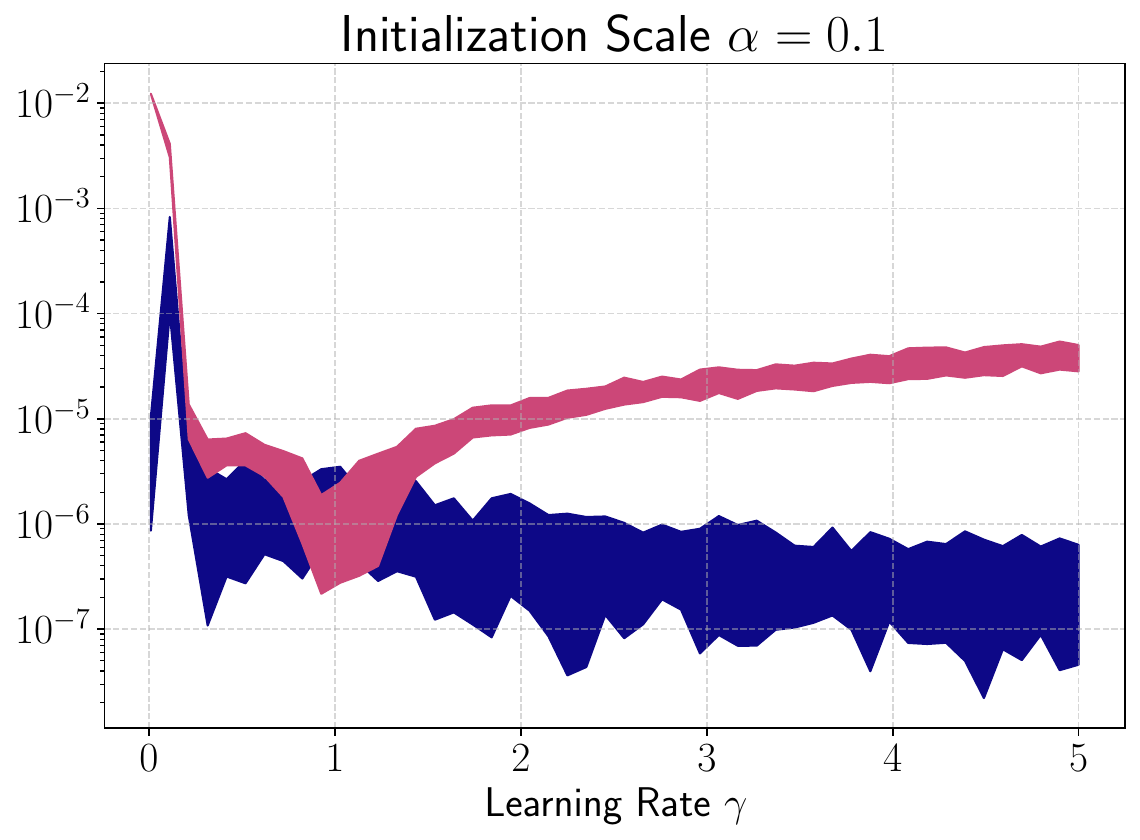} 
    \caption{\textbf{Risk discrepancy between SGD and its continuous-time approximations on a diagonal linear network.}
    For each stepsize $\gamma$, we report the absolute difference between the empirical risk of SGD after $T\!\cdot\! d$ iterations (with $T=20$) and two approximations: (i) homogenized SGD (HSGD) \eqref{eq: homogenizedSGD} (blue), and (ii) stochastic gradient flow (SGF) \eqref{eq: homogenizedSGF} (pink). As $\gamma$ increases, HSGD is a more accurate approximation of SGD, whereas SGF degrades.
    Initialization scale $\alpha$ controls proximity to the saddle point $x=0$: smaller $\alpha$ corresponds to a longer transient before learning accelerates. See Appendix~\ref{app: numerics} for simulation details.}
    \label{fig: sdes}
\end{figure}

\section{High-dimensional Concentration of SGD and its Diffusion Approximation}
\label{sec: concentration}
This section formalizes the main concentration phenomenon: in the high-dimensional regime, both streaming SGD and the homogenized SGD introduced in Section~\ref{sec: sde} concentrate around the same deterministic dynamics. The key point is that, under our structural assumptions, the risk and the quantities that drive the update can be expressed through a small collection of empirical matrix statistics. We encode this information into a $3\times 3$ resolvent-based matrix $S(x,z)$ (indexed by $z$ on a fixed contour $\Gamma$). Knowing the map $z\mapsto S(x,z)$ determines $B(x)$ via a contour integral, and hence determines the learning curves and all other statistics of interest.

Let $z=(z_1,z_2,z_3,z_4)\in\mathbb{C}^4$ be such that
\[
z_1\notin \sigma(\operatorname{diag}(u)),\qquad
z_2\notin \sigma(\operatorname{diag}(v)),\qquad
z_3\notin \sigma(\operatorname{diag}(\beta^*)),\qquad
z_4\notin \sigma(K).
\]
Equivalently, all resolvents appearing below are well-defined. In Remark~\ref{rem: fixedContour}, we specify a product domain on which this condition holds. Define the matrix
\begin{equation}\label{eq: Omega}
\Omega(x,z)
:=
R\bigl(z_1;\operatorname{diag}(u)\bigr)
R\bigl(z_2;\operatorname{diag}(v)\bigr)
R\bigl(z_3;\operatorname{diag}(\beta^*)\bigr)
R\bigl(z_4;K\bigr)
\in \mathbb{C}^{d\times d},
\end{equation}
where $R(z;A):=(zI_d-A)^{-1}$ denotes the resolvent of $A$. We then set
\begin{equation}\label{eq: defS}
S(x,z)
:=
\frac{1}{d}W(x)^\top \Omega(x,z)W(x)
\in \mathbb{C}^{3\times 3},
\end{equation}
where $W(x)$ is the stacked matrix defined in \eqref{eqn:stacked_notation}.
In particular, we evaluate these quantities either along SGD, writing $S(x_k,z)$ and $B(x_k)$, or along the diffusion, writing $S(\mathscr{X}_t,z)$ and $B(\mathscr{X}_t)$.

To control the resolvents uniformly over time (and to justify the fixed contour), we work on the high-probability event that the iterates remain uniformly bounded on the time interval of interest. This non-explosiveness condition is standard in high-dimensional/diffusion-limit arguments and is consistent with our numerical experiments, where we do not observe any such explosions. Moreover, we show in Theorem~\ref{thm:hp_decay_main} that this holds automatically for all stepsizes $\gamma$ below a small (explicit) numerical constant.

\begin{assumption}[Non-explosiveness]\upright\label{ass: nonexplosive}
There exists $M>0$ independent of $d$ such that the initialization satisfies $\|x_0\|_\infty\leqslant M$, and such that, for every fixed $T>0$, with overwhelming probability:
\[
\sup_{0\leqslant k\leqslant \lfloor Td\rfloor}\|x_k\|_\infty \leqslant M
\qquad\text{and}\qquad
\sup_{0\leqslant t\leqslant T}\|\mathscr{X}_t\|_\infty \leqslant M.
\]
\end{assumption}

\begin{remark}\upright
Assumption~\ref{ass: nonexplosive} is stated in a general form. In Appendix~\ref{app:entropy_barrier}, we verify that it holds in the isotropic squared-parametrization setting $\beta^*=\mathds{1}_d$, $K=I_d$, and constant stepsize $\gamma$. More precisely, for sufficiently small $\gamma$, we establish a high-probability exponential decay of the risk. We then show that risk integrability yields uniform-in-time bounds on the coordinates of the solution and prevents them from approaching the origin. By the standard continuation criterion for SDEs, these bounds rule out finite-time explosion, thereby verifying Assumption~\ref{ass: nonexplosive} in this setting.
\end{remark}

\begin{remark}\upright\label{rem: fixedContour}
Under Assumption~\ref{ass: nonexplosive}, the spectra of $\operatorname{diag}(u)$ and $\operatorname{diag}(v)$ remain inside the circle
$C_{2M}(0):=\{z\in \mathbb{C}:\ |z|=2M\}$ 
over the time intervals considered. Throughout the paper, we fix the product contour
$\Gamma=\Gamma_1\times\Gamma_2\times\Gamma_3\times\Gamma_4\subset \mathbb{C}^4$ given by $\Gamma_1 = \Gamma_2 = C_{2M}(0)$ and
\begin{equation*}
\Gamma_3 = \{ z_3 : |z_3| = \max\{1, 2\|\beta^*\|_\infty\}\}, \qquad 
\Gamma_4 = \{ z_4 : |z_4| = \max\{1, 2\|K\|_\mathrm{op}\}\}.
\end{equation*}
The contours are chosen in such a way that each $\Gamma_i$ encloses the relevant spectrum with a fixed positive margin. Thus, by the Cauchy integral formula, $B$ can be recovered from $S$ via $B(x)=\frac{1}{(2\pi)^4}\oint_\Gamma z_4\,S(x,z)\,\mathrm{d}z$, and hence
\begin{equation}\label{eq:B_from_S}
B(x_k) = \frac{1}{(2\pi)^4} 
\oint_{\Gamma}
z_4\, S(x_k, z)\,
\mathrm{d}z, \quad \text{and} \quad
B(\mathscr{X}_t) = \frac{1}{(2\pi)^4} 
\oint_{\Gamma}
z_4\, S(\mathscr{X}_t, z)\,
\mathrm{d}z.
\end{equation}
\end{remark}

In order to derive deterministic dynamics, we pass to a  rescaled continuous-time parameter:
\begin{equation*}
k \text{ iterations of SGD } = \lfloor td \rfloor, \quad \text{ where } t\in \R \text{ is the continuous time parameter.}
\end{equation*}
This time change is natural, since when the size of the problem grows, more SGD iterations are needed to solve the
underlying problem and make equivalent progress. This scaling law ensures all training dynamics live on the same space and are comparable across problem sizes. 

\paragraph{Deterministic Limit Dynamics (PDE).}
Next, for each $z\in\Gamma \subset \mathbb{C}^4$, let $\mathrsfs{S}(t,z)\in\mathbb{C}^{3\times 3}$ denote the (local) solution of the partial integro-differential equation 
\begin{equation}\label{eq:Sequation_clean}
    \partial_t \mathrsfs{S}(t,\cdot)=\mathscr{F}\big(\cdot,\mathrsfs{S}(t,\cdot)\big),
    \qquad
    \mathrsfs{S}(0,z)=S(x_0,z),
\end{equation}
where the function $\mathscr{F}$ is defined in \eqref{eq: PDEFormula} and involves contour integrals around $\Gamma$ and derivatives in $z$ of $\mathrsfs{S}$.
We define the associated deterministic quantities
\begin{equation}\label{eq:deterministic_B_R_I}
    \mathrsfs{B}(t)
    :=\frac{1}{(2\pi)^4}\oint_\Gamma z_4\,\mathrsfs{S}(t,z)\,\mathrm{d}z,
    \qquad
    \mathrsfs{R}(t):=h\left(\mathrsfs{B}(t)\right),
    \qquad
    \mathrsfs{I}(t):=I\left(\mathrsfs{B}(t)\right).
\end{equation}
Since \eqref{eq:Sequation_clean} is a fully nonlinear, nonlocal (integro-differential) parabolic PDE, we do not pursue global well-posedness. 
Throughout, $\mathrsfs{S}(t,z)$ denotes any solution defined up to $\mathrsfs{B}(t)$'s first exit time from the domain of validity of the coefficients $\mathcal{U}$ (or blow-up), whenever such a solution exists. In practice, we solve \eqref{eq:Sequation_clean} numerically using standard discretization methods, and these computed solutions form the basis of the numerical simulations presented in this paper.

The next theorem states the concentration result: both sequences $B(x_{\lfloor td\rfloor})$ (SGD) and $B(\mathscr{X}_t)$ (homogenized SGD) track the same deterministic curve $\mathrsfs{B}(t)$ while the dynamics remain in  $\mathcal{U}$.

\begin{theorem}[Learning Curves]\label{thm: learningCurves}
Suppose that Assumptions~\ref{ass: fPseudoLipschitz},~\ref{ass: data},~\ref{ass: diagonalNeuralNetworks},~\ref{ass: parameterScaling},~\ref{ass: riskRepresentation},
\ref{ass: squaredGradientsPseudoLipschitz},~\ref{ass: learningRate}, and~\ref{ass: nonexplosive} hold.
Let $\mathrsfs{S}(t,z)$ solve \eqref{eq:Sequation_clean} and define $\mathrsfs{B}(t)$ by \eqref{eq:deterministic_B_R_I}.

\smallskip
\noindent\textbf{(i) Streaming SGD.}
Let $\vartheta$ be the first time that either $\mathrsfs{B}(t)$ or $B(x_{\lfloor td\rfloor})$ exit $\mathcal U$.
Then there exists an $\varepsilon>0$ such that for any $T>0$, with overwhelming probability, it holds:
\begin{equation}
\sup_{0\leqslant t \leqslant T\wedge \vartheta} \norm{\mathrsfs{B} (t) - B(x_{\lfloor td \rfloor})} \leqslant d^{-\varepsilon}.
\end{equation}

\smallskip
\noindent\textbf{(ii) Homogenized SDE.}
For any $\eta>0$, define the set
$
\mathcal U_\eta:=\Big\{B\in\mathcal U:\ \inf_{\widehat B\notin\mathcal U}\|B-\widehat B\|\geqslant \eta\Big\},
$
and let $\vartheta$ be the first time that either $\mathrsfs{B}(t)$ or $B(\mathscr{X}_t)$ exit $\mathcal U_\eta$.
Then there exists $\varepsilon>0$ such that for any $T>0$, with overwhelming probability, it holds:
\begin{equation}
\sup_{0\leqslant t \leqslant T\wedge \vartheta} \norm{\mathrsfs{B} (t) - B\left(\mathscr{X}_t\right)} \leqslant d^{-\varepsilon}.
\end{equation}
\end{theorem}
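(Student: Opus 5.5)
The plan follows the homogenization framework of \citet{collins-woodfin2024hitting}: show that the resolvent statistic $S(x,z)$, evaluated along either SGD or homogenized SGD, is an approximate solution of the integro-differential equation \eqref{eq:Sequation_clean}, uniformly over $z\in\Gamma$. A Gronwall-type stability argument then upgrades this to closeness to $\mathrsfs{S}$, and the Cauchy formula \eqref{eq:B_from_S} transfers the estimate to $B$.

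\textbf{Step 1 (Doob decomposition for SGD).} For SGD, fix $z\in\Gamma$ and write
\[
S(x_{k+1},z)-S(x_k,z)=\EE\big[S(x_{k+1},z)-S(x_k,z)\mid\mathcal F_k\big]+\Delta\mathcal M_k(z).
\]
Since $\psi$ is coordinatewise quadratic and $\Omega$ is diagonal (here $K$ is diagonal), $S(x,z)$ is an explicit rational function of the coordinates $(u_i,v_i)$. I would Taylor expand it to second order in the increment $x_{k+1}-x_k=-\tfrac{\gamma_k}{\sqrt d}\nabla_{r_1}f(r_k)\nabla\psi(x_k)^\top a_{k+1}$; the third-order remainder is $O(d^{-3/2})$ per step on the non-explosive event of Assumption~\ref{ass: nonexplosive}.

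To compute the conditional drift, condition $a_{k+1}$ on the Gaussian projection $\tfrac1{\sqrt d}W(x_k)^\top a_{k+1}$. The first-order term yields $-\tfrac{\gamma_k}{d}\,\nabla h(B)$ contracted against $z$-derivatives of $S$; these derivatives arise because $\partial_{u_i}$ of a resolvent entry produces a squared resolvent, i.e.\ $-\partial_{z_1}$. The second-order term, by Gaussian conditioning and the diagonality of $K$, yields $\tfrac{\gamma_k^2}{d}\,I(B)$ times a trace expression in $\Omega K$. The off-diagonal and rank-one pieces of the conditional covariance contribute only $O(d^{-2})$ per step.

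The key point is that every such term is again a contour integral of $S$ or of its $z$-derivatives, which is exactly the closure encoded in $\mathscr F$. This gives
\[
\EE[\Delta S\mid\mathcal F_k]=\tfrac1d\,\mathscr F\big(z,S(x_k,\cdot)\big)+O(d^{-1-\delta}),
\]
with $B(x_k)$ recovered inside $\mathscr F$ through \eqref{eq:B_from_S}.

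\textbf{Step 2 (martingale and error control).} Sum over $k\le\lfloor td\rfloor$ and compare with $\int_0^t\mathscr F(z,S(x_{\lfloor sd\rfloor},\cdot))\,\mathrm ds$. The martingale increments satisfy $\Delta\mathcal M_k=O(d^{-1/2}\cdot d^{-1/2}\text{polylog})$ with overwhelming probability, because $\tfrac1{\sqrt d}W^\top\Omega\,\nabla\psi^\top a$ is a normalized linear or quadratic form in Gaussians. Gaussian concentration plus Azuma/Freedman over $Td$ steps then gives $\sup_t\|\mathcal M\|\le d^{-1/2+\epsilon}$. Uniformity in $z$ follows from a $d^{-C}$-net on the compact contour $\Gamma$, together with the Lipschitz dependence of resolvents on $z$ given by the fixed margin in Remark~\ref{rem: fixedContour}. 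Everything is done on the stopped process up to $\vartheta$, where $\nabla h$ and $I$ are pseudo-Lipschitz by Assumptions~\ref{ass: riskRepresentation} and~\ref{ass: squaredGradientsPseudoLipschitz}, with polynomial factors bounded by $M$.

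\textbf{Step 3 (stability).} Let $\mathcal E(t)=\sup_{z\in\Gamma}\|S(x_{\lfloor td\rfloor},z)-\mathrsfs S(t,z)\|$. The obstacle is that $\mathscr F$ involves $z$-derivatives of $S$, so it is not Lipschitz in the sup norm. I would handle this by shrinking the contours slightly: $\partial_z$ of a function analytic in the annulus between two contours is bounded by Cauchy estimates. Alternatively, I would observe that the derivatives enter only through integrals $\oint g(z)\,\partial_z S$, which integrate by parts to $\oint(\partial_z g)\,S$. I expect the latter to work, since $\mathscr F$ is built from contour integrals, and this makes $\mathscr F$ Lipschitz in the sup norm on the stopped region. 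Gronwall then gives $\mathcal E(t)\le C(T)d^{-\varepsilon}$, and \eqref{eq:B_from_S} bounds $\|\mathrsfs B-B\|$.

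\textbf{Part (ii).} The SDE case is identical but easier: Itô's formula applied to $S(\mathscr X_t,z)$ produces the drift $\mathscr F$ exactly up to the same negligible diagonal corrections, since the Itô correction reproduces the $\gamma^2I(B)$ trace term. The stochastic integral is controlled by the BDG inequality. The $\eta$-margin in $\mathcal U_\eta$ prevents the continuous path from reaching the boundary between discretization checkpoints.

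The main obstacle is Step 1: verifying that the second-order expansion closes on $S$ with only $O(d^{-1-\delta})$ error, uniformly in $z$, despite the nonlinear $\psi$.
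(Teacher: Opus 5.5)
Your Steps~1--2 and the It\^o computation in part~(ii) follow the paper's route: entrywise Doob/It\^o decomposition of $S(\cdot,z)$, Gaussian conditioning, martingale and Taylor-remainder bounds on the stopped process, a net over $\Gamma$, and recovery of $B$ via \eqref{eq:B_from_S}. The gap is in Step~3.

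\textbf{Integration by parts is not available.} As you say yourself in Step~1, the operators $-\partial_{z_i}$ and $\tfrac12\partial_{z_i}^2$ in $\mathscr{G}_i$ come from $R(z_i;D_i)^2=-\partial_{z_i}R(z_i;D_i)$ and $R(z_i;D_i)^3=\tfrac12\partial_{z_i}^2R(z_i;D_i)$. These are taken at the \emph{same} $z_i$ that already sits in $\Omega(x,z)$. So in $\mathscr{F}(z,\cdot)$ they act at the free spectral point and are never under a contour integral in that coordinate. $\mathscr{F}$ therefore contains genuine first- and second-order differential operators in $z$.

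\textbf{Cauchy estimates do not give a fixed-contour Gronwall either.} For $g$ analytic off the spectra and vanishing at infinity, one only has $\sup_{|z_i|=r}|\partial_{z_i}g|\leqslant\delta^{-1}\sup_{|z_i|=r-\delta}|g|$. So the drift difference on $\Gamma$ is controlled by the difference on a strictly smaller contour, and every use of $\mathscr{F}$ costs radius. A nested-contour (Ovsyannikov-type) iteration tolerates only a first-order loss, not the $\partial_{z_i}^2$ terms. Hence \eqref{eq: LipschitzF} follows from neither of your devices.

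\textbf{How the paper handles it.} The paper does not derive \eqref{eq: LipschitzF} from sup-norm control. It builds the difference estimate $\|\partial_{z_i}^j\mathscr{S}-\partial_{z_i}^j\tilde{\mathscr{S}}\|_\Gamma\leqslant C\|\mathscr{S}-\tilde{\mathscr{S}}\|_\Gamma$ ($j=1,2$, up to the stopping time) into the definition of an approximate solution (condition~(i) of Definition~\ref{def: approximateSolution}). It asserts this for the resolvent statistics via Remark~\ref{rem:Lipschitz}, and for $\mathrsfs{S}$ as part of being a solution. Proposition~\ref{prop: stability} is then a plain Gronwall on the fixed contour. To complete Step~3 along those lines, you must take this condition as an input for all three objects rather than expect it from integration by parts. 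Your instinct that something beyond sup-norm bounds is needed is sound: the remark itself only bounds derivatives of a single $S(x,\cdot)$ by $C\|S(x,\cdot)\|_\Gamma$, which is weaker than the difference estimate.

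\textbf{A separate omission.} The constants in \eqref{eq: LipschitzF} also require a bound on $\|\mathrsfs{S}(t,\cdot)\|_\Gamma$, which Assumption~\ref{ass: nonexplosive} does not provide. The paper obtains it from the bootstrap in Theorem~\ref{thm: mainResultOneStoppingTime}: prove closeness up to the joint stopping time at level $M+1$; on that event $\mathrsfs{S}$ cannot reach $M+1$ before the stochastic statistic exceeds $M$. This is combined with Corollary~\ref{cor: boundedNConcentration}, which turns $\|x_{\lfloor td\rfloor}\|_\infty\leqslant M$ into a $d$-independent bound on $\|S(x_{\lfloor td\rfloor},\cdot)\|_\Gamma$. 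The same one-sided argument is where the margin $\eta$ of part~(ii) is used: a $d^{-\varepsilon}$-perturbation of a point of $\mathcal U_\eta$ remains in $\mathcal U$. It has nothing to do with discretization checkpoints, which the SDE does not have.
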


\begin{remark}\upright
Theorem~\ref{thm: learningCurves} extends the concentration framework in
\cite{collins-woodfin2024hitting} to a larger class of risks, covering not only models
such as multi-class logistic regression, but also mean-squared error over
diagonal linear networks. The main additional difficulty is the presence of the
inner nonlinear function $\psi$, through which the data enter as
$\langle a,\psi(x)\rangle$ rather than as a linear inner product
$\langle a,x\rangle$. This nonlinear parametrization leads, after applying
It\^o's formula, to products of multiple resolvents involving $u$, $v$,
$\beta^*$, and $K$. This is precisely the step where the finite-dimensional ODE
closure available in least-squares-type settings breaks down. As a result, the deterministic closure is no longer a
finite-dimensional ODE as in \cite{collins-woodfin2024hitting}, but instead takes the
form of a partial integro-differential equation for the matrix-valued statistic
$\mathrsfs S(t,z)$ with $z\in\mathbb C^4$. Thus the extension is not merely to
more general test functions, but requires handling a different closure
mechanism. The commutativity assumption is what allows these products to be reordered into a fixed multi-resolvent statistic; without it, the It\^o expansion would generate noncommuting resolvent words that are not closed by the statistic $S(x,z)$. To retain tractability, we assume a commutativity structure, which in
our setting leads to the restriction that $K$ is diagonal rather than a general
covariance matrix.
\end{remark}

\subsection{Other Statistics}
Summarizing, we have shown that both $B(x_{\lfloor td\rfloor})$ and $B(\mathscr{X}_t)$ concentrate around a deterministic limit. Now we pass to tracking various statistics $\varphi(x)$ along the SGD and homogenized SGD trajectories and showing that they are governed by the same deterministic dynamics in the large dimensional limit.
The statistics that will satisfy this property are the ones satisfying the following assumption:

\begin{assumption}\upright\label{ass: statistic}
The statistic $\varphi\colon \R^{2d} \to \R$ satisfies a composite structure, 
\begin{equation*}
\varphi(x) = g\left(\frac{1}{d} W^\top  q_1(\operatorname{diag}(u)) q_2(\operatorname{diag}(v)) q_4(K) W\right), 
\end{equation*}
where $g \colon \R^{3 \times 3}\to \R$ is $\alpha$-pseudo-Lipschitz on $\mathcal{U}$ and $q_1, q_2, q_4$ are polynomials. 
\end{assumption}

This assumption covers all the typical statistics, including the risk, curvature, squared norms, and estimation errors that do not explicitly involve the covariance $K$
(e.g., $\|\psi(x)\|_2^2$ and $\|\psi(x)-\beta^*\|_2^2$); $\ell^2$--regularized objectives, and higher-order quantities built from derivatives of the risk. In order to describe the deterministic equivalent of $\varphi$ we require its continuous-time analog:
\begin{equation}\label{eq: detEquivVarphi}
\phi(t) := g\left(\frac{1}{(2\pi)^4} 
\oint_{\Gamma}q_1(z_1) q_2(z_2)
q_4(z_4)\, \mathrsfs{S}(t, z)\,
\mathrm{d}z\right).
\end{equation}
The following theorem shows that $\phi(t)$--a deterministic function of a single variable--governs the evolution of $\varphi$ both along the SGD and the homogenized SGD trajectories. 

\begin{theorem}\label{thm: statistic}
Suppose Assumptions~\ref{ass: fPseudoLipschitz},~\ref{ass: data},~\ref{ass: diagonalNeuralNetworks},~\ref{ass: parameterScaling},~\ref{ass: riskRepresentation},
\ref{ass: squaredGradientsPseudoLipschitz},~\ref{ass: learningRate}, and~\ref{ass: nonexplosive} hold. Suppose further that $\mathcal{U} = \R^{3 \times 3}$. For any function $\varphi$, which satisfies Assumption~\ref{ass: statistic}, and for any $T>0$, and $\varepsilon\in (0,1/2)$ there is a constant $C$ (not depending on $d$) so that with overwhelming probability it holds:
\begin{equation}\label{eq: phifinal}
\sup_{0\leqslant t\leqslant T} 
\biggl(
\abs{\varphi(x_{\lfloor td \rfloor}) - \phi(t)} +\abs{\varphi(\mathscr{X}_{t})  - \phi(t)}
\biggr)
\leqslant Cd^{-\varepsilon}.
\end{equation}
\end{theorem}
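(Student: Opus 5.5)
The plan is to reduce Theorem~\ref{thm: statistic} to a concentration statement for the full resolvent statistic $S(x,z)$, uniformly over the contour $\Gamma$. Theorem~\ref{thm: learningCurves} only gives this for $B$, but its proof should yield the stronger statement
\[
\sup_{0\leqslant t\leqslant T}\ \sup_{z\in\Gamma}\ \bigl\|S(x_{\lfloor td\rfloor},z)-\mathrsfs{S}(t,z)\bigr\| \leqslant d^{-\varepsilon}
\qquad \text{w.o.p.},
\]
and the same bound with $\mathscr{X}_t$ in place of $x_{\lfloor td\rfloor}$. Since $\mathcal{U}=\R^{3\times3}$, the stopping time $\vartheta$ equals $+\infty$, so the bound holds on all of $[0,T]$.

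First I would record the algebraic identity behind the statistic. On the event of Assumption~\ref{ass: nonexplosive}, every spectrum lies strictly inside the corresponding contour of Remark~\ref{rem: fixedContour}. Because $\operatorname{diag}(u)$, $\operatorname{diag}(v)$, $\operatorname{diag}(\beta^*)$ and $K$ are all diagonal, they commute. The product form of $\Omega$ and the holomorphic functional calculus, applied once in each variable, then give
\[
\frac{1}{(2\pi)^4}\oint_\Gamma q_1(z_1)q_2(z_2)q_4(z_4)\,\Omega(x,z)\,\mathrm{d}z = q_1(\operatorname{diag}(u))\,q_2(\operatorname{diag}(v))\,q_4(K).
\]
The $z_3$ integral simply produces the identity, and the normalization follows the paper's convention in \eqref{eq:B_from_S}. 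Sandwiching this identity between $\frac1d W^\top$ and $W$ shows that $\varphi(x)=g\bigl(\frac{1}{(2\pi)^4}\oint_\Gamma q_1q_2q_4\,S(x,z)\,\mathrm{d}z\bigr)$. This is exactly the formula for $\phi(t)$ in \eqref{eq: detEquivVarphi}, with $\mathrsfs{S}$ replaced by $S$.

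Next I would set $A(x):=\frac{1}{(2\pi)^4}\oint q_1q_2q_4 S(x,z)\,\mathrm{d}z$ and its deterministic counterpart $\mathcal{A}(t)$ built from $\mathrsfs{S}(t,\cdot)$. The polynomials are bounded on the compact contour $\Gamma$, and $\Gamma$ has finite length. Hence $\|A(x_{\lfloor td\rfloor})-\mathcal{A}(t)\|\leqslant C\sup_{z\in\Gamma}\|S-\mathrsfs{S}\|\leqslant Cd^{-\varepsilon}$. Boundedness of $A(x)$ follows from $\|x\|_\infty\leqslant M$, $\|\beta^*\|_\infty\leqslant C$ and $\|K\|_{\mathrm{op}}\leqslant\bar K$: each entry of $\frac1d W^\top(\cdot)W$ is an average of $d$ bounded terms. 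Boundedness of $\mathcal{A}(t)$ follows from continuity of $\mathrsfs{S}$ on $[0,T]\times\Gamma$. The $\alpha$-pseudo-Lipschitz property of $g$ then gives $|\varphi(x_{\lfloor td\rfloor})-\phi(t)|\leqslant L\,Cd^{-\varepsilon}(1+2C^\alpha)$, and the same argument handles $\mathscr{X}_t$. Taking a union of the two overwhelming-probability events proves \eqref{eq: phifinal}.

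The main obstacle is upgrading from $B$ to $S$ uniformly in $z\in\Gamma$, with the full exponent range $\varepsilon\in(0,1/2)$. I would redo the proof of Theorem~\ref{thm: learningCurves} at the level of $S$, in four steps.
\begin{enumerate}
\item For fixed $z$, write the Doob decomposition of $S(x_k,z)$ for SGD, or the It\^o decomposition of $S(\mathscr{X}_t,z)$ for the SDE.
\item Show that the drift equals $\mathscr{F}(z,S(\cdot,\cdot))$ up to errors of order $d^{-1/2+\delta}$. This uses Gaussian conditioning on $W^\top a$ and commutativity, which closes products of resolvents in terms of $S$ and its $z$-derivatives.
\item Bound the martingale parts by $d^{-1/2+\delta}$ using Azuma or Freedman-type inequalities with Gaussian concentration of the quadratic forms.
\item Run a Gr\"onwall argument for $\sup_{z}\|S-\mathrsfs{S}\|$, using that $\mathscr{F}$ is Lipschitz in sup-norm over a slightly larger contour via Cauchy estimates for the $z$-derivatives.
\end{enumerate}
Uniformity in $z$ comes from a $d^{-1}$-net on $\Gamma$ combined with Lipschitz continuity of the resolvents in $z$, which holds because of the fixed margin. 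Uniformity in time comes from a net in $t$ plus the per-step increment bound $O(d^{-1/2+\delta})$. Since $\delta>0$ is arbitrary, every $\varepsilon<1/2$ is attainable.
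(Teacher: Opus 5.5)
Your reduction (the Cauchy-integral representation of $\varphi$ through the commuting diagonal resolvents, followed by pseudo-Lipschitzness of $g$) and your Steps~1--3 follow the paper's route. The transfer to $\varphi$ is Proposition~\ref{prop: stabilityStatistics} and Theorem~\ref{thm: mainResultStatistic}. Showing that $S(\mathscr X_t,\cdot)$ and $S(x_{\lfloor td\rfloor},\cdot)$ satisfy the PDE up to $d^{-1/2+\delta}$ uniformly on $\Gamma$ is Propositions~\ref{prop: HSGDapprox} and~\ref{prop: SGDapprox} together with Lemma~\ref{lem: net_argument}.

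The gap is Step~4. The vector field $\mathscr F$ contains $-\partial_{z_i}\mathscr S$ and $\tfrac12\partial_{z_i}^2\mathscr S$, so it is not Lipschitz for $\|\cdot\|_\Gamma$ on a fixed contour, and Cauchy estimates do not repair this. They only give $\|\partial_{z_i}^k(\mathscr S_1-\mathscr S_2)\|_\Gamma\leqslant C\rho^{-k}\sup_{\Gamma_\rho}\|\mathscr S_1-\mathscr S_2\|$, where $\Gamma_\rho$ is a $\rho$-neighbourhood of $\Gamma$. By the exterior maximum principle, this is effectively a contour strictly closer to the spectra. Your integral inequality therefore reads
\[
\sup_{s\leqslant t}\|\mathscr S_1(s,\cdot)-\mathscr S_2(s,\cdot)\|_\Gamma\leqslant 2\varepsilon+\frac{C}{\rho^{2}}\int_0^t\sup_{\Gamma_\rho}\|\mathscr S_1(s,\cdot)-\mathscr S_2(s,\cdot)\|\,\mathrm ds,
\]
with a strictly stronger norm on the right, so Gr\"onwall does not close. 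Iterating loses domain at every step. Abstract Cauchy--Kovalevskaya arguments on a scale of domains (Ovsyannikov, Nirenberg--Nishida) tolerate only a first-order loss $\rho^{-1}$, not the second-order loss produced by the It\^o-correction terms. Your route would also need $\mathrsfs{S}(t,\cdot)$ to be defined and holomorphic off $\Gamma$, which is not part of the setup.

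The paper closes this step with a different device. By condition~(i) of Definition~\ref{def: approximateSolution}, an $(\varepsilon,M,T)$-approximate solution must satisfy the same-contour difference bound
$\|\partial_{z_i}^k\mathscr S_1-\partial_{z_i}^k\mathscr S_2\|_\Gamma\leqslant C\|\mathscr S_1-\mathscr S_2\|_\Gamma$ for $k=1,2$. With this, \eqref{eq: LipschitzF} holds in $\|\cdot\|_\Gamma$ itself, and Proposition~\ref{prop: stability} becomes a Gr\"onwall argument on a single contour. To repair your Step~4 along these lines, you would have to prove that difference bound for $S(x_{\lfloor td\rfloor},\cdot)$, $S(\mathscr X_t,\cdot)$ and $\mathrsfs{S}$.

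This bound is where the real difficulty sits. It does not follow from the single-function bounds $\|\partial_{z_i}^kS(x,\cdot)\|_\Gamma\leqslant C\|S(x,\cdot)\|_\Gamma$, which is all that Remark~\ref{rem:Lipschitz} records. For Stieltjes transforms $G$ of signed measures of bounded variation on $[-M,M]$, the ratio $\|\partial_zG\|_\Gamma/\|G\|_\Gamma$ is unbounded; for example, along Chebyshev modes $T_n$ it grows like $n$. Without such an estimate, you need a stability mechanism other than a sup-norm Gr\"onwall argument on a fixed contour.
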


\paragraph{Intuition of the proof.}
We introduce the statistic $S(x,z)$ in~\eqref{eq: defS} because the resolvents in $\Omega(x,z)$ allow us to represent the polynomial statistics from Assumption~\ref{ass: statistic} by contour integrals, via Cauchy's integral formula. Thus, rather than tracking each statistic separately, it suffices to prove concentration for the single matrix-valued statistic $S$. The concentration is proved before taking contour integrals: for each time, the
control holds simultaneously for all spectral parameters $z$ on the fixed product
contour from Remark~\ref{rem: fixedContour}. This uniform-in-$z$ control is what
allows the Cauchy integral representations to be applied afterward. The main step is to show that both $S(x_{\lfloor td\rfloor},z)$ and $S(\mathscr X_t,z)$ are approximate solutions of the same deterministic equation, whose solution is denoted by $\mathrsfs S(t,z)$. The commutativity assumptions ensure that the terms produced by the Doob/It\^o expansions can be rewritten in terms of this same statistic $S$, yielding the closed PDE system~\eqref{eq: PDESystem}. Stability of this PDE then implies that $S(x_{\lfloor td\rfloor},z)$ and $S(\mathscr X_t,z)$ both remain close to $\mathrsfs S(t,z)$. Finally, $B(x)$ follows from the contour representation in Remark~\ref{rem: fixedContour}, and any admissible statistic $\varphi$ follows by applying the same contour-integral stability argument.

\begin{remark}\upright 
While the PDE \eqref{eq:Sequation_clean} offers the most complete description
of the high-dimensional dynamics, the SDE \eqref{eq: homogenizedSGD} is often
the most effective tool in practice, both because it is cheaper to simulate
numerically (e.g., via Euler--Maruyama time stepping) and because it lets us
quickly identify the deterministic evolution of admissible statistics
$\varphi$. Applying It\^o's formula to $\varphi(\mathscr X_t)$ gives a drift
term, including the second-order It\^o correction, and a martingale term as in
\eqref{eqn:ito}. The key point is that $\mathcal L\varphi(\mathscr X_t)$ can be
rewritten in terms of the same class of admissible statistics (possibly after
enlarging the class to achieve closure), producing a closed deterministic
system. The Brownian term is not discarded at the level of the full trajectory
$\mathscr X_t$; rather, our concentration estimates show that the resulting
martingale term in the evolution of admissible statistics is negligible in the
high-dimensional limit. Thus the deterministic equations for $\phi(t)$ are
obtained by retaining the full drift of the It\^o expansion, including the
It\^o correction, and controlling the martingale term at the level of
statistics.
\end{remark}

As a concrete application for our result, we show that when the stepsize is below a constant threshold, the SDE iterates remain bounded (hence Assumption~\ref{ass: nonexplosive} holds) and the risk decays exponentially fast to zero. Therefore using Theorem~\ref{thm: statistic} we see that the same is true for SGD iterates themselves.  



\begin{theorem}[High-probability Exponential Decay]
\label{thm:hp_decay_main}
Consider homogenized SGD \eqref{eq: sdeFormula} for diagonal linear networks under the squared parametrization \eqref{eq: squaredParam}, initialized at $\mathscr{U}_{0,i}=\mathscr{V}_{0,i}=1$ for $i=1,\dots,d$. Then for sufficiently small stepsize $\gamma>0$ (below an explicit numerical constant) and for any $\delta\in(0,1)$, there exist constants $C,\mu>0$ (not dependent on $t$ or $d$) such that with probability at least $1-\delta$, it holds:
\begin{equation}
\mathcal{R}(\mathscr{X}_t) \leqslant C e^{-\mu t}
\qquad \text{for all } t\geqslant 0.
\end{equation}
In particular, Assumption~\ref{ass: nonexplosive} holds.
\end{theorem}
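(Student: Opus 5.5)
The plan is to reduce the homogenized SGD to explicit coordinatewise dynamics, find a quantity that the noise cannot move, and then run a stopping-time bootstrap on $\log\mathcal R$. I work in the setting of Appendix~\ref{app:entropy_barrier}: $K=I_d$, $\beta^*=\mathds{1}_d$, constant $\gamma$, with the residual $w:=\mathscr U^2-\mathscr V^2-\mathds 1_d$ (coordinatewise squares).

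\emph{Explicit dynamics.} Then $\mathcal R=\|w\|^2/(4d)$ and $I=2\mathcal R$, and \eqref{eq: sdeFormula} decouples into
\[
\mathrm d\mathscr U_i=-\gamma \mathscr U_i w_i\,\mathrm dt+2\gamma\sqrt{2\mathcal R}\,\mathscr U_i\,\mathrm d\mathfrak B_{t,i},\qquad
\mathrm d\mathscr V_i=\gamma \mathscr V_i w_i\,\mathrm dt-2\gamma\sqrt{2\mathcal R}\,\mathscr V_i\,\mathrm d\mathfrak B_{t,i},
\]
with the same Brownian coordinate in both equations. Each coordinate is a geometric diffusion, so positivity is preserved.

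\emph{A noise-free invariant.} Applying It\^o's formula to $\log\mathscr U_i+\log\mathscr V_i$, the drifts $\mp\gamma w_i$ cancel and the noises cancel. What remains is the It\^o correction, which gives the identity
\[
\mathscr U_{t,i}\mathscr V_{t,i}=\exp\Big(-8\gamma^2\int_0^t\mathcal R(\mathscr X_s)\,\mathrm ds\Big)\quad\text{for every } i.
\]
This is the ``entropy barrier'': integrability of the risk keeps all coordinates uniformly away from the origin. It also gives $\mathscr U_i^2+\mathscr V_i^2\ge 2e^{-8\gamma^2\int_0^t\mathcal R}$ and $(\mathscr U_i^2+\mathscr V_i^2)^2=(w_i+1)^2+4\mathscr U_i^2\mathscr V_i^2\le (w_i+1)^2+4$.

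\emph{Risk equation.} Next I apply It\^o's formula to $w_i$ and then to $\mathcal R$. I expect
\[
\mathrm d w_i=-2\gamma(\mathscr U_i^2+\mathscr V_i^2)w_i\,\mathrm dt+8\gamma^2\mathcal R\,(w_i+1)\,\mathrm dt+4\gamma\sqrt{2\mathcal R}\,(\mathscr U_i^2+\mathscr V_i^2)\,\mathrm d\mathfrak B_{t,i}.
\]
Summing, the drift of $\mathcal R$ is at most
\[
-8\gamma e^{-8\gamma^2\int_0^t\mathcal R}\,\mathcal R+C\gamma^2\mathcal R\,\big(1+\mathcal R+\tfrac1d\textstyle\sum_i w_i^4\big).
\]
Using the bound $(\mathscr U_i^2+\mathscr V_i^2)^2\le (w_i+1)^2+4$ from above, the martingale part has quadratic-variation rate at most $C\gamma^2\mathcal R^2\,(1+\max_i w_i^2)$.

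\emph{Bootstrap on $\log\mathcal R$.} Let $\tau$ be the first time at which $\int_0^t\mathcal R>A$, or $\mathcal R>1$, or $\max_i|w_i|>L$. On $[0,\tau]$, $\log\mathcal R_t$ has drift at most $-8\gamma e^{-8\gamma^2A}+C_L\gamma^2$ (this includes the $-\tfrac12\,\mathrm d\langle\cdot\rangle$ correction) and a martingale $N_t$ with $\langle N\rangle_t\le C_L\gamma^2t$. The exponential-martingale inequality gives $\PP\big(\sup_t(N_t-\lambda\langle N\rangle_t)>b\big)\le e^{-2\lambda b}$, and I take $b=\log(1/\delta)/(2\lambda)$. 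On the complementary event,
\[
\mathcal R_t\le \mathcal R_0\,e^{b}e^{-\mu t},\qquad \mu=8\gamma e^{-8\gamma^2A}-C_L\gamma^2(1+\lambda),\qquad \mathcal R_0=\tfrac14 .
\]
I then choose $A=2\mathcal R_0e^b/(4\gamma)$. Since $\gamma^2A=O(\gamma)$, this keeps $e^{-8\gamma^2A}\ge\tfrac12$ for $\gamma$ below a numerical constant, so $\mu\ge 2\gamma$ and $\int_0^\infty\mathcal R\le \mathcal R_0e^b/\mu<A$. Once the $\max_i|w_i|$ condition is also shown to be strict up to $\tau$ (next paragraph), continuity shows that none of the stopping conditions is reached, so $\tau=\infty$. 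This yields $\mathcal R(\mathscr X_t)\le Ce^{-\mu t}$ with $C=\mathcal R_0e^b$. Finally, $\mathscr U_i\mathscr V_i\ge e^{-8\gamma^2A}$ together with bounded $w_i$ bounds $\|\mathscr X_t\|_\infty$, and the continuation criterion excludes explosion, giving Assumption~\ref{ass: nonexplosive}.

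\emph{Main obstacle.} The hard step is the coordinatewise control $\max_i|w_i|\le L$, uniformly in $i\le d$ and in time, which the bootstrap needs. An averaged bound on $\mathcal R$ alone does not control $\sup_i$. My plan is to use mean reversion: $\log\mathscr U_i$ has drift $-\gamma w_i-4\gamma^2\mathcal R$ and noise of total quadratic variation $8\gamma^2\int_0^t\mathcal R\le 8\gamma^2A=O(\gamma)$. When $w_i$ is large, $\mathscr U_i^2$ dominates and the drift pushes $\mathscr U_i$ down (and symmetrically for $\mathscr V_i$). So each $\log\mathscr U_i,\log\mathscr V_i$ deviates from its initial value $0$ by at most the supremum of a time-changed Brownian motion with variance $O(\gamma)$ plus a bounded drift excursion. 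I will try the estimate $\sup_t|\log\mathscr U_{t,i}|\le c_1+\sup_{s\le 8\gamma^2A}|\beta_{s,i}|$, where the $\beta_{\cdot,i}$ are Brownian motions obtained by a Dambis--Dubins--Schwarz time change. A union bound over $i$ then costs a factor $\sqrt{\log d}$. This is absorbed if $L$ is allowed to grow slowly or if $\gamma$ shrinks with the failure probability, so the delicate point is getting a $d$-independent constant. If that fails, the fallback is to replace $\max_i w_i^2$ by the average $\frac1d\sum_i w_i^4$ and close the bootstrap on a fourth-moment statistic, again via It\^o's formula.
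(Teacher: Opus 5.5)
Your product identity (the paper's Lemma~\ref{lem:product_identity}, up to the noise normalization), the equation for $w_i$, and the dissipation bound via $\mathscr U_i^2+\mathscr V_i^2\ge 2\mathscr U_i\mathscr V_i$ are correct. However, there is a genuine gap at exactly the step you flag, and it cannot be closed as planned.

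Your bootstrap stops when $\max_i|w_i|>L$ with $L$ independent of $d$. For fixed $\gamma$ this happens with probability tending to one as $d\to\infty$. The martingales $\int_0^t\sqrt{\mathcal R}\,\mathrm d\mathfrak B_{s,i}$ are orthogonal and share the bracket $\int_0^t\mathcal R$, which initially grows at rate $\mathcal R_0=\tfrac14$. So, conditionally on the concentrating path of $\mathcal R$, the log-ratios $\log(\mathscr U_{t,i}/\mathscr V_{t,i})$ behave like independent nondegenerate diffusions. At any fixed $t>0$, some coordinate's log-ratio therefore exceeds any prescribed level. This forces $|w_{t,i}|>L$, because $w_{t,i}+1=2\mathscr U_{t,i}\mathscr V_{t,i}\sinh\log(\mathscr U_{t,i}/\mathscr V_{t,i})$ and $\mathscr U_{t,i}\mathscr V_{t,i}\ge e^{-8\gamma^2A}$ before $\tau$.

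So the $\sqrt{\log d}$ from your union bound is real. Letting $L$ grow does not help, since in your scheme $C_L$ enters $\mu$. Shrinking $\gamma$ with $\delta$ does not help either; neither gives $d$-independent constants.

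The paper's Lemma~\ref{lem:remove_coordinate_barrier} runs the same mechanism: per-coordinate martingales, a union bound, and a deterministic barrier for the log-ratio. Its radius $R_{H,L}(\theta,\eta)=\sqrt{2V_{H,L}(\theta)\log(4d/\eta)}$ carries the same $\sqrt{\log d}$ into the hypothesis \eqref{eq:Lstar_self_consistency}. The structural difference is that there the coordinatewise barrier only feeds the coercivity constants $m_{L_\ast},M_{L_\ast}$ of the entropy $H_t$. That entropy's martingale has the exact bracket $\frac{64\gamma^2}{d}\mathcal R^2$ (Proposition~\ref{prop:entropy_sde}).

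For the same reason your last sentence overreaches. Assumption~\ref{ass: nonexplosive} asks for a $d$-independent bound on $\|\mathscr X_t\|_\infty$ with overwhelming probability. A probability-$(1-\delta)$ event cannot supply that, and the mechanism above excludes it for fixed $\gamma$. What your argument yields, like the paper's via Lemma~\ref{lem:tau_HL_before_explosion}, is $\zeta=\infty$ on the good event.

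The repair is that your route never needed coordinatewise control. Your quadratic-variation bound drops a factor $d^{-1}$. In your normalization the martingale part of $\mathcal R$ has rate $\frac{8\gamma^2\mathcal R}{d^2}\sum_i w_i^2(\mathscr U_i^2+\mathscr V_i^2)^2$. Using $(\mathscr U_i^2+\mathscr V_i^2)^2\le 2w_i^2+6$ and $\sum_i w_i^4\le(\sum_i w_i^2)^2=16d^2\mathcal R^2$, the martingale part of $\log\mathcal R$ has rate at most $256\gamma^2\mathcal R+192\gamma^2/d$. No maximum over coordinates appears. The It\^o terms in the drift likewise involve only averages such as $\frac1d\sum_i(\mathscr U_i^2+\mathscr V_i^2)^2\le 8\mathcal R+6$, so the $\frac1d\sum_i w_i^4$ you carry is spurious.

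The bootstrap therefore closes on $\{\int_0^t\mathcal R\le A,\ \mathcal R\le R_{\max}\}$ alone. Non-explosion before $\tau$ follows from $|w_i|\le 2\sqrt{dR_{\max}}$ and $\mathscr U_i\mathscr V_i\ge e^{-8\gamma^2A}$; a $d$-dependent bound suffices for the continuation criterion. Two bookkeeping points remain:
\begin{enumerate}
\item The stopping level $\mathcal R>1$ is incompatible with your own bound $\mathcal R_0e^b$ once $e^b\ge 4$, so take $R_{\max}\asymp e^b$.
\item You have $\gamma^2A\asymp\gamma e^b$ with $b=\log(1/\delta)/(2\lambda)$, while the $\lambda\langle N\rangle$ term costs $O(\lambda\gamma)$. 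Balancing $\lambda$ gives a threshold of order $\gamma\lesssim 1/\log(1/\delta)$, not a numerical constant.
\end{enumerate}

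That loss is intrinsic to the crude bound: the noise on $\log\mathcal R$ is then $O(\sqrt\gamma)$ rather than vanishing with $d$. It parallels the $\delta$-dependence of $\bar\gamma_{H,L,\delta}$ and of \eqref{eq:Lstar_delta_self_consistency} in Corollary~\ref{cor:delta_high_probability}. If you instead keep the $d^{-1}$ together with the $d^{o(1)}$ bound on $\max_i w_i^2$ that your union bound does give, the noise contribution becomes $O(d^{-1/2+o(1)})$. This is the analogue of the paper's $\sigma_{H,L}^2\propto d^{-1}$. Repaired this way, your direct $\log\mathcal R$ bootstrap is a simpler alternative to the entropy construction that needs no coordinatewise barrier at all. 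The price is forgoing the $1/d$ averaging that the entropy's exact bracket builds in.
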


Our results also enable the analysis of finer statistics of the dynamics.

\paragraph{Potential further applications.}
Beyond convergence, our concentration framework can be used to analyze additional statistics of the dynamics. A promising interesting example is $\varphi(x)=\tfrac{1}{d}\tr(\nabla^2 \mathcal{R}(x))$, which provides a tractable proxy for the \emph{average local curvature} (and hence “sharpness”) of the landscape at the current iterate. This quantity appears in \cite{ding2024flat}, where flatness predicts good generalization, as well as in studies of the Sharpness-Aware Minimization (SAM) algorithm~\cite{wen2023how}. Tracking both $\mathcal{R}(x)$ and $\varphi(x)$ through their deterministic expressions gives a way to study \emph{progressive sharpening}, a phenomenon that has recently garnered some attention \citep{macdonald2023progressive,yoo2025understanding,liu2026minimalist,kalra2026scalable,cohen2021gradient,jastrzębski2019relation}. Empirically, one often observes transient phases where the risk temporarily increases before converging while the curvature decreases, indicating that the trajectory moves toward flatter regions of the landscape (cf.\ Figure~\ref{fig: statistics}). In this sense,
$\frac{1}{d}\tr(\nabla^2\mathcal{R})$ allows one to quantify how these phases depend on the learning
rate. We see such behavior in Figure~\ref{fig: statistics} and numerically can find the different ranges of stepsizes by using the deterministic equations.  

\paragraph{Conclusion.} We introduce a high-dimensional continuous-time description of discrete-time SGD for diagonal network models. Our framework develops an SDE, which we call homogenized SGD, that accurately tracks training dynamics at practical stepsizes. Our main technical contribution is a concentration argument that shows that both SGD and homogenized SGD converge in an appropriate sense to a deterministic PDE for a wide class of statistics, including risk and curvature. Unlike classical diffusion limits, which require vanishing stepsizes, our approximation becomes exact as the parameter dimension $d \to \infty$, even for large stepsizes. 

An interesting next step, which we plan to pursue, is to leverage these deterministic dynamics to study key behaviors of diagonal linear networks, such as implicit bias and progressive sharpening. The framework we developed here provides a precise characterization of the risk and other statistics, enabling a detailed understanding of how problem-dependent quantities (e.g., average eigenvalue of the data covariance matrix) influence the effective stepsize and the convergence or divergence of SGD. In particular, using the deterministic equivalents developed here, we plan to investigate the progressive sharpening phenomenon, widely observed in practice in neural network training. 

\newpage
\section*{Acknowledgments}
Research of B. Garc\'ia Malaxechebarr\'ia was in part funded by NSF DMS 2023166 (NSF TRIPODS II). C. Paquette was supported by a Discovery Grant from the Natural Science and Engineering Research Council (NSERC) of Canada, NSERC CREATE grant Interdisciplinary Math and Artificial Intelligence Program (INTER-MATH-AI), Google x Mila research grant, Fonds de recherche du Quebec Nature et technologies (FRQNT) NOVA Grant, and CIFAR AI Catalyst Grant. Additionally C. Paquette has a 20\% part-time employment at Google DeepMind. The work of M.\ Fazel is supported in part by awards 
NSF TRIPODS II DMS 2023166, NSF CCF 2212261, NSF CCF 2312775, and the Moorthy Family Professorship at UW. She also works part-time at Amazon Inc.\ as an Amazon Scholar. Research of D. Drusvyatskiy was supported by NSF DMS-2306322,
NSF DMS-2023166, and AFOSR FA9550-24-1-0092 awards.

{
\small
\bibliographystyle{plainnat}
\bibliography{references}
}

\newpage
\appendix

\textbf{Outline of the paper.} The remainder of the article is structured as follows:
\begin{enumerate}
    \item Appendix~\ref{app: notation} collects notation and auxiliary tools used throughout the proofs. It fixes our conventions for complex-valued tensor products, coordinate contractions, and tensor norms; records derivative computations for the special functions $\psi$, $\Psi$, and $S$ appearing in the proof of Theorem~\ref{thm: statistic}; and recalls the concentration and pseudo-Lipschitz estimates used in the probabilistic arguments.
    \item Appendix~\ref{app: dynamicalNexus} develops the main dynamical argument. 
    It introduces the partial integro-differential equation \eqref{eq: PDESystem} 
    and the notion of approximate solutions, proves a stability principle for these 
    solutions, and applies it to the resolvent statistic $S$ along SGD and 
    homogenized SGD. This yields Theorem~\ref{thm: mainResultOneStoppingTime}; the 
    result is then transferred to general statistics satisfying 
    Assumption~\ref{ass: statistic}, yielding Theorem~\ref{thm: mainResultStatistic} 
    and its corollaries.
    \item Appendix~\ref{sec: approxSols} proves that the resolvent statistics
    $t\mapsto S(x_{\lfloor td\rfloor},\cdot)$ and
    $t\mapsto S(\mathscr X_t,\cdot)$, associated respectively with SGD and
    homogenized SGD \eqref{eq: homogenizedSGD}, are approximate solutions of the
    partial integro-differential equation \eqref{eq: PDESystem}. The proof uses
    Doob/It\^o decompositions, a net argument over the fixed contour, and
    martingale and Taylor-error bounds.
    \item Appendix~\ref{app:entropy_barrier} studies the homogenized SDE in the
    isotropic squared-parameterization setting. It introduces an empirical entropy adapted to the coordinatewise dynamics, proves an exact entropy SDE and barrier estimates, and uses an exponential supermartingale argument to obtain high-probability global existence and exponential decay of the risk. The section also records consequences such as risk integrability and uniform separation from the saddle.
    \item Appendix~\ref{app: examples} presents key examples illustrating our concentration risk framework.
    \item Appendix~\ref{app: numerics} provides additional details on the numerical simulations used to produce the figures in the main text.
\end{enumerate}

\tableofcontents

\section{Notation and Preliminaries}\label{app: notation}

This appendix collects notation and auxiliary facts used throughout the proofs. 
We first fix basic notation and conventions for tensor products, coordinate 
contractions, and tensor norms for complex-valued tensors. We then record 
derivative identities for $\psi$, $\Psi$, and $S$, which are used 
in the derivation of the limiting dynamics. Finally, we recall the concentration 
and pseudo-Lipschitz estimates needed for the probabilistic bounds.

\paragraph{Basic notation.} Throughout the paper, $e_i$ denotes the $i$-th canonical coordinate vector, with its dimension inferred from context. Additionally, the matrix $E_{ij}$ is defined as the outer product $e_i \cdot e_j^\top$.

For $x=(u,v)$, recall that each coordinate of the inner function $\psi$ is given by
\begin{equation*}
\psi_i(u,v)
=
\begin{pmatrix}
u_i & v_i
\end{pmatrix}
\mathcal{Q}
\begin{pmatrix}
u_i \\
v_i
\end{pmatrix}
+
l^\top
\begin{pmatrix}
u_i \\
v_i
\end{pmatrix}
+c,
\end{equation*}
where $\mathcal{Q}
=
\begin{pmatrix}
q_{11} & q_{12}\\
q_{12} & q_{22}
\end{pmatrix}
\in \R^{2\times 2}$ is symmetric, $l=(l_1,l_2)^\top\in\R^2$, and $c\in\R$.

Equivalently, we have
\[
\psi_i(u,v)
=
q_{11}u_i^2+2q_{12}u_i v_i+q_{22}v_i^2+l_1u_i+l_2v_i+c.
\]

\subsection{Tensor Products and Contractions}

We briefly fix the tensor-contraction conventions used throughout the proofs.
The optimization variables are real, but several quantities, such as $S(x,z)$,
are complex-valued because of the spectral parameter $z$. Consequently, we will
use complex-valued tensors whose derivative directions are indexed by real
coordinate spaces such as $\R^d$.

We order tensor indices so that derivative or ambient coordinates appear first,
and observable coordinates appear last. For example, since
$S(x,z)\in\mathbb{C}^{3\times 3}$, we identify
\[
\nabla_u S(x,z)
\in
\mathbb{C}^{d\times 3\times 3}
\cong
\mathbb{C}^d\otimes \mathbb{C}^{3\times 3},
\]
where the first index corresponds to the derivative with respect to $u$.

We use $\langle\cdot,\cdot\rangle_{\R^d}$ to denote contraction over
the indicated $d$-dimensional coordinate. This contraction is bilinear and is
defined by summing over the contracted coordinate, with no complex conjugation.
Thus, if
\[
A\in\mathbb{C}^{d\times m_1\times\cdots\times m_k},
\qquad
B\in\mathbb{C}^{d\times n_1\times\cdots\times n_\ell},
\]
then the expression
\[
\langle A,B\rangle_{\R^d}
\in
\mathbb{C}^{m_1\times\cdots\times m_k
\times n_1\times\cdots\times n_\ell}
\]
is defined by
\[
\bigl(\langle A,B\rangle_{\R^d}\bigr)_{\alpha_1,\ldots,\alpha_k,
\beta_1,\ldots,\beta_\ell}
:=
\sum_{i=1}^d
A_{i,\alpha_1,\ldots,\alpha_k}
B_{i,\beta_1,\ldots,\beta_\ell}.
\]
In words, the contracted axes are summed over, and the uncontracted axes of the
first tensor are listed before the uncontracted axes of the second tensor.

For instance, if
\[
\nabla_u S(x,z)\in\mathbb{C}^{d\times 3\times 3},
\qquad
\nabla_u \mathcal R(x)\in\R^d,
\]
then the matrix
\[
\left\langle
\nabla_u S(x,z),
\nabla_u \mathcal R(x)
\right\rangle_{\R^d}
\in\mathbb{C}^{3\times 3}
\]
has entries
\[
\left[
\left\langle
\nabla_u S(x,z),
\nabla_u \mathcal R(x)
\right\rangle_{\R^d}
\right]_{ab}
=
\sum_{i=1}^d
\partial_{u_i}S_{ab}(x,z)\,
\partial_{u_i}\mathcal R(x).
\]

Similarly, if
\[
\nabla_u^2 S(x,z)\in\mathbb{C}^{d\times d\times 3\times 3},
\qquad
M\in\R^{d\times d},
\]
then contraction over the two derivative coordinates gives
\[
\langle \nabla_u^2 S(x,z),M\rangle_{\R^{d\times d}}\in\mathbb{C}^{3\times 3},
\]
with entries
\[
\left[
\langle \nabla_u^2 S(x,z),M\rangle_{\R^{d\times d}}
\right]_{ab}
=
\sum_{i,j=1}^d
\partial_{u_i}\partial_{u_j}S_{ab}(x,z)\,M_{ij}.
\]

We reserve the unsubscripted notation $\langle \cdot,\cdot\rangle$ for the
full Hilbert--Schmidt tensor inner product. Thus, for complex matrices or
tensors of the same shape,
\[
\langle A,B\rangle
:=
\sum_{\alpha}\overline{A_\alpha}B_\alpha,
\]
where $\alpha$ ranges over all tensor indices. In particular, for matrices,
\[
\langle A,B\rangle
=
\operatorname{Tr}(A^*B),
\]
where $A^*=\overline{A}^{\,\top}$ denotes the conjugate transpose. The
corresponding unsubscripted norm is
\[
\|A\|:=\sqrt{\langle A,A\rangle}.
\]

In contrast, a subscript on the bracket indicates a partial contraction over
the specified coordinate space. For example,
$\langle A,B\rangle_{\R^d}$ denotes contraction over the
$\R^d$ coordinate. These partial contractions are bilinear coordinate
contractions used in derivative and generator computations, and no complex
conjugation is applied unless explicitly stated. Thus
$\langle\cdot,\cdot\rangle_{\R^d}$ should be read as a contraction
label, not as a Hermitian inner product on $\mathbb{C}^d$.

\subsection{Norms on Tensors}

We now define the tensor norms used throughout the paper. Unless otherwise
specified, $\|\cdot\|$ denotes the Hilbert-space norm: the Euclidean norm for
vectors and the Hilbert--Schmidt/Frobenius norm for matrices and tensors. As
above, $\langle A,B\rangle$ denotes the full Hilbert--Schmidt inner product, and
\[
\|A\|:=\sqrt{\langle A,A\rangle}
\]
denotes the associated Hilbert-space norm. Thus, for matrices, $\|A\|$ is the
Frobenius norm unless another subscript, such as $\|\cdot\|_{\mathrm{op}}$, is
specified. We use subscripts such as $\|\cdot\|_{\mathrm{op}}$,
$\|\cdot\|_{\sigma}$, and $\|\cdot\|_*$ for the operator, injective, and nuclear
norms, respectively.

For a matrix $A\in\mathbb{C}^{d\times d}$, the operator norm admits the
variational representation
\[
\|A\|_{\mathrm{op}}
=
\sup_{\substack{\|y\|_2=1\\ \|z\|_2=1}}
\bigl|y^*Az\bigr|.
\]

The injective tensor norm is the natural higher-order analogue of this formula.
Let
\[
A\in V_1\otimes V_2\otimes \cdots \otimes V_k.
\]
We define
\[
\|A\|_{\sigma}
:=
\sup_{\substack{\|y_i\|_{V_i}=1\\ i=1,\ldots,k}}
\left|
\left\langle
A,\,
y_1\otimes y_2\otimes \cdots \otimes y_k
\right\rangle
\right|.
\]
Equivalently, $\|A\|_{\sigma}$ is the largest correlation of $A$ with a unit
simple tensor. This norm is also known as the \emph{injective tensor norm}. In
the case $k=2$, it reduces to the usual operator norm, up to the standard
identification of matrices with order-two tensors.

By Cauchy--Schwarz, the Hilbert-space norm also has the variational
representation
\[
\|A\|
=
\sup_{\|B\|\leqslant 1}
\left|\langle A,B\rangle\right|,
\]
where the supremum is over tensors $B$ of the same shape as $A$, and
$\|B\|=\sqrt{\langle B,B\rangle}$ is the Hilbert--Schmidt norm.

Finally, we define the nuclear norm as the dual norm of the injective norm:
\[
\|A\|_{*}
:=
\sup_{\|B\|_{\sigma}\leqslant 1}
\left|
\langle A,B\rangle
\right|.
\]
For order-two tensors this agrees with the usual matrix nuclear norm. For
higher-order tensors, this is the tensor nuclear norm, equivalently the
projective tensor norm.

These norms satisfy the chain of inequalities
\begin{equation}\label{eq: tensornorms}
\|A\|_{\sigma}
\leqslant
\|A\|
\leqslant
\|A\|_{*}.
\end{equation}
Indeed, the first inequality follows because unit simple tensors have
Hilbert--Schmidt norm one. The second follows from the variational formula above
and the fact that $\|B\|_{\sigma}\leq \|B\|$ for every tensor $B$.

\subsection{Derivative Identities for Special Statistics}

We record the derivative identities used in the derivation of the limiting
dynamics. All tensor-valued derivatives are interpreted using the contraction
conventions from the previous subsection.

\begin{lemma}[Jacobian of $\psi$ for diagonal linear networks]\label{lem: derivativepsi}
Let $\psi$ be as in Assumption~\ref{ass: diagonalNeuralNetworks}. Then its
Jacobian with respect to $x=(u,v)$ is
\[
\nabla \psi(u,v)
=
\begin{bmatrix}
\nabla_u \psi(u,v) & \nabla_v \psi(u,v)
\end{bmatrix}
\in \mathbb{R}^{d\times 2d},
\]
where
\[
\nabla_u \psi(u,v)
=
2q_{11}\operatorname{diag}(u)
+
2q_{12}\operatorname{diag}(v)
+
l_1 I_d,
\]
and
\[
\nabla_v \psi(u,v)
=
2q_{12}\operatorname{diag}(u)
+
2q_{22}\operatorname{diag}(v)
+
l_2 I_d.
\]
\end{lemma}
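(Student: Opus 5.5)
The Jacobian is a direct coordinatewise computation, and the plan is to exploit the separable structure of $\psi$. By Assumption~\ref{ass: diagonalNeuralNetworks}, the $i$-th output is
\[
\psi_i(u,v)=q_{11}u_i^2+2q_{12}u_iv_i+q_{22}v_i^2+l_1u_i+l_2v_i+c,
\]
so $\psi_i$ depends only on the pair $(u_i,v_i)$. Hence $\partial_{u_j}\psi_i=\partial_{v_j}\psi_i=0$ for all $j\neq i$, and both blocks $\nabla_u\psi$ and $\nabla_v\psi$ of the Jacobian are diagonal $d\times d$ matrices.

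For the diagonal entries, differentiate the scalar quadratic directly:
\[
\partial_{u_i}\psi_i=2q_{11}u_i+2q_{12}v_i+l_1,\qquad \partial_{v_i}\psi_i=2q_{12}u_i+2q_{22}v_i+l_2.
\]
Both identities use the symmetry of $\mathcal Q$. Indeed, $\nabla_{(u_i,v_i)}\bigl[(u_i,v_i)\mathcal Q(u_i,v_i)^\top\bigr]=2\mathcal Q(u_i,v_i)^\top$ holds only because the off-diagonal entries of $\mathcal Q$ coincide, which is also why the cross term carries the coefficient $2q_{12}$.

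Assembling the diagonal entries over $i=1,\dots,d$ gives
\[
\nabla_u\psi=2q_{11}\operatorname{diag}(u)+2q_{12}\operatorname{diag}(v)+l_1I_d,
\]
and the analogous expression for $\nabla_v\psi$. Placing the two blocks side by side yields the stated $d\times 2d$ Jacobian $[\nabla_u\psi\;\nabla_v\psi]$, with rows indexed by the output coordinates and columns by $x=(u,v)$.

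There is no real obstacle here. The only point needing care is the layout convention: rows must index outputs and columns must index inputs, so that $(\nabla\psi)^\top a$ matches its use in the update \eqref{eq:  sgd_rule_update}.
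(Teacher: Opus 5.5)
Your proposal is correct and matches the paper's approach. The paper states this Jacobian without a written proof, treating it as exactly the routine computation you give: since $\psi_i(u,v)=q_{11}u_i^2+2q_{12}u_iv_i+q_{22}v_i^2+l_1u_i+l_2v_i+c$ depends only on $(u_i,v_i)$, both blocks are diagonal, and their entries come from differentiating this scalar quadratic.
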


\begin{lemma}[Gradient of $\Psi$]\label{lem: derivativePsi}
Let
\[
r
:=
\frac{1}{\sqrt d}
\begin{pmatrix}
\psi(x)\\
\beta^*
\end{pmatrix}^{\!\top}a \in \R^2,
\]
and define $\Psi(x;a)
:=
f (r)$.
Then we hav the expression
\[
\nabla_x\Psi(x;a)
=
\frac{1}{\sqrt d}\,
\nabla_{r_1}f(r)\,
(\nabla\psi(x))^\top a
\in\R^{2d},
\]
where $\nabla_{r_1}f$ denotes the partial derivative of $f$ with respect to its
first coordinate.
\end{lemma}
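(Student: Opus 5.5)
The identity follows from the chain rule applied to the composition $x\mapsto r(x;a)\mapsto f(r)$, with $a$ held fixed. Write
\[
r(x;a)=\Bigl(\tfrac{1}{\sqrt d}\,\psi(x)^\top a,\ \tfrac{1}{\sqrt d}\,(\beta^*)^\top a\Bigr).
\]
The second component does not depend on $x$, since $\beta^*$ and $a$ are fixed. Hence only the first component contributes, and the chain rule gives
\[
\nabla_x\Psi(x;a)=\nabla_{r_1}f(r)\,\nabla_x r_1(x;a).
\]
This uses differentiability of $f$ in its first argument, which is implicitly assumed whenever $\nabla_{r_1}f$ appears, as in \eqref{eq:  sgd_rule_update}. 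Pseudo-Lipschitzness alone gives it only almost everywhere; for the quadratic and logistic losses $f$ is smooth.

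Next we compute $\nabla_x r_1$. The map $x\mapsto \psi(x)^\top a=\sum_{i=1}^d a_i\psi_i(x)$ is a linear combination of the coordinates of $\psi$. Its gradient in $x\in\R^{2d}$ is therefore $\sum_i a_i\nabla\psi_i(x)=(\nabla\psi(x))^\top a$, where $\nabla\psi(x)\in\R^{d\times 2d}$ is the Jacobian whose rows are $\nabla\psi_i(x)^\top$. Dividing by $\sqrt d$ gives
\[
\nabla_x r_1=\tfrac{1}{\sqrt d}(\nabla\psi(x))^\top a,
\]
and substituting this into the chain rule yields the claimed formula.

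If one wants the explicit form, Lemma~\ref{lem: derivativepsi} provides the blocks $\nabla_u\psi$ and $\nabla_v\psi$. Each is diagonal, so $(\nabla\psi)^\top a$ stacks the vectors $\nabla_u\psi\,a$ and $\nabla_v\psi\,a$. This step is not needed for the lemma itself.

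No step is a genuine obstacle. The only point needing care is the transpose convention: the Jacobian $\nabla\psi(x)$ is $d\times 2d$, so its transpose maps $a\in\R^d$ to a vector in $\R^{2d}$, which matches the dimensions of the statement.
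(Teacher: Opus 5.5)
Your proof is correct and is exactly the chain-rule computation the paper has in mind; the paper records this lemma without a written proof. Your added remark that only differentiability of $f$ in its first argument at $r$ is needed, since $r_2$ does not depend on $x$, correctly states an assumption the paper leaves implicit.
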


\begin{lemma}[Derivative identities for $S$]\label{lem: derivativeS}
Fix the product contour $\Gamma \subset \mathbb{C}^4$ from
Remark~\ref{rem: fixedContour}, and let
$z=(z_1,z_2,z_3,z_4)\in\Gamma$. We define
\[
\Omega(x,z)
:=
R\bigl(z_1;\operatorname{diag}(u)\bigr)
R\bigl(z_2;\operatorname{diag}(v)\bigr)
R\bigl(z_3;\operatorname{diag}(\beta^*)\bigr)
R\bigl(z_4;K\bigr)
\in \mathbb{C}^{d\times d},
\]
where
\[
R(z;A):=(zI_d-A)^{-1}
\]
denotes the resolvent of $A$. Since $K$ is diagonal in our setting, all factors
in $\Omega(x,z)$ are diagonal; in particular, $\Omega(x,z)^\top=\Omega(x,z)$.

Let us consider
\[
W(x):=[\,\psi(x)\mid \beta^*\mid \mathds{1}_d\,]
\in\R^{d\times 3},
\]
and set the matrix
\[
S(x,z)
:=
\frac{1}{d}W(x)^\top\Omega(x,z)W(x)
\in\mathbb{C}^{3\times 3}.
\]
Then
\[
\nabla_u S(x,z),\nabla_v S(x,z)\in\mathbb{C}^{d\times 3\times 3},
\]
and
\[
\nabla_u^2 S(x,z),\nabla_{uv}^2 S(x,z),\nabla_v^2 S(x,z)
\in\mathbb{C}^{d\times d\times 3\times 3}.
\]

In the following displays, we write $\Omega$ for $\Omega(x,z)$. Products
involving block matrices are interpreted as tensor contractions over the
$d$-dimensional coordinate: a $3\times 3$ block matrix with vector-valued
entries is identified with an element of $\mathbb{C}^{d\times 3\times 3}$, and
a $3\times 3$ block matrix with matrix-valued entries is identified with an
element of $\mathbb{C}^{d\times d\times 3\times 3}$.

The first derivatives of $S$ have the following form
\allowdisplaybreaks
\begin{align*}
\nabla_u S(x,z)
&=
\frac{1}{d}\nabla_u\psi(x)\Omega
\begin{bmatrix}
\psi(x) & \beta^* & \mathds{1}_d\\
0 & 0 & 0\\
0 & 0 & 0
\end{bmatrix}
+
\frac{1}{d}\nabla_u\psi(x)\Omega
\begin{bmatrix}
\psi(x) & 0 & 0\\
\beta^* & 0 & 0\\
\mathds{1}_d & 0 & 0
\end{bmatrix}
\\[10pt]
&\quad+
\frac{1}{d}\operatorname{diag}(\psi(x))
R\bigl(z_1;\operatorname{diag}(u)\bigr)\Omega
\begin{bmatrix}
\psi(x) & \beta^* & \mathds{1}_d\\
0 & 0 & 0\\
0 & 0 & 0
\end{bmatrix}
\\[10pt]
&\quad+
\frac{1}{d}\operatorname{diag}(\beta^*)
R\bigl(z_1;\operatorname{diag}(u)\bigr)\Omega
\begin{bmatrix}
0 & 0 & 0\\
\psi(x) & \beta^* & \mathds{1}_d\\
0 & 0 & 0
\end{bmatrix}
\\[10pt]
&\quad+
\frac{1}{d}
R\bigl(z_1;\operatorname{diag}(u)\bigr)\Omega
\begin{bmatrix}
0 & 0 & 0\\
0 & 0 & 0\\
\psi(x) & \beta^* & \mathds{1}_d
\end{bmatrix}
\in\mathbb{C}^{d\times 3\times 3},
\\[20pt]
\nabla_v S(x,z)
&=
\frac{1}{d}\nabla_v\psi(x)\Omega
\begin{bmatrix}
\psi(x) & \beta^* & \mathds{1}_d\\
0 & 0 & 0\\
0 & 0 & 0
\end{bmatrix}
+
\frac{1}{d}\nabla_v\psi(x)\Omega
\begin{bmatrix}
\psi(x) & 0 & 0\\
\beta^* & 0 & 0\\
\mathds{1}_d & 0 & 0
\end{bmatrix}
\\[10pt]
&\quad+
\frac{1}{d}\operatorname{diag}(\psi(x))
R\bigl(z_2;\operatorname{diag}(v)\bigr)\Omega
\begin{bmatrix}
\psi(x) & \beta^* & \mathds{1}_d\\
0 & 0 & 0\\
0 & 0 & 0
\end{bmatrix}
\\[10pt]
&\quad+
\frac{1}{d}\operatorname{diag}(\beta^*)
R\bigl(z_2;\operatorname{diag}(v)\bigr)\Omega
\begin{bmatrix}
0 & 0 & 0\\
\psi(x) & \beta^* & \mathds{1}_d\\
0 & 0 & 0
\end{bmatrix}
\\[10pt]
&\quad+
\frac{1}{d}
R\bigl(z_2;\operatorname{diag}(v)\bigr)\Omega
\begin{bmatrix}
0 & 0 & 0\\
0 & 0 & 0\\
\psi(x) & \beta^* & \mathds{1}_d
\end{bmatrix}
\in\mathbb{C}^{d\times 3\times 3}.
\end{align*}

Moreover, the second derivatives of $S$ are
\allowdisplaybreaks
\begin{align*}
\nabla_u^2 S(x,z)
&=
\frac{2q_{11}}{d}\Omega
\begin{bmatrix}
\operatorname{diag}(\psi(x)) & \operatorname{diag}(\beta^*) & I_d\\
0 & 0 & 0\\
0 & 0 & 0
\end{bmatrix}
+
\frac{2q_{11}}{d}\Omega
\begin{bmatrix}
\operatorname{diag}(\psi(x)) & 0 & 0\\
\operatorname{diag}(\beta^*) & 0 & 0\\
I_d & 0 & 0
\end{bmatrix}
\\[10pt]
&\quad+
\frac{2}{d}\Omega
\begin{bmatrix}
(\nabla_u\psi(x))^2 & 0 & 0\\
0 & 0 & 0\\
0 & 0 & 0
\end{bmatrix}
\\[10pt]
&\quad+
\frac{2}{d}\nabla_u\psi(x)
R\bigl(z_1;\operatorname{diag}(u)\bigr)\Omega
\begin{bmatrix}
\operatorname{diag}(\psi(x)) & \operatorname{diag}(\beta^*) & I_d\\
0 & 0 & 0\\
0 & 0 & 0
\end{bmatrix}
\\[10pt]
&\quad+
\frac{2}{d}\nabla_u\psi(x)
R\bigl(z_1;\operatorname{diag}(u)\bigr)\Omega
\begin{bmatrix}
\operatorname{diag}(\psi(x)) & 0 & 0\\
\operatorname{diag}(\beta^*) & 0 & 0\\
I_d & 0 & 0
\end{bmatrix}
\\[10pt]
&\quad+
\frac{2}{d}\operatorname{diag}(\psi(x))
R\bigl(z_1;\operatorname{diag}(u)\bigr)^2\Omega
\begin{bmatrix}
\operatorname{diag}(\psi(x)) & \operatorname{diag}(\beta^*) & I_d\\
0 & 0 & 0\\
0 & 0 & 0
\end{bmatrix}
\\[10pt]
&\quad+
\frac{2}{d}\operatorname{diag}(\beta^*)
R\bigl(z_1;\operatorname{diag}(u)\bigr)^2\Omega
\begin{bmatrix}
0 & 0 & 0\\
\operatorname{diag}(\psi(x)) & \operatorname{diag}(\beta^*) & I_d\\
0 & 0 & 0
\end{bmatrix}
\\[10pt]
&\quad+
\frac{2}{d}
R\bigl(z_1;\operatorname{diag}(u)\bigr)^2\Omega
\begin{bmatrix}
0 & 0 & 0\\
0 & 0 & 0\\
\operatorname{diag}(\psi(x)) & \operatorname{diag}(\beta^*) & I_d
\end{bmatrix}
\in\mathbb{C}^{d\times d\times 3\times 3},
\\[20pt]
\nabla_{uv}^2 S(x,z)
&=
\frac{2q_{12}}{d}\Omega
\begin{bmatrix}
\operatorname{diag}(\psi(x)) & \operatorname{diag}(\beta^*) & I_d\\
0 & 0 & 0\\
0 & 0 & 0
\end{bmatrix}
+
\frac{2q_{12}}{d}\Omega
\begin{bmatrix}
\operatorname{diag}(\psi(x)) & 0 & 0\\
\operatorname{diag}(\beta^*) & 0 & 0\\
I_d & 0 & 0
\end{bmatrix}
\\[10pt]
&\quad+
\frac{2}{d}\Omega
\begin{bmatrix}
(\nabla_u\psi(x))(\nabla_v\psi(x)) & 0 & 0\\
0 & 0 & 0\\
0 & 0 & 0
\end{bmatrix}
\\[10pt]
&\quad+
\frac{1}{d}\nabla_v\psi(x)
R\bigl(z_1;\operatorname{diag}(u)\bigr)\Omega
\begin{bmatrix}
\operatorname{diag}(\psi(x)) & \operatorname{diag}(\beta^*) & I_d\\
0 & 0 & 0\\
0 & 0 & 0
\end{bmatrix}
\\[10pt]
&\quad+
\frac{1}{d}\nabla_v\psi(x)
R\bigl(z_1;\operatorname{diag}(u)\bigr)\Omega
\begin{bmatrix}
\operatorname{diag}(\psi(x)) & 0 & 0\\
\operatorname{diag}(\beta^*) & 0 & 0\\
I_d & 0 & 0
\end{bmatrix}
\\[10pt]
&\quad+
\frac{1}{d}\nabla_u\psi(x)
R\bigl(z_2;\operatorname{diag}(v)\bigr)\Omega
\begin{bmatrix}
\operatorname{diag}(\psi(x)) & \operatorname{diag}(\beta^*) & I_d\\
0 & 0 & 0\\
0 & 0 & 0
\end{bmatrix}
\\[10pt]
&\quad+
\frac{1}{d}\nabla_u\psi(x)
R\bigl(z_2;\operatorname{diag}(v)\bigr)\Omega
\begin{bmatrix}
\operatorname{diag}(\psi(x)) & 0 & 0\\
\operatorname{diag}(\beta^*) & 0 & 0\\
I_d & 0 & 0
\end{bmatrix}
\\[10pt]
&\quad+
\frac{1}{d}\operatorname{diag}(\psi(x))
R\bigl(z_1;\operatorname{diag}(u)\bigr)
R\bigl(z_2;\operatorname{diag}(v)\bigr)\Omega
\begin{bmatrix}
\operatorname{diag}(\psi(x)) & \operatorname{diag}(\beta^*) & I_d\\
0 & 0 & 0\\
0 & 0 & 0
\end{bmatrix}
\\[10pt]
&\quad+
\frac{1}{d}\operatorname{diag}(\beta^*)
R\bigl(z_1;\operatorname{diag}(u)\bigr)
R\bigl(z_2;\operatorname{diag}(v)\bigr)\Omega
\begin{bmatrix}
0 & 0 & 0\\
\operatorname{diag}(\psi(x)) & \operatorname{diag}(\beta^*) & I_d\\
0 & 0 & 0
\end{bmatrix}
\\[10pt]
&\quad+
\frac{1}{d}
R\bigl(z_1;\operatorname{diag}(u)\bigr)
R\bigl(z_2;\operatorname{diag}(v)\bigr)\Omega
\begin{bmatrix}
0 & 0 & 0\\
0 & 0 & 0\\
\operatorname{diag}(\psi(x)) & \operatorname{diag}(\beta^*) & I_d
\end{bmatrix}
\in\mathbb{C}^{d\times d\times 3\times 3},
\\[20pt]
\nabla_v^2 S(x,z)
&=
\frac{2q_{22}}{d}\Omega
\begin{bmatrix}
\operatorname{diag}(\psi(x)) & \operatorname{diag}(\beta^*) & I_d\\
0 & 0 & 0\\
0 & 0 & 0
\end{bmatrix}
+
\frac{2q_{22}}{d}\Omega
\begin{bmatrix}
\operatorname{diag}(\psi(x)) & 0 & 0\\
\operatorname{diag}(\beta^*) & 0 & 0\\
I_d & 0 & 0
\end{bmatrix}
\\[10pt]
&\quad+
\frac{2}{d}\Omega
\begin{bmatrix}
(\nabla_v\psi(x))^2 & 0 & 0\\
0 & 0 & 0\\
0 & 0 & 0
\end{bmatrix}
\\[10pt]
&\quad+
\frac{2}{d}\nabla_v\psi(x)
R\bigl(z_2;\operatorname{diag}(v)\bigr)\Omega
\begin{bmatrix}
\operatorname{diag}(\psi(x)) & \operatorname{diag}(\beta^*) & I_d\\
0 & 0 & 0\\
0 & 0 & 0
\end{bmatrix}
\\[10pt]
&\quad+
\frac{2}{d}\nabla_v\psi(x)
R\bigl(z_2;\operatorname{diag}(v)\bigr)\Omega
\begin{bmatrix}
\operatorname{diag}(\psi(x)) & 0 & 0\\
\operatorname{diag}(\beta^*) & 0 & 0\\
I_d & 0 & 0
\end{bmatrix}
\\[10pt]
&\quad+
\frac{2}{d}\operatorname{diag}(\psi(x))
R\bigl(z_2;\operatorname{diag}(v)\bigr)^2\Omega
\begin{bmatrix}
\operatorname{diag}(\psi(x)) & \operatorname{diag}(\beta^*) & I_d\\
0 & 0 & 0\\
0 & 0 & 0
\end{bmatrix}
\\[10pt]
&\quad+
\frac{2}{d}\operatorname{diag}(\beta^*)
R\bigl(z_2;\operatorname{diag}(v)\bigr)^2\Omega
\begin{bmatrix}
0 & 0 & 0\\
\operatorname{diag}(\psi(x)) & \operatorname{diag}(\beta^*) & I_d\\
0 & 0 & 0
\end{bmatrix}
\\[10pt]
&\quad+
\frac{2}{d}
R\bigl(z_2;\operatorname{diag}(v)\bigr)^2\Omega
\begin{bmatrix}
0 & 0 & 0\\
0 & 0 & 0\\
\operatorname{diag}(\psi(x)) & \operatorname{diag}(\beta^*) & I_d
\end{bmatrix}
\in\mathbb{C}^{d\times d\times 3\times 3}.
\end{align*}
\end{lemma}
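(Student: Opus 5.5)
The plan is to exploit the fact that, because $K$ is diagonal, every factor in $\Omega(x,z)$ is diagonal. This reduces $S$ to a coordinate-separable sum, and each identity then follows from the one-variable product rule. Write $W_j\in\R^3$ for the $j$-th row of $W(x)$, namely $W_j=(\psi_j(x),\beta^*_j,1)^\top$, and set
\[
\omega_j(x,z):=\Omega(x,z)_{jj}=\frac{1}{(z_1-u_j)(z_2-v_j)(z_3-\beta^*_j)(z_4-K_{jj})}.
\]
Then
\[
S(x,z)=\frac1d\sum_{j=1}^d \omega_j(x,z)\,W_jW_j^\top .
\]
The key structural observation is that the $j$-th summand depends on $x$ only through the pair $(u_j,v_j)$. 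Hence $\partial_{u_i}S$ involves only the $i$-th summand. Likewise, all mixed second derivatives $\partial_{u_i}\partial_{u_k}S$, $\partial_{u_i}\partial_{v_k}S$ and $\partial_{v_i}\partial_{v_k}S$ with $i\neq k$ vanish. This is why the second-derivative tensors appear as block matrices with diagonal-matrix entries.

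For the first derivatives, I would differentiate the $i$-th summand with respect to $u_i$. Two scalar facts are needed. The first is $\partial_{u_i}\omega_i=\omega_i\,(z_1-u_i)^{-1}=\omega_i\,R(z_1;\operatorname{diag}(u))_{ii}$. The second is that $\partial_{u_i}W_i=\partial_{u_i}\psi_i\,e_1$, with $\partial_{u_i}\psi_i=[\nabla_u\psi(x)]_{ii}$ read off from Lemma~\ref{lem: derivativepsi}. The product rule then gives
\[
\partial_{u_i}S=\frac1d\Big(\omega_i\,\partial_{u_i}\psi_i\,(e_1W_i^\top+W_ie_1^\top)+\omega_i R(z_1;\operatorname{diag}(u))_{ii}\,W_iW_i^\top\Big).
\]
The first group corresponds to the two $\nabla_u\psi(x)\Omega$ blocks in the stated formula (first row and first column). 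For the second group, I would split $W_iW_i^\top$ row by row, with rows scaled by $\psi_i$, $\beta^*_i$ and $1$ respectively. This yields the three terms carrying $\operatorname{diag}(\psi)R_1\Omega$, $\operatorname{diag}(\beta^*)R_1\Omega$ and $R_1\Omega$. Collecting over $i$ gives the element of $\mathbb{C}^{d\times3\times3}$. The $v$-derivative is identical with $z_2$, $R(z_2;\operatorname{diag}(v))$ and $\nabla_v\psi$ in place of their $u$-counterparts.

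For the second derivatives, I would differentiate $\partial_{u_i}S$ once more. This requires the following scalar facts:
\[
\partial_{u_i}^2\psi_i=2q_{11},\qquad
\partial_{u_i}\partial_{v_i}\psi_i=2q_{12},\qquad
\partial_{u_i}^2\omega_i=2\omega_iR_{1,ii}^2,\qquad
\partial_{u_i}\partial_{v_i}\omega_i=\omega_iR_{1,ii}R_{2,ii}.
\]
We also have $\partial_{u_i}(e_1W_i^\top+W_ie_1^\top)=2\partial_{u_i}\psi_i\,e_1e_1^\top$. Applying the product rule term by term produces, in order, the following contributions:
\begin{itemize}
\item the $2q_{11}$ terms, from differentiating $\partial\psi_i$;
\item the $(\nabla_u\psi)^2$ term in the $(1,1)$ entry, from differentiating $W_i$ inside $e_1W_i^\top+W_ie_1^\top$;
\item the cross terms $\nabla_u\psi\,R_1\Omega$, each obtained twice, once from $\omega_i$ in the first group and once from $W_iW_i^\top$ in the second group, which accounts for the factor $2$;
\item the $R_1^2\Omega$ terms.
\end{itemize}
The mixed $uv$ case works the same way, except that the asymmetric pairs $\nabla_v\psi\,R_1$ and $\nabla_u\psi\,R_2$ each appear once, giving coefficient $1/d$ rather than $2/d$.

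No analytic difficulty is expected. The resolvents are well defined and smooth on the contour by Remark~\ref{rem: fixedContour}, and everything is an explicit rational function of $(u_i,v_i)$. The main obstacle is bookkeeping: matching each product-rule term to the correct block position and coefficient, especially the $(1,1)$ entry, where both the first-row and first-column blocks contribute, and the factor-of-two counting in the mixed derivative. I would therefore verify the block identities entrywise using the scalar formula for $\partial_{u_i}S$ above, rather than manipulating the block notation directly.
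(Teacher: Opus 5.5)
Your proposal is correct and matches what the paper relies on: the paper records these identities without a written proof, treating them as a direct differentiation, and your reduction to $S=\frac1d\sum_j\omega_jW_jW_j^\top$ followed by the one-variable product rule is exactly that computation. Carrying out your entrywise check, every coefficient in the stated formulas comes out right: $2\omega_iR_{1,ii}^2$ for the $R_1^2\Omega$ blocks, $2\omega_i\,\partial_{u_i}\psi_i\,R_{1,ii}$ for the $\nabla_u\psi\,R_1\Omega$ blocks, and coefficient one for the asymmetric mixed $\nabla_v\psi\,R_1\Omega$, $\nabla_u\psi\,R_2\Omega$ and $R_1R_2\Omega$ blocks.
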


\subsection{Concentration and Pseudo-Lipschitz Estimates}

We use the subgaussian norm $\|\cdot\|_{\psi_2}$, which is equivalent up to
universal constants to the optimal variance proxy in a Gaussian tail bound.
Namely, for a real-valued random variable $X$,
\begin{equation}\label{eq: subgaussian_norm}
\norm{X}_{\psi_2}
\asymp
\inf\left\{
V>0:
\PP\bigl(|X|>t\bigr)\leqslant 2e^{-t^2/V^2}
\text{ for all } t>0
\right\}.
\end{equation}
Gaussian random variables are naturally subgaussian. Moreover, Gaussian measures satisfy
the stronger property of dimension-free \emph{Lipschitz concentration}, which gives cocnentration inequalities for nonlinear functions of Gaussian vectors. Specifically,
let $V_0$ be a finite-dimensional Hilbert space, and let $Z$ be a centered
isotropic Gaussian vector in $V_0$. If $g:V_0\to\mathbb{R}$ is Lipschitz with
constant $L(g)$, meaning that
\[
|g(x)-g(y)|
\le
L(g)\|x-y\|_{V_0}
\qquad
\text{for all } x,y\in V_0,
\]
then we have
\[
\norm{g(Z)-\EE g(Z)}_{\psi_2}
\le
C L(f),
\]
where $C>0$ is an absolute universal constant. In particular, the bound does not depend
on the dimension of $V_0$.

\paragraph{Pseudo-Lipschitz functions.}
In our setting, we will also work with functions which are not quite Lipschitz, in that they are locally Lipschitz (Lipschitz on compact sets) and moreover have polynomial growth of their Lipschitz on norm-balls.

\begin{definition}[Constant Pseudo-Lipschitz functions]\upright
A function $f:V_0\to V_1$ is called pseudo-Lipschitz of
order $\alpha$ if there exists a constant $L=L(\alpha,f)$ such that, for all
$x,y\in V_0$, we have
\[
\|f(x)-f(y)\|_{V_1}
\le
L\|x-y\|_{V_0}
\left(1+\|x\|_{V_0}^{\alpha}+\|y\|_{V_0}^{\alpha}\right).
\]
We call $L$ an $\alpha$-pseudo-Lipschitz constant for $f$.
\end{definition}

We will often works with outer functions and statistics whose gradients are $\alpha$-pseudo-Lipschitz. To reduce pseudo-Lipschitz estimates to Lipschitz estimates on bounded sets, we
will use projection onto norm balls. For $\beta>0$, we define the \textit{projection operator onto the ball of radius $\beta$}, denoted $\operatorname{Proj}_{\beta}:V_0\to V_0$, by
\begin{equation*}
\operatorname{Proj}_{\beta}(x)
:=
\argmin_{y\in \beta\mathbb{B}}
\|x-y\|_{V_0}^2
=
\begin{cases}
x, & \text{if } \|x\|_{V_0}\leqslant \beta,\\
\beta \left ( \frac{x}{\norm{x}_{V_0}} \right ), & \text{otherwise}.
\end{cases},
\end{equation*}
where $\mathbb{B}$ denotes the unit ball in $V_0$.

\begin{lemma}\label{lem:pseudoLipProjection}
Suppose $f:V_0\to V_1$ is $\alpha$-pseudo-Lipschitz with constant $L$. Then the composition 
$f\circ\operatorname{Proj}_{\beta}$ is Lipschitz with constant
$L(1+2\beta^\alpha)$.
\end{lemma}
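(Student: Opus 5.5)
The plan is to compare the two functions pointwise and split on the size of the inputs. Fix $x,y\in V_0$ and write $\hat x:=\operatorname{Proj}_\beta(x)$ and $\hat y:=\operatorname{Proj}_\beta(y)$. By construction $\|\hat x\|_{V_0}\leqslant \beta$ and $\|\hat y\|_{V_0}\leqslant \beta$. Applying the $\alpha$-pseudo-Lipschitz bound for $f$ at the pair $(\hat x,\hat y)$ gives
\[
\|f(\hat x)-f(\hat y)\|_{V_1}\leqslant L\|\hat x-\hat y\|_{V_0}\bigl(1+\|\hat x\|_{V_0}^\alpha+\|\hat y\|_{V_0}^\alpha\bigr)\leqslant L(1+2\beta^\alpha)\|\hat x-\hat y\|_{V_0}.
\]

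It then remains to show that $\operatorname{Proj}_\beta$ is nonexpansive, that is, $\|\hat x-\hat y\|_{V_0}\leqslant\|x-y\|_{V_0}$. Since $\beta\mathbb{B}$ is a closed convex subset of the Hilbert space $V_0$, this is the standard nonexpansiveness of metric projection onto a closed convex set. To make the argument self-contained, I will use the variational inequality $\langle x-\hat x,\,w-\hat x\rangle\leqslant 0$ for all $w\in\beta\mathbb{B}$, which follows from the first-order optimality of the argmin.

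Apply this inequality with $w=\hat y$, and apply the analogous inequality for $y$ with $w=\hat x$. Adding the two gives
\[
\|\hat x-\hat y\|^2\leqslant\langle x-y,\hat x-\hat y\rangle,
\]
and Cauchy--Schwarz then yields the nonexpansiveness. If $V_0$ is complex, I will work with the real part of the inner product throughout; the same argument goes through.

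Combining the two displays shows that $f\circ\operatorname{Proj}_\beta$ is Lipschitz with constant $L(1+2\beta^\alpha)$. There is no genuine obstacle here. The only point requiring care is the nonexpansiveness of the projection, which is the classical projection-theorem argument.
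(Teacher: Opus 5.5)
Your proof is correct and is essentially the paper's argument: apply the pseudo-Lipschitz bound at the projected points, bound their norms by $\beta$, and use that projection onto the closed convex ball is $1$-Lipschitz. The only difference is that the paper cites this nonexpansiveness as standard while you derive it from the variational inequality; also, no case split on the size of the inputs is needed, and your argument in fact uses none.
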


\begin{proof}
Projection onto a closed convex set is $1$-Lipschitz. Therefore, for all
$x,y\in V_0$, it holds
\begin{align*}
\bigl\|
(f\circ\operatorname{Proj}_{\beta})(x)
-
(f\circ\operatorname{Proj}_{\beta})(y)
\bigr\|_{V_1}
&\le
L\,
\bigl\|
\operatorname{Proj}_{\beta}(x)
-
\operatorname{Proj}_{\beta}(y)
\bigr\|_{V_0}
\\
&\quad \times
\left(
1+
\|\operatorname{Proj}_{\beta}(x)\|_{V_0}^{\alpha}
+
\|\operatorname{Proj}_{\beta}(y)\|_{V_0}^{\alpha}
\right)
\\
&\le
L(1+2\beta^\alpha)\|x-y\|_{V_0},
\end{align*}
as we had to show.
\end{proof}

We will use the following growth estimate for moments of the first partial derivative of
the $\alpha$-pseudo-Lipschitz outer function $f$; see \citep{collins-woodfin2024hitting}.

\begin{lemma}[Growth of $\nabla_{r_1} f$]\label{lem:growthGradf}
Suppose $f:\mathbb{R}^2\to\mathbb{R}$ is $\alpha$-pseudo-Lipschitz with
pseudo-Lipschitz constant $L(f)$, as in Assumption~\ref{ass: fPseudoLipschitz}.
Then, for any $p>0$ and $r\in\mathbb{R}^2$, we have
\begin{equation}\label{eq: growthGradf}
\bigl|\nabla_{r_1}f(r)\bigr|^p
\le
C(\alpha,p)\,L(f)^p
\left(1+\|r\|\right)^{\max\{1,\alpha p\}},
\end{equation}
where $\nabla_{r_1}f$ denotes the partial derivative of $f$ with respect to its
first coordinate.

Moreover, set $r = \frac{1}{\sqrt d}
\begin{bmatrix}
\psi(x) & \beta^*
\end{bmatrix}^{\!\top}a
\in\mathbb{R}^2$, and let
\[
W(x):=[\,\psi(x)\mid \beta^*\mid \mathds{1}_d\,]\in\mathbb{R}^{d\times 3}.
\]
Then the following moment bound holds:
\begin{align}\label{eq: expGrowthGradf}
\begin{split}
\EE_a\left[
\bigl|\nabla_{r_1}f(r)\bigr|^p
\right]
&\le
C(\alpha,p)\,L(f)^p
\left(
1+
\frac{1}{\sqrt d}
\|K\|_{\mathrm{op}}^{1/2}\|W(x)\|
\right)^{\max\{1,\alpha p\}},
\\[8pt]
\norm{1+\|r\|}_{\psi_2}
&\le
C\left(
1+
\frac{1}{\sqrt d}
\|K\|_{\mathrm{op}}^{1/2}\|W(x)\|
\right).
\end{split}
\end{align}
\end{lemma}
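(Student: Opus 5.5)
The statement to prove is Lemma~\ref{lem:growthGradf}. It has two parts: a pointwise bound on $|\nabla_{r_1}f(r)|^p$, and moment and subgaussian bounds for $r$ when $a\sim\mathcal N(0,K)$.

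\textbf{Step 1: pointwise bound.} I would get this directly from Assumption~\ref{ass: fPseudoLipschitz}. For a.e.\ $r$ where $f$ is differentiable, take $\hat r=r+he_1$ and let $h\to0$. This gives
\[
|\nabla_{r_1}f(r)|\leqslant L(f)\bigl(1+2\|r\|^\alpha\bigr)\leqslant 3L(f)(1+\|r\|)^\alpha .
\]
Raising to the power $p$ yields $|\nabla_{r_1}f(r)|^p\leqslant 3^pL(f)^p(1+\|r\|)^{\alpha p}$. Since $1+\|r\|\geqslant1$, we have $(1+\|r\|)^{\alpha p}\leqslant(1+\|r\|)^{\max\{1,\alpha p\}}$, which gives \eqref{eq: growthGradf} with $C(\alpha,p)=3^p$. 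The exponent $\max\{1,\cdot\}$ is only there for convenience, so that it is at least one and Jensen-type steps work uniformly.

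\textbf{Step 2: subgaussian bound on $1+\|r\|$.} Write $a=\sqrt K g$ with $g\sim\mathcal N(0,I_d)$. Then
\[
r=\tfrac1{\sqrt d}\,[\psi(x)\mid\beta^*]^\top\sqrt K\,g .
\]
The map $g\mapsto\|r\|$ is Lipschitz with constant
\[
\tfrac1{\sqrt d}\bigl\|[\psi\mid\beta^*]^\top\sqrt K\bigr\|_{\mathrm{op}}\leqslant\tfrac1{\sqrt d}\|K\|_{\mathrm{op}}^{1/2}\|W(x)\|,
\]
because the operator norm is at most the Frobenius norm and $[\psi\mid\beta^*]$ is a submatrix of $W$. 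Gaussian Lipschitz concentration, as recalled above, then controls $\|\,\|r\|-\EE\|r\|\,\|_{\psi_2}$ by this quantity. For the mean, Jensen gives
\[
\EE\|r\|\leqslant(\EE\|r\|^2)^{1/2}=\Bigl(\tfrac1d\tr\bigl([\psi\mid\beta^*]^\top K[\psi\mid\beta^*]\bigr)\Bigr)^{1/2}\leqslant\tfrac1{\sqrt d}\|K\|_{\mathrm{op}}^{1/2}\|W\|.
\]
Constants have $\psi_2$-norm of order their size, so the triangle inequality for $\|\cdot\|_{\psi_2}$ gives the second line of \eqref{eq: expGrowthGradf}.

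\textbf{Step 3: the moment bound.} Take expectations in \eqref{eq: growthGradf}. Set $m=\max\{1,\alpha p\}$. A subgaussian variable $X$ satisfies $\EE|X|^m\leqslant (C\sqrt m\,\|X\|_{\psi_2})^m$. Apply this with $X=1+\|r\|$ and use Step~2. This gives $\EE|\nabla_{r_1}f(r)|^p\leqslant C(\alpha,p)L(f)^p\bigl(1+d^{-1/2}\|K\|_{\mathrm{op}}^{1/2}\|W\|\bigr)^m$, with the factor $m^{m/2}$ absorbed into $C(\alpha,p)$.

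No step is a real obstacle. The only delicate point is justifying the derivative bound where $f$ is merely pseudo-Lipschitz. Step~1 handles this by a difference quotient: $f$ is locally Lipschitz, hence differentiable almost everywhere by Rademacher. Elsewhere one reads $\nabla_{r_1}f$ as any measurable selection bounded by the local Lipschitz constant.
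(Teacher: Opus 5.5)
Your proof is correct: the difference-quotient bound $|\nabla_{r_1}f(r)|\leqslant L(f)(1+2\|r\|^\alpha)$ handles the pointwise claim, and Gaussian Lipschitz concentration for $g\mapsto\|r\|$ together with the Jensen bound on $\EE\|r\|$ gives the $\psi_2$ claim. The subgaussian moment estimate $\EE|X|^m\leqslant (C\sqrt m\,\|X\|_{\psi_2})^m$ with $m=\max\{1,\alpha p\}\geqslant 1$ then gives the moment bound, and all constants depend only on $\alpha,p$. The paper gives no proof of its own and defers to \citep{collins-woodfin2024hitting}; your argument is the standard one behind that citation.
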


\section{Dynamics of the Resolvent Statistic}\label{app: dynamicalNexus}

The purpose of this section is to identify the statistic that mediates between
the stochastic dynamics and their deterministic limit. Recall that our goal is
to prove that, for every statistic
$\varphi:\mathbb{R}^{2d}\to\mathbb{R}$ satisfying
Assumption~\ref{ass: statistic}, the processes
$\varphi(x_{\lfloor td\rfloor})$ and $\varphi(\mathscr{X}_t)$ are close in the sense that they converge to the same deterministic limit. 

The central object in this comparison is the matrix-valued statistic
\[
S(x,z)
:=
\frac{1}{d}W(x)^\top \Omega(x,z)W(x)
\in\mathbb{C}^{3\times 3},
\]
where
\[
W(x):=[\,\psi(x)\mid \beta^*\mid \mathds{1}_d\,]\in\mathbb{R}^{d\times 3},
\]
and $\Omega(x,z)$ is the resolvent product defined in \eqref{eq: Omega}. We
study this statistic along both trajectories, namely $S(\mathscr{X}_t,z)$ (homogenized SGD updates) and $S(x_{\lfloor td\rfloor},z)$ (SGD updates), for $z$ on the fixed product contour of Remark~\ref{rem: fixedContour}.

The argument has two main components. First, we show that the discrete and
homogenized dynamics are close when tested against $S$, in the sense that
$S(x_{\lfloor td\rfloor},z)$ and $S(\mathscr{X}_t,z)$ remain close uniformly
over the relevant time interval. Second, we show that $S(\mathscr{X}_t,z)$ is
itself close to a deterministic limit $\mathrsfs{S}(t,z)$, where
$\mathrsfs{S}$ solves the partial integro-differential equation
\eqref{eq: PDESystem}. Combining these two steps yields a deterministic
description of $S(x_{\lfloor td\rfloor},z)$.

This statistic is powerful because it encodes enough spectral information to
recover the deterministic limits of the broader class of statistics
$\varphi$ considered in Assumption~\ref{ass: statistic}. We make this reduction
explicit in Section~\ref{subsec: mainArgumentProof}. Thus, the dynamics of
$S(x,z)$ serve as the dynamical nexus of the proof: once $S$ is controlled, the
limiting behavior of the other admissible statistics follows by the contour
representations developed below. Beyond this, the dynamics of the mapping $S(x,z)$ itself often provide useful insights into analyzing the optimization trajectories of particular optimization problems. Indeed, properties of the solutions to which the algorithms converge can be derived by looking at the mapping $S(x,z)$.

\subsection{Approximate Solutions and Stability}
We begin by recalling the quantities entering the partial integro-differential equation. By Assumptions~\ref{ass: riskRepresentation} and~\ref{ass: squaredGradientsPseudoLipschitz}, we have
\begin{equation*}
\mathcal{R}(x) :=  h\left(B(x)\right) \; \text{ and } \; \EE_a [\nabla_{r_1} f(r)^2] := I\left(B(x)\right) \; \text{ with } \; B(x) = \frac{1}{d} W(x)^\top K W(x),
\end{equation*}
where $h,I:\mathbb{R}^{3\times 3}\to\mathbb{R}$ are differentiable and $\alpha$-pseudo-Lipschitz. It will be useful to isolate the first-column components of the gradient of $h$. We therefore define
\begin{equation*}
H\left(B(x)\right) = \left[
\begin{array}{c|c|c}
\nabla_{11} h\left(B(x)\right) & 0 & 0 \\ \hline
\nabla_{21} h\left(B(x)\right) & 0 & 0\\ \hline
\nabla_{31} h\left(B(x)\right) & 0 & 0
\end{array}\right].
\end{equation*}
With this notation in place, we now introduce the partial integro-differential equation. In what follows, we will work with a function $\mathscr{F}\bigl(z, \mathscr{S}(t,\cdot)\bigr)$. Its explicit formula can be found in the displayed lines after Remark~\ref{rem: matrixEntriesV}, but will not be important for what follows; the only feature that matters is that it can be written as a sum of simple terms described below.
\todo[inline]{\textbf{Partial Integro-differential Equation for $\mathrsfs{S}(t,z)$.} Fix a product contour $\Gamma \subset \mathbb{C}^4$ as in Remark~\ref{rem: fixedContour}. For a multi-index $\zeta=(\zeta_1,\zeta_2,\zeta_3,\zeta_4)$ and a complex vector 
$z=(z_1,z_2,z_3,z_4)\in\Gamma$, write
\[
z^\zeta := \prod_{i=1}^4 z_i^{\zeta_i}.
\]
Let $\mathscr{F}\bigl(z, \mathscr{S}(t,\cdot)\bigr)$ be a finite sum of terms of the form
\begin{align}\label{eq: PDEFormula}
\begin{split}
&- 2 \gamma(t)\, H\bigl(\mathscr{B}(t)\bigr)^\top 
z^\zeta \left(\bigcirc_{i=1}^4 \mathscr{G}_i\right)
\bigl(\mathscr{S}(t,\cdot)\bigr) \\[5pt]
&\quad \text{or} \\[5pt]
& C(Q)\,\gamma(t)^2\, I\bigl(\mathscr{B}(t)\bigr) 
z^\zeta \left(\bigcirc_{i=1}^4 \mathscr{G}_i\right)
\bigl(\mathscr{S}(t,\cdot)\bigr),
\end{split}
\end{align}
where $\bigcirc_{i=1}^4 \mathscr{G}_i$ denotes successive composition,
\[
\left(\bigcirc_{i=1}^4 \mathscr{G}_i\right)
\bigl(\mathscr{S}(t,\cdot)\bigr)
:= \left(\mathscr{G}_1 \circ \mathscr{G}_2 \circ \mathscr{G}_3 \circ \mathscr{G}_4\right)
\bigl(\mathscr{S}(t,\cdot)\bigr).
\]
Here each $\mathscr{G}_i$ acts only on the $z_i$-coordinate and is chosen from the following three operators:
\begin{equation*}
\mathscr{G}_i\bigl(\mathscr{S}(t,\cdot)\bigr)
\in \left\{
\frac{1}{2\pi i} \oint_{\Gamma_i} q(w)\,\mathscr{S}(t,z)\,\mathrm{d}w,\;
- \frac{\mathrm{d}}{\mathrm{d}z_i}\mathscr{S}(t,z),\;
\frac{1}{2}\frac{\mathrm{d}^2}{\mathrm{d}z_i^2}\mathscr{S}(t,z)
\right\}.
\end{equation*}
Finally,
\begin{equation*}
\mathscr{B}(t) := \frac{1}{(2\pi)^4} \oint_{\Gamma} 
z_4\, \mathscr{S}(t,z)\, \mathrm{d}z.
\end{equation*}
We then define $\mathrsfs{S}$ as a solution to the evolution equation
\begin{equation}\label{eq:  PDESystem}
\mathrm{d}\mathrsfs{S}(t,\cdot)
= \mathscr{F}\bigl(z,\mathrsfs{S}(t,\cdot)\bigr)\,\mathrm{d}t,
\end{equation}
with initial condition
\begin{equation}\label{eq:  PDEInitialization}
\mathrsfs{S}(0,z) = S(x_0,z).
\end{equation}}
We next introduce a notion of approximate solution to \eqref{eq: PDESystem}. The point of this definition is that both
$S(\mathscr{X}_t,z)$ and $S(x_{\lfloor td\rfloor},z)$, which are functions
of both homogenized SGD and SGD respectively, will be shown to satisfy the PDE up to a small error.

To measure errors uniformly on the fixed contour, we define a norm, $\norm{\cdot}_\Gamma$, on a continuous function $S\colon \Gamma \subset \mathbb{C}^4 \to \mathbb{C}^{3\times 3}$ by
\begin{equation*}
    \norm{S}_\Gamma = \max_{z\in \Gamma } \norm{S(z)}.
\end{equation*}
The following lemma relates this contour norm to the parameter squared norm along both the homogenized and discrete trajectories.
\begin{lemma}\label{lem: normEquivalence}
There exist constants $0<c<C<\infty$, depending on $\norm{K}_{\mathrm{op}}$,  $\norm{\beta^*}_\infty$, and $\Gamma$, such that for all $t\geqslant 0$,
\begin{equation*}
c \leqslant \frac{\norm{S(\mathscr{X}_t, \cdot)}_\Gamma}{\frac{1}{d}\norm{\mathscr{W}_t}^2}, \frac{\norm{S(x_{\lfloor td\rfloor},\cdot)}_\Gamma}{\frac{1}{d}\norm{W_{\lfloor td \rfloor}}^2} \leqslant C. 
\end{equation*}
\end{lemma}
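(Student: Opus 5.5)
The plan is to reduce the statement to two-sided entrywise bounds on the diagonal matrix $\Omega(x,z)$, working on the event of Assumption~\ref{ass: nonexplosive}. On that event $\|u\|_\infty,\|v\|_\infty\leqslant M$, so the contour choice of Remark~\ref{rem: fixedContour} applies. Write $W=W(x)$ with rows $w_i^\top\in\R^3$, and let $\omega_i(z)$ be the $i$-th diagonal entry of $\Omega(x,z)$. Since $K$ is diagonal,
\[
\omega_i(z)=\prod_{j=1}^4 (z_j-\lambda^{(j)}_i)^{-1},
\]
where $\lambda^{(1)}_i=u_i$, $\lambda^{(2)}_i=v_i$, $\lambda^{(3)}_i=\beta^*_i$ and $\lambda^{(4)}_i=K_{ii}$. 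Then
\[
S(x,z)=\frac1d\sum_{i=1}^d \omega_i(z)\,w_iw_i^\top .
\]
Note also that $\frac1d\|W\|^2=\frac1d\sum_i\|w_i\|^2$.

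\textbf{Upper bound.} On $\Gamma$ each eigenvalue is enclosed with a fixed margin. Indeed, $|z_j|\geqslant 2|\lambda^{(j)}_i|$ gives $|z_j-\lambda^{(j)}_i|\geqslant |z_j|/2$, and each radius is at least $1$ (in particular $\geqslant 2M$ for $j=1,2$). Hence $|\omega_i(z)|\leqslant C$ uniformly in $i$ and $z\in\Gamma$, with $C$ depending only on $M$, $\|\beta^*\|_\infty$ and $\|K\|_{\mathrm{op}}$. By the triangle inequality in Frobenius norm, and using $\|w_iw_i^\top\|=\|w_i\|^2$,
\[
\|S(x,z)\|\leqslant \frac{C}{d}\sum_i\|w_i\|^2=\frac{C}{d}\|W\|^2 ,
\]
and taking the maximum over $z\in\Gamma$ gives the upper bound.

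\textbf{Lower bound.} This is the main obstacle, because the $\omega_i(z)$ are complex and can cancel inside the sum. I would avoid choosing a single point $z$. Instead, average over the contour with the Cauchy formula. Since $\Gamma_j$ encloses every $\lambda^{(j)}_i$, we have $\frac{1}{(2\pi i)^4}\oint_\Gamma z_4\,\omega_i(z)\,\mathrm dz = K_{ii}$. (Up to the normalizing constant this is \eqref{eq:B_from_S}.) This gives
\[
B(x)=c_\Gamma\oint_\Gamma z_4 S(x,z)\,\mathrm dz,
\qquad\text{so}\qquad
\|B(x)\|\leqslant C'\|S(x,\cdot)\|_\Gamma ,
\]
with $C'$ depending only on the contour lengths and radii. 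Using a different weight, namely the integrand $z_1^0\cdots z_4^0$ (that is, $\mathrm{d}z$ with no weight), the Cauchy formula gives instead
\[
\frac{1}{(2\pi i)^4}\oint_\Gamma S(x,z)\,\mathrm dz=\frac1d W^\top W .
\]
Its trace is exactly $\frac1d\|W\|^2$. Therefore
\[
\frac1d\|W\|^2\leqslant \sqrt3\,\Bigl\|\tfrac1dW^\top W\Bigr\|\leqslant C''\|S(x,\cdot)\|_\Gamma ,
\]
which is the lower bound with $c=1/C''$. This step uses only that every contour encloses the relevant spectrum, so no positivity of $K$ is needed.

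Finally, apply both bounds pathwise to $x=\mathscr X_t$ and to $x=x_{\lfloor td\rfloor}$. The constants do not depend on $t$ or $d$, because they come only from the contour geometry and the uniform bounds. One caveat: the ratio requires $W\neq0$. This always holds when $\beta^*\neq0$, and in any case the third column $\mathds 1_d$ forces $\frac1d\|W\|^2\geqslant 1$.
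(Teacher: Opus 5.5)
Your proof is correct and follows the paper's argument. For the upper bound, you bound the resolvent factors uniformly on $\Gamma$; you do this entrywise on the diagonal $\Omega$, while the paper uses a Neumann-series bound on the operator norm. For the lower bound, you use the unweighted Cauchy integral $\frac{1}{(2\pi i)^4}\oint_\Gamma S(x,z)\,\mathrm dz=\frac1d W^\top W$, whose trace is $\frac1d\|W\|^2$, exactly as the paper does.

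One small slip: the radius of $\Gamma_1,\Gamma_2$ is $2M$, which need not be at least $1$. Your constant is therefore $\prod_j 2/|z_j|\leqslant 4M^{-2}$ rather than $2^4$, which is still independent of $d$ and $t$.
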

\begin{proof}
For homogenized SGD, we have
\begin{equation*}
\frac{1}{d} \norm{\mathscr{W}_t}^2 = \frac{1}{(2 \pi)^4} \oint_\Gamma \tr S(\mathscr{X}_t, z) \, \mathrm{d}z \leqslant C \norm{S(\mathscr{X}_t, \cdot)}_\Gamma.
\end{equation*}
On the other hand, by Neumann series and since $\abs{z_i} > \norm{D_i}_{\mathrm{op}}$ on each $\Gamma_i$, we know that
\begin{equation*}
R\left(z_i; D_i \right) = \left(z_i\cdot I_d - D_i\right)^{-1} = \frac{1}{z_i} \left(I_d - \frac{1}{z_i}D_i \right)^{-1} = \frac{1}{z_i} \sum_{j=0}^\infty \left( \frac{1}{z_i}D_i \right)^j,
\end{equation*}
therefore
\begin{equation*}
\norm{R\left(z_i; D_i \right)}_{\mathrm{op}} \leqslant \frac{1}{\abs{z_i}} \sum_{j=0}^\infty \left( \frac{1}{\abs{z_i}}\norm{D_i}_{\mathrm{op}} \right)^j = \frac{1}{\abs{z_i}} \cdot \frac{1}{1 - \frac{1}{\abs{z_i}}\norm{D_i}_{\mathrm{op}}} = \frac{1}{\abs{z_i} - \norm{D_i}_{\mathrm{op}}} \leqslant 2.
\end{equation*}
Thus we conclude 
\begin{equation*}
\norm{S(\mathscr{X}_t, \cdot)}_\Gamma = \max_{z \in \Gamma} \norm{\frac{1}{d} \mathscr{W}_t^\top \Omega \mathscr{W}_t} \leqslant \frac{1}{d} \norm{\mathscr{W}_t}^2 \cdot \Pi_{i=1}^4 \max_{z_i \in \Gamma_i} \norm{R\left(z_i; D_i \right)}_{\mathrm{op}} \leqslant \frac{2^4}{d} \norm{\mathscr{W}_t}^2.
\end{equation*}
The same bounds hold for SGD with obvious changes.
\end{proof}

We will be working with \textit{approximate solutions to the partial integro-differential equation} defined as:
\begin{definition}[$(\varepsilon, M, T)$-approximate solution to the partial integro-differential equation]\upright\label{def: approximateSolution} For constants $M, T, \varepsilon>0$, we say that a continuous function
\begin{equation*}
\mathscr{S}\colon \{t \geqslant 0\}\times \Gamma \subset \{t \geqslant 0\}\times \mathbb{C}^4\to \mathbb{C}^{3 \times 3}\end{equation*}
is an $(\varepsilon,M,T)$-approximate solution to \eqref{eq: PDESystem} if it satisfies the following properties.

\medskip
\noindent
\textbf{Stopping time.}  
Define the stopping time
\begin{equation*}
\hat{\tau}_M(\mathscr{S}) := \inf \biggl\{ t\geqslant 0 : 
\norm{\mathscr{S}(t, \cdot)}_\Gamma> M \text{ or } \mathscr{B}(t) \notin \mathcal{U} \biggr\}.
\end{equation*}
\medskip

\begin{enumerate}[label=(\roman*)]
\item\label{cond:lipschitz} \textbf{Lipschitz regularity of derivatives.}
For any other 
$(\varepsilon,M,T)$–approximate solution $\tilde{\mathscr{S}}$ and for each coordinate $z_i$, the first and second partial derivatives $\frac{\mathrm{d}}{\mathrm{d}z_i}\mathscr{S}$ and $\frac{\mathrm{d}^2}{\mathrm{d}z_i^2}\mathscr{S}$ exist, and they depend Lipschitz–continuously on $\mathscr{S}$ in the sense that for any $s\geqslant 0$ and $z\in \Gamma \subset \mathbb{C}^4$, it holds:
\begin{align*}
\norm{\frac{\mathrm{d}}{\mathrm{d}z_i}\mathscr{S}(s\wedge \hat{\tau}_M^\mathscr{S}, z)  - \frac{\mathrm{d}}{\mathrm{d}z_i} \tilde{\mathscr{S}}(s\wedge \hat{\tau}_M^{\tilde{\mathscr{S}}}, z)}_\Gamma &\leqslant C \norm{ \mathscr{S}(s\wedge \hat{\tau}_M^\mathscr{S}, \cdot) - \tilde{\mathscr{S}}(s\wedge \hat{\tau}_M^{\tilde{\mathscr{S}}}, \cdot)}_\Gamma,\\[10pt]
\norm{ \frac{\mathrm{d}^2}{\mathrm{d}z_i^2}\mathscr{S}(s\wedge \hat{\tau}_M^\mathscr{S}, z)  -  \frac{\mathrm{d}^2}{\mathrm{d}z_i^2} \tilde{\mathscr{S}}(s\wedge \hat{\tau}_M^{\tilde{\mathscr{S}}}, z)}_\Gamma &\leqslant C \norm{ \mathscr{S}(s\wedge \hat{\tau}_M^\mathscr{S}, \cdot) - \tilde{\mathscr{S}}(s\wedge \hat{\tau}_M^{\tilde{\mathscr{S}}}, \cdot)}_\Gamma.
\end{align*}
\item\label{cond:pde} \textbf{Approximate satisfaction of the PDE.} The following integral-form error bound holds:
\begin{align*}
\sup_{0\leqslant t \leqslant (\hat{\tau}_M(\mathscr{S}) \wedge T)} \norm{\mathscr{S}(t,\cdot) - \mathscr{S}(0,\cdot) - \int_0^t \mathscr{F}\left(\cdot, \mathscr{S}(s,\cdot)\right) \mathrm{d}s}_\Gamma &\leqslant \varepsilon,
\end{align*}
where the initial condition is $\mathscr{S}(0,\cdot) = S(x_0, \cdot)$, with $x_0$ the initialization of SGD.
\end{enumerate}
We suppress the $\mathscr{S}$ in the notation for $\hat{\tau}_M$, that is $\hat{\tau}_M = \hat{\tau}_M(\mathscr{S})$, when the function $\mathscr{S}$ is clear from context.
\end{definition}
\begin{remark}\upright\label{rem:Lipschitz}
Consider
\begin{equation*}
S(x,z) = \frac{1}{d} W(x)^\top \Omega(x,z) W(x) \in \mathbb{C}^{3 \times 3}
\end{equation*}
for 
\begin{equation*}
\Omega = R\left(z_1; \operatorname{diag}(u) \right) \cdot R\left(z_2; \operatorname{diag}(v) \right) \cdot R\left(z_3; \operatorname{diag}(\beta^*) \right) \cdot R\left(z_4; K \right) \in \mathbb{C}^{d\times d}.    
\end{equation*}
\vspace{0.1in}
Differentiating the resolvent representation gives
\[
\frac{\mathrm{d}^2}{\mathrm{d} z_i^2} S(x,z)
=
\frac{2}{d}W^\top R(z_i;D_i)^2\Omega(x,z)W,
\qquad
\frac{\mathrm{d}^3}{\mathrm{d} z_i^3} S(x,z)
=
-\frac{6}{d}W^\top R(z_i;D_i)^3\Omega(x,z)W,
\]
so by Lemma~\ref{lem: normEquivalence} and \eqref{eq:resolventBound}, we get 
\begin{equation*}
\norm{\frac{\mathrm{d}^2}{\mathrm{d} z_i^2} S(x,z)}_\Gamma \leqslant C \norm{S(x,\cdot)}_\Gamma \quad \text{and} \quad  \norm{\frac{\mathrm{d}^3}{\mathrm{d} z_i^3} S(x,z)}_\Gamma \leqslant C \norm{S(x,\cdot)}_\Gamma.
\end{equation*}
Consequently, the processes $S(x_{\lfloor td\rfloor},z)$ and $S(\mathscr{X}_t,z)$ satisfy condition~\ref{cond:lipschitz}, since $\norm{\mathscr{S}(s\wedge \hat{\tau}_M, \cdot)}_\Gamma \leqslant M$ for any $s\geqslant 0$. Furthermore, in Section~\ref{sec: approxSols}, we prove that they 
also satisfy condition~\ref{cond:pde} and hence that they are $(\varepsilon, M, T)$-approximate solutions. Note that we must extend the discrete time of
SGD to a continuous time (see Section~\ref{sec: approxSols} for details). Finally, it is clear by definition that the exact solution $\mathrsfs{S}$ of \eqref{eq: PDESystem} is an $(0,M,T)$-approximate solution.
\end{remark}
The first result is a \textit{stability} statement: any two $(\varepsilon,M,T)$-approximate solutions remain uniformly close up to the stopping time.

\begin{proposition}[Stability]\label{prop: stability}
For all $(\varepsilon, M, T)$-approximate solutions $\mathscr{S}_1$ and $\mathscr{S}_2$, there exists a positive constant $C = C(L(h), L(I), \bar{\gamma}, \norm{K}_{\mathrm{op}}, \norm{\beta^*}_\infty, M, \alpha, T)$ such that
\begin{equation*}
\sup_{0\leqslant t \leqslant T} \norm{\mathscr{S}_1(t\wedge \tau_M,\cdot) - \mathscr{S}_2(t\wedge \tau_M, \cdot)}_\Gamma \leqslant C \cdot \varepsilon,
\end{equation*}
where $\tau_M = \min\{ \hat{\tau}_M(\mathscr{S}_1), \hat{\tau}_M(\mathscr{S}_2)\}$.
\end{proposition}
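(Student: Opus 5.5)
The plan is a Gronwall argument on the difference $\Delta(t):=\norm{\mathscr{S}_1(t\wedge\tau_M,\cdot)-\mathscr{S}_2(t\wedge\tau_M,\cdot)}_\Gamma$. Both functions share the initial condition $S(x_0,\cdot)$. Applying condition~\ref{cond:pde} of Definition~\ref{def: approximateSolution} to each and subtracting, for $t\leqslant T$ we get
\[
\Delta(t)\leqslant 2\varepsilon+\int_0^{t\wedge\tau_M}\norm{\mathscr{F}(\cdot,\mathscr{S}_1(s,\cdot))-\mathscr{F}(\cdot,\mathscr{S}_2(s,\cdot))}_\Gamma\,\mathrm{d}s .
\]
Since $s\leqslant\tau_M$ in the integrand, we may replace $s$ by $s\wedge\tau_M$ throughout. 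It therefore suffices to prove the Lipschitz bound
\[
\norm{\mathscr{F}(\cdot,\mathscr{S}_1(s\wedge\tau_M,\cdot))-\mathscr{F}(\cdot,\mathscr{S}_2(s\wedge\tau_M,\cdot))}_\Gamma\leqslant L_M\,\Delta(s),
\]
with $L_M$ depending only on the listed constants. Gronwall then gives $\Delta(t)\leqslant 2\varepsilon e^{L_M T}$.

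For the Lipschitz bound, I treat $\mathscr{F}$ term by term, using that it is a finite sum of terms of the form \eqref{eq: PDEFormula}. Each term is a product of a scalar or matrix coefficient, either $\gamma(t)H(\mathscr{B}(t))^\top$ or $\gamma(t)^2 I(\mathscr{B}(t))$, times $z^\zeta(\bigcirc_i\mathscr{G}_i)(\mathscr{S})$. The difference of products splits as
\[
(\text{coef}_1-\text{coef}_2)\cdot\mathcal{G}(\mathscr{S}_1)+\text{coef}_2\cdot\bigl(\mathcal{G}(\mathscr{S}_1)-\mathcal{G}(\mathscr{S}_2)\bigr).
\]

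\emph{Coefficients.} The map $\mathscr{S}\mapsto\mathscr{B}=\frac{1}{(2\pi)^4}\oint_\Gamma z_4\mathscr{S}\,\mathrm{d}z$ is linear with $\norm{\mathscr{B}}\leqslant C_\Gamma\norm{\mathscr{S}}_\Gamma$, since $\Gamma$ is a fixed compact contour. Before $\tau_M$ we have $\norm{\mathscr{B}_j}\leqslant C_\Gamma M$ and $\mathscr{B}_j\in\mathcal{U}$. The pseudo-Lipschitz bounds of Assumptions~\ref{ass: riskRepresentation} and~\ref{ass: squaredGradientsPseudoLipschitz}, together with $\gamma\leqslant\bar\gamma$, then give $\norm{\text{coef}_1-\text{coef}_2}\leqslant C(1+2(C_\Gamma M)^\alpha)\,C_\Gamma\,\Delta(s)$. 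The same bounds applied against a fixed reference point yield $\norm{\text{coef}_2}\leqslant C(M)$.

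\emph{Operators.} Each $\mathscr{G}_i$ is one of three things. A contour integral against a polynomial $q$ on the compact $\Gamma_i$ is a bounded linear map in $\norm{\cdot}_\Gamma$. A first or second $z_i$-derivative is controlled by condition~\ref{cond:lipschitz}: it is Lipschitz in $\norm{\cdot}_\Gamma$ with constant $C$, and bounded by $CM$ via Remark~\ref{rem:Lipschitz} or by comparison with the exact solution. The factor $|z^\zeta|$ is bounded on $\Gamma$. Composing four such maps gives $\norm{\mathcal{G}(\mathscr{S}_1)-\mathcal{G}(\mathscr{S}_2)}_\Gamma\leqslant C\Delta(s)$ and $\norm{\mathcal{G}(\mathscr{S}_1)}_\Gamma\leqslant CM$.

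The main obstacle is this composition step. Condition~\ref{cond:lipschitz} is stated only for a single derivative applied directly to $\mathscr{S}$, whereas in $\bigcirc_i\mathscr{G}_i$ derivatives in one coordinate are taken after integrations or derivatives in the others. I would handle this by exploiting the product structure: operators acting on distinct coordinates $z_i$ commute, so each composite can be rewritten as contour integrals in some coordinates of mixed derivatives in the others. I would then use the resolvent representation (the derivatives are again $\frac1d W^\top(\cdots)W$ with extra resolvent powers, as in Remark~\ref{rem:Lipschitz}) to read condition~\ref{cond:lipschitz} as covering these mixed derivatives, uniformly on $\Gamma$. If that reading is too generous, the fallback is Cauchy estimates on a slightly enlarged contour, which bound derivatives by sup norms. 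Summing the finitely many terms yields $L_M=C(L(h),L(I),\bar\gamma,\norm{K}_{\mathrm{op}},\norm{\beta^*}_\infty,M,\alpha)$, and the constant in the proposition is $C=2e^{L_M T}$.
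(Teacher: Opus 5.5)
Your proposal is correct and follows the paper's proof essentially step for step. You subtract the integral forms of the two approximate solutions and prove a Lipschitz bound for $\mathscr{F}$ from four ingredients: the pseudo-Lipschitz estimates on $\nabla h$ and $I$, the bound $\norm{\mathscr{B}_j}\leqslant CM$, the boundedness of the contour-integral operators in $\norm{\cdot}_\Gamma$, and condition~\ref{cond:lipschitz} for the derivative operators; Gronwall then gives $2\varepsilon e^{CT}$, exactly as in the paper. The composition issue you flag is genuine: mixed derivatives such as $\partial_{z_1}\partial_{z_2}$, which come from the $\nabla^2_{uv}S$ terms, are not literally covered by condition~\ref{cond:lipschitz}, and the paper passes over this with the remark that compositions of Lipschitz maps are Lipschitz, so your reading of that condition as covering mixed derivatives is the same implicit strengthening. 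Your Cauchy-estimate fallback, however, is not available from the definition, because approximate solutions are only defined on $\Gamma$ and $\norm{\cdot}_\Gamma$ gives no control on an enlarged contour.
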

\begin{proof}
First note that $\tau_M \leqslant \hat{\tau}_M(\mathscr{S}_1)$ and $\tau_M \leqslant \hat{\tau}_M(\mathscr{S}_2)$. Thus all estimates below are taken up to the common stopping time $\tau_M$. Write $\mathscr{S}_1$ and $\mathscr{S}_2$ as
\begin{align}
\begin{split}
\label{eq: twoapprox}
\mathscr{S}_1(t,\cdot) &= \mathscr{S}_1(0,\cdot) + \int_0^t \mathscr{F}\left(\cdot, \mathscr{S}_1(s,\cdot)\right) \mathrm{d}s + \varepsilon(\mathscr{S}_1) \\
\mathscr{S}_2(t,\cdot) &= \mathscr{S}_2(0,\cdot) + \int_0^t \mathscr{F}\left(\cdot, \mathscr{S}_2(s,\cdot)\right) \mathrm{d}s + \varepsilon(\mathscr{S}_2),
\end{split}
\end{align}
where $\varepsilon(\mathscr{S}_1)$ and $\varepsilon(\mathscr{S}_2)$ are error terms from the $(\varepsilon, M, T)$-approximate solution inequality and we have for $j=1,2$ the estimate
\begin{equation*}
\sup_{0\leqslant t \leqslant (T\wedge \tau_M)} \norm{\varepsilon(\mathscr{S}_j)}_\Gamma \leqslant \varepsilon.
\end{equation*}
We first prove the stability estimate under the following Lipschitz bound on $\mathscr{F}$. 
Suppose that there exists a constant
$C = C(L(h), L(I), \bar{\gamma}, \norm{K}_{\mathrm{op}}, \norm{\beta^*}_\infty, M, \alpha)$ such that, for all $s$,
\begin{equation}
\label{eq: LipschitzF}
\norm{\mathscr{F}\left(\cdot,\mathscr{S}_1(s\wedge \tau_M, \cdot)\right) - \mathscr{F}\left(\cdot,\mathscr{S}_2(s\wedge \tau_M, \cdot)\right)}_\Gamma \leqslant C  \norm{\mathscr{S}_1(s\wedge \tau_M, \cdot) - \mathscr{S}_2(s\wedge \tau_M, \cdot)}_\Gamma.
\end{equation}
We verify \eqref{eq: LipschitzF} at the end of the proof. However, Equations \eqref{eq: twoapprox} and \eqref{eq: LipschitzF} imply
\begin{align*}
\sup_{0\leqslant t \leqslant T\wedge \tau_M} \norm{\mathscr{S}_1(t, \cdot) - \mathscr{S}_2(t, \cdot)}_\Gamma \leqslant 2\varepsilon + \sup_{0\leqslant t \leqslant T\wedge \tau_M} \int_0^t \norm{\mathscr{F}\left(\cdot,\mathscr{S}_1(s, \cdot)\right) - \mathscr{F}\left(\cdot,\mathscr{S}_2(s, \cdot)\right)}_\Gamma \, &\mathrm{d}s\\[5pt]
= 2\varepsilon + \sup_{0\leqslant t \leqslant T} \int_0^t \norm{\mathscr{F}\left(\cdot,\mathscr{S}_1(s\wedge \tau_M, \cdot)\right) - \mathscr{F}\left(\cdot,\mathscr{S}_2(s\wedge \tau_M, \cdot)\right)}_\Gamma \, &\mathrm{d}s\\[5pt]
\leqslant 2\varepsilon + C \int_0^T  \norm{\mathscr{S}_1(s\wedge \tau_M, \cdot) - \mathscr{S}_2(s\wedge \tau_M, \cdot)}_\Gamma \, &\mathrm{d}s,
\end{align*}
where $C = C(L(h), L(I), \bar{\gamma}, \norm{K}_{\mathrm{op}}, \norm{\beta^*}_\infty, M, \alpha)$ is a positive constant.

Define $Q_T := \sup_{0\leqslant t \leqslant T} \norm{\mathscr{S}_1(t\wedge \tau_M, \cdot) - \mathscr{S}_2(t\wedge \tau_M, \cdot)}_\Gamma$. Then one has that
\begin{align*}
Q_T &= \sup_{0\leqslant t \leqslant T\wedge \tau_M} \norm{\mathscr{S}_1(t, \cdot) - \mathscr{S}_2(t, \cdot)}_\Gamma \leqslant 2\varepsilon + C\int_0^T Q_s \, \mathrm{d}s.
\end{align*}
By an application of Gronwall's inequality, 
\begin{align*}
\sup_{0\leqslant t \leqslant T} \norm{\mathscr{S}_1(t\wedge \tau_M, \cdot) - \mathscr{S}_2(t\wedge \tau_M, \cdot)}_\Gamma &\leqslant 2 \varepsilon e^{C T},
\end{align*}
and the result is shown. 

It remains to verify the Lipschitz estimate \eqref{eq: LipschitzF}. We will do this
in steps. First, define $\mathscr{B}_j(\cdot) = \frac{1}{(2\pi)^4} 
\oint_{\Gamma}
z_4\, \mathscr{S}_j(\cdot, z)\,
\mathrm{d}z$ for $j = 1,2$. We
will use the shorthand $\mathscr{B}_j^{\tau_M}(s) = \mathscr{B}_j(s\wedge \tau_M)$ and $\mathscr{S}_j^{\tau_M}(s,\cdot) = \mathscr{S}_j(s\wedge \tau_M,\cdot)$. Now, by the $\alpha$-pseudo-Lipschitzness of $\nabla h$ (Assumption~\ref{ass: riskRepresentation}), we have
\begin{align}\label{eq: HBound}
\begin{split}
\norm{H(\mathscr{B}_1^{\tau_M}(s))^\top - H(\mathscr{B}_2^{\tau_M}(s))^\top} &\leqslant L(h) \left(1 +  \norm{\mathscr{B}_1^{\tau_M}(s)}^\alpha + \norm{\mathscr{B}_2^{\tau_M}(s)}^\alpha \right) \norm{\mathscr{B}_1^{\tau_M}(s) - \mathscr{B}_2^{\tau_M}(s)}\\[10pt]
&\leqslant C(L(h),  M, \alpha) \norm{\mathscr{B}_1^{\tau_M}(s) - \mathscr{B}_2^{\tau_M}(s)}\\[10pt]
\norm{H(\mathscr{B}_j^{\tau_M}(s))^\top} &\leqslant L(h) \left(1 +  \norm{\mathscr{B}_j^{\tau_M}(s)}^{\alpha+1} \right) \leqslant C(L(h),  M, \alpha)
\end{split}
\end{align}
since we have the expression
\begin{equation*}\label{eq: bBound}
\norm{\mathscr{B}_j^{\tau_M}(s)} = \norm{\frac{1}{(2\pi)^4} 
\oint_{\Gamma}
z_4\, \mathscr{S}_j^{\tau_M}(s, z)\,
\mathrm{d}z} \leqslant C(\abs{\Gamma},\norm{K}_{\mathrm{op}}, \norm{\beta^*}_\infty) \norm{\mathscr{S}_j^{\tau_M}(s, \cdot)}_\Gamma \leqslant C \cdot M.
\end{equation*}
Here we used the stopping time $\tau_M$ explicitly. Similarly,
\begin{align*}\label{eq: bDiffBound}
\begin{split}
\norm{\mathscr{B}_1^{\tau_M}(s) - \mathscr{B}_2^{\tau_M}(s)} &\leqslant C 
\oint_{\Gamma}
\abs{z_4} \norm{ \mathscr{S}_1^{\tau_M}(s, \cdot) - \mathscr{S}_2^{\tau_M}(s, \cdot)}_\Gamma
\mathrm{d}\abs{z} \\[10pt]
&\leqslant C(\abs{\Gamma},\norm{K}_{\mathrm{op}}, \norm{\beta^*}_\infty) \norm{ \mathscr{S}_1^{\tau_M}(s, \cdot) - \mathscr{S}_2^{\tau_M}(s, \cdot)}_\Gamma.
\end{split}
\end{align*}
Consequently, there exists a positive constant (independent of $s$) such that
\begin{equation}\label{eq: HDiffBound}
\norm{H(\mathscr{B}_1^{\tau_M}(s))^\top - H(\mathscr{B}_2^{\tau_M}(s))^\top} \leqslant C(L(h), M, \alpha, \norm{K}_{\mathrm{op}}, \norm{\beta^*}_\infty) \norm{ \mathscr{S}_1^{\tau_M}(s, \cdot) - \mathscr{S}_2^{\tau_M}(s, \cdot)}_\Gamma.
\end{equation}
Analogously,
\begin{align}
\begin{split}
\norm{ \oint_{\Gamma_i} q(z_i) \mathscr{S}_1^{\tau_M}(s, z) \, \mathrm{d}z_i -  \oint_{\Gamma_i} q(z_i) \mathscr{S}_2^{\tau_M}(s, z) \, \mathrm{d}z_i} &\leqslant C(\abs{\Gamma},\norm{K}_{\mathrm{op}}, \norm{\beta^*}_\infty) \cdot\\[5pt]
& \hspace{0.3in} \norm{ \mathscr{S}_1^{\tau_M}(s, \cdot) - \mathscr{S}_2^{\tau_M}(s, \cdot)}_\Gamma\\[10pt]
\norm{ \oint_{\Gamma_i} q(z_i) \mathscr{S}_j^{\tau_M}(s, z) \, \mathrm{d}z_i } &\leqslant C(\abs{\Gamma},\norm{K}_{\mathrm{op}}, \norm{\beta^*}_\infty) \norm{\mathscr{S}_j^{\tau_M}(s, \cdot)}_\Gamma \\[5pt]
&\leqslant C \cdot M.
\end{split}
\end{align}
Lastly, by the definition of approximate solution (\ref{def: approximateSolution}), for every $1\leqslant i \leqslant 4$, we have 
\begin{align}
\begin{split}
\norm{\frac{\mathrm{d}}{\mathrm{d}z_i}\mathscr{S}_1^{\tau_M}(s, z)  - \frac{\mathrm{d}}{\mathrm{d}z_i} \mathscr{S}_2^{\tau_M}(s, z)}_\Gamma &\leqslant C \norm{ \mathscr{S}_1^{\tau_M}(s, \cdot) - \mathscr{S}_2^{\tau_M}(s, \cdot)}_\Gamma\\[10pt]
\norm{\frac{\mathrm{d}}{\mathrm{d}z_i}\mathscr{S}_j^{\tau_M}(s, z)}_\Gamma &\leqslant C \norm{\mathscr{S}_j^{\tau_M}(s, \cdot)}_\Gamma \leqslant C \cdot M,
\end{split}
\end{align}
and 
\begin{align}
\begin{split}
\norm{ \frac{\mathrm{d}^2}{\mathrm{d}z_i^2}\mathscr{S}_1^{\tau_M}(s, z)  -  \frac{\mathrm{d}^2}{\mathrm{d}z_i^2} \mathscr{S}_2^{\tau_M}(s, z)}_\Gamma &\leqslant C \norm{ \mathscr{S}_1^{\tau_M}(s, \cdot) - \mathscr{S}_2^{\tau_M}(s, \cdot)}_\Gamma\\[10pt]
\norm{ \frac{\mathrm{d}^2}{\mathrm{d}z_i^2}\mathscr{S}_j^{\tau_M}(s, z)}_\Gamma &\leqslant C \norm{\mathscr{S}_j^{\tau_M}(s, \cdot)}_\Gamma \leqslant C \cdot M.
\end{split}
\end{align}
Therefore, since the composition of Lipschitz functions is Lipschitz, we have
\begin{align}\label{eq: compGBound}
\begin{split}
\norm{\left(\bigcirc_{i=1}^4 \mathscr{G}_i\right)
( \mathscr{S}_1^{\tau_M}(s, \cdot)) - \left(\bigcirc_{i=1}^4 \mathscr{G}_i\right)
( \mathscr{S}_2^{\tau_M}(s, \cdot))}_\Gamma &\leqslant C \norm{ \mathscr{S}_1^{\tau_M}(s, \cdot) - \mathscr{S}_2^{\tau_M}(s, \cdot)}_\Gamma\\[10pt]
\norm{\left(\bigcirc_{i=1}^4 \mathscr{G}_i\right)
( \mathscr{S}_j^{\tau_M}(s, \cdot)) }_\Gamma &\leqslant C \norm{ \mathscr{S}_j^{\tau_M}(s, \cdot) }_\Gamma \leqslant C \cdot M.
\end{split}
\end{align}
From equations \eqref{eq: HBound}, \eqref{eq: HDiffBound}, and \eqref{eq: compGBound}, it follows that there exists a positive constant $C= C(L(h), M,\\ \alpha, \norm{K}_{\mathrm{op}}, \norm{\beta^*}_\infty, \bar{\gamma})$ such that
\begin{align}\label{eq: firstTermPDE}
\begin{split}
\Big\lVert 2 \gamma(s) H\left( \mathscr{B}_1^{\tau_M}(s)\right)^\top z^\zeta 
\left(\bigcirc_{i=1}^4 \mathscr{G}_i\right)
( \mathscr{S}_1^{\tau_M}(s, \cdot)) &- 2 \gamma(s) H\left( \mathscr{B}_2^{\tau_M}(s)\right)^\top z^\zeta 
\left(\bigcirc_{i=1}^4 \mathscr{G}_i\right)
( \mathscr{S}_1^{\tau_M}(s, \cdot))\Big\rVert_\Gamma \\[10pt]
&\leqslant C \cdot \norm{ \mathscr{S}_1^{\tau_M}(s, \cdot) - \mathscr{S}_2^{\tau_M}(s, \cdot)}_\Gamma.
\end{split}
\end{align}
Next, by $\alpha$-pseudo-Lipschitzness of the squared gradients (Assumption~\ref{ass: squaredGradientsPseudoLipschitz}),
\begin{align}\label{eq: IBound}
\begin{split}
\norm{I(\mathscr{B}_1^{\tau_M}(s)) - I(\mathscr{B}_2^{\tau_M}(s))} &\leqslant C(L(I),  M, \alpha) \norm{\mathscr{B}_1^{\tau_M}(s) - \mathscr{B}_2^{\tau_M}(s)}\\[10pt]
\norm{I(\mathscr{B}_j^{\tau_M}(s))} &\leqslant L(I) \left(1 +  \norm{\mathscr{B}_j^{\tau_M}(s)}^{\alpha+1} \right) \leqslant C(L(I),  M, \alpha).   
\end{split}
\end{align}
Therefore, we deduce that
\begin{align}\label{eq: secondTermPDE}
\begin{split}
\Big \lVert C(Q) \gamma(s)^2 I(\mathscr{B}_1^{\tau_M}(s)) z^\zeta \left(\bigcirc_{i=1}^4 \mathscr{G}_i\right)
( \mathscr{S}_1^{\tau_M}(s, \cdot)) &- C(Q) \gamma(s)^2 I(\mathscr{B}_2^{\tau_M}(s)) z^\zeta \left(\bigcirc_{i=1}^4 \mathscr{G}_i\right)
( \mathscr{S}_2^{\tau_M}(s, \cdot))\Big \rVert_\Gamma \\[10pt]
&\leqslant C \cdot \norm{ \mathscr{S}_1^{\tau_M}(s, \cdot) - \mathscr{S}_2^{\tau_M}(s, \cdot)}_\Gamma,
\end{split}
\end{align}
where $C= C(L(I), M, \alpha, \norm{K}_{\mathrm{op}}, \norm{\beta^*}_\infty, \bar{\gamma})$ is a positive constant. The Lipschitz condition for $\mathscr{F}$ \eqref{eq: LipschitzF} holds after applying expressions \eqref{eq: firstTermPDE} and \eqref{eq: secondTermPDE}.
\end{proof}
We now transfer the stability estimate from the resolvent statistic $S$ to any statistic $\varphi=g\circ Q$ satisfying Assumption~\ref{ass: statistic}. Here we set
\begin{equation*}
Q(x) := \frac{1}{d} W^\top q_1(\operatorname{diag}(u)) q_2(\operatorname{diag}(v)) q_4(K) W,
\end{equation*}
where $q_1, q_2, q_4$ are polynomials. For an approximate solution $\mathscr{S}_i$, define
\begin{equation*}
\mathscr{Q}_i(t) = \frac{1}{(2 \pi)^4} \oint_\Gamma q_1(z_1) q_2(z_2) q_4(z_4) \mathscr{S}_i(t,z) \, \mathrm{d}z.
\end{equation*}
The following proposition shows that given two approximate solutions, $\mathscr{S}_1$ and $\mathscr{S}_2$, the function $g\circ \mathscr{Q}_1(t)$ is close to $g \circ \mathscr{Q}_2(t)$. The pseudo-Lipschitzness of $g$, together with the boundedness imposed by the stopping time, allows us to control the difference between $g(\mathscr Q_1)$ and $g(\mathscr Q_2)$ by the contour distance between $\mathscr S_1$ and $\mathscr S_2$:
\[
\sup_{0\leqslant t\leqslant T}
\left|
(g\circ \mathscr Q_1)(t\wedge\tau_M)
-
(g\circ \mathscr Q_2)(t\wedge\tau_M)
\right|
\leqslant
C
\sup_{0\leqslant t\leqslant T}
\|\mathscr S_1(t\wedge\tau_M,\cdot)-\mathscr S_2(t\wedge\tau_M,\cdot)\|_\Gamma,
\]
and then Proposition~\ref{prop: stability} finishes the result.
\begin{proposition}\label{prop: stabilityStatistics}
Suppose $\varphi \colon \R^{2d} \to \R$ is a statistic satisfying Assumption~\ref{ass: statistic} such that $\varphi(x) = g \circ Q(x)$. Suppose $\mathscr{S}_1$ and $\mathscr{S}_2$ are $(\varepsilon, M, T)$-approximate solutions. Then there exists a
positive constant $C = C(L(h), L(I), \bar{\gamma}, \norm{K}_{\mathrm{op}}, \norm{\beta^*}_\infty, \allowbreak M, \alpha, T)$ such that
\begin{align*}
\sup_{0\leqslant t \leqslant T} \bigg| &g \left( \frac{1}{(2 \pi)^4} \oint_\Gamma q_1(z_1) q_2(z_2) q_4(z_4) \mathscr{S}^{\tau_M}_1(t,z) \, \mathrm{d}z \right)  \\[5pt]
&\hspace{1in}- g \left( \frac{1}{(2 \pi)^4} \oint_\Gamma q_1(z_1) q_2(z_2) q_4(z_4) \mathscr{S}^{\tau_M}_2(t,z) \, \mathrm{d}z \right) \bigg| \leqslant C \cdot \varepsilon
\end{align*}
where $\tau_M:=\hat{\tau}_M(\mathscr{S}_1)\wedge
\hat{\tau}_M(\mathscr{S}_2)$. Here $\mathscr{S}^{\tau_M}_i(t,\cdot) = \mathscr{S}_i(t \wedge \tau_M,\cdot)$.
\end{proposition}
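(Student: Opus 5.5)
The plan is to reduce the statement to Proposition~\ref{prop: stability} through a deterministic Lipschitz estimate for the map $\mathscr{S}\mapsto g(\mathscr{Q})$, restricted to the ball of radius $M$ in $\|\cdot\|_\Gamma$. Fix $t\in[0,T]$ and abbreviate $\mathscr{Q}_i^{\tau_M}(t):=\frac{1}{(2\pi)^4}\oint_\Gamma q_1(z_1)q_2(z_2)q_4(z_4)\mathscr{S}_i(t\wedge\tau_M,z)\,\mathrm{d}z$. The proof has three steps.

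\textbf{Step 1: contour bounds.} Since $\Gamma$ is a fixed product of circles whose radii depend only on $M$, $\|\beta^*\|_\infty$ and $\|K\|_{\mathrm{op}}$, the weight $|q_1(z_1)q_2(z_2)q_4(z_4)|$ is bounded on $\Gamma$ by a constant $C_q$. This constant depends on the polynomials and on these radii, but not on $d$. Estimating the integral by its length times the maximum of the integrand then gives
\[
\|\mathscr{Q}_1^{\tau_M}(t)-\mathscr{Q}_2^{\tau_M}(t)\|\leqslant C_q|\Gamma|\,\|\mathscr{S}_1(t\wedge\tau_M,\cdot)-\mathscr{S}_2(t\wedge\tau_M,\cdot)\|_\Gamma,
\]
and also $\|\mathscr{Q}_i^{\tau_M}(t)\|\leqslant C_q|\Gamma|\,\|\mathscr{S}_i(t\wedge\tau_M,\cdot)\|_\Gamma$.

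\textbf{Step 2: uniform boundedness at the stopped time.} By the definition of $\hat\tau_M$, and because approximate solutions are continuous in $t$, we have $\|\mathscr{S}_i(t\wedge\tau_M,\cdot)\|_\Gamma\leqslant M$. Combined with Step 1, this gives $\|\mathscr{Q}_i^{\tau_M}(t)\|\leqslant CM$. Here $\mathcal U=\R^{3\times 3}$ in the setting of Theorem~\ref{thm: statistic}; otherwise the same bound holds on $\mathcal U$, where $g$ is pseudo-Lipschitz by Assumption~\ref{ass: statistic}. The $\alpha$-pseudo-Lipschitz property of $g$ then yields
\[
|g(\mathscr{Q}_1^{\tau_M}(t))-g(\mathscr{Q}_2^{\tau_M}(t))|\leqslant L(g)\bigl(1+2(CM)^\alpha\bigr)\|\mathscr{Q}_1^{\tau_M}(t)-\mathscr{Q}_2^{\tau_M}(t)\|.
\]
Equivalently, one can apply Lemma~\ref{lem:pseudoLipProjection} to $g\circ\operatorname{Proj}_{CM}$, which agrees with $g$ on these arguments.

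\textbf{Step 3: conclusion.} Chaining Steps 1 and 2, taking the supremum over $t\in[0,T]$, and invoking Proposition~\ref{prop: stability} bounds the supremum of the contour distance by $C\varepsilon$. This proves the claim with a constant depending additionally on $L(g)$, $q_1,q_2,q_4$ and $|\Gamma|$.

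The argument is essentially bookkeeping. The only point that needs care is the dimension-independence of the constants. The contour $\Gamma$ from Remark~\ref{rem: fixedContour} is fixed by $M$, $\|\beta^*\|_\infty$ and $\|K\|_{\mathrm{op}}$, which are all $d$-independent by the standing assumptions. The Cauchy-integral representation itself is not needed here, because the statement is purely about the contour-defined quantities $\mathscr{Q}_i$ and not yet about $\varphi(x)$.
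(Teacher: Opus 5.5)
Your proposal is correct and follows the paper's proof essentially step for step: bound $\|\mathscr{Q}_i^{\tau_M}(t)\|\leqslant CM$ using the stopping time, show that $\mathscr{S}\mapsto\mathscr{Q}$ is Lipschitz in $\|\cdot\|_\Gamma$, combine this with the $\alpha$-pseudo-Lipschitz property of $g$, and finish with Proposition~\ref{prop: stability}. You are also right that the constant depends on $L(g)$ and on the polynomials $q_1,q_2,q_4$; the paper's proof incurs the same dependence even though the constant listed in the statement omits it.
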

\begin{proof}
Since $\tau_M \leqslant \hat{\tau}_M(\mathscr{S}_1)$ and $\tau_M \leqslant \hat{\tau}_M(\mathscr{S}_2)$, we can always work on the smaller time $\tau_M$. We define $\mathscr{Q}_i(t) = \frac{1}{(2 \pi)^4} \oint_\Gamma q_1(z_1) q_2(z_2) q_4(z_4) \mathscr{S}_i(t,z) \, \mathrm{d}z$ and the stopped process $\mathscr{Q}_i^{\tau_M}(t) = \mathscr{Q}_i(t \wedge \tau_M)$ for $i=1,2$. First, we observe  
\begin{align}
\begin{split}\label{eq: Qbound}
\norm{\mathscr{Q}_i^{\tau_M}(t) } &\leqslant C \oint_\Gamma \abs{q_1(z_1)} \abs{q_2(z_2)} \abs{q_4(z_4)} \norm{\mathscr{S}^{\tau_M}_i(t,z)} \, \mathrm{d}z \\[5pt]
&\leqslant C(\norm{K}_{\mathrm{op}}, \norm{\beta^*}_\infty, \norm{q_1}_{\Gamma_1}, \norm{q_2}_{\Gamma_2}, \norm{q_4}_{\Gamma_4}) \norm{\mathscr{S}^{\tau_M}_i(t,\cdot)}_\Gamma \leqslant C \cdot M.
\end{split}
\end{align}
Moreover, the map $\mathscr S\mapsto \mathscr Q$ is Lipschitz in the contour norm:
\begin{align}\label{eq: Qlipschitz}
\begin{split}
\norm{\mathscr{Q}_1^{\tau_M}(t) - \mathscr{Q}_2^{\tau_M}(t)} &\leqslant C(\norm{q_1}_{\Gamma_1}, \norm{q_2}_{\Gamma_2}, \norm{q_4}_{\Gamma_4}) \oint_\Gamma \norm{\mathscr{S}^{\tau_M}_1(t,z) - \mathscr{S}^{\tau_M}_2(t,z)} \, \mathrm{d}\abs{z}\\
&\leqslant C(\norm{K}_{\mathrm{op}}, \norm{\beta^*}_\infty, \norm{q_1}_{\Gamma_1}, \norm{q_2}_{\Gamma_2}, \norm{q_4}_{\Gamma_4})  \norm{\mathscr{S}^{\tau_M}_1(t,\cdot) - \mathscr{S}^{\tau_M}_2(t,\cdot)}_\Gamma .
\end{split}
\end{align}
Using the $\alpha$-pseudo-Lipschitzness of $g$, together with the boundedness estimate \eqref{eq: Qbound} and the Lipschitz estimate \eqref{eq: Qlipschitz}, we obtain
\begin{align*}
\abs{(g \circ \mathscr{Q}^{\tau_M}_1)(t) - (g \circ \mathscr{Q}^{\tau_M}_2)(t)} &\leqslant L(g) \norm{ \mathscr{Q}^{\tau_M}_1(t) - \mathscr{Q}^{\tau_M}_2(t)}\left(1 + \norm{\mathscr{Q}^{\tau_M}_1(t)}^\alpha + \norm{\mathscr{Q}^{\tau_M}_2(t)}^\alpha\right)\\[10pt]
&\leqslant C \cdot \norm{\mathscr{S}^{\tau_M}_1(t,\cdot) - \mathscr{S}^{\tau_M}_2(t,\cdot)}_\Gamma,
\end{align*}
where $C=C(\norm{K}_{\mathrm{op}}, \norm{\beta^*}_\infty, M, \norm{q_1}_{\Gamma_1}, \norm{q_2}_{\Gamma_2}, \norm{q_4}_{\Gamma_4}, L(g), \alpha)$ is a positive constant. Taking the supremum over all $0\leqslant t \leqslant T$ and applying Proposition~\ref{prop: stability} finishes the result. 
\end{proof}

\subsection{Main Argument of the Proof: Concentration of the Resolvent Statistic \texorpdfstring{$S$}{S}}
\label{subsec: mainArgumentProof}
In this section, we prove concentration of both SGD and homogenized SGD under the resolvent statistic $S$ around the deterministic solution $\mathrsfs{S}(t,z)$ of \eqref{eq: PDESystem}. We first prove a concentration result with a common stopping time, and then strengthen it to a one-sided stopping-time formulation in Theorem~\ref{thm: mainResultOneStoppingTime}. This resolvent-level result implies Theorem~\ref{thm: learningCurves}. The important statistic which will play a pivotal role is 
\begin{equation}
S(x,z) = \frac{1}{d} W(x)^\top \Omega(x,z) W(x) \in \mathbb{C}^{3 \times 3}, 
\end{equation}
as well as the function
\begin{equation}
B(x) = \frac{1}{d} W(x)^\top K W(x) \in \R^{3 \times 3}. 
\end{equation}
Here
\[
W(x):=[\,\psi(x)\mid \beta^*\mid \mathds{1}_d\,]\in\mathbb{R}^{d\times 3}.
\]
We will extend the iterates of SGD,  $\{x_k\}$,  defined on discrete time $k$ to continuous time. This
is so that we can compare SGD and homogenized SGD, $\{\mathscr{X}_t\}$. We relate the $k$-th iterate of SGD
to the continuous time parameter $t$ in homogenized SGD through the relationship $k=\lfloor td \rfloor$. Thus,
when $t = 1$, SGD has done exactly $d$ updates.

We are now ready to state and prove one of our main results.
\begin{theorem}[Concentration under a common stopping time]\label{thm: mainResultTwoStoppingTimes} Suppose the risk function $\mathcal{R}(x)$ \eqref{eq: riskSetting} satisfies Assumptions~\ref{ass: fPseudoLipschitz},~\ref{ass: riskRepresentation}, and~\ref{ass: squaredGradientsPseudoLipschitz}. Suppose the stepsize schedule satisfies~\ref{ass: learningRate}, the iterates $x_k$ and $\mathscr{X}_t$ satisfy~\ref{ass: nonexplosive}, and the hidden parameters $\beta^*$ satisfy Assumption~\ref{ass: parameterScaling}. Moreover the data $a\sim \mathcal{N}(0,K)$ satisfies Assumption~\ref{ass: data}. Write $W_{\lfloor td \rfloor}:=W(x_{\lfloor td \rfloor})$ and $\mathscr W_t:=W(\mathscr X_t)$ initialized with $\mathscr{X}_0 = x_0$. Then there is an $\varepsilon > 0$ so that for any $T, M > 0$ and $d$ sufficiently large, with overwhelming probability,
\begin{align}
\begin{split}
\sup_{0\leqslant t \leqslant T\wedge \tau_M(S(x_{\lfloor td \rfloor},\cdot), \mathrsfs{S}(t,\cdot))} \norm{S(x_{\lfloor td \rfloor},\cdot) - \mathrsfs{S}(t,\cdot)}_\Gamma &\leqslant d^{-\varepsilon},\\
\sup_{0\leqslant t \leqslant T\wedge \tau_M(S(\mathscr{X}_t, \cdot), \mathrsfs{S}(t,\cdot))} \norm{S(\mathscr{X}_t, \cdot) - \mathrsfs{S}(t,\cdot)}_\Gamma &\leqslant d^{-\varepsilon},\\
\sup_{0\leqslant t \leqslant T\wedge \tau_M(S(x_{\lfloor td \rfloor},\cdot), S(\mathscr{X}_t, \cdot))} \norm{S(x_{\lfloor td \rfloor},\cdot) - S(\mathscr{X}_t, \cdot)}_\Gamma &\leqslant d^{-\varepsilon},
\end{split}
\end{align}
where $\mathrsfs{S}$ solves the partial integro-differential equation \eqref{eq: PDESystem}, and
\begin{equation*}
\tau_M \left(\mathscr{S}_1, \mathscr{S}_2 \right) = \min\{\hat{\tau}_M(\mathscr{S}_1), \hat{\tau}_M(\mathscr{S}_2)  \}.
\end{equation*}
\end{theorem}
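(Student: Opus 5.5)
The plan is to reduce all three estimates to the stability principle, Proposition~\ref{prop: stability}, once we know that $t\mapsto S(x_{\lfloor td\rfloor},\cdot)$ and $t\mapsto S(\mathscr X_t,\cdot)$ are, with overwhelming probability, $(d^{-\varepsilon'},M,T)$-approximate solutions of \eqref{eq: PDESystem} in the sense of Definition~\ref{def: approximateSolution}. Remark~\ref{rem:Lipschitz} already gives condition~\ref{cond:lipschitz} for both processes and notes that the exact solution $\mathrsfs S$ is a $(0,M,T)$-approximate solution. So the real work is condition~\ref{cond:pde} for the two random processes. Given it, we apply Proposition~\ref{prop: stability} to three pairs: $(S(x_{\lfloor td\rfloor},\cdot),\mathrsfs S)$, $(S(\mathscr X_t,\cdot),\mathrsfs S)$ and $(S(x_{\lfloor td\rfloor},\cdot),S(\mathscr X_t,\cdot))$. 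Each pair has its own common stopping time $\tau_M$, exactly as in the statement. This gives bounds $C(M,T)\,d^{-\varepsilon'}\le d^{-\varepsilon}$ for any $\varepsilon<\varepsilon'$ and $d$ large. A union bound over the three events keeps overwhelming probability.

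\textbf{Homogenized SGD.} For condition~\ref{cond:pde} along $\mathscr X_t$, apply It\^o's formula to $S(\mathscr X_t,z)$ for each fixed $z\in\Gamma$. The drift is
\[
-\gamma d\langle\nabla S,\nabla\mathcal R\rangle+\tfrac12\gamma^2 I(B)\langle\nabla^2S,(\nabla\psi)^\top K\nabla\psi\rangle .
\]
Using Lemma~\ref{lem: derivativeS}, $\nabla\mathcal R=\frac{2}{d}(\nabla\psi)^\top K W H(B)e_1$-type expressions, and the fact that $\operatorname{diag}(u),\operatorname{diag}(v),\operatorname{diag}(\beta^*),K$ all commute, every term has the form $\frac1d W^\top p(\operatorname{diag}u,\operatorname{diag}v,K)R(z_i)^{m}\Omega W$. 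We then rewrite each such term as a contour integral or $z_i$-derivative of $S$: polynomial factors via $\frac{1}{2\pi i}\oint q(w)S\,\mathrm dw$, and powers of resolvents via $-\partial_{z_i}$ and $\frac12\partial_{z_i}^2$. This matches the two families of terms in \eqref{eq: PDEFormula}, with $B$ replaced by $B(\mathscr X_t)$. That replacement is exact because $B$ is itself the contour integral \eqref{eq:B_from_S}.

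Some pieces of $\nabla^2 S$ do not close in this way: those contracting $\psi$ with itself coordinatewise, together with the off-diagonal Hessian parts weighted by $\frac1d$. These must be shown to be $O(d^{-1})$ using $\|\mathscr X_t\|_\infty\le M$ from Assumption~\ref{ass: nonexplosive}. The martingale term is $\int\gamma\sqrt{I}\,\langle\nabla S,(\nabla\psi)^\top\sqrt K\,\mathrm d\mathfrak B\rangle$. Its quadratic variation is $O(1/d)$ on the stopped interval, because $\nabla S$ carries a prefactor $\frac1d$ against $O(1)$ coordinates. Hence a Bernstein/BDG-type bound for continuous martingales gives $d^{-1/2+\delta}$ with overwhelming probability, for each fixed $z$. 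To pass to $\sup_{z\in\Gamma}$ we take a $d^{-C}$-net of $\Gamma$, union bound over it, and use Lipschitz dependence in $z$, controlled by the third-derivative bound in Remark~\ref{rem:Lipschitz}.

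\textbf{SGD.} For SGD, write the Doob decomposition of $S(x_{k+1},z)-S(x_k,z)$. A second-order Taylor expansion uses the update \eqref{eq:  sgd_rule_update}, whose increment has entries of size $O(d^{-1/2})$ times Gaussian factors. We take conditional expectations over $a_{k+1}$ given $x_k$, and Gaussian conditioning on $\frac{1}{\sqrt d}W^\top a$ produces the same drift, with $\nabla\mathcal R$ and $I(B)$, plus errors of order $O(d^{-3/2}\mathrm{polylog}\,d)$ per step. Summing over $\lfloor Td\rfloor$ steps gives $O(d^{-1/2+\delta})$, and the Riemann-sum discretization error of the time integral is $O(1/d)$. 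The martingale increments are subgaussian with scale $O(d^{-1})$, via Lemma~\ref{lem:growthGradf} and Gaussian concentration, so Azuma/Freedman plus the same net argument controls them. The third-order Taylor remainder uses pseudo-Lipschitz growth truncated on the overwhelming-probability event where $|r_k|\le \log d$.

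\textbf{Main obstacle.} I expect the hardest step to be the closure computation: showing that every term of the generator, especially the Hessian contraction $\langle\nabla^2S,(\nabla\psi)^\top K\nabla\psi\rangle$ and the conditional-expectation terms for SGD, is either expressible through the operators $\mathscr G_i$ acting on $S$ or negligible uniformly in $z$. I would first handle the isotropic diagonal bookkeeping entrywise in the $3\times3$ blocks. I would then verify that the leftover pieces are of the form $\frac{1}{d^2}\sum_i(\cdot)_i$, hence $O(1/d)$, under the $\ell_\infty$ bound.
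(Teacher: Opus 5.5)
Your proposal is correct and follows the paper's proof. The paper likewise shows (Propositions~\ref{prop: HSGDapprox} and~\ref{prop: SGDapprox}) that $S(\mathscr X_t,\cdot)$ and $S(x_{\lfloor td\rfloor},\cdot)$ are w.o.p.\ $(d^{-\tilde\varepsilon},M,T)$-approximate solutions, using the same It\^o/Doob decompositions, Gaussian conditioning, martingale bounds and net over $\Gamma$ that you sketch, and then applies Proposition~\ref{prop: stability} to the three pairs.

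One correction to your closure discussion: since $K$ is diagonal, $\nabla_x^2 S$ is block-diagonal over the coordinate pairs $(u_i,v_i)$. Consequently the self-contraction terms such as $\frac{2}{d}\tr\bigl((\nabla_u\psi)^4K\Omega\bigr)$, and the same-coordinate $uv$ terms, are $O(1)$, not $O(d^{-1})$. They close exactly through the $\mathds{1}_d$ column of $W$ (e.g.\ as $2E_{13}\,\frac1d W^\top(\nabla_u\psi)^4K\Omega W\,E_{31}$) and must be kept in $\mathscr F$. So for homogenized SGD the drift equals $\mathscr F$ exactly, and the only error is the martingale term.
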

\begin{proof}
We will consider $\mathscr{S}_1(t,z) = S(x_{\lfloor td \rfloor},z)$ and $\mathscr{S}_2(t,z) = S(\mathscr{X}_t,z)$ and suppress the notation by
setting $\tau_M \left(\mathscr{S}_1, \mathscr{S}_2 \right) = \tau_M$. We also note that the cases when $\mathscr{S}_1(t,z) = S(x_{\lfloor td \rfloor},z)$ and $\mathscr{S}_2(t,z) = \mathrsfs{S}(t,z)$
and $\mathscr{S}_1(t,z) = S(\mathscr{X}_t,z)$ and $\mathscr{S}_2(t,z) = \mathrsfs{S}(t,z)$ follow an analogous proof, so for brevity, we do not
present them.

By Proposition~\ref{prop: HSGDapprox}, for some $\tilde{\varepsilon}>0$, we have that $S(\mathscr{X}_t,z)$ is an $(d^{-\tilde{\varepsilon}}, M, T)$-approximate solution with overwhelming probability. Moreover, by Proposition~\ref{prop: SGDapprox}, $S(x_{\lfloor td \rfloor},z)$ is an $(d^{-\tilde{\varepsilon}}, M, T)$-approximate solution. (For the deterministic function $\mathrsfs{S}(t,z)$, it is an $(0, M+1, T)$-approximate solution by definition.) Applying the stability result, Proposition~\ref{prop: stability}, to the three pairs
\[
(S(x_{\lfloor td\rfloor},\cdot),S(\mathscr X_t,\cdot)),\qquad
(S(x_{\lfloor td\rfloor},\cdot),\mathrsfs S(t,\cdot)),\qquad
(S(\mathscr X_t,\cdot),\mathrsfs S(t,\cdot))
\]
gives the three stated bounds, after possibly decreasing $\varepsilon$.
\end{proof}
In the next theorem, we note that one can remove the condition that \textit{both} processes must remain bounded in $\norm{\cdot}_\Gamma$ and separated from $\mathcal{U}$, and reduce this to show that we only need \textit{one} of the processes to satisfy these properties. In this way, we can show that SGD is well-behaved and then conclude that homogenized SGD
must also be well-behaved.

For any $(\varepsilon, M, T)$-approximate solution $\mathscr{S}(t,\cdot)$, we define the stopping time
\begin{equation*}
\hat{\tau}_{M, \eta} (\mathscr{S}) = \inf \{ t\geqslant 0 : \norm{\mathscr{S}(t,\cdot)}_\Gamma > M \; \text{or} \; \sup_{\hat{\mathscr{B}}\notin \mathcal{U}} \norm{\mathscr{B}(t, \mathscr{S})-\hat{\mathscr{B}}}\leqslant \eta \} 
\end{equation*}
where we set
\begin{equation*}
\mathscr{B}(t, \mathscr{S}) = \frac{1}{(2 \pi)^4}\oint_\Gamma z_4 \mathscr{S}(t, z)\, \mathrm{d}z.  
\end{equation*}

Our main theorem requires that \textit{only one} of the statistics stays bounded, and not, in particular, both. To define this, we introduce a stopping time
\begin{equation}\label{eq: stoppingTimeOnlyOne}
\Theta_{M,\eta}^{\mathscr S_1,\mathscr S_2}
:=
\max_{i=1,2}\hat\tau_{M,\eta}(\mathscr S_i).
\end{equation}
We note that $\hat{\tau}_{M, 0} = \hat{\tau}_M$ with $\hat{\tau}_M$ defined in the $(\varepsilon, M, T)$-approximate solution definition.
\begin{theorem}[Concentration under a one-sided stopping time]\label{thm: mainResultOneStoppingTime}
Suppose the risk function $\mathcal{R}(x)$ \eqref{eq: riskSetting} satisfies Assumptions~\ref{ass: fPseudoLipschitz},~\ref{ass: riskRepresentation}, and~\ref{ass: squaredGradientsPseudoLipschitz}. Suppose the learning
rate schedule satisfies~\ref{ass: learningRate}, the iterates $x_k$ and $\mathscr{X}_t$ satisfy~\ref{ass: nonexplosive}, and the hidden parameters $\beta^*$ satisfy~\ref{ass: parameterScaling}. Moreover the data $a\sim \mathcal{N}(0,K)$ satisfies Assumption~\ref{ass: data}. Let $\Theta_{M,\eta}$ be defined by \eqref{eq: stoppingTimeOnlyOne}. Write $W_{\lfloor td \rfloor}:=W(x_{\lfloor td \rfloor})$ and $\mathscr W_t:=W(\mathscr X_t)$ initialized with $\mathscr{X}_0 = x_0$. Then there is an $\varepsilon > 0$ so that for any
$T, M, \eta > 0$ and $d$ sufficiently large, with overwhelming probability,
\begin{align}
\begin{split}
\sup_{0\leqslant t \leqslant T\wedge \Theta_{M, \eta}^{S(x_{\lfloor td \rfloor}, \cdot), \mathrsfs{S}(t,\cdot)}} \norm{S(x_{\lfloor td \rfloor}, \cdot) - \mathrsfs{S}(t,\cdot)}_\Gamma &\leqslant d^{-\varepsilon}, \\
\sup_{0\leqslant t \leqslant T\wedge \Theta_{M, \eta}^{S(\mathscr{X}_t,\cdot), \mathrsfs{S}(t,\cdot)}} \norm{S(\mathscr{X}_t,\cdot) - \mathrsfs{S}(t,\cdot)}_\Gamma &\leqslant d^{-\varepsilon},\\
\sup_{0\leqslant t \leqslant T\wedge \Theta_{M, \eta}^{S(x_{\lfloor td \rfloor}, \cdot), S(\mathscr{X}_t,\cdot)}} \norm{S(x_{\lfloor td \rfloor}, \cdot) - S(\mathscr{X}_t,\cdot)}_\Gamma &\leqslant d^{-\varepsilon},
\end{split}
\end{align}
where $\mathrsfs{S}$ solves the partial integro-differential equation \eqref{eq:  PDESystem}.
\end{theorem}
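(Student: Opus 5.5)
The plan is to deduce Theorem~\ref{thm: mainResultOneStoppingTime} from Theorem~\ref{thm: mainResultTwoStoppingTimes}, applied with a slightly enlarged threshold, together with a continuity/bootstrap argument. Fix one of the three pairs $(\mathscr S_1,\mathscr S_2)$; the other two are identical. Write $\Theta:=\Theta^{\mathscr S_1,\mathscr S_2}_{M,\eta}$. By definition there is an index $i$ with $\hat\tau_{M,\eta}(\mathscr S_i)=\Theta$, so on $[0,\Theta)$ this $\mathscr S_i$ satisfies $\|\mathscr S_i(t,\cdot)\|_\Gamma\le M$ and $\mathscr B(t,\mathscr S_i)\in\mathcal U_\eta$. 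We then apply Theorem~\ref{thm: mainResultTwoStoppingTimes} with $M':=M+1$ and common stopping time $\tau_{M'}:=\tau_{M'}(\mathscr S_1,\mathscr S_2)$. This gives, with overwhelming probability,
\[
\sup_{t\le T\wedge\tau_{M'}}\|\mathscr S_1(t,\cdot)-\mathscr S_2(t,\cdot)\|_\Gamma\le d^{-\varepsilon}.
\]

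The key step is to show that $\tau_{M'}\ge T\wedge\Theta$ on this event, at least for $d$ large. Suppose instead that $\tau_{M'}<T\wedge\Theta$. Then at $\tau_{M'}$ one of two things happens: either some $\mathscr S_j$ has $\|\mathscr S_j\|_\Gamma$ reaching (or exceeding) $M+1$, or some $\mathscr B(\cdot,\mathscr S_j)$ leaves $\mathcal U$.

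For $j=i$, neither can happen, since $\mathscr S_i$ has norm at most $M$ and $\mathscr B(\cdot,\mathscr S_i)$ stays at distance at least $\eta$ from $\mathcal U^c$.

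For the other index, the triangle inequality gives $\|\mathscr S_j\|_\Gamma\le M+d^{-\varepsilon}<M+1$. Similarly, $\|\mathscr B(\cdot,\mathscr S_j)-\mathscr B(\cdot,\mathscr S_i)\|\le C_\Gamma d^{-\varepsilon}<\eta$, using the contour-integral bound $\|\mathscr B_1-\mathscr B_2\|\le C\|\mathscr S_1-\mathscr S_2\|_\Gamma$ from the proof of Proposition~\ref{prop: stability}. Hence $\mathscr B(\cdot,\mathscr S_j)\in\mathcal U$.

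The delicate point is that the estimate is only available on $[0,\tau_{M'}]$, including the endpoint, while exiting occurs at $\tau_{M'}$ itself. There are two ways to handle this.

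First, one can use right-continuity or continuity of the paths. For homogenized SGD and the PDE solution the paths are continuous. For the SGD interpolation, the single-step jump of $S$ is $O(d^{-1/2+\delta})$ w.o.p., which follows from the Gaussian bounds on $\|a_{k+1}\|$ and Lemma~\ref{lem:growthGradf}. So the bound at the exit time still forces $\|\mathscr S_j\|_\Gamma<M+1$ and $\mathscr B\in\mathcal U$ there, a contradiction.

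Alternatively, and more cleanly, one can phrase everything through the stopped processes, which are controlled up to and including $\tau_{M'}$.

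Therefore $T\wedge\Theta\le\tau_{M'}$, and the supremum over $[0,T\wedge\Theta]$ is dominated by the one over $[0,T\wedge\tau_{M'}]$. This gives the claimed $d^{-\varepsilon}$ bound for $d$ large enough that $C_\Gamma d^{-\varepsilon}<\min(1,\eta)$. The constants from Theorem~\ref{thm: mainResultTwoStoppingTimes} now depend on $M+1$ instead of $M$, which is harmless.

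The main obstacle is the endpoint/jump issue for the discrete SGD process. There I would first try to bound the one-step increment $\|S(x_{k+1},\cdot)-S(x_k,\cdot)\|_\Gamma$ using Lemma~\ref{lem: derivativeS} and the non-explosiveness Assumption~\ref{ass: nonexplosive}, then take a union bound over the $\lfloor Td\rfloor$ steps to get a w.o.p.\ statement.
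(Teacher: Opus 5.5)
Your proposal is correct and follows the paper's argument: apply Theorem~\ref{thm: mainResultTwoStoppingTimes} with threshold $M+1$, then use the triangle inequality in $\|\cdot\|_\Gamma$ and, via the contour-integral Lipschitz bound, for $\mathscr B$ to rule out $\tau_{M+1,0}<T\wedge\Theta_{M,\eta}$. On the endpoint issue, your second option is what the paper implicitly uses: the bound from Theorem~\ref{thm: mainResultTwoStoppingTimes} already holds on the closed interval $[0,T\wedge\tau_{M+1,0}]$ and is evaluated at $t=\tau_{M+1,0}$, so the one-step jump estimate is unnecessary though harmless, and your conclusion $T\wedge\Theta_{M,\eta}\le\tau_{M+1,0}$ is slightly more careful than the paper's stated $\tau_{M+1,0}\ge\Theta_{M,\eta}$.
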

\begin{proof}
Fix an $\eta > 0$. For two mappings $\mathscr{S}_1$ and $\mathscr{S}_2$, we define the stopping time
\begin{equation}\label{eq: coupledStoppingTime}
\tau_{M+1, 0}^{\mathscr{S}_1, \mathscr{S}_2} = \min \{ \hat{\tau}_{M+1,0}(\mathscr{S}_1), \hat{\tau}_{M+1,0}(\mathscr{S}_2) \}.
\end{equation}
As in the previous theorem, we will consider $\mathscr{S}_1(t,z) = S(x_{\lfloor td \rfloor},z)$ and $\mathscr{S}_2(t,z) = S(\mathscr{X}_t,z)$ and
suppress the notation by setting $\tau_{M, \eta}^{\mathscr{S}_1, \mathscr{S}_2} = \tau_{M, \eta}$. We also note that the cases when $\mathscr{S}_1(t,z) = S(x_{\lfloor td \rfloor},z)$ and $\mathscr{S}_2(t,z) = \mathrsfs{S}(t,z)$
and $\mathscr{S}_1(t,z) = S(\mathscr{X}_t,z)$ and $\mathscr{S}_2(t,z) = \mathrsfs{S}(t,z)$ follow an analogous proof so for
brevity we do not present them.

By Theorem~\ref{thm: mainResultTwoStoppingTimes}, we have that
\begin{equation}\label{eq: eventStoppingTime}
\sup_{0\leqslant t \leqslant T\wedge \tau_{M+1,0}} \norm{S(x_{\lfloor td \rfloor},\cdot) - S(\mathscr{X}_t,\cdot)}_\Gamma \leqslant d^{-\varepsilon} \quad \text{w.o.p.}
\end{equation}
The remaining component is to replace the stopping time $\tau_{M+1,0}$ which requires \textit{both} statistics
to have $\Gamma$-norm less than $M + 1$ with $\Theta_{M,\eta}$ which only requires \textit{one} of the statistics to remain in the
good set. Denote the event that \eqref{eq: eventStoppingTime} occurs by $A_\varepsilon$ and its complement by $A_\varepsilon^{\mathrm C}$. Then for sufficiently
large $d$, we have
\begin{equation}\label{eq: probEventStoppingTime}
\PP \left[ \Theta_{M,\eta} > \tau_{M+1, 0} \right] \leqslant \PP \left[ A_\varepsilon^{\mathrm C} \right].
\end{equation}
To see this, suppose $\Theta_{M,\eta} > \tau_{M+1,0}$. At time $t=\tau_{M+1,0}$, one of the following exits must occur: $\norm{S(x_{\lfloor td \rfloor},\cdot)}_\Gamma \geqslant M+1$ or $\sup_{\hat{B}\notin \mathcal{U}} \norm{B(x_{\lfloor td \rfloor})-\hat{B}}\leqslant 0$ or $\norm{S(\mathscr{X}_t,\cdot)}_\Gamma \geqslant M+1$ or $\sup_{\hat{B}\notin \mathcal{U}} \norm{B(\mathscr{X}_t)-\hat{B}}\leqslant 0$. On the other hand, since $\tau_{M+1,0} = t < \Theta_{M,\eta}$, then either $\norm{S(x_{\lfloor td \rfloor},\cdot)}_\Gamma \geqslant M$ or $\norm{S(\mathscr{X}_t,\cdot)}_\Gamma \geqslant M$ and then $\sup_{\hat{B}\notin \mathcal{U}} \norm{B(x_{\lfloor td \rfloor})-\hat{B}}> \eta$ or $\sup_{\hat{B}\notin \mathcal{U}} \norm{B(\mathscr{X}_t)-\hat{B}}> \eta$.

Now we consider cases. Suppose $\norm{S(\mathscr{X}_t,\cdot)}_\Gamma \geqslant M+1$. Then $\norm{S(\mathscr{X}_t,\cdot)}_\Gamma$ cannot be less than or
equal to $M$ so it must have been that $\norm{S(x_{\lfloor td \rfloor},\cdot)}_\Gamma \leqslant M$. Since $t = \tau_{M+1,0}$, working on the event that \eqref{eq: eventStoppingTime} occurs, we have that
\begin{equation*}
\norm{S(\mathscr{X}_t,\cdot)}_\Gamma \leqslant  \norm{S(x_{\lfloor td \rfloor},\cdot) - S(\mathscr{X}_t,\cdot)}_\Gamma + \norm{S(x_{\lfloor td \rfloor},\cdot)}_\Gamma \leqslant d^{-\varepsilon} + M.
\end{equation*}
For sufficiently large $d$, then $\norm{S(\mathscr{X}_t,\cdot)}_\Gamma < M + 1$ which is a contradiction.

Suppose $\norm{S(x_{\lfloor td \rfloor},\cdot)}_\Gamma \geqslant M + 1$. Then by reversing the roles of $x_{\lfloor td \rfloor}$ and $\mathscr{X}_t$
in the previous case,
we see that this cannot occur.

Next suppose that $\sup_{\hat{B}\notin \mathcal{U}} \norm{B(\mathscr{X}_t)-\hat{B}}\leqslant 0$. Then $\sup_{\hat{B}\notin \mathcal{U}} \norm{B(\mathscr{X}_t)-\hat{B}}$ cannot be greater than $\eta$. Thus it had to be the case that $\sup_{\hat{B}\notin \mathcal{U}} \norm{B(x_{\lfloor td \rfloor})-\hat{B}}> \eta$. Now working on
the event that \eqref{eq: eventStoppingTime} occurs, we have that
\begin{equation*}
\norm{B(x_{\lfloor td \rfloor})-\hat{B}} \leqslant 
\norm{B(x_{\lfloor td \rfloor})-B(\mathscr{X}_t)} \leqslant C \cdot \sup_{z_4 \in \Gamma_4} \abs{z_4} \cdot
\norm{S(x_{\lfloor td \rfloor},\cdot)-S(\mathscr{X}_t,\cdot)}_\Gamma \leqslant \tilde{C} \cdot d^{-\varepsilon}.
\end{equation*}
where $C$ and $\tilde{C}$ are positive constants. Hence for sufficiently large $d$, we have $\sup_{\hat{B}\notin \mathcal{U}} \norm{B(x_{\lfloor td \rfloor})-\hat{B}} < \eta$, a contradiction.

Lastly suppose $\sup_{\hat{B}\notin \mathcal{U}} \norm{B(x_{\lfloor td \rfloor})-\hat{B}} \leqslant 0$. By reversing the roles of $x_{\lfloor td \rfloor}$ and $\mathscr{X}_t$, we
reach the same conclusion as the previous case.

Hence the inequality \eqref{eq: probEventStoppingTime} holds and thus, $\tau_{M+1,0} \geqslant \Theta_{M,\eta}$ with overwhelming probability. The
result follows.
\end{proof}

We immediately get a corollary which shows that SGD and homogenized SGD concentrate
around the deterministic function $\mathrsfs{S}(t,z)$ which is a solution to the partial integro-differential equation
\eqref{eq: PDESystem} provided Assumption~\ref{ass: nonexplosive} holds.
\begin{corollary}[Non-explosiveness and concentration]\label{cor: boundedNConcentration}
Suppose the assumptions of Theorem~\ref{thm: mainResultOneStoppingTime} hold. Suppose, in addition, for a fixed $T > 0$ and $\eta > 0$ that
\begin{equation}\label{eq: conditionsForTheorem}
\sup_{0\leqslant t \leqslant T} \sup_{\hat{B}\notin \mathcal{U}} \norm{B(x_{\lfloor td \rfloor})-\hat{B}} > \eta \quad \text{w.o.p.}
\end{equation}
Then there is an $\varepsilon > 0$ so that for $d$ sufficiently
large, with overwhelming probability,
\begin{equation}\label{eq: corollarySGD}
\sup_{0\leqslant t \leqslant T} \norm{S(x_{\lfloor td \rfloor},\cdot) - \mathrsfs{S}(t,\cdot)}_\Gamma \leqslant d^{-\varepsilon} \quad \text{and} \quad \sup_{0\leqslant t \leqslant T} \norm{S(x_{\lfloor td \rfloor},\cdot) - S(\mathscr{X}_t,\cdot) }_\Gamma \leqslant d^{-\varepsilon},
\end{equation}
and therefore
\begin{equation}\label{eq: corollaryTriangleIneq}
\sup_{0\leqslant t \leqslant T} \norm{\mathrsfs{S}(t,\cdot) - S(\mathscr{X}_t,\cdot)}_\Gamma \leqslant 2d^{-\varepsilon}.
\end{equation}
\end{corollary}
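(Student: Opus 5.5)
The plan is to deduce the corollary from Theorem~\ref{thm: mainResultOneStoppingTime} by showing that, for a suitable choice of $M$, the one-sided stopping time $\Theta_{M,\eta}$ is at least $T$ with overwhelming probability. On that event the stopped suprema coincide with the unstopped ones.

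\textbf{Step 1: choose $M$ from non-explosiveness.} By Assumption~\ref{ass: nonexplosive}, w.o.p.\ $\sup_{k\leqslant \lfloor Td\rfloor}\|x_k\|_\infty\leqslant M_0$. Then $\psi(x_k)$ has entries bounded by a constant depending only on $M_0$ and $\mathcal Q,l,c$. Together with Assumption~\ref{ass: parameterScaling}, this gives $\frac1d\|W_{\lfloor td\rfloor}\|^2\leqslant C$ uniformly for $t\leqslant T$. Lemma~\ref{lem: normEquivalence} then yields $\|S(x_{\lfloor td\rfloor},\cdot)\|_\Gamma\leqslant C'$ on $[0,T]$ w.o.p. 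I set $M:=C'+1$, where the contour $\Gamma$ is the one fixed in Remark~\ref{rem: fixedContour} using $M_0$.

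\textbf{Step 2: the stopping time.} By hypothesis \eqref{eq: conditionsForTheorem}, w.o.p.\ $B(x_{\lfloor td\rfloor})$ remains at distance more than $\eta$ from $\mathcal U^c$ for all $t\leqslant T$. I read the hypothesis as holding uniformly in $t$, which is the intended meaning. Combined with Step~1, this gives $\hat\tau_{M,\eta}(S(x_{\lfloor td\rfloor},\cdot))\geqslant T$ w.o.p. Since $\Theta_{M,\eta}$ in \eqref{eq: stoppingTimeOnlyOne} is the maximum over the two processes, it follows that $\Theta^{S(x_{\lfloor td\rfloor}),\mathrsfs S}_{M,\eta}\geqslant T$ and $\Theta^{S(x_{\lfloor td\rfloor}),S(\mathscr X_t)}_{M,\eta}\geqslant T$ w.o.p.

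\textbf{Step 3: conclude.} Intersect the finitely many w.o.p.\ events, which is still w.o.p. On this intersection, the first and third bounds of Theorem~\ref{thm: mainResultOneStoppingTime} hold with supremum over all of $[0,T]$, which gives \eqref{eq: corollarySGD}. The triangle inequality in $\|\cdot\|_\Gamma$ then gives \eqref{eq: corollaryTriangleIneq}.

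The main obstacle is the bookkeeping around the stopping times. The definition uses a strict exit condition, and $\Theta$ is a maximum, so one must check that staying in the good set for SGD alone suffices. It does, because Theorem~\ref{thm: mainResultOneStoppingTime} already absorbed the comparison argument showing that the other process cannot exit first. A second point is that the constant $M$ must be chosen independently of $d$; Step~1 handles this through Lemma~\ref{lem: normEquivalence}.
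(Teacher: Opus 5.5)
Your proposal is correct and follows essentially the same route as the paper. The paper's proof also bounds $\|S(x_{\lfloor td\rfloor},\cdot)\|_\Gamma \leqslant C(\|x_{\lfloor td\rfloor}\|_\infty^4+1)$ via Lemma~\ref{lem: normEquivalence} and Assumption~\ref{ass: parameterScaling}. It then uses Assumption~\ref{ass: nonexplosive} together with \eqref{eq: conditionsForTheorem}, read uniformly in $t$ exactly as you do, to obtain $T\leqslant \Theta_{C(M^4+1),\eta}$ w.o.p., and concludes from Theorem~\ref{thm: mainResultOneStoppingTime} and the triangle inequality.
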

\begin{proof}
Define the following stopping time similar to $\Theta_{M,\eta}$ in \eqref{eq: stoppingTimeOnlyOne} by
\begin{equation}
\tilde{\Theta}_{M, \eta}^{\mathscr{S}_1, \mathscr{S}_2} = \max_{i=1,2} \inf \{ t\geqslant 0 : \sup_{\hat{\mathscr{B}}\notin \mathcal{U}} \norm{\mathscr{B}(t, \mathscr{S}_i)-\hat{\mathscr{B}}}\leqslant \eta \}.
\end{equation}
Here we think of $\mathscr{S}_1$ as either homogenized SGD or $\mathrsfs{S}$ and $\mathscr{S}_2$ as SGD. The stopping time $\Theta_{M,\eta}^{\mathscr S_1,\mathscr S_2}$ from
\eqref{eq: stoppingTimeOnlyOne} is controlled by the non-explosiveness and
separation assumptions on the SGD trajectory. By Lemma~\ref{lem: normEquivalence} and Assumption~\ref{ass: parameterScaling}, there exists some $C>0$ independent of $d$ such that
\begin{equation*}
\norm{S(x_{\lfloor td \rfloor},\cdot)}_\Gamma \leqslant \frac{2^4}{d} \norm{W_{\lfloor td \rfloor}}^2 \leqslant 2^4\left( \norm{x_{\lfloor td \rfloor}}_\infty^4 + \norm{\beta^*}_\infty^2 + 1\right) \leqslant C\left( \norm{x_{\lfloor td \rfloor}}_\infty^4 + 1\right). 
\end{equation*}
Consequently, this translates into
\begin{equation*}
\{ t\geqslant 0 : \norm{S(x_{\lfloor td \rfloor},\cdot)}_\Gamma > C \left( M^4 + 1\right)\} \subset \{ t\geqslant 0 : \norm{x_{\lfloor td \rfloor}}_\infty > M\},
\end{equation*}
and so the infimum of the right-hand-side is smaller than the infimum of the left-hand-side. Moreover, we have by Assumption~\ref{ass: nonexplosive} that
\begin{equation*}
T \leqslant \inf \{ t\geqslant 0 : \norm{x_{\lfloor td \rfloor}}_\infty > M\} \quad \text{w.o.p.}
\end{equation*}
Similarly we have that
\begin{equation*}
T \leqslant \inf \{ t\geqslant 0 : \sup_{\hat{B}\notin \mathcal{U}} \norm{B(x_{\lfloor td \rfloor})-\hat{B}}\leqslant \eta \} \quad \text{w.o.p.}
\end{equation*}
Thus, we have that
\begin{equation*}
T \leqslant \tilde{\Theta}_{M, \eta}^{\mathscr{S}_1, \mathscr{S}_2} \leqslant \Theta_{C \left( M^4 + 1\right), \eta}^{\mathscr{S}_1, \mathscr{S}_2} \quad \text{w.o.p.,}
\end{equation*}
where $\mathscr{S}_1$ is either $S(\mathscr{X}_t, \cdot)$ or $\mathrsfs{S}(t, \cdot)$. By Theorem~\ref{thm: mainResultOneStoppingTime}, we immediately get the result in \eqref{eq: corollarySGD}.  A
simple triangle inequality gives the result in \eqref{eq: corollaryTriangleIneq}.
\end{proof}
Lastly, we make one final connection to Theorem~\ref{thm: learningCurves},
proving the result below. 
\begin{proof}[Proof of Theorem~\ref{thm: learningCurves}]
The result immediately follows from Theorem~\ref{thm: mainResultOneStoppingTime} and
Corollary~\ref{cor: boundedNConcentration} after noting that 

\begin{equation*}
B(x) = \frac{1}{(2 \pi)^4}\oint_\Gamma z_4 S(x, z)\, \mathrm{d}z \quad \text{and} \quad \mathrsfs{B}(t) = \frac{1}{(2 \pi)^4}\oint_\Gamma z_4 \mathrsfs{S}(t, z)\, \mathrm{d}z
\end{equation*}

and Lipschitzness of the integral, that is,

\begin{equation*}
\norm{ \oint_\Gamma z_4 \mathscr{S}_1(t,\cdot)\, \mathrm{d}z - \oint_\Gamma z_4 \mathscr{S}_2(t,\cdot)\, \mathrm{d}z } \leqslant C \cdot \norm{ \mathscr{S}_1(t,\cdot) - \mathscr{S}_2(t,\cdot) }_\Gamma \quad \text{for some positive} \quad C>0.
\end{equation*}
\end{proof}

\subsection{Concentration of General Statistics}
We now transfer the resolvent-level concentration result of
Theorem~\ref{thm: mainResultOneStoppingTime} to any statistic
$\varphi:\mathbb{R}^{2d}\to\mathbb{R}$ satisfying
Assumption~\ref{ass: statistic}. This gives Theorem~\ref{thm: mainResultStatistic},
which is a reformulation of Theorem~\ref{thm: statistic}. The result applies,
in particular, to the risk and curvature curves,
$\mathcal{R}(x)$ and $\operatorname{tr}(\nabla^2\mathcal{R}(x))$, as well as
to other generalization metrics covered by Assumption~\ref{ass: statistic}.

In this section, the statistics $\varphi\colon \R^{2d} \to \R$ of interest satisfy a composite structure 
\begin{equation*}
\varphi(x) = g\left(\frac{1}{d} W(x)^\top q_1(\operatorname{diag}(u)) q_2(\operatorname{diag}(v)) q_4(K) W(x)\right) 
\end{equation*}
where $g \colon \R^{3 \times 3}\to \R$ is $\alpha$-pseudo-Lipschitz on $\mathcal{U}$ and $q_1, q_2, q_4$ are polynomials (see Assumption~\ref{ass: statistic}). Note that we assume that $g$ is evaluated on the real-valued matrix recovered by the
contour integral, which lies in $\mathcal U$ up to the stopping time.

Define
\[
\mathrsfs{Q}(t)
:=
\frac{1}{(2\pi)^4}
\oint_{\Gamma}
q_1(z_1)q_2(z_2)q_4(z_4)
\mathrsfs{S}(t,z)\,\mathrm{d}z,
\]
where $\mathrsfs{S}$ solves \eqref{eq: PDESystem}. The deterministic equivalent
of $\varphi(x_{\lfloor td \rfloor})$ and $\varphi(\mathscr{X}_t)$ is then
\begin{equation}\label{eq: defPhi}
\phi(t):=g(\mathrsfs{Q}(t)).
\end{equation}
Thus we state our concentration theorem for $\varphi(x_{\lfloor td \rfloor})$ and $\varphi(\mathscr{X}_t)$.
\begin{theorem}[Concentration of general statistics]\label{thm: mainResultStatistic}
Suppose the Assumptions of Theorem~\ref{thm: mainResultOneStoppingTime} hold. Suppose, in addition, the statistic satisfies a composite structure,
\begin{equation*}
\varphi(x) = g\left(\frac{1}{d} W(x)^\top q_1(\operatorname{diag}(u)) q_2(\operatorname{diag}(v)) q_4(K) W(x)\right) 
\end{equation*}
where $g \colon \R^{3 \times 3}\to \R$ is $\alpha$-pseudo-Lipschitz on $\mathcal{U}$ and $q_1, q_2, q_4$ are polynomials (see Assumption~\ref{ass: statistic}).  Then there is an $\varepsilon > 0$ so that for any $T, M , \eta> 0$ and $d$ sufficiently large, with overwhelming
probability,
\begin{align}
\begin{split}
\sup_{0\leqslant t \leqslant T\wedge \Theta_{M, \eta}^{S(x_{\lfloor td \rfloor}, \cdot), \mathrsfs{S}(t,\cdot)}} \abs{\varphi(x_{\lfloor td \rfloor}) - \phi(t)} &\leqslant d^{-\varepsilon},\\
\sup_{0\leqslant t \leqslant T\wedge \Theta_{M, \eta}^{S(\mathscr{X}_t,\cdot), \mathrsfs{S}(t,\cdot)}} \abs{\varphi(\mathscr{X}_t) - \phi(t)} &\leqslant d^{-\varepsilon},\\
\sup_{0\leqslant t \leqslant T\wedge \Theta_{M, \eta}^{S(x_{\lfloor td \rfloor}, \cdot), S(\mathscr{X}_t, \cdot)}} \abs{\varphi(x_{\lfloor td \rfloor}) - \varphi(\mathscr{X}_t)} &\leqslant d^{-\varepsilon},
\end{split}
\end{align}
where $\phi$ is defined in \eqref{eq: defPhi} and the stopping time $\Theta_{M,\eta}^{\mathscr{S}_1,\mathscr{S}_2}$ is defined in \eqref{eq: stoppingTimeOnlyOne}.
\end{theorem}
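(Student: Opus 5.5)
The plan is to reduce Theorem~\ref{thm: mainResultStatistic} to the resolvent-level concentration of Theorem~\ref{thm: mainResultOneStoppingTime} by Cauchy's integral formula, and then transfer via pseudo-Lipschitzness of $g$, in the spirit of Proposition~\ref{prop: stabilityStatistics}.

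\textbf{Step 1: contour representation.} For any $x$ whose $u$ and $v$ coordinates lie inside $\Gamma_1=\Gamma_2$ (guaranteed up to the stopping time by Assumption~\ref{ass: nonexplosive} and Remark~\ref{rem: fixedContour}), we have
\[
Q(x)=\frac{1}{(2\pi)^4}\oint_\Gamma q_1(z_1)q_2(z_2)q_4(z_4)\,S(x,z)\,\mathrm{d}z ,
\]
with the normalization conventions of \eqref{eq:B_from_S}. To see this, all four factors of $\Omega$ are diagonal and commute. Cauchy's formula in each variable $z_i$ then turns $\oint q_i(z_i)R(z_i;D_i)\,\mathrm{d}z_i$ into $q_i(D_i)$. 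For the $z_3$ contour we use the trivial polynomial $1$, which returns $I_d$ since $\Gamma_3$ encloses $\sigma(\operatorname{diag}(\beta^*))$. Hence $\varphi(x_{\lfloor td\rfloor})=g(Q_1(t))$ and $\varphi(\mathscr X_t)=g(Q_2(t))$, where $Q_j$ is the same contour functional applied to $\mathscr S_1=S(x_{\lfloor td\rfloor},\cdot)$ and $\mathscr S_2=S(\mathscr X_t,\cdot)$. Likewise $\phi(t)=g(\mathrsfs Q(t))$.

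\textbf{Step 2: Lipschitz transfer.} Exactly as in \eqref{eq: Qbound} and \eqref{eq: Qlipschitz}, the map $\mathscr S\mapsto\mathscr Q$ is linear and bounded in $\|\cdot\|_\Gamma$, with constant depending on $\sup_{\Gamma_i}|q_i|$ and the contour lengths, all independent of $d$. Before $\Theta_{M,\eta}$, at least one of the two processes has $\|\cdot\|_\Gamma\le M$. On the w.o.p.\ event of Theorem~\ref{thm: mainResultOneStoppingTime}, the other process is then within $d^{-\varepsilon}$ of it, so both have norm at most $M+1$ and both $\mathscr Q$'s are bounded by $C(M+1)$. The $\alpha$-pseudo-Lipschitz bound on $g$ then gives
\[
|g(\mathscr Q_1)-g(\mathscr Q_2)|\le L(g)\bigl(1+2(C(M+1))^\alpha\bigr)\,C\,\|\mathscr S_1-\mathscr S_2\|_\Gamma\le C' d^{-\varepsilon}.
\]
We apply this to each of the three pairs, and shrink $\varepsilon$ slightly to absorb $C'$ for large $d$.

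\textbf{Main obstacle.} The delicate point is ensuring that $g$ is only evaluated where its pseudo-Lipschitz bound holds, namely on $\mathcal U$ and at real matrices. The $\mathscr Q$'s are real: $Q(x)$ is real by construction, and $\mathrsfs Q$ is a limit of real quantities within $d^{-\varepsilon}$. The issue is \emph{on $\mathcal U$}. The stopping time only controls $\mathscr B$, not $\mathscr Q$, so membership of $\mathscr Q$ in $\mathcal U$ is not guaranteed by the stopping time itself. I would handle this by noting that the remark after Assumption~\ref{ass: statistic} states $g$ is evaluated on the real matrix recovered by the contour integral, assumed to lie in $\mathcal U$ up to the stopping time. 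In the main case $\mathcal U=\mathbb R^{3\times 3}$ (Theorem~\ref{thm: statistic}) the issue disappears entirely. Combined with Corollary~\ref{cor: boundedNConcentration} to remove the stopping time, this yields Theorem~\ref{thm: statistic}.
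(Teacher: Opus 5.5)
Your proposal is correct and follows essentially the paper's argument. It uses the same ingredients: the Cauchy-integral identity $Q(x)=\frac{1}{(2\pi)^4}\oint_\Gamma q_1(z_1)q_2(z_2)q_4(z_4)S(x,z)\,\mathrm{d}z$, the $\|\cdot\|_\Gamma$-Lipschitz bounds \eqref{eq: Qbound}--\eqref{eq: Qlipschitz}, and pseudo-Lipschitzness of $g$ on the stopped, bounded region. The only difference is the order of steps: the paper applies Proposition~\ref{prop: stabilityStatistics} under the two-sided stopping time $\tau_{M+1,0}$ and then re-runs the stopping-time removal of Theorem~\ref{thm: mainResultOneStoppingTime}, whereas you transfer directly from the one-sided resolvent bound of that theorem (using the $d^{-\varepsilon}$ closeness to bound the second process by $M+1$). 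Your ordering is slightly more economical and yields the $\Theta_{M,\eta}$ form exactly as stated.
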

\begin{proof}
As in the proof of Theorem~\ref{thm: mainResultOneStoppingTime}, we define the stopping time 
$\tau_{M+1, \eta}^{\mathscr{S}_1, \mathscr{S}_2}$ as in \eqref{eq: coupledStoppingTime} and suppress
the notation by setting $\tau_{M+1, \eta}^{\mathscr{S}_1, \mathscr{S}_2} = \tau_{M, \eta}$. We will consider the case when $\mathscr{S}_1(t,\cdot) = S(x_{\lfloor td \rfloor},\cdot)$ and $\mathscr{S}_2(t,\cdot) = S(\mathscr{X}_t,\cdot)$. The other cases will follow by analogous proof.

By Proposition~\ref{prop: HSGDapprox}, we have that $S(\mathscr{X}_t,z)$ is an $(d^{-\tilde{\varepsilon}}, M+1, T)$-approximate solution with
overwhelming probability. Moreover, by Proposition~\ref{prop: SGDapprox}, the function $S(x_{\lfloor td \rfloor},z)$ is an $(d^{-\tilde{\varepsilon}}, M+1, T)$-approximate solution. (For the deterministic function $\mathrsfs{S}$, it is a $(0, M+1, T)$-approximate solution by definition.) We observe that
\begin{align*}
\frac{1}{(2 \pi)^4} \oint_\Gamma
q_1(z_1)q_2(z_2)q_4(z_4)
S(x_{\lfloor td\rfloor},z)\,\mathrm{d}z
&=
\frac{1}{d}
W_{\lfloor td\rfloor}^{\top}
q_1(\operatorname{diag}(u_{\lfloor td\rfloor}))
q_2(\operatorname{diag}(v_{\lfloor td\rfloor}))
q_4(K)
W_{\lfloor td\rfloor},\\[5pt]
\frac{1}{(2 \pi)^4} \oint_\Gamma q_1(z_1) q_2(z_2) q_4(z_4) S(\mathscr{X}_t,z) \, \mathrm{d}z &= \frac{1}{d}
\mathscr W_t^\top
q_1(\operatorname{diag}(\mathscr u_t))
q_2(\operatorname{diag}(\mathscr v_t))
q_4(K)
\mathscr W_t.
\end{align*}
We apply Proposition~\ref{prop: stabilityStatistics} to conclude that  there exists a $\varepsilon > 0$ such that
\begin{equation}
\sup_{0\leqslant t \leqslant T \wedge \tau_{M+1,0}} \abs{\varphi(x_{\lfloor td \rfloor}) - \varphi(\mathscr{X}_t)} \leqslant d^{-\varepsilon} \quad \text{w.o.p.}
\end{equation}
Using the same argument as in Theorem~\ref{thm: mainResultOneStoppingTime}, 
we can remove the stopping time $\tau_{M+1,0}$ and replace
it with $\Theta_{M,0}$ for sufficiently large $d$.
\end{proof}
Lastly we formulate an immediate corollary which follows directly from the proofs of Theorem~\ref{thm: mainResultStatistic} and Corollary~\ref{cor: boundedNConcentration}.

\begin{corollary}\label{cor: generalStatResult}
Suppose the Assumptions of Theorem~\ref{thm: mainResultStatistic} and Corollary~\ref{cor: boundedNConcentration} hold. For any fixed $T>0$, there exists $\varepsilon>0$ such that, for $d$
sufficiently large, with overwhelming probability,
\begin{equation}
\sup_{0\leqslant t \leqslant T} \abs{\varphi(x_{\lfloor td \rfloor}) - \phi(t)} \leqslant d^{-\varepsilon}\quad \text{and} \quad \sup_{0\leqslant t \leqslant T} \abs{\varphi(x_{\lfloor td \rfloor}) - \varphi(\mathscr{X}_t)} \leqslant d^{-\varepsilon},   
\end{equation}
and therefore
\begin{equation}
\sup_{0\leqslant t \leqslant T} \abs{\varphi(\mathscr{X}_t) - \phi(t)} \leqslant 2d^{-\varepsilon}.
\end{equation}
\end{corollary}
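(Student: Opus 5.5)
The corollary should follow by combining the stopping-time removal in Corollary~\ref{cor: boundedNConcentration} with the statistic-level concentration of Theorem~\ref{thm: mainResultStatistic}. Fix $T>0$ and $\eta>0$ as in \eqref{eq: conditionsForTheorem}. I would first reuse the estimate from the proof of Corollary~\ref{cor: boundedNConcentration}. By Lemma~\ref{lem: normEquivalence} and Assumption~\ref{ass: parameterScaling}, $\norm{S(x_{\lfloor td\rfloor},\cdot)}_\Gamma\leqslant C(\norm{x_{\lfloor td\rfloor}}_\infty^4+1)$. Assumption~\ref{ass: nonexplosive} then gives, with overwhelming probability, that $\norm{S(x_{\lfloor td\rfloor},\cdot)}_\Gamma\leqslant M':=C(M^4+1)$ for all $t\leqslant T$. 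Assumption \eqref{eq: conditionsForTheorem} shows that $B(x_{\lfloor td\rfloor})$ stays at distance more than $\eta$ from the complement of $\mathcal U$ on $[0,T]$. Hence $\hat\tau_{M',\eta}(S(x_{\lfloor td\rfloor},\cdot))\geqslant T$ w.o.p. Because $\Theta_{M',\eta}^{\mathscr S_1,\mathscr S_2}$ is the maximum of the two individual stopping times, it follows that $\Theta_{M',\eta}^{\mathscr S_1,\mathscr S_2}\geqslant T$ w.o.p. for both pairs that contain SGD: $(S(x_{\lfloor td\rfloor},\cdot),\mathrsfs S(t,\cdot))$ and $(S(x_{\lfloor td\rfloor},\cdot),S(\mathscr X_t,\cdot))$.

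Next I apply Theorem~\ref{thm: mainResultStatistic} with $M'$ in place of $M$. Its first and third bounds hold with overwhelming probability up to $T\wedge\Theta_{M',\eta}$. Intersecting with the event $\{\Theta_{M',\eta}\geqslant T\}$, which has overwhelming probability, turns these into bounds on all of $[0,T]$. This gives $\sup_{t\leqslant T}|\varphi(x_{\lfloor td\rfloor})-\phi(t)|\leqslant d^{-\varepsilon}$ and $\sup_{t\leqslant T}|\varphi(x_{\lfloor td\rfloor})-\varphi(\mathscr X_t)|\leqslant d^{-\varepsilon}$. A finite intersection of overwhelming-probability events is still overwhelming, since we lose at most a constant factor in $e^{-\omega(d)}$. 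For $\varepsilon$ I take the minimum of the exponents from the two applications.

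The last display then follows from the triangle inequality: $|\varphi(\mathscr X_t)-\phi(t)|\leqslant|\varphi(\mathscr X_t)-\varphi(x_{\lfloor td\rfloor})|+|\varphi(x_{\lfloor td\rfloor})-\phi(t)|\leqslant 2d^{-\varepsilon}$ uniformly in $t\leqslant T$.

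The only delicate point is the first step. The one-sided stopping time must be controlled by the SGD trajectory alone, and this control must be uniform on $[0,T]$. That is why $\Theta$ is defined as a maximum rather than a minimum: a maximum only needs the SGD process to stay bounded and inside $\mathcal U_\eta$, and that is exactly what Assumption~\ref{ass: nonexplosive} and \eqref{eq: conditionsForTheorem} supply. I would also check that $g$ is evaluated only where its pseudo-Lipschitz bound is valid, namely on matrices in $\mathcal U$, which is guaranteed before the stopping time.
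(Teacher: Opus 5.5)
Your proposal is correct and matches the paper's route: the paper proves this corollary only by pointing to the proofs of Theorem~\ref{thm: mainResultStatistic} and Corollary~\ref{cor: boundedNConcentration}, and those are exactly the two ingredients you combine. Non-explosiveness together with \eqref{eq: conditionsForTheorem} gives $\Theta_{C(M^4+1),\eta}\geqslant T$ with overwhelming probability, because $\Theta$ is a maximum; this removes the stopping time in Theorem~\ref{thm: mainResultStatistic}, and the final bound then follows from the triangle inequality.
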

Theorem~\ref{thm: statistic} follows by applying Corollary~\ref{cor: generalStatResult} to the statistic $\varphi$ appearing in the theorem statement.

\section{SGD and Homogenized SGD are Approximate Solutions}\label{sec: approxSols}
In this section, we show that the resolvent statistic
$t\mapsto S(x_{\lfloor td\rfloor},z)$ associated with SGD and the corresponding
statistic $t\mapsto S(\mathscr X_t,z)$ associated with homogenized SGD satisfy the partial integro-differential equation
\eqref{eq: PDESystem} up to a small error. The argument uses a martingale method
in the spirit of diffusion approximation \citep{ethier1986markov}: after applying
a Doob decomposition to the statistic $S$, we show that the martingale terms are
negligible and that the drift terms match the operator $\mathscr F$.

Recall the matrix-valued statistics
\[
S(x,z)
=
\frac1d W(x)^\top\Omega(x,z)W(x)
\in\mathbb C^{3\times 3},
\qquad
B(x)
=
\frac1d W(x)^\top K W(x)
\in\mathbb R^{3\times 3},
\]
where $W(x)=[\,\psi(x)\mid\beta^*\mid\mathds 1_d\,]$ and
$\Omega(x,z)$ is defined in \eqref{eq: Omega}.

We first show that both homogenized SGD and SGD on $S(\cdot, z)$ are $(\varepsilon, M, T)$-approximate solutions as defined in Definition~\ref{def: approximateSolution}. Then by Proposition~\ref{prop: stability},  it is immediately implied that both homogenized SGD and SGD on $S(\cdot, z)$ are uniformly close. Finally, Proposition~\ref{prop: stabilityStatistics} establishes that the same hold for
any statistic $\varphi(x)$ satisfying Assumption~\ref{ass: statistic}. In order to show that both homogenized SGD and
SGD on $S(\cdot,z)$ are $(\varepsilon, M, T)$-approximate solutions, we perform a Doob decomposition for both
homogenized SGD and SGD and then show that both martingale terms are small.

To compare homogenized SGD with SGD, we rescale time by setting $k=\lfloor td \rfloor$. 
Thus, one unit of continuous time corresponds to $d$ SGD updates, and we write
\[
x_{td} = x_{\lfloor td \rfloor} \quad \text{(SGD)} \qquad \text{and} \qquad \mathscr{X}_t \quad \text{(HSGD)}.
\]

Throughout this section, expressions such as $S(x_{\lfloor td\rfloor},z)$ are understood under this identification, and scalar statistics are real-valued $\varphi:\mathbb{R}^{2d}\to\mathbb{R}$.

When applying the scalar It\^o, Taylor, Doob decomposition, or martingale
estimates to the complex-valued matrix statistic
$S(\cdot,z)\in\mathbb{C}^{3\times 3}$, we do so coordinatewise. Namely, for
$a,b\in\{1,2,3\}$ and fixed $z\in\Gamma$, define the real-valued functions
\[
S_{ab}^{\operatorname{Re},z}(x):=\operatorname{Re}S_{ab}(x,z),
\qquad
S_{ab}^{\operatorname{Im},z}(x):=\operatorname{Im}S_{ab}(x,z).
\]
All scalar identities and estimates below are applied to these real-valued
functions. The corresponding complex matrix-valued identities are then obtained
by recombining real and imaginary parts. For example,
\[
\bigl(\mathcal M_t(S)(z)\bigr)_{ab}
:=
\mathcal M_t\bigl(S_{ab}^{\operatorname{Re},z}\bigr)
+
i\,\mathcal M_t\bigl(S_{ab}^{\operatorname{Im},z}\bigr).
\]
Since there are only finitely many entries, passing from scalar estimates for
the real and imaginary parts to matrix estimates for $S$ only changes constants.

Our first argument is a net argument showing that we do not need to work with every $z\in \Gamma \subset \mathbb{C}^4$, but
only polynomially many in $d$. For this, recall $\Gamma$ defined in Remark~\ref{rem: fixedContour}. For a fixed $\delta > 0$, we say that $\Gamma_i^\delta$ is a $d^{-\delta}$-mesh of $\Gamma_i$ if $\Gamma_i^\delta \subset \Gamma_i$ and for every $z_i\in \Gamma_i$ there exists a $\bar{z}_i \in \Gamma_i^\delta$ such that $\abs{z_i - \bar{z}_i}<d^{-\delta}$. 
\begin{lemma}[Net argument]\label{lem: net_argument}
Fix $T,M>0$ and let $\delta>0$. For each $i=1,\ldots,4$, let
$\Gamma_i^\delta$ be a $d^{-\delta}$-mesh of $\Gamma_i$, chosen so that
\[
|\Gamma_i^\delta|\leqslant C_i d^\delta .
\]
Set
\[
\Gamma^\delta
:=
\Gamma_1^\delta\times\Gamma_2^\delta\times\Gamma_3^\delta\times\Gamma_4^\delta .
\]
Then
\[
|\Gamma^\delta|
=
\prod_{i=1}^4 |\Gamma_i^\delta|
\leqslant C_\Gamma d^{4\delta},
\]
where $C_\Gamma:=\prod_{i=1}^4 C_i$ depends only on the fixed contour $\Gamma$.

Let $S(t,z)$ denote either $S(x_{\lfloor td\rfloor},z)$ or
$S(\mathscr X_t,z)$, and suppose that
\begin{equation}\label{eq:net_argument_gamma_delta}
\sup_{0\leqslant t\leqslant \hat\tau_M(S)\wedge T}
\left\|
S(t,\cdot)-S(0,\cdot)
-
\int_0^t
\mathscr F\bigl(\cdot,S(s,\cdot)\bigr)\,\mathrm ds
\right\|_{\Gamma^\delta}
\leqslant \varepsilon,
\end{equation}
where
\[
\hat\tau_M(S)
:=
\inf\left\{
t\ge0:
\|S(t,\cdot)\|_\Gamma>M
\ \text{or}\
B(t,S)\notin\mathcal U
\right\},
\qquad
B(t,S)
:=
\frac{1}{(2\pi)^4}
\oint_\Gamma z_4 S(t,z)\,\mathrm dz .
\]
Then the approximate PDE residual bound ~\ref{cond:lipschitz} in
Definition~\ref{def: approximateSolution} holds on the full contour $\Gamma$
with error $\varepsilon+C d^{-\delta}$, that is,
\[
\sup_{0\leqslant t\leqslant \hat\tau_M(S)\wedge T}
\left\|
S(t,\cdot)-S(0,\cdot)
-
\int_0^t
\mathscr F\bigl(\cdot,S(s,\cdot)\bigr)\,\mathrm ds
\right\|_{\Gamma}
\le
\varepsilon+C d^{-\delta}.
\]
Here $C=C(M,T,\Gamma,\|K\|_{\mathrm{op}},\|\beta^*\|_\infty,
\bar\gamma,L(I),L(h))$ is positive and independent of $d$. Consequently, if $S$ also satisfies the derivative
regularity condition~\ref{cond:lipschitz}, then $S$ is an
$(\varepsilon+C d^{-\delta},M,T)$-approximate solution.
\end{lemma}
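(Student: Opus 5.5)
The plan is a standard continuity argument: the residual
\[
\mathcal E(t,z):=S(t,z)-S(0,z)-\int_0^t\mathscr F\bigl(z,S(s,\cdot)\bigr)\,\mathrm ds
\]
is controlled on the mesh by hypothesis, so it suffices to show that $z\mapsto\mathcal E(t,z)$ is Lipschitz on $\Gamma$, uniformly in $d$ and in $t\leqslant\hat\tau_M(S)\wedge T$. Then for any $z\in\Gamma$ we pick $\bar z\in\Gamma^\delta$ with $|z_i-\bar z_i|<d^{-\delta}$ in each coordinate. This gives $\|\mathcal E(t,z)\|\leqslant\|\mathcal E(t,\bar z)\|+C|z-\bar z|\leqslant\varepsilon+4Cd^{-\delta}$, and taking the supremum over $z$ and $t$ yields the claim. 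The final sentence about approximate solutions is then immediate from Definition~\ref{def: approximateSolution}.

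\textbf{Step 1: Lipschitz bounds for $S(t,\cdot)$ itself.} Since $S(x,z)=\frac1dW^\top\Omega(x,z)W$ and $\partial_{z_i}\Omega=-R(z_i;D_i)\Omega$, the derivative bound from Remark~\ref{rem:Lipschitz} and Lemma~\ref{lem: normEquivalence} (resolvents bounded by $2$ on $\Gamma$) gives
\[
\|\partial_{z_i}S(x,\cdot)\|_\Gamma\leqslant C\|S(x,\cdot)\|_\Gamma\leqslant CM
\]
for $t\leqslant\hat\tau_M$. Higher $z$-derivatives are bounded the same way by $C\|S\|_\Gamma$. Integrating along each circle $\Gamma_i$ then shows that $S(t,\cdot)$ and $S(0,\cdot)$ are $CM$-Lipschitz in $z$ with respect to the arc metric. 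Because the radii are bounded below, this metric is comparable to $|z_i-\bar z_i|$ for nearby points, so we may use either.

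\textbf{Step 2: Lipschitz bound for the drift term.} Each summand of $\mathscr F$ in \eqref{eq: PDEFormula} is a scalar prefactor times $z^\zeta(\bigcirc\mathscr G_i)(S(s,\cdot))$.
\begin{itemize}
\item The prefactors $\gamma(s)H(\mathscr B(s))^\top$ and $\gamma(s)^2I(\mathscr B(s))$ do not depend on $z$. By \eqref{eq: HBound} and \eqref{eq: IBound} they are bounded by $C(M)$ up to $\hat\tau_M$.
\item The factor $z^\zeta$ is a smooth function on the compact set $\Gamma$.
\item Each $\mathscr G_i$ acts in the variable $z_i$. The contour-integral version removes the $z_i$-dependence altogether. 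The derivative versions produce $\partial_{z_i}^jS$ with $j\leqslant2$. Differentiating once more in any $z_l$ gives $\partial_{z_l}\partial_{z_i}^jS$, which by the resolvent formula is again of the form $\frac1dW^\top(\text{products of resolvents})\Omega W$. These are bounded by $C\|S\|_\Gamma\leqslant CM$.
\end{itemize}
Hence each summand of $\mathscr F(\cdot,S(s,\cdot))$ is $C(M)$-Lipschitz in $z$, and integrating over $s\in[0,t]$ with $t\leqslant T$ gives a Lipschitz constant $C(M)T$.

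\textbf{Main obstacle.} The delicate point is Step 2. We must ensure that the Lipschitz constant of $\mathscr F(\cdot,S)$ in $z$ involves only the third $z$-derivatives of $S$, and that these are controlled by $\|S\|_\Gamma$ rather than by a constant depending on $d$. This works because $S$ is evaluated at a genuine parameter point $x$, so the explicit resolvent structure is available. The order of the stopping times also needs care: every bound is taken at $s\leqslant\hat\tau_M$, where $\|S(s,\cdot)\|_\Gamma\leqslant M$. Once these are in hand, combining Steps 1 and 2 gives $\|\mathcal E(t,z)-\mathcal E(t,\bar z)\|\leqslant C(M,T,\Gamma,\ldots)\,d^{-\delta}$, which finishes the proof.
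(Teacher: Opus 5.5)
Your proposal is correct and takes essentially the same route as the paper. Both arguments pick $\bar z\in\Gamma^\delta$ within $d^{-\delta}$ of $z$ and bound $\|S(t,z)-S(t,\bar z)\|\leqslant CMd^{-\delta}$ (the paper via the resolvent identity, you by integrating $\partial_{z_i}S$ along the arc, which is equivalent); both then show each summand $z^\zeta(\bigcirc_i\mathscr G_i)(S(s,\cdot))$ is Lipschitz in $z$ with a $d$-independent constant up to $\hat\tau_M$, with the $z$-independent prefactors bounded via \eqref{eq: HBound} and \eqref{eq: IBound}, and conclude with the triangle inequality against the mesh hypothesis (your remark that this needs one extra $z$-derivative, controlled by $\|S\|_\Gamma$ through the resolvent structure, is exactly what the paper asserts more briefly).
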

\begin{proof} We consider only $S(t,z) = S(\mathscr{X}_t, z)$ as the same argument will also hold for SGD. We also will always work with the stopped process, that is, $S(t \wedge \hat{\tau}_M, z)$, where $\hat{\tau}_M = \inf \{ t \geqslant 0 \, : \, \norm{S(t,\cdot)}_{\Gamma} \geqslant M\}$. To simplify the notation, we suppress the $\hat{\tau}_M$ and use $S(t,z)$. 

First, we state some resolvent identities. One such resolvent identity gives
\begin{equation} \label{eq:resolvent_identity}
\norm{R(z_i; D_i) - R(\bar{z}_i; D_i)}_{\mathrm{op}} \leqslant \abs{z_i-\bar{z}_i} \norm{R(z_i; D_i) R(\bar{z}_i; D_i)}_{\mathrm{op}} \quad \text{for any $z_i, \bar{z}_i \in \Gamma_i$.}
\end{equation}

Furthermore, by Neumann series and since $\abs{z_i} > \norm{D_i}_{\mathrm{op}}$ on each $\Gamma_i$, we know that
\begin{equation*}
R\left(z_i; D_i \right) = \left(z_i\cdot I_d - D_i\right)^{-1} = \frac{1}{z_i} \left(I_d - \frac{1}{z_i}D_i \right)^{-1} = \frac{1}{z_i} \sum_{j=0}^\infty \left( \frac{1}{z_i}D_i \right)^j,
\end{equation*}
so then
\begin{equation*}
\norm{R\left(z_i; D_i \right)}_{\mathrm{op}} \leqslant \frac{1}{\abs{z_i}} \sum_{j=0}^\infty \left( \frac{1}{\abs{z_i}}\norm{D_i}_{\mathrm{op}} \right)^j = \frac{1}{\abs{z_i}} \cdot \frac{1}{1 - \frac{1}{\abs{z_i}}\norm{D_i}_{\mathrm{op}}} = \frac{1}{\abs{z_i} - \norm{D_i}_{\mathrm{op}}} \leqslant C
\end{equation*}
and we immediately get 
\begin{equation}\label{eq:resolventBound}
\sup_{z_i \in \Gamma_i} \norm{R(z_i;D_i)}_{\mathrm{op}} \leqslant C(M,\|K\|_{\mathrm{op}},\|\beta^*\|_\infty).
\end{equation}

These bounds will be useful later in the proof. 

Next, with these bounds, we can get estimates on quantities involving $S(t, \cdot)$ where $t$ is fixed and $z$ varies. Fix $z \in \Gamma$ and let $\bar{z} \in \Gamma_{\delta}$ be such that $\abs{z_i-\bar{z}_i} < d^{-\delta}$ for each $i=1,\cdots,4$. Then, using Lemma~\ref{lem: normEquivalence} (and the stopping time $\hat{\tau}_M$), we obtain
\begin{align}
\begin{split} \label{eq:S_3}
\norm{S(t,z) - S(t,\bar{z})} 
&\leqslant \frac{C}{d} \norm{\mathscr{W}_t}^2  \sum_{i=1}^4
\abs{z_i-\bar{z}_i}  \norm{R(z_i; D_i)}_{\mathrm{op}} \norm{R(\bar{z}_i; D_i)}_{\mathrm{op}} \\
& \leqslant
C \norm{S(\mathscr{X}_t, \cdot)}_\Gamma d^{-\delta}\\
& \leqslant 
C \cdot M \cdot d^{-\delta},
\end{split}
\end{align}
where we used the boundedness of the product contour $\Gamma$ in the last inequality.

Similarly, for any $z \in \Gamma$, we have
\[
\norm{S(t,z)} \leqslant \frac{1}{d} \norm{\mathscr{W}_t}^2 \Pi_{i=1}^4 \norm{R(z_i; D_i)}_{\mathrm{op}} \leqslant C \norm{S(\mathscr{X}_t, \cdot)}_\Gamma \leqslant C \cdot M.  
\]

Since $S(t,z)$ is a product of resolvents and $t\leqslant \hat\tau_M$, all finitely many
$z$-derivatives appearing in the terms defining $\mathscr F$ are uniformly
bounded on $\Gamma$. Hence each term
\[
z^\zeta\left(\bigcirc_i\mathscr G_i\right)S(t,z)
\]
is Lipschitz in $z$ on $\Gamma$, with Lipschitz constant depending only on the
stopped bounds and the fixed contour. Thus, since $z, \bar{z} \in \Gamma$ and the contour $\Gamma$ is bounded,
\begin{equation}
\label{eq:S_2}
\norm{z^\zeta 
\left(\bigcirc_{i=1}^4 \mathscr{G}_i\right)
( S(s,z)) - \bar{z}^\zeta 
\left(\bigcirc_{i=1}^4 \mathscr{G}_i\right)
( S(s,\bar{z}))} \leqslant C \cdot M \cdot d^{-\delta},
\end{equation}
where we used equations \eqref{eq: compGBound} and \eqref{eq:S_3}.

Now we are ready to prove the main result of the proposition. For a fixed $t \leqslant \hat{\tau}_{M}$ and $z \in \Gamma$ with $\bar{z} \in \Gamma_{\delta}$ such that $|z_i-\bar{z}_i| \leqslant d^{-\delta}$ for each $i=1,\cdots, 4$,
\begin{equation}
\begin{aligned} \label{eq: S_1}
&\norm{ S(t,z) - 
 S(0,z) - \int_0^t \mathscr{F}(z, S(s, \cdot) ) \, \mathrm{d} s  } \\[10pt]
&\hspace{0.3in}
\leqslant
\norm{ S(t,z)- S(t, \bar{z}) } + \norm{ S(0, z) - S(0, \bar{z}) } + \int_0^t \norm{ \mathscr{F}(z, S(s,\cdot)) - \mathscr{F}(\bar{z}, S(s, \cdot)) } \, \mathrm{d} s 
\\[10pt]
&\hspace{0.4in} + \norm{ S(t,\bar{z}) - 
 S(0,\bar{z}) - \int_0^t \mathscr{F}(\bar{z}, S(s, \cdot) ) \, \mathrm{d} s  }
\\[10pt]
&\hspace{0.3in}
\leqslant C M d^{-\delta} \\[10pt]
&\hspace{0.4in}+ 2 \bar{\gamma} \int_0^t  \norm{H(B(s))} \cdot \norm{z^\zeta 
\left(\bigcirc_{i=1}^4 \mathscr{G}_i\right)
( S(s,z)) - \bar{z}^\zeta 
\left(\bigcirc_{i=1}^4 \mathscr{G}_i\right)
( S(s,\bar{z}))}  \, \mathrm{d} s
\\[10pt]
&\hspace{0.4in}+ C(Q) \bar{\gamma}^2 \int_0^t \abs{ I(B(s)) } \cdot \norm{z^\zeta 
\left(\bigcirc_{i=1}^4 \mathscr{G}_i\right)
( S(s,z)) - \bar{z}^\zeta 
\left(\bigcirc_{i=1}^4 \mathscr{G}_i\right)
( S(s,\bar{z}))}  \, \mathrm{d} s  \\[10pt]
&\hspace{0.4in} +\varepsilon, 
\end{aligned}
\end{equation}
where the omitted terms are bounded in the same way, since $\mathscr F$ is a finite
sum of terms of the two forms in \eqref{eq: PDEFormula}.

Here we used \eqref{eq:S_3} to bound the first two terms in the first inequality and $\varepsilon$ for the last term by the assumption \eqref{eq:net_argument_gamma_delta} in the statement. In the difference
$\mathscr F(z,S(s,\cdot))-\mathscr F(\bar z,S(s,\cdot))$, the coefficients
depending only on $s$ and on $\mathscr B(s)$ are identical. Thus the difference
is controlled by the variation in the monomial and coordinate-operator factor
as $z$ is replaced by $\bar z$.


As we have already shown that $z^\zeta 
\left(\bigcirc_{i=1}^4 \mathscr{G}_i\right)
( S(s,z))$ is Lipschitz in $z$, we only need to bound $\abs{I(B(s))}$ and $\norm{H(B(s))}$. We have already shown a uniform bound on $\abs{I(B(s))}$ and $\norm{H(B(s))}$ in the proof of Proposition~\ref{prop: stability}. Notably, we showed that for $s \leqslant \hat{\tau}_M$, we have that $\abs{I(B(s))} \leqslant C(L(I),M,\alpha)$ and $\norm{H(B(s))} \leqslant C(L(h),M,\alpha)$. 

Last, by taking the supremum over $z \in \Gamma$ and then the supremum over $0 \leqslant t \leqslant (\hat{\tau}_M \wedge T)$ on the left-hand-side of \eqref{eq: S_1} and then using the bounds \eqref{eq:S_3} and \eqref{eq:S_2}, yields the result. 
\end{proof}

It remains to verify the approximate-solution property for the two processes
\[
t\mapsto S(\mathscr X_t,z)
\qquad\text{and}\qquad
t\mapsto S(x_{\lfloor td\rfloor},z).
\]
We do this by applying a Doob decomposition to each process and showing that the resulting martingale terms are negligible.

To do so, it will be convenient to work directly with the stopped process $x_{t \wedge \hat{\tau}_M}$ on the iterates. Since $\hat{\tau}_M$ is a time based on $S$-values, it is often difficult to apply to iterates of SGD and homogenized SGD, so we introduce equivalent stopping times
\begin{align}
\begin{split}
\vartheta_M &= \inf \{t \geqslant 0 : \frac{1}{d}\norm{W_{\lfloor td \rfloor}}^2 > M \text{ or } B\left(x_{\lfloor td\rfloor}\right) \notin \mathcal{U}\} \quad \text{or}\\
\vartheta_M &= \inf \{t \geqslant 0 : \frac{1}{d}\norm{\mathscr{W}_t}^2 > M \text{ or } B\left(\mathscr{X}_t\right) \notin \mathcal{U} \}.
\end{split}
\end{align} 
We overload the notation $\vartheta_M$ to be either applied to SGD iterates, $x_{\lfloor td \rfloor}$ or homogenized SGD iterates, $\mathscr{X}_t$, for which it will be made clear in the context which criterion is used. These stopping times are equivalent to $\hat{\tau}_M$ (see Lemma~\ref{lem: normEquivalence}). Moreover, we often drop the $M$ so that $\vartheta = \vartheta_M$. It will be convenient to work with the stopped processes, $x_{\lfloor td \rfloor}^{\vartheta} = x_{\lfloor d(t\wedge \vartheta)\rfloor}$ and $\mathscr{X}_t^{\vartheta} = \mathscr{X}_{t \wedge \vartheta}$. 

\subsection{Homogenized SGD Under the Resolvent Statistic \texorpdfstring{$S$}{S}}
We first verify the approximate-solution property for the homogenized process. 
Specifically, we show that the resolvent statistic
$t\mapsto S(\mathscr X_t,z)$ satisfies the partial integro-differential equation
\eqref{eq: PDESystem} up to a martingale error, and that this martingale error is negligible uniformly on the fixed contour.


With this, we recall \textit{homogenized SGD}
\begin{equation*}
\mathrm{d} \mathscr{X}_t = - \gamma(t) d \nabla \mathcal{R}(\mathscr{X}_t) \mathrm{d}t + \gamma(t) \sqrt{I\left(B(\mathscr{X}_t) \right)} \, \left(\nabla \psi(\mathscr{X}_t)\right)^\top  \sqrt{K}\, \mathrm{d}\mathfrak{B}_t,
\end{equation*}
where $\mathscr{X}_t$ is a stochastic process taking values in $\R^{2d}$ with initial conditions $\mathscr{X}_0 = x_0$, and $\mathrm{d}\mathfrak{B}_t$ is the differential of a standard Brownian motion in $\R^d$.

Along the homogenized trajectory, write
\begin{equation*}
\mathscr{W}_t := [\, \psi \left(\mathscr{X}_t\right) \, |  \,  \beta^* \, | \, \mathds{1}_d \, ]\in \R^{d\times 3}, \quad \text{and} \quad \rho_t := \frac{1}{\sqrt{d}} \begin{pmatrix}
\psi(\mathscr{X}_t) \\
\beta^*
\end{pmatrix}^{\!\top} a
\in \R^2.
\end{equation*}
We study the homogenized process through the resolvent statistic
\begin{equation*}
x \in \R^{2d} \mapsto S(x,z) = \frac{1}{d} W(x)^\top \Omega(x,z) W(x) \in \mathbb{C}^{3 \times 3} \quad \text{for} \quad z \in \Gamma \subset \mathbb{C}^4.
\end{equation*}
We will show that $S\left(\mathscr{X}_t, z\right)$ is an approximate solution \eqref{def: approximateSolution} to the partial integro-differential equation \eqref{eq: PDESystem} which we state below.

\begin{proposition}[Homogenized SGD is an approximate solution]\label{prop: HSGDapprox}
Fix $T, M > 0$ and $0 < \delta < 1/2$. Then $S\left(\mathscr{X}_t, z\right)$ is a $(d^{-\delta}, M, T)$-approximate solution w.o.p., that is, 
\begin{align*}
\sup_{0\leqslant t \leqslant (\hat{\tau}_M \wedge T)} \norm{S(\mathscr{X}_t, \cdot) - S(x_0, \cdot) - \int_0^t \mathscr{F}\left(\cdot, S(\mathscr{X}_s, \cdot)\right) \mathrm{d}s}_\Gamma &\leqslant d^{-\delta} \quad \text{w.o.p.}
\end{align*}
\end{proposition}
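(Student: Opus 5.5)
Fix $z$ on the mesh $\Gamma^\delta$ of Lemma~\ref{lem: net_argument} and work with the stopped process $\mathscr X_t^{\vartheta}$, where $\vartheta=\vartheta_M$ is equivalent to $\hat\tau_M$ by Lemma~\ref{lem: normEquivalence}. Apply It\^o's formula coordinatewise to the real and imaginary parts of each entry $S_{ab}(\cdot,z)$. This gives
\[
S(\mathscr X_t^{\vartheta},z)=S(x_0,z)+\int_0^{t\wedge\vartheta}\Big(-\gamma(s)d\langle\nabla S,\nabla\mathcal R\rangle_{\R^{2d}}+\tfrac{\gamma(s)^2}{2}I(B(\mathscr X_s))\big\langle\nabla^2 S,(\nabla\psi)^\top K\nabla\psi\big\rangle_{\R^{2d\times 2d}}\Big)\mathrm ds+\mathcal M_t(z),
\]
where $\mathcal M_t(z)$ is the stochastic integral of $\gamma\sqrt{I}\,\nabla S^\top(\nabla\psi)^\top\sqrt K$ against $\mathrm d\mathfrak B$.

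The first step is to identify the drift exactly with $\mathscr F(z,S(\mathscr X_s,\cdot))$. For the gradient term I write $d\nabla\mathcal R(x)=2\nabla\psi(x)^\top K W(x)H(B(x))$ via the chain rule through $h(B(x))$. Contracting this against the explicit first derivatives in Lemma~\ref{lem: derivativeS} yields terms of the form $\frac1d W^\top D\,\Omega\,K W$. Here $D$ is a product of factors $\operatorname{diag}(u),\operatorname{diag}(v)$, $R(z_i;D_i)$ and $I_d$, coming from $\nabla_u\psi=2q_{11}\operatorname{diag}(u)+\dots$.

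Because everything is diagonal and commutes, each factor can be rewritten in the resolvent calculus:
\begin{itemize}
\item multiplication by $\operatorname{diag}(u)$ is $z_1 R(z_1)-I$;
\item an extra power of $R(z_1)$ is $-\frac{\mathrm d}{\mathrm dz_1}$ acting on $\Omega$;
\item $R(z_1)^2$ corresponds to $\frac12\frac{\mathrm d^2}{\mathrm dz_1^2}$;
\item a bare $K$ is $z_4$ times the resolvent minus identity;
\item removing a resolvent factor entirely, i.e.\ setting $D_i$'s contribution to the identity, is a contour integral $\frac{1}{2\pi i}\oint_{\Gamma_i} q(w)S\,\mathrm dw$.
\end{itemize}
This produces exactly the $-2\gamma H(B)^\top z^\zeta(\bigcirc\mathscr G_i)S$ terms. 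The entries of $\operatorname{diag}(\psi)$ and $\operatorname{diag}(\beta^*)$ are handled the same way, with the $z_3$ resolvent for $\beta^*$ and the quadratic expression of $\psi$ in $u,v$.

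The It\^o correction is treated the same way. The trace contraction of $\nabla^2 S$ (Lemma~\ref{lem: derivativeS}) with the diagonal matrix $(\nabla\psi)^\top K\nabla\psi$ involves only diagonal entries, so it is again of the form $\frac1d W^\top D\,\Omega W$ times $\gamma^2 I(B)$, and gives the $C(Q)\gamma^2 I(B)$ terms. One subtlety needs a check: some Hessian terms, such as the $\frac{2q_{11}}{d}\Omega[\operatorname{diag}(\psi)\ \cdots]$ blocks, contract to $\frac1d\operatorname{tr}$-type quantities without a $W$ on both sides. These must either also be expressible through $S$ (via the $\mathds 1_d$ column of $W$, which makes $\frac1d\mathds 1^\top D\Omega\mathds 1$ a trace) or be $O(1/d)$. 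I expect the $\mathds 1_d$ column to absorb them exactly. I regard this identification as bookkeeping and not the main obstacle.

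The main work is bounding $\sup_{t\le T}\|\mathcal M_t(z)\|$ by $d^{-\delta}$ w.o.p. Its quadratic variation is
\[
\int_0^{t\wedge\vartheta}\gamma^2 I(B)\,\big\|\sqrt K\nabla\psi\,\nabla S\big\|^2\mathrm ds .
\]
From Lemma~\ref{lem: derivativeS}, each coordinate of $\nabla_u S$ is $\frac1d$ times a product of bounded diagonal entries (using $\|x\|_\infty\le M$ from Assumption~\ref{ass: nonexplosive} and the resolvent bound \eqref{eq:resolventBound}) with $O(1)$ entries of $W$. Hence $\|\nabla S\|^2\le C d\cdot d^{-2}=C/d$ up to the stopping time, and $I(B)$ is bounded there by the pseudo-Lipschitz estimate \eqref{eq: IBound}. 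The quadratic variation is therefore at most $CT/d$.

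By a Bernstein/BDG exponential martingale inequality, $\PP(\sup_t\|\mathcal M_t\|>d^{-\delta})\le C\exp(-c\,d^{1-2\delta})$, which is overwhelming since $\delta<1/2$.

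A union bound over the $|\Gamma^\delta|\le C d^{4\delta}$ mesh points, applied to the $18$ real scalar components, preserves overwhelming probability. Lemma~\ref{lem: net_argument} then extends the bound to all of $\Gamma$ at the cost of $Cd^{-\delta}$; the resulting $2d^{-\delta}$-type error is fine after adjusting $\delta$ slightly. The derivative-regularity condition~\ref{cond:lipschitz} was already verified in Remark~\ref{rem:Lipschitz}, which completes the proof that $S(\mathscr X_t,z)$ is a $(d^{-\delta},M,T)$-approximate solution.
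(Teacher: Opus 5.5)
Your proposal is correct and follows the paper's route. It\^o's formula gives $S(\mathscr X_t,z)=S(x_0,z)+\int_0^t\mathscr F\,\mathrm ds+\mathcal M_t^{\mathrm{HSGD}}$, with the drift identified exactly through resolvent identities; as you expected, the $\mathds 1_d$ column of $W$ absorbs the trace-type Hessian terms, which is exactly the role of the paper's $E_{13}(\cdot)E_{31}$ terms. The martingale is then controlled by an $O(1/d)$ quadratic-variation bound, an exponential martingale tail, a union bound over the $d^{-\delta}$-mesh, and Lemma~\ref{lem: net_argument}; your slight adjustment of $\delta$ to absorb the net-argument error is, if anything, cleaner bookkeeping than the paper's substitution $\hat\delta=1-2\delta$.
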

The proof of this Proposition is deferred to Section~\ref{subsubsec: HSGDproof}.

\subsubsection{Doob Decomposition for Homogenized SGD} 
We begin by applying It\^o calculus to homogenized SGD under smooth statistics
$\varphi:\mathbb{R}^{2d}\to\mathbb{R}$. Later, when applying the resulting
identity to $S(\cdot,z)$, we apply it separately to
$S_{ab}^{\operatorname{Re},z}$ and $S_{ab}^{\operatorname{Im},z}$ for each
matrix entry.


Applying It\^o's lemma, we deduce that 
\begin{align}
\begin{split}
\mathrm{d} \varphi(\mathscr{X}_t) 
&= \inner{\nabla \varphi(\mathscr{X}_t), \mathrm{d} \mathscr{X}_t} + \frac{1}{2}\inner{\nabla^2 \varphi(\mathscr{X}_t), (\mathrm{d} \mathscr{X}_t)^{\otimes 2} } \\
&= - \gamma(t) d \inner{ \nabla \varphi(\mathscr{X}_t), \nabla \mathcal{R}(\mathscr{X}_t)}\, \mathrm{d}t + \gamma(t) \sqrt{I\left(B(\mathscr{X}_t) \right)}\inner{\nabla \varphi(\mathscr{X}_t), \left(\nabla \psi(\mathscr{X}_t)\right)^\top  \sqrt{K}\, \mathrm{d}\mathfrak{B}_t} \\
&+ \frac{\gamma(t)^2}{2}I\left(B(\mathscr{X}_t) \right)\inner{\nabla^2 \varphi(\mathscr{X}_t), \left( \left(\nabla \psi(\mathscr{X}_t)\right)^\top  \sqrt{K}\, \mathrm{d}\mathfrak{B}_t\right)^{\otimes 2} }.
\end{split}
\label{eq:HSGD_2}
\end{align}
We seek to simplify some of the terms in \eqref{eq:HSGD_2}. For this, we flatten the second term in sum:
\begin{equation}
\inner{\nabla \varphi(\mathscr{X}_t), \left(\nabla \psi(\mathscr{X}_t)\right)^\top  \sqrt{K}\, \mathrm{d}\mathfrak{B}_t} = \inner{\left(\nabla \psi(\mathscr{X}_t)\right)^\top  \sqrt{K}, \nabla \varphi(\mathscr{X}_t) \left(\mathrm{d}\mathfrak{B}_t\right)^\top}_{\R^{2d\times d}}.
\label{eq:martingale_HSGD_1}
\end{equation}
Next, we look at the second derivative term of $\varphi$,
\begin{equation}
\label{eq:Hessian_HSGD_1}
\inner{\nabla^2 \varphi(\mathscr{X}_t), \left( \left(\nabla \psi(\mathscr{X}_t)\right)^\top  \sqrt{K}\, \mathrm{d}\mathfrak{B}_t\right)^{\otimes 2} } = \inner{\nabla^2\varphi(\mathscr X_t),
(\nabla\psi(\mathscr X_t))^\top K\nabla\psi(\mathscr X_t)}
\,\mathrm dt,
\end{equation}
where we used the symmetry of $\sqrt{K}$. With this, we can now identify the martingale increment for homogenized SGD, 
\begin{align*}
\mathrm{d} \varphi(\mathscr{X}_t) = &- \gamma(t) d \inner{ \nabla \varphi(\mathscr{X}_t), \nabla \mathcal{R}(\mathscr{X}_t)}\, \mathrm{d}t \\
&+ \frac{\gamma(t)^2}{2}I\left(B(\mathscr{X}_t) \right)\inner{\nabla^2\varphi(\mathscr X_t),
(\nabla\psi(\mathscr X_t))^\top K\nabla\psi(\mathscr X_t)}\, \mathrm{d}t + \mathrm{d}\mathcal{M}_t^\mathrm{HSGD}(\varphi),
\end{align*}
where $\mathrm{d}\mathcal{M}_t^\mathrm{HSGD}(\varphi) := \gamma(t) \sqrt{I\left(B(\mathscr{X}_t) \right)}\inner{\left(\nabla \psi(\mathscr{X}_t)\right)^\top  \sqrt{K}, \nabla \varphi (\mathscr{X}_t) \left(\mathrm{d}\mathfrak{B}_t\right)^\top}_{\R^{2d\times d}}$. By integrating, we derive the Doob decomposition for $\varphi(\mathscr{X}_t)$
\begin{align}
\begin{split}
\varphi(\mathscr{X}_t) 
&
= \varphi(x_0) - \int_0^t \gamma(s)d \inner{ \nabla \varphi(\mathscr{X}_s), \nabla \mathcal{R}(\mathscr{X}_s)}\, \mathrm{d}s
\\
&+\frac{1}{2} \int_0^t \gamma(s)^2 I\left(B(\mathscr{X}_s) \right) \inner{\nabla^2\varphi(\mathscr X_s),
(\nabla\psi(\mathscr X_s))^\top K\nabla\psi(\mathscr X_s)}\, \mathrm{d}s + \int_0^t \mathrm{d} \mathcal{M}_s^{\mathrm{HSGD}}(\varphi). 
\end{split}
\end{align}
\subsubsection{\texorpdfstring{$S(\mathscr{X}_t, z)$}{S(Xt, z)} is an Approximate Solution, Proof of Proposition~\ref{prop: HSGDapprox}}\label{subsubsec: HSGDproof}
The goal in this section is to prove Proposition~\ref{prop: HSGDapprox}, that is, show that
\begin{equation*}
S(\mathscr{X}_t, z) = \frac{1}{d} \mathscr W_t^\top \Omega(x,z) \mathscr W_t \in \mathbb{C}^{3 \times 3}
\end{equation*}
is an approximate solution \eqref{def: approximateSolution} to the partial integro-differential equation in \eqref{eq:  PDESystem}.

The first step is to derive a closed equation for \(S(\mathscr X_t,z)\) using Itô calculus.

\textbf{It\^o calculus applied to $S(\mathscr{X}_t, z)$.}
Recall the expected risk $\mathcal{R}$ can be expressed as a composition, $\mathcal{R}(\mathscr{X}_t) = h \left( B(\mathscr{X}_t)\right)$, for some function $h \colon \R^{3 \times 3}\to \R$ and 
\begin{equation*}
B(\mathscr{X}_t) = \frac{1}{d} \mathscr W_t^\top K \mathscr W_t \in \R^{3\times 3}.    
\end{equation*}
A straightforward application of the chain rule shows that
\begin{align*}
\R^{2d} &\ni \nabla \mathcal{R}(\mathscr{X}_t) = \inner{\nabla B(\mathscr{X}_t), \nabla h\left(B(\mathscr{X}_t)\right)}_{\R^{3 \times 3}} \\[10pt]
&= \frac{2}{d} \left( \nabla h_{11} \cdot \left(\nabla \psi(\mathscr{X}_t)\right)^\top K \psi(\mathscr{X}_t) + \nabla h_{21} \cdot \left(\nabla \psi(\mathscr{X}_t)\right)^\top K \beta^* + \nabla h_{31} \cdot \left(\nabla \psi(\mathscr{X}_t)\right)^\top K \mathds{1}_d \right).
\end{align*}
Using the product rule for It\^o
derivatives, we obtain
\begin{align}\label{eq: ItoS}
\begin{split}
\mathrm{d} S = 
&- \gamma(t) d \inner{ \nabla S(\mathscr{X}_t, z), \nabla \mathcal{R}(\mathscr{X}_t)}\, \mathrm{d}t + \mathrm{d}\mathcal{M}_t^\mathrm{HSGD}(S)\\[8pt]
&+ \frac{\gamma(t)^2}{2}I\left(B(\mathscr{X}_t) \right)\inner{\nabla^2S(\mathscr X_t,z),
(\nabla\psi(\mathscr X_t))^\top K\nabla\psi(\mathscr X_t)}\, \mathrm{d}t \\[8pt]
=&- \gamma(t) d\left(\inner{ \nabla_u S(\mathscr{X}_t, z), \nabla_u \mathcal{R}(\mathscr{X}_t)}+ \inner{ \nabla_v S(\mathscr{X}_t, z), \nabla_v \mathcal{R}(\mathscr{X}_t)}\right) \mathrm{d}t + \mathrm{d}\mathcal{M}_t^\mathrm{HSGD}(S)\\[8pt]
&+ \frac{\gamma(t)^2}{2} I\left(B(\mathscr{X}_t)\right) \Big[\inner{\nabla_u^2 S(\mathscr{X}_t, z), K \left( \nabla_u \psi(\mathscr X_t) \right)^2} \\[8pt]
&+ 2 \inner{\nabla_{uv}^2 S(\mathscr{X}_t, z), \left( \nabla_u \psi(\mathscr X_t) \right)^\top K \nabla_v \psi(\mathscr X_t)} +  \inner{\nabla_v^2 S(\mathscr{X}_t, z), K \left( \nabla_v \psi(\mathscr X_t) \right)^2}\Big] \mathrm{d}t. \\[8pt]
\end{split}
\end{align}

\begin{remark}[Applying scalar formulas to $S$]\upright\label{rem: matrixEntriesV}
The statistic $S(x,z)$ is complex-valued:
\[
S(x,z)\in\mathbb{C}^{3\times 3}.
\]
However, all scalar statistic formulas above were stated for real-valued
functions $\varphi:\mathbb{R}^{2d}\to\mathbb{R}$. Therefore, whenever we apply
these formulas to $S$, we apply them to the real-valued coordinate functions
\[
S_{ab}^{\operatorname{Re},z}(x)
:=
\operatorname{Re}S_{ab}(x,z),
\qquad
S_{ab}^{\operatorname{Im},z}(x)
:=
\operatorname{Im}S_{ab}(x,z),
\]
for each $a,b\in\{1,2,3\}$ and fixed $z\in\Gamma$.

For example, we define the complex-valued HSGD martingale associated with $S$
entrywise by
\[
\bigl(\mathrm d\mathcal M_t^{\mathrm{HSGD}}(S)(z)\bigr)_{ab}
:=
\mathrm d\mathcal M_t^{\mathrm{HSGD}}
\bigl(S_{ab}^{\operatorname{Re},z}\bigr)
+
i\,\mathrm d\mathcal M_t^{\mathrm{HSGD}}
\bigl(S_{ab}^{\operatorname{Im},z}\bigr)
\]
for
\[
\mathrm{d}\mathcal{M}_t^\mathrm{HSGD}(S_{ab}^{\operatorname{Re},z}) :=  \gamma(t) \sqrt{I\left(B(\mathscr{X}_t) \right)}\inner{\left(\nabla \psi(\mathscr{X}_t)\right)^\top \sqrt{K}, \nabla S_{ab}^{\operatorname{Re},z} (\mathscr{X}_t, z) \left(\mathrm{d}\mathfrak{B}_t\right)^\top}_{\R^{2d\times d}},
\]
and analogously for $S_{ab}^{\operatorname{Im},z}$.

Thus matrix-valued stochastic identities involving $S$ are shorthand for the
collection of real-valued identities for the real and imaginary parts of its
entries.

Similarly, we define
\begin{equation*}
\mathcal{M}_t^\mathrm{HSGD}(S) = \int_0^t \mathrm{d} \mathcal{M}_s^\mathrm{HSGD}(S).
\end{equation*}
\end{remark}

We consider the first term in the summation above, and plugging in $\nabla \mathcal{R}$ and using Lemma~\ref{lem: derivativeS}, we have
\allowdisplaybreaks
\begin{align*}
\mathbb{C}^{3 \times 3} \ni \inner{ \nabla_u S(\mathscr{X}_t, z), \nabla_u \mathcal{R}(\mathscr{X}_t)}_{\R^{d}} &= \frac{2}{d^2} ( \nabla h_{11} \cdot \psi(\mathscr{X}_t) + \nabla h_{21} \cdot \beta^* + \nabla h_{31} \cdot \mathds{1}_d ) ^\top \cdot \\
&\hspace{1.0in}\left(\nabla_u \psi(\mathscr{X}_t)\right)^2 K \Omega \begin{bmatrix}
 \psi(\mathscr{X}_t) &  \beta^* & \mathds{1}_d\\
0 & 0 & 0\\
0 & 0 & 0
\end{bmatrix}\\
&+ \frac{2}{d^2} ( \nabla h_{11} \cdot \psi(\mathscr{X}_t) + \nabla h_{21} \cdot \beta^* + \nabla h_{31} \cdot \mathds{1}_d ) ^\top \cdot\\
&\hspace{1.0in}\left(\nabla_u \psi(\mathscr{X}_t)\right)^2 K \Omega \begin{bmatrix}
 \psi(\mathscr{X}_t) &  0 & 0\\
\beta^* & 0 & 0\\
\mathds{1}_d & 0 & 0
\end{bmatrix}\\
&+ \frac{2}{d^2} ( \nabla h_{11} \cdot \psi(\mathscr{X}_t) + \nabla h_{21} \cdot \beta^* + \nabla h_{31} \cdot \mathds{1}_d ) ^\top \cdot \\
&\hspace{-0.5in}\nabla_u \psi(\mathscr{X}_t) \operatorname{diag}(\psi(\mathscr{X}_t)) K R\left(z_1; \operatorname{diag}(\mathscr{U}_t) \right) \Omega \begin{bmatrix}
 \psi(\mathscr{X}_t) &  \beta^* & \mathds{1}_d  \\
 0 & 0 & 0\\
 0 & 0 & 0
\end{bmatrix}\\
&+ \frac{2}{d^2} ( \nabla h_{11} \cdot \psi(\mathscr{X}_t) + \nabla h_{21} \cdot \beta^* + \nabla h_{31} \cdot \mathds{1}_d ) ^\top \cdot \\
&\hspace{-0.3in}\nabla_u \psi(\mathscr{X}_t) \operatorname{diag}(\beta^*) K R\left(z_1; \operatorname{diag}(\mathscr{U}_t) \right) \Omega \begin{bmatrix}
0 & 0 & 0\\
 \psi(\mathscr{X}_t) &  \beta^* & \mathds{1}_d\\
 0 & 0 & 0
\end{bmatrix}\\
&+ \frac{2}{d^2} ( \nabla h_{11} \cdot \psi(\mathscr{X}_t) + \nabla h_{21} \cdot \beta^* + \nabla h_{31} \cdot \mathds{1}_d ) ^\top \cdot \\
&\hspace{0.2in}\nabla_u \psi(\mathscr{X}_t) K R\left(z_1; \operatorname{diag}(\mathscr{U}_t) \right) \Omega \begin{bmatrix}
0 & 0 & 0\\
 0 & 0 & 0\\
 \psi(\mathscr{X}_t) &  \beta^* & \mathds{1}_d
 \end{bmatrix}\\
&= \frac{2}{d} H^\top \cdot \frac{1}{d}\mathscr{W}_t^\top \left(\nabla_u \psi(\mathscr{X}_t)\right)^2 K \Omega \mathscr{W}_t\\
&+ \frac{2}{d^2} \mathscr{W}_t^\top \left(\nabla_u \psi(\mathscr{X}_t)\right)^2 K \Omega \mathscr{W}_t \cdot H\\
&\hspace{-0.6in}+ \frac{2}{d} H^\top \cdot \frac{1}{d}\mathscr{W}_t^\top \nabla_u \psi(\mathscr{X}_t) \operatorname{diag}(\psi(\mathscr{X}_t)) K R\left(z_1; \operatorname{diag}(\mathscr{U}_t) \right) \Omega \mathscr{W}_t\\
&\hspace{-0.6in}+ \frac{2}{d} E_{21} \cdot H^\top \cdot \frac{1}{d}\mathscr{W}_t^\top \nabla_u \psi(\mathscr{X}_t) \operatorname{diag}(\beta^*) K R\left(z_1; \operatorname{diag}(\mathscr{U}_t) \right) \Omega \mathscr{W}_t\\
&\hspace{-0.6in}+ \frac{2}{d} E_{31} \cdot H^\top \cdot \frac{1}{d}\mathscr{W}_t^\top \nabla_u \psi(\mathscr{X}_t) K R\left(z_1; \operatorname{diag}(\mathscr{U}_t) \right) \Omega \mathscr{W}_t,\\[20pt]
\mathbb{C}^{3 \times 3} \ni \inner{ \nabla_v S(\mathscr{X}_t, z), \nabla_v \mathcal{R}(\mathscr{X}_t)}_{\R^{d}} &= \frac{2}{d} H^\top \cdot \frac{1}{d}\mathscr{W}_t^\top \left(\nabla_v \psi(\mathscr{X}_t)\right)^2 K \Omega \mathscr{W}_t\\
&+ \frac{2}{d^2} \mathscr{W}_t^\top \left(\nabla_v \psi(\mathscr{X}_t)\right)^2 K \Omega \mathscr{W}_t \cdot H\\
&\hspace{-0.6in}+ \frac{2}{d} H^\top \cdot \frac{1}{d}\mathscr{W}_t^\top \nabla_v \psi(\mathscr{X}_t) \operatorname{diag}(\psi(\mathscr{X}_t)) K R\left(z_2; \operatorname{diag}(\mathscr{V}_t) \right) \Omega \mathscr{W}_t\\
&\hspace{-0.6in}+ \frac{2}{d} E_{21} \cdot H^\top \cdot \frac{1}{d}\mathscr{W}_t^\top \nabla_v \psi(\mathscr{X}_t) \operatorname{diag}(\beta^*) K R\left(z_2; \operatorname{diag}(\mathscr{V}_t) \right) \Omega \mathscr{W}_t\\
&\hspace{-0.6in}+ \frac{2}{d} E_{31} \cdot H^\top \cdot \frac{1}{d}\mathscr{W}_t^\top \nabla_v \psi(\mathscr{X}_t) K R\left(z_2; \operatorname{diag}(\mathscr{V}_t) \right) \Omega \mathscr{W}_t,
\end{align*}
for 
\begin{equation*}
H\left(B(\mathscr{X}_t)\right) = \left[
\begin{array}{c|c|c}
\nabla_{11} h & 0 & 0 \\ \hline
\nabla_{21} h & 0 & 0 \\ \hline
\nabla_{31} h & 0 & 0
\end{array}\right].
\end{equation*}
Similarly for the second term we have
\allowdisplaybreaks
\begin{align*}
\mathbb{C}^{3\times 3} \ni \inner{\nabla_u^2 S(\mathscr{X}_t, z), K \left( \nabla_u \psi(x) \right)^2}_{\R^{d\times d}} &= \frac{2 q_{11}}{d} \mathds{1}_d^\top \cdot  \left( \nabla_u \psi(x) \right)^2 K \Omega \begin{bmatrix}
\psi(\mathscr{X}_t) &  \beta^* & \mathds{1}_d\\
0 & 0 & 0\\
0 & 0 & 0
\end{bmatrix}\\
&\hspace{0.1in}+ \frac{2 q_{11}}{d} \mathds{1}_d^\top \cdot \left( \nabla_u \psi(x) \right)^2 K \Omega \begin{bmatrix}
\psi(\mathscr{X}_t) &  0 & 0\\
\beta^* & 0 & 0\\
\mathds{1}_d & 0 & 0
\end{bmatrix}\\
&\hspace{-0.75in}+ \frac{2}{d} \mathds{1}_d^\top \cdot \left( \nabla_u \psi(x) \right)^3 K R\left(z_1; \operatorname{diag}(\mathscr{U}_t) \right) \Omega \begin{bmatrix}
\psi(\mathscr{X}_t) & \beta^* & \mathds{1}_d\\
0 & 0 & 0\\
0 & 0 & 0
\end{bmatrix} \\
&\hspace{-0.65in}+ \frac{2}{d} \mathds{1}_d^\top \cdot \left( \nabla_u \psi(x) \right)^3 K R\left(z_1; \operatorname{diag}(\mathscr{U}_t) \right) \Omega \begin{bmatrix}
\psi(\mathscr{X}_t) & 0 & 0\\
\beta^* & 0 & 0\\
\mathds{1}_d & 0 & 0
\end{bmatrix} \\
&\hspace{0.1in}+ \frac{2}{d} \begin{bmatrix}
\tr\left( \left( \nabla_u \psi(x) \right)^4 K \Omega\right) &  0 & 0\\
0 & 0 & 0\\
0 & 0 & 0
\end{bmatrix}\\
&\hspace{-1.2in}+ \frac{2}{d} ( \psi(\mathscr{X}_t))^\top \cdot \left( \nabla_u \psi(x) \right)^2 K
R\left(z_1; \operatorname{diag}(\mathscr{U}_t) \right)^2 \Omega \begin{bmatrix}
\psi(\mathscr{X}_t) &  \beta^* & \mathds{1}_d \\
 0 & 0 & 0\\
 0 & 0 & 0
\end{bmatrix}\\
&\hspace{-1.0in}+ \frac{2}{d} (\beta^*)^\top \cdot \left( \nabla_u \psi(x) \right)^2 K
R\left(z_1; \operatorname{diag}(\mathscr{U}_t) \right)^2 \Omega \begin{bmatrix}
 0 & 0  & 0\\
 \psi(\mathscr{X}_t)  &  \beta^* & \mathds{1}_d\\
 0 & 0  & 0
\end{bmatrix}\\
&\hspace{-0.8in}+ \frac{2}{d} \mathds{1}_d^\top \cdot \left( \nabla_u \psi(x) \right)^2 K
R\left(z_1; \operatorname{diag}(\mathscr{U}_t) \right)^2 \Omega \begin{bmatrix}
 0 & 0  & 0\\
 0 & 0  & 0\\
 \psi(\mathscr{X}_t)  &  \beta^* & \mathds{1}_d
\end{bmatrix}\\[15pt]
&= 2 q_{11} E_{13} \cdot \frac{1}{d}\mathscr{W}_t^\top \left(\nabla_u \psi(\mathscr{X}_t)\right)^2 K\Omega \mathscr{W}_t\\
&+ 2 q_{11} \frac{1}{d}\mathscr{W}_t^\top \left(\nabla_u \psi(\mathscr{X}_t)\right)^2 K\Omega \mathscr{W}_t \cdot E_{31}\\
&+ 2 E_{13} \cdot \frac{1}{d}\mathscr{W}_t^\top \left(\nabla_u \psi(\mathscr{X}_t)\right)^3 K R\left(z_1; \operatorname{diag}(\mathscr{U}_t) \right) \Omega \mathscr{W}_t\\
&+ 2 \frac{1}{d}\mathscr{W}_t^\top \left(\nabla_u \psi(\mathscr{X}_t)\right)^3 K R\left(z_1; \operatorname{diag}(\mathscr{U}_t) \right) \Omega \mathscr{W}_t \cdot E_{31}\\
&+ 2 E_{13} \cdot \frac{1}{d}\mathscr{W}_t^\top \left(\nabla_u \psi(\mathscr{X}_t)\right)^4 K\Omega \mathscr{W}_t \cdot E_{31}\\
&+ 2 \frac{1}{d}\mathscr{W}_t^\top \left( \nabla_u \psi(x) \right)^2 K
R\left(z_1; \operatorname{diag}(\mathscr{U}_t) \right)^2 \Omega \mathscr{W}_t,
\end{align*}
\begin{align*}
\mathbb{C}^{3\times 3} \ni \inner{\nabla_{uv}^2 S(\mathscr{X}_t, z), \left( \nabla_u \psi(x) \right)^\top K \nabla_v \psi(x)}_{\R^{d\times d}} &=\\
&\hspace{-0.2in}2 q_{12} E_{13} \cdot \frac{1}{d}\mathscr{W}_t^\top 
\left(\nabla_u \psi(\mathscr{X}_t)\right) \left(\nabla_v \psi(\mathscr{X}_t)\right) K\Omega \mathscr{W}_t\\
&\hspace{-0.4in}+ 2 q_{12} \frac{1}{d}\mathscr{W}_t^\top \left(\nabla_u \psi(\mathscr{X}_t)\right) \left(\nabla_v \psi(\mathscr{X}_t)\right) K\Omega \mathscr{W}_t \cdot E_{31}\\
&\hspace{-1.1in}+ E_{13} \cdot \frac{1}{d}\mathscr{W}_t^\top \left(\nabla_u \psi(\mathscr{X}_t)\right) \left(\nabla_v \psi(\mathscr{X}_t)\right)^2 K R\left(z_1; \operatorname{diag}(\mathscr{U}_t) \right) \Omega \mathscr{W}_t\\
&\hspace{-1.1in}+ \frac{1}{d}\mathscr{W}_t^\top \left(\nabla_u \psi(\mathscr{X}_t)\right) \left(\nabla_v \psi(\mathscr{X}_t)\right)^2 K R\left(z_1; \operatorname{diag}(\mathscr{U}_t) \right) \Omega \mathscr{W}_t \cdot E_{31}\\
&\hspace{-1.1in}+ E_{13} \cdot \frac{1}{d}\mathscr{W}_t^\top \left(\nabla_u \psi(\mathscr{X}_t)\right)^2 \left(\nabla_v \psi(\mathscr{X}_t)\right) K R\left(z_2; \operatorname{diag}(\mathscr{V}_t) \right) \Omega \mathscr{W}_t\\
&\hspace{-1.1in}+ \frac{1}{d}\mathscr{W}_t^\top \left(\nabla_u \psi(\mathscr{X}_t)\right)^2 \left(\nabla_v \psi(\mathscr{X}_t)\right) K R\left(z_2; \operatorname{diag}(\mathscr{V}_t) \right) \Omega \mathscr{W}_t \cdot E_{31}\\
&\hspace{-0.6in}+ 2 E_{13} \cdot \frac{1}{d}\mathscr{W}_t^\top \left(\nabla_u \psi(\mathscr{X}_t)\right)^2 \left(\nabla_v \psi(\mathscr{X}_t)\right)^2 K\Omega \mathscr{W}_t \cdot E_{31}\\
&\hspace{-1.7in}+ \frac{1}{d}\mathscr{W}_t^\top \left(\nabla_u \psi(\mathscr{X}_t)\right) \left(\nabla_v \psi(\mathscr{X}_t)\right) K
R\left(z_1; \operatorname{diag}(\mathscr{U}_t) \right) R\left(z_2; \operatorname{diag}(\mathscr{V}_t) \right) \Omega \mathscr{W}_t,
\end{align*}
\begin{align*}
\mathbb{C}^{3\times 3} \ni \inner{\nabla_v^2 S(\mathscr{X}_t, z), K \left( \nabla_v \psi(x) \right)^2}_{\R^{d\times d}} &= 2 q_{22} E_{13} \cdot \frac{1}{d}\mathscr{W}_t^\top \left(\nabla_v \psi(\mathscr{X}_t)\right)^2 K\Omega \mathscr{W}_t\\
&+ 2 q_{22} \frac{1}{d}\mathscr{W}_t^\top \left(\nabla_v \psi(\mathscr{X}_t)\right)^2 K\Omega \mathscr{W}_t \cdot E_{31}\\
&+ 2 E_{13} \cdot \frac{1}{d}\mathscr{W}_t^\top \left(\nabla_v \psi(\mathscr{X}_t)\right)^3 K R\left(z_2; \operatorname{diag}(\mathscr{V}_t) \right) \Omega \mathscr{W}_t\\
&+ 2 \frac{1}{d}\mathscr{W}_t^\top \left(\nabla_v \psi(\mathscr{X}_t)\right)^3 K R\left(z_2; \operatorname{diag}(\mathscr{V}_t) \right) \Omega \mathscr{W}_t \cdot E_{31}\\
&+ 2 E_{13} \cdot \frac{1}{d}\mathscr{W}_t^\top \left(\nabla_v \psi(\mathscr{X}_t)\right)^4 K\Omega \mathscr{W}_t \cdot E_{31}\\
&+ 2 \frac{1}{d}\mathscr{W}_t^\top \left( \nabla_v \psi(x) \right)^2 K
R\left(z_2; \operatorname{diag}(\mathscr{V}_t) \right)^2 \Omega \mathscr{W}_t.
\end{align*}
Now recall that 
\begin{equation*}
\nabla \psi(\mathscr{X}_t) = \left[
    2q_{11}\operatorname{diag}(\mathscr{U}_t)  + 2q_{12}\operatorname{diag}(\mathscr{V}_t) + l_1 I_d \; 
    2q_{12}\operatorname{diag}(\mathscr{U}_t) + 2q_{22}\operatorname{diag}(\mathscr{V}_t) + l_2 I_d
\right] \in \R^{d\times 2d}.
\end{equation*}

Accordingly, each term in \eqref{eq: ItoS} takes a form such as
\begin{align*}
&-2 \gamma(t) H^\top \cdot \frac{1}{d} \mathscr{W}_t^\top 
\operatorname{diag}(\mathscr{U}_t)^{m_1} 
R\big(z_1; \operatorname{diag}(\mathscr{U}_t)\big)^{\rho_1} \\
&\quad \cdot \operatorname{diag}(\mathscr{V}_t)^{m_2} 
R\big(z_2; \operatorname{diag}(\mathscr{V}_t)\big)^{\rho_2} 
\operatorname{diag}(\beta^*)^{m_3} 
R\big(z_3; \operatorname{diag}(\beta^*)\big) K  
R\big(z_4; K \big) \mathscr{W}_t,\\[2mm]
&\text{or} \quad C(Q) \gamma(t)^2 I\big(B(\mathscr{X}_t)\big) \cdot \frac{1}{d} \mathscr{W}_t^\top
\operatorname{diag}(\mathscr{U}_t)^{m_1} 
R\big(z_1; \operatorname{diag}(\mathscr{U}_t)\big)^{\rho_1} \\
&\quad \cdot \operatorname{diag}(\mathscr{V}_t)^{m_2} 
R\big(z_2; \operatorname{diag}(\mathscr{V}_t)\big)^{\rho_2} 
R\big(z_3; \operatorname{diag}(\beta^*)\big) K  
R\big(z_4; K \big) \mathscr{W}_t.
\end{align*}
Expanding these terms and applying the Cauchy integral formula along each contour $\Gamma_i$, we write
\begin{equation*}
q(D_i) = \frac{1}{2\pi i} \oint_{\Gamma_i} q(z_i) R\left(z_i; D_i \right) \, \mathrm{d}z_i,   
\end{equation*}
together with the standard resolvent identities,
\begin{equation*}
R\left(z_i; D_i \right)^2 = - \frac{\mathrm{d}}{\mathrm{d}z_i} R\left(z_i; D_i \right) \quad \text{and} \quad R\left(z_i; D_i \right)^3 = \frac{1}{2} \cdot \frac{\mathrm{d}^2}{\mathrm{d}z_i^2} R\left(z_i; D_i \right).
\end{equation*}
It then follows that
\begin{equation}\label{eq: FinalItoS}
\mathrm{d} S(\mathscr{X}_t, z) = \mathscr{F}\left(z, S(\mathscr{X}_t, z)\right)\mathrm{d}t + \mathrm{d}\mathcal{M}_t^\mathrm{HSGD}(S), 
\end{equation}
with $\mathscr{X}_0 = x_0$, where the summands of $\mathscr{F}\left(z, S(\mathscr{X}_t, z)\right)$ are of the form
\begin{align}\label{eq: sampleTermPDE}
\begin{split}
- 2 \gamma(t) H^\top \cdot \frac{1}{d} \mathscr{W}_t^\top z^\zeta  
\left(\bigcirc_{i=1}^4 \mathscr{G}_i\right)
( R\left(z_i; D_i \right))  \mathscr{W}_t \quad \text{or} \\[5pt]
C(Q) \gamma(t)^2 I(B(\mathscr{X}_t)) \cdot \frac{1}{d} \mathscr{W}_t^\top z^\zeta  
\left(\bigcirc_{i=1}^4 \mathscr{G}_i\right)
( R\left(z_i; D_i \right))  \mathscr{W}_t,
\end{split}
\end{align}
for 
\begin{equation}\label{eq: sampleSubtermPDE}
\mathscr{G}_i( R\left(z_i; D_i \right)) = \frac{1}{2\pi i} \oint_{\Gamma_i} q(z_i) R\left(z_i; D_i \right) \, \mathrm{d}z_i, \quad - \frac{\mathrm{d}}{\mathrm{d}z_i} R\left(z_i; D_i \right), \quad \text{or} \quad  \frac{1}{2} \cdot \frac{\mathrm{d}^2}{\mathrm{d}z_i^2} R\left(z_i; D_i \right).
\end{equation}
We now prove Proposition~\ref{prop: HSGDapprox}.

\begin{proof}[Proof of Proposition~\ref{prop: HSGDapprox}]
By It\^o’s Lemma, we have seen that
\begin{align*}
S(\mathscr{X}_t, \cdot) &= S(x_0, \cdot) + \int_0^t \mathscr{F}\left(\cdot, S(\mathscr{X}_s, \cdot)\right)\mathrm{d}s + \int_0^t \mathrm{d}\mathcal{M}_s^\mathrm{HSGD}(S(\mathscr{X}_s, \cdot)).
\end{align*}

Thus to show that $S(\mathscr{X}_t, \cdot)$ is an approximate solution of the partial integro-differential equation \eqref{eq:  PDESystem} it amounts to bounding the martingale term where $C$ is a positive constant independent of $d$. For all $z\in \Gamma$, we note that for some constants $C, c > 0$ we have $\vartheta_{c\cdot M} \leqslant \hat{\tau}_M \leqslant \vartheta_{C\cdot M}$ (see Lemma~\ref{lem: normEquivalence}). Consequently, we can work with the stopped process $\mathscr{X}_t^\vartheta = \mathscr{X}_{t\wedge \vartheta}$ instead of using $\hat{\tau}_{M}$. We thus have that for all $z\in \Gamma$ 
\begin{equation*}
\sup_{0\leqslant t \leqslant (\hat{\tau}_{M} \wedge T)} \norm{S(\mathscr{X}_t, z) - S(x_0,z) - \int_0^t \mathscr{F}\left(z, S(\mathscr{X}_s,z)\right)\, \mathrm{d}s} \leqslant \sup_{0\leqslant t \leqslant (\vartheta_{C\cdot M} \wedge T)} \norm{\mathcal{M}_t^\mathrm{HSGD}(S(\cdot, z))}.
\end{equation*}
Fix a constant $\delta>0$. Let $\Gamma^\delta = \Gamma_1^\delta \times \Gamma_2^\delta \times \Gamma_3^\delta \times \Gamma_4^\delta$ where each  $\Gamma_i^{\delta}$ is a $d^{-\delta}$-mesh of $\Gamma_i$ with $|\Gamma^{\delta}| \leqslant C_\Gamma d^{4\delta}$ for positive $C_\Gamma > 0$ depending on $\norm{K}_{\mathrm{op}}$ and $\norm{\beta^*}_\infty$.

By the martingale error proposition, Proposition~\ref{prop: homSGDmartingale}, which we have deferred the proof to Section~\ref{subsubsec: homSGDmartingale}, we have that for any $ \hat{\delta}> 0$
\begin{equation*}
\sup_{0\leqslant t \leqslant T} \norm{\mathcal{M}_{t\wedge \vartheta_{C\cdot M}}^\mathrm{HSGD}(S(\cdot, z))} \leqslant C L(f) d^{\hat{\delta}/2 -1/2}   \quad \text{w.o.p.} 
\end{equation*}
By a union bound over $z\in\Gamma^\delta$, using
$|\Gamma^\delta|\leqslant C_\Gamma d^{4\delta}$, the martingale bound holds uniformly
on $\Gamma^\delta$ with overwhelming probability:
\begin{equation*}
\sup_{z\in \Gamma^\delta}\sup_{0\leqslant t \leqslant T} \norm{\mathcal{M}_{t\wedge \vartheta_{C\cdot M}}^\mathrm{HSGD}(S(\cdot, z))} \leqslant C L(f) d^{\hat{\delta}/2 -1/2}   \quad \text{w.o.p.} 
\end{equation*}
Consequently, we deduce that
\begin{align*}
&\sup_{0\leqslant t \leqslant (\hat{\tau}_{M} \wedge T)} \norm{S(\mathscr{X}_t, z) - S(x_0,z) - \int_0^t \mathscr{F}\left(z, S(\mathscr{X}_s,z)\right)\, \mathrm{d}s}_{\Gamma^\delta} \\[10pt]
&\hspace{3in}\leqslant\sup_{0\leqslant t \leqslant (\vartheta_{C\cdot M} \wedge T)} \norm{\mathcal{M}_t^\mathrm{HSGD}(S(\cdot, z))}_{\Gamma^\delta} \\[10pt] 
&\hspace{3in}\leqslant C L(f) d^{\hat{\delta}/2 - 1/2} \quad \text{w.o.p.}
\end{align*}
An application of the net argument, Lemma~\ref{lem: net_argument}, finishes the proof after setting $\hat{\delta}= 1-2\delta$.

The derivative regularity condition~\ref{cond:lipschitz} is automatic for
$t\mapsto S(\mathscr X_t,\cdot)$: for each fixed trajectory, the dependence on
$z$ is through a finite product of resolvents on the fixed contour $\Gamma$, and
the resolvent derivative bounds from Remark~\ref{rem:Lipschitz} give the required Lipschitz control
up to $\hat\tau_M$. Thus the estimate above verifies condition~\ref{cond:pde},
and hence $S(\mathscr X_t,\cdot)$ is an
$(d^{-\varepsilon},M,T)$-approximate solution.
\end{proof}
\subsection{SGD Under the Resolvent Statistic \texorpdfstring{$S$}{S}}
In this section, we show that $S\left(x_{\lfloor td\rfloor}, z\right)$ is an approximate solution \eqref{def: approximateSolution} to the partial integro-differential
equation \eqref{eq: PDESystem} which we state below.

\begin{proposition}[SGD is an approximate solution]\label{prop: SGDapprox}
Fix a $T, M > 0$ and $0 < \delta < 1/2$. Then $S\left(x_{\lfloor td\rfloor},z\right)$ is a $(d^{-\delta}, M, T)$-approximate solution w.o.p., that is, 
\begin{align*}
\sup_{0\leqslant t \leqslant (\hat{\tau}_M \wedge T)} \norm{S(x_{\lfloor td\rfloor},z) - S(x_0,z) - \int_0^t \mathscr{F}\left(z, S(x_{\lfloor sd\rfloor},z)\right)\,  \mathrm{d}s} &\leqslant d^{-\delta} \quad \text{w.o.p.}
\end{align*}
\end{proposition}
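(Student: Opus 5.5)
We would prove Proposition~\ref{prop: SGDapprox} by the discrete analogue of the argument used for Proposition~\ref{prop: HSGDapprox}: a Doob decomposition of $S(x_k,z)$ along the SGD recursion \eqref{eq:  sgd_rule_update}. We then identify the predictable drift with $\frac1d\mathscr F$ up to small errors, and show that the martingale and remainder terms are negligible uniformly on a mesh $\Gamma^\delta$. Throughout we work with the stopped iterates $x_k^{\vartheta}$, where $\vartheta=\vartheta_{CM}$, which is legitimate by Lemma~\ref{lem: normEquivalence}. At the end we pass from $\Gamma^\delta$ to $\Gamma$ via Lemma~\ref{lem: net_argument}. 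The derivative regularity condition~\ref{cond:lipschitz} is automatic, by Remark~\ref{rem:Lipschitz}.

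\textbf{Decomposition.} Fix $z$ and an entry $S_{ab}^{\operatorname{Re},z}$ or $S_{ab}^{\operatorname{Im},z}$. Since $\psi$ is quadratic and $\Omega$ is a product of diagonal resolvents, $S(\cdot,z)$ is smooth on the stopped region. A second-order Taylor expansion with increment $\Delta_k=-\frac{\gamma_k}{\sqrt d}\nabla_{r_1}f(r_k)(\nabla\psi(x_k))^\top a_{k+1}$ gives
\[
S(x_{k+1},z)-S(x_k,z)=\langle\nabla S,\Delta_k\rangle+\tfrac12\langle\nabla^2S,\Delta_k^{\otimes2}\rangle+\mathcal E_k^{\mathrm{Tay}} .
\]
We then replace each term by its conditional expectation given $\mathcal F_k$ and collect the differences into a martingale $\mathcal M_k^{\mathrm{SGD}}$. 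For the first-order term, Assumption~\ref{ass: riskRepresentation} gives $\EE_k\Delta_k=-\gamma_k\nabla\mathcal R(x_k)$; summed over $\lfloor td\rfloor$ steps, this reproduces $-\int_0^t\gamma(s)d\langle\nabla S,\nabla\mathcal R\rangle\,ds$, exactly as in \eqref{eq: ItoS}. For the second-order term we compute $\EE_k[\nabla_{r_1}f(r_k)^2\,a a^\top]$ by Gaussian conditioning on the three-dimensional projection $\frac1{\sqrt d}W(x_k)^\top a$. This yields $I(B(x_k))K$ plus a low-rank correction built from $KW$. Contracted against $\nabla^2 S$, which by Lemma~\ref{lem: derivativeS} is diagonal with a $\frac1d$ prefactor in each block, the correction contributes only $O(d^{-1})$ per step, hence $O(d^{-1})$ in total. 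The leading part $\frac{\gamma_k^2}{2d}I(B)\langle\nabla^2S,(\nabla\psi)^\top K\nabla\psi\rangle$ matches the It\^o term. The resolvent manipulations after \eqref{eq: ItoS} then show that the drift equals $\frac1d\mathscr F(z,S(x_k,\cdot))$. Two further errors are also $O(d^{-1})$: replacing the Riemann sum $\sum_k\frac1d\mathscr F(\cdot,S(x_k))$ by $\int_0^t\mathscr F\,ds$ (note $S(x_{\lfloor sd\rfloor})$ is piecewise constant), and replacing $\gamma_k$ by $\gamma(s)$, which are equal by Assumption~\ref{ass: learningRate}.

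\textbf{Error terms.} By Lemma~\ref{lem:growthGradf}, $\nabla_{r_1}f(r_k)$ has polynomial moments and subgaussian-type tails under the stopping. The stopping also keeps $\|x_k\|_\infty$ bounded in the sense needed, since $\frac1d\|W\|^2\le CM$; under non-explosiveness it gives an $\ell_\infty$ bound. Consequently the third derivative of $S$, contracted against $\Delta_k^{\otimes 3}$, is $O(d^{-3/2}\,\mathrm{polylog})$ w.o.p. per step, which sums to $O(d^{-1/2+})$. The martingale has increments of two kinds. The first-order increment $\langle\nabla S,\Delta_k-\EE_k\Delta_k\rangle$ is a linear form $\frac1{\sqrt d}\cdot\frac1d(\cdots)^\top a_{k+1}$ multiplied by $\nabla_{r_1}f$, hence of size $O(d^{-1})$ with subgaussian tails. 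The quadratic-form fluctuation $\frac1d\langle\nabla^2S,aa^\top-K\rangle$ is controlled by Hanson--Wright and is also $O(d^{-1+})$. With $Td$ increments, a martingale Bernstein/Freedman inequality, applied after truncating on the overwhelming-probability event where all increments are bounded by $d^{-1+\hat\delta/2}$, gives $\sup_{k\le Td}\|\mathcal M_k\|\le d^{-1/2+\hat\delta}$ w.o.p. A union bound over $|\Gamma^\delta|\le Cd^{4\delta}$ points, followed by Lemma~\ref{lem: net_argument}, yields the claim with any $\delta<1/2$.

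\textbf{Main obstacle.} We expect the hardest step to be the exact identification of the second-order conditional expectation. Unlike HSGD, the SGD noise $\nabla_{r_1}f(r)^2aa^\top$ is correlated with $a$ through $r$. We must verify that, after Gaussian conditioning, every term beyond $I(B)K$ is either a rank-$\le3$ perturbation, which is negligible against the diagonal $\frac1d$-scaled Hessian of $S$ uniformly in $z\in\Gamma$, or is absorbed into the structure of $\mathscr F$. This is where the diagonal-$K$ commutativity is used. A secondary difficulty is handling the unbounded factor $\nabla_{r_1}f(r_k)$ inside the martingale increments. For that we would first apply the truncation $\operatorname{Proj}_\beta$ of Lemma~\ref{lem:pseudoLipProjection} with $\beta=d^{\hat\delta}$, and then show the truncation is inactive w.o.p.
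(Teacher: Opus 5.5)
Your proposal is correct and is essentially the paper's own proof. The shared steps are: a Taylor expansion and Doob decomposition of $S(x_k,z)$; identification of the predictable drift with $\frac1d\mathscr F$ through the resolvent manipulations following \eqref{eq: ItoS}; Gaussian conditioning that splits the conditional second moment $\EE[\nabla_{r_1}f(r_k)^2a_{k+1}a_{k+1}^\top]$ into $I(B(x_k))K$ plus a low-rank correction costing $O(d^{-1})$ in total; an $O(d^{-1/2})$ higher-order remainder and an $O(d^{-1})$ mesh error; truncated martingale concentration; a union bound over $\Gamma^\delta$; and Lemma~\ref{lem: net_argument}. Your deviations are harmless or in your favor (conditioning on $W(x_k)^\top a_{k+1}$ rather than $(\psi(x_k),\beta^*)^\top a_{k+1}$; a pathwise bound on the Taylor remainder, which also controls the fluctuating part the paper leaves implicit; martingale bounds of order $d^{-1/2+\hat\delta}$, which is the actual size of these sums and all that $\delta<1/2$ needs), and the one loose step — $\gamma_k$ and $\gamma(s)$ agree only at grid points, so that replacement costs $O(d^{-1})$ only under some regularity of $\gamma$ (e.g.\ Lipschitz or bounded variation) — is passed over in the paper in exactly the same way.
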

The proof of this Proposition is deferred to Section~\ref{subsubsec: SGDapprox}.

Recall our iterates satisfy the recurrence 
\begin{equation}
x_{k+1} = x_k - \gamma_k \, \nabla_x \Psi (x_k; a_{k+1}) =   x_k - \frac{\gamma_k}{\sqrt{d}} \,   \nabla_{r_1} f\left(  r_k \right) \cdot (\nabla \psi(x_k))^\top \cdot a_{k+1} 
\end{equation}
with $r_k := \frac{1}{\sqrt{d}} \begin{pmatrix}
\psi(x_k) \\
\beta^*
\end{pmatrix}^{\!\top} a_{k+1}$.

The derivation follows the martingale-decomposition strategy used in \citep{collins-woodfin2024hitting}. We first derive a one-step martingale decomposition for a smooth scalar statistic $\varphi:\mathbb{R}^{2d}\to\mathbb{R}$. We later apply this decomposition to the real-valued coordinate functions
$S_{ab}^{\operatorname{Re},z}$ and $S_{ab}^{\operatorname{Im},z}$ and then
recombine the resulting identities to obtain the complex matrix-valued
decomposition for $S(x_{\lfloor td\rfloor},z)$.

Applying Taylor's expansion, we have
\begin{align}\label{eq: taylorExpansion}
\begin{split}
\varphi(x_{k+1}) &= \varphi(x_k) - \gamma_k \inner{\nabla \varphi(x_k), \nabla_x \Psi(x_k; a_{k+1})} + \frac{\gamma_k^2}{2} \inner{\nabla^2 \varphi(x_k), \left(\nabla_x \Psi(x_k; a_{k+1}) \right)^{\otimes 2}}  + \dots\\
&= \varphi(x_k) - \frac{\gamma_k}{\sqrt{d}} \, \inner{\nabla \varphi(x_k), \nabla_{r_1} f\left(  r_k \right) \cdot (\nabla \psi(x_k))^\top \cdot a_{k+1}} \\
&+ \frac{\gamma_k^2}{2d} \,  \inner{\nabla^2 \varphi(x_k), \left(\nabla_{r_1} f\left(  r_k \right) \cdot (\nabla \psi(x_k))^\top \cdot a_{k+1}\right)^{\otimes 2}} +  \mathcal{E}_k^{\mathrm{High}}(\varphi).
\end{split}
\end{align}

We will show in Section~\ref{subsubsec: highOrderTerm} that the third and higher-order terms, $\mathcal{E}_k^{\mathrm{High}}(\varphi)$, 
vanish when $d\to \infty$.

\subsubsection{Doob Decomposition for SGD}

Recall that, for a Hessian $A$ and vector $v$,
\[
\langle A,v^{\otimes 2}\rangle = v^\top A v.
\]

To write the Doob decomposition, the idea is to condition on $r_k = \frac{1}{\sqrt{d}} \begin{pmatrix}
\psi(x_k) \\
\beta^*
\end{pmatrix}^{\!\top} a_{k+1}$ and $W_k = [\, \psi(x_k) \, |  \,  \beta^* \, | \, \mathds{1}_d \, ]\in \R^{d\times 3}$. For this, we will introduce some notation. Define the $\sigma$-algebras
\begin{equation*}
    \mathscr{F}_k := \sigma \left( \{ W_i \}_{i=0}^k \right) \subset \mathcal{G}_k := \sigma \left( \{ r_i \}_{i=0}^k , \, \{ W_i \}_{i=0}^k  \right). 
\end{equation*}

\textbf{Gradient term in Taylor expansion.} First, consider the conditional expectation with respect to $\mathscr{F}_k$ of the gradient term in equation \eqref{eq: taylorExpansion}. Note we can safely assume the interchange of the gradient and the expectation in the risk formula $\mathcal{R}$ since $a$ follows a Gaussian distribution, and generally, this assumption holds true. Therefore, we have 
\begin{equation*}
\frac{1}{\sqrt{d}} \EE_{a_{k+1}} \left[  \inner{\nabla \varphi(x_k), \nabla_{r_1} f\left(  r_k \right) \cdot (\nabla \psi(x_k))^\top \cdot a_{k+1}} \middle| \mathscr{F}_k \right] = \inner{\nabla \varphi (x_k), \nabla \mathcal{R}(x_k)}.
\end{equation*}
By applying Doob's decomposition in this context, the gradient term takes the form
\begin{equation}\label{eq: gradientTerm}
\frac{\gamma_k}{\sqrt{d}} \, \inner{\nabla \varphi(x_k), \nabla_{r_1} f\left(  r_k \right) \cdot (\nabla \psi(x_k))^\top \cdot a_{k+1}} = \gamma_k \, \inner{\nabla \varphi (x_k), \nabla \mathcal{R}(x_k)} - \Delta \mathcal{M}_k^{\mathrm{Grad}}(\varphi)
\end{equation}
where $\Delta \mathcal{M}_k^{\mathrm{Grad}}(\varphi)$ represents a martingale increment that tends to zero as $d$ tends to infinity.

\textbf{Hessian term in the Taylor expansion.} Proceeding to the Hessian term in  \eqref{eq: taylorExpansion}, we initiate the analysis by conditioning first on $\mathcal{G}_k$ and subsequently on $\mathscr{F}_k$, using the tower property. A simple computation leads to
\begin{equation*}
    \inner{\nabla^2 \varphi(x_k), \left(\nabla_{r_1} f\left(  r_k \right) \cdot (\nabla \psi(x_k))^\top \cdot a_{k+1}\right)^{\otimes 2}} = \inner{\mathcal{B}, a_{k+1} ^{\otimes 2}},
\end{equation*}
where $\mathcal{B}:= \nabla_{r_1} f\left(  r_k \right)^2 \nabla \psi(x_k)  \nabla^2 \varphi(x_k)\, (\nabla \psi(x_k))^\top$. To proceed, we must evaluate 

\begin{align}
\begin{split}
    \EE_{a_{k+1}} \left[\inner{\mathcal{B}, a_{k+1}^{\otimes 2}}\middle| \mathcal{G}_k \right] &= \inner{\mathcal{B}, \EE_{a_{k+1}} \left[  a_{k+1}^{\otimes 2} \middle| \mathcal{G}_k \right]}\\ 
    &= \inner{\mathcal{B}, \EE_{a_{k+1}} \left[ \left(
        a_{k+1} - \EE_{a_{k+1}} \left[ 
        a_{k+1}  \middle| \mathcal{G}_k \right] \right)^{\otimes 2} \middle| \mathcal{G}_k \right]}\\
    &+ \inner{\mathcal{B}, \left( \EE_{a_{k+1}} \left[ 
        a_{k+1} \middle| \mathcal{G}_k \right]\right)^{\otimes 2}}.
\end{split}
\label{eq: term}
\end{align}

To establish the conditional distribution of $a_{k+1}$ given $r_k$, we invoke a standard conditioning lemma, see e.g. \citep{collins-woodfin2024hitting}. 
\begin{lemma}[Gaussian conditioning]\label{lem: gaussianConditioning}
Let $v\sim\mathcal N(0,I_d)$ and let $Q\in\mathbb R^{d\times 2}$ have
orthonormal columns. Then, conditional on $Q^\top v$, we have
\[
v \stackrel{\mathcal{D}}{=} (I_d-QQ^\top)g+QQ^\top v
\]
where $g\sim\mathcal N(0,I_d)$ is independent of $Q^\top v$. In particular,
\[
\mathbb E[v\mid Q^\top v]=QQ^\top v,
\qquad
\operatorname{Cov}(v\mid Q^\top v)=I_d-QQ^\top.
\]
\end{lemma}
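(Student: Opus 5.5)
The plan is to split $v$ orthogonally along the column space of $Q$ and to use that, for jointly Gaussian vectors, zero cross-covariance implies independence. Let $P:=QQ^\top$. Since $Q^\top Q=I_2$, the matrix $P$ is the orthogonal projection onto the column space of $Q$: it is symmetric, $P^2=P$, and $I_d-P$ is the complementary projection. We write
\[
v=(I_d-P)v+Pv .
\]

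\textbf{Step 1: independence of the two pieces.} The vector $\bigl((I_d-P)v,\;Q^\top v\bigr)$ is a linear image of $v$, so it is jointly Gaussian. Its cross-covariance is
\[
\EE\bigl[(I_d-P)vv^\top Q\bigr]=(I_d-P)Q=Q-QQ^\top Q=0 .
\]
For jointly Gaussian vectors, vanishing cross-covariance gives independence, so $(I_d-P)v$ is independent of $Q^\top v$. Moreover $Pv=Q(Q^\top v)$ is a measurable function of $Q^\top v$, so conditioning on $Q^\top v$ fixes $Pv$.

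\textbf{Step 2: identify the law of the independent piece.} Take $g\sim\mathcal N(0,I_d)$ independent of $Q^\top v$. Then $(I_d-P)v$ and $(I_d-P)g$ are both centered Gaussians with covariance $(I_d-P)(I_d-P)^\top=I_d-P$, so they have the same law. Both are independent of $Q^\top v$, hence the pairs $\bigl((I_d-P)v,\,Q^\top v\bigr)$ and $\bigl((I_d-P)g,\,Q^\top v\bigr)$ have the same joint law. It follows that
\[
\bigl(v,\,Q^\top v\bigr)\stackrel{\mathcal D}{=}\bigl((I_d-P)g+Q(Q^\top v),\,Q^\top v\bigr),
\]
which is exactly the claimed conditional representation.

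\textbf{Step 3: conditional moments.} Conditionally on $Q^\top v$, the term $Pv$ is fixed and $(I_d-P)g$ is a centered Gaussian with covariance $I_d-P$. This gives $\EE[v\mid Q^\top v]=QQ^\top v$ and $\operatorname{Cov}(v\mid Q^\top v)=I_d-QQ^\top$.

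No step is a real obstacle. The one point needing care is that the independence in Step 1 uses \emph{joint} Gaussianity, which holds because every vector involved is a linear image of $v$. In the application to $a_{k+1}=\sqrt K v$, one takes $Q$ to be an orthonormal basis of the span of $\sqrt K\psi(x_k)$ and $\sqrt K\beta^*$; if that span is degenerate, the same argument applies with fewer columns.
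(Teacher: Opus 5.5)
Your proof is correct and complete. The orthogonal split $v=(I_d-QQ^\top)v+QQ^\top v$, the independence of the uncorrelated jointly Gaussian pieces, and the joint-law identity $(v,Q^\top v)\stackrel{\mathcal D}{=}((I_d-QQ^\top)g+QQ^\top v,\,Q^\top v)$ make up exactly the standard argument; the paper itself gives no proof and only cites the lemma as standard from \citep{collins-woodfin2024hitting}.

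In the paper's application $Q=Q_k$ is random, but it is $\mathscr F_k$-measurable and independent of $v_k$, so your fixed-$Q$ statement is applied conditionally on $\mathscr F_k$. Your remark on the rank-deficient case also covers a situation the paper sidesteps by assuming $R_k$ is invertible.
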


By Assumption~\ref{ass: data}, we write $a_{k+1} = \sqrt{K} v_k$ with $v_k\sim \mathcal{N}(0,I_d)$, and express $\sqrt{K}
\begin{pmatrix}
\psi(x_k) \\
\beta^*
\end{pmatrix}$ as $Q_k R_k$, where $Q_k \in \R^{d\times 2}$ is orthogonal and $R_k\in \R^{2\times 2}$ is upper triangular and invertible (obtained through QR decomposition, assuming $2 \ll d$). Observe here $\Pi_k := Q_k Q_k^\top$ has rank $2$.

Consequently, we can simplify $\begin{pmatrix}
\psi(x_k) \\
\beta^*
\end{pmatrix}^{\!\top} a_{k+1}$ as follows
\begin{equation*}
    \begin{pmatrix}
\psi(x_k) \\
\beta^*
\end{pmatrix}^{\!\top} a_{k+1} = \begin{pmatrix}
\psi(x_k) \\
\beta^*
\end{pmatrix}^{\!\top} \sqrt{K} v_k = R_k^\top Q_k^\top v_k.
\end{equation*}
Applying the conditioning lemma, we derive
\begin{align*}
    a_{k+1}\Big|\begin{pmatrix}
\psi(x_k) \\
\beta^*
\end{pmatrix}^{\!\top} a_{k+1} &\stackrel{\mathcal{D}}{=} \sqrt{K} v_k|Q_k^\top v_k \\
&\stackrel{\mathcal{D}}{=} \sqrt{K} \left(
(I_d - Q_k Q_k^\top)g_k + Q_k Q_k^\top v_k \right) \\
&= \sqrt{K}\left(g_k -\Pi_k g_k\right) + \sqrt{K} \Pi_k v_k,
\end{align*}
where $g_k \sim \mathcal{N}(0, I_d)$ is independent of $Q_k^\top v_k$. From this, we have that 
\begin{equation}
    \EE_{a_{k+1}} \left[ 
        a_{k+1} \middle| \mathcal{G}_k \right] = 
        \sqrt{K} \Pi_k v_k \quad \text{where } v_k \sim \mathcal{N}(0,I_d).
\end{equation}
Moreover, the conditional covariance of $a_{k+1}$ is precisely 
\begin{equation}
    \EE_{a_{k+1}} \left[ \left(
        a_{k+1} - \EE_{a_{k+1}} \left[ 
        a_{k+1} \middle| \mathcal{G}_k \right] \right)^{\otimes 2} \middle| \mathcal{G}_k \right] = 
        \sqrt{K}\left(I_d -\Pi_k \right) \sqrt{K}.
\end{equation}
Thus, from \eqref{eq: term} we have 
\begin{equation}\label{eq: hessianDiagCovSimplified}
    \EE_{a_{k+1}} \left[\inner{\mathcal{B},  a_{k+1}^{\otimes 2}} \middle| \mathcal{G}_k \right] = \inner{\mathcal{B}, K} - \inner{\mathcal{B}, \sqrt{K}\Pi_k \sqrt{K}} + \inner{\mathcal{B}, \left( \sqrt{K} \Pi_k v_k \right)^{\otimes 2}}.
\end{equation}

We will later see, in Section~\ref{subsubsec: hessianErrorTerm}, that the term
\begin{equation*}
\mathcal{E}_k^{\mathrm{Hess}}(\varphi) := -\frac{\gamma_k^2}{2 d} \inner{\mathcal{B}, \sqrt{K}\Pi_k \sqrt{K}} + \frac{\gamma_k^2}{2 d} \inner{\mathcal{B}, \left( \sqrt{K} \Pi_k v_k \right)^{\otimes 2}}
\end{equation*}
is of lower order in expectation and will disappear as $d\to \infty$.  So we may write
\begin{equation*}
\frac{\gamma_k^2}{2 d} \EE_{a_{k+1}} \left[ \inner{\mathcal{B},  a_{k+1}^{\otimes 2}} \middle| \mathscr{F}_k \right] = \frac{\gamma_k^2}{2 d} \EE_{a_{k+1}} \left[ \inner{\mathcal{B}, K}\middle| \mathscr{F}_k \right] + \EE_{a_{k+1}} \left[ \mathcal{E}_k^{\mathrm{Hess}}(\varphi)\middle| \mathscr{F}_k \right].
\end{equation*}

Moreover, observe that
\begin{align}
\begin{split}
\frac{\gamma_k^2}{2 d} \EE_{a_{k+1}} \left[ \inner{\mathcal{B}, K}\middle| \mathscr{F}_k \right] &=  \frac{\gamma_k^2}{2 d} \EE_{a_{k+1}} \left[\nabla_{r_1} f\left(  r_k \right)^2 \middle | \mathscr{F}_k \right]   \inner{\nabla \psi(x_k)  \nabla^2 \varphi(x_k)\, (\nabla \psi(x_k))^\top, K} \\
&= \frac{\gamma_k^2}{2 d} \, I(B(x_k))
\, \inner{\nabla \psi(x_k)  \nabla^2 \varphi(x_k)\, (\nabla \psi(x_k))^\top, K}\\ 
&= \frac{\gamma_k^2}{2 d} \, I(B(x_k))
\, \inner{\nabla^2 \varphi(x_k), \inner{K, \left( \nabla \psi(x_k) \right)^{\otimes 2}}}.
\end{split}
\end{align}
By applying Doob's decomposition in this context, the Hessian term takes the form
\begin{align}\label{eq: hessianTerm}
\begin{split}
    &\frac{\gamma_k^2}{2d} \inner{\nabla^2 \varphi(x_k), \left(\nabla_{r_1} f\left(  r_k \right) \cdot (\nabla \psi(x_k))^\top \cdot a_{k+1}\right)^{\otimes 2}} =\\ &\frac{\gamma_k^2}{2d}  \, I(B(x_k))
    \, \inner{\nabla^2 \varphi(x_k), \inner{K, \left( \nabla \psi(x_k) \right)^{\otimes 2}}} + \Delta \mathcal{M}_k^{\mathrm{Hess}}(\varphi) + \EE_{a_{k+1}} \left[ \mathcal{E}_k^{\mathrm{Hess}}(\varphi)\middle| \mathscr{F}_k \right]
\end{split}
\end{align}
where $\Delta \mathcal{M}_k^{\mathrm{Hess}}(\varphi)$ represents a martingale increment that tends to zero as $d$ tends to infinity.

We have successfully identified the martingale increments of a \textit{single} update of SGD, that is,
by \eqref{eq: gradientTerm} and \eqref{eq: hessianTerm} in the Taylor expansion \eqref{eq: taylorExpansion}
\begin{align}
\begin{split}
    \varphi(x_{k+1}) &= \varphi(x_k) - \gamma_k \, \inner{\nabla \varphi (x_k), \nabla \mathcal{R}(x_k)} + \Delta \mathcal{M}_k^{\mathrm{Grad}}(\varphi) \\
    &+ \frac{\gamma_k^2}{2d}  \, I(B(x_k))
    \, \inner{\nabla^2 \varphi(x_k), \inner{K, \left( \nabla \psi(x_k) \right)^{\otimes 2}}} + \Delta \mathcal{M}_k^{\mathrm{Hess}}(\varphi) \\
    &+ \EE_{a_{k+1}} \left[ \mathcal{E}_k^{\mathrm{Hess}}(\varphi)\middle| \mathscr{F}_k \right]
    + \EE_{a_{k+1}} \left[ \mathcal{E}_k^{\mathrm{High}}(\varphi)\middle| \mathscr{F}_k \right],
\end{split}
\end{align}
where the error terms look like
\begin{align*}
\Delta \mathcal{M}_k^{\mathrm{Grad}}(\varphi) &= \gamma_k \, \inner{\nabla \varphi (x_k), \nabla \mathcal{R}(x_k)} - 
\frac{\gamma_k}{\sqrt{d}} \, \inner{\nabla \varphi(x_k), \nabla_{r_1} f\left(  r_k \right) \cdot (\nabla \psi(x_k))^\top \cdot a_{k+1}}\\
\Delta \mathcal{M}_k^{\mathrm{Hess}}(\varphi) &= \frac{\gamma_k^2}{2d} \inner{\nabla^2 \varphi(x_k), \left(\nabla_{r_1} f\left(  r_k \right) \cdot (\nabla \psi(x_k))^\top \cdot a_{k+1}\right)^{\otimes 2}}\\
&- \frac{\gamma_k^2}{2d} \EE_{a_{k+1}} \left[ \inner{\nabla^2 \varphi(x_k), \left(\nabla_{r_1} f\left(  r_k \right) \cdot (\nabla \psi(x_k))^\top \cdot a_{k+1}\right)^{\otimes 2}} \middle| \mathscr{F}_k \right]\\
\mathcal{E}_k^{\mathrm{Hess}}(\varphi) &= -\frac{\gamma_k^2}{2 d} \inner{\nabla_{r_1} f\left(  r_k \right)^2 \nabla \psi(x_k)  \nabla^2 \varphi(x_k)\, (\nabla \psi(x_k))^\top, \sqrt{K}\Pi_k \sqrt{K}} \\
&+ \frac{\gamma_k^2}{2 d} \inner{\nabla_{r_1} f\left(  r_k \right)^2 \nabla \psi(x_k)  \nabla^2 \varphi(x_k)\, (\nabla \psi(x_k))^\top, \left( \sqrt{K} \Pi_k v_k \right)^{\otimes 2}}.
\end{align*}
We now pass from the one-step decomposition to the continuous-time embedding.
For $t\geqslant 0$, write
\[
n_t:=\lfloor td\rfloor,
\qquad
t_d:=\frac{n_t}{d}.
\]
Thus $0\leqslant t-t_d < 1/d$ and $x_{\lfloor td\rfloor}=x_{n_t}$.

Define the predictable drift integrand
\[
A_j(\varphi)
:=
-d\,\gamma_j
\left\langle
\nabla\varphi(x_j),\nabla\mathcal R(x_j)
\right\rangle
+
\frac{\gamma_j^2}{2}
I(B(x_j))
\left\langle
\nabla^2\varphi(x_j),
\left\langle
K,(\nabla\psi(x_j))^{\otimes 2}
\right\rangle
\right\rangle.
\]
Equivalently, the predictable part of one SGD step is $(1/d)A_j(\varphi)$.
Let
\[
A_\varphi^{(d)}(s):=A_{\lfloor sd\rfloor}(\varphi)
\]
be its piecewise-constant interpolation. Then
\[
\sum_{j=0}^{n_t-1}\frac{1}{d}A_j(\varphi)
=
\int_0^{t_d}A_\varphi^{(d)}(s)\,\mathrm ds.
\]
Therefore,
\[
\sum_{j=0}^{n_t-1}\frac{1}{d}A_j(\varphi)
=
\int_0^{t}A_\varphi^{(d)}(s)\,\mathrm ds
+
\xi_t^{\mathrm{mesh}}(\varphi),
\]
where the mesh error is
\[
\xi_t^{\mathrm{mesh}}(\varphi)
:=
-\int_{t_d}^{t}A_\varphi^{(d)}(s)\,\mathrm ds.
\]
Since $t-t_d\leqslant 1/d$, we have
\[
\left|\xi_t^{\mathrm{mesh}}(\varphi)\right|
\leqslant
\frac{1}{d}
\sup_{0\leqslant s\leqslant T}
\left|A_\varphi^{(d)}(s)\right|.
\]

Using the one-step decomposition above, we obtain
\begin{align}
\varphi(x_{\lfloor td\rfloor})
&=
\varphi(x_0)
+
\int_0^t A_\varphi^{(d)}(s)\,\mathrm ds
+
\mathcal M_{\lfloor td\rfloor}^{\mathrm{Grad}}(\varphi)
+
\mathcal M_{\lfloor td\rfloor}^{\mathrm{Hess}}(\varphi)
\nonumber\\
&\quad
+
\sum_{j=0}^{\lfloor td\rfloor-1}
\mathbb E_{a_{j+1}}
\left[
\mathcal E_j^{\mathrm{Hess}}(\varphi)
\,\middle|\,
\mathscr F_j
\right]
+
\sum_{j=0}^{\lfloor td\rfloor-1}
\mathbb E_{a_{j+1}}
\left[
\mathcal E_j^{\mathrm{High}}(\varphi)
\,\middle|\,
\mathscr F_j
\right]
+
\xi_t^{\mathrm{mesh}}(\varphi).
\label{eq: SGDDoobContinuous}
\end{align}
where
\[
\mathcal M_{\lfloor td\rfloor}^{\mathrm{Grad}}(\varphi)
:=
\sum_{j=0}^{\lfloor td\rfloor-1}
\Delta\mathcal M_j^{\mathrm{Grad}}(\varphi),
\qquad
\mathcal M_{\lfloor td\rfloor}^{\mathrm{Hess}}(\varphi)
:=
\sum_{j=0}^{\lfloor td\rfloor-1}
\Delta\mathcal M_j^{\mathrm{Hess}}(\varphi).
\]

In Section~\ref{subsec: errorBounds}, we prove that the error terms in \eqref{eq: SGDDoobContinuous} are negligible as $d\to \infty$. The other two terms in $\int_0^t A_\varphi^{(d)}(s)\,\mathrm ds$ survive the limit. Next, we show that SGD on $S$ is an $(\varepsilon, M, T)$-approximate solution.

\subsubsection{\texorpdfstring{$S\left(x_{\lfloor td\rfloor},z\right)$}{B(xtd)} is an Approximate Solution, Proof of Proposition~\ref{prop: SGDapprox}}\label{subsubsec: SGDapprox}
The goal in this section is to prove Proposition~\ref{prop: SGDapprox}, that is, show that 
\begin{equation*}
S\left(x_{\lfloor td\rfloor},z\right) = \frac{1}{d} W_{\lfloor td\rfloor}^\top \Omega\left(x_{\lfloor td\rfloor},z\right) W_{\lfloor td\rfloor} \in \mathbb{C}^{3 \times 3}
\end{equation*}
is an approximate solution to the partial integro-differential equation \eqref{eq:  PDESystem}.
\begin{proof}[Proof of Proposition~\ref{prop: SGDapprox}]
Applying the preceding real-valued identity to the real and imaginary parts of
each matrix entry of $S(\cdot,z)$, and then recombining, yields
\begin{align*}
S\left(x_{\lfloor td\rfloor}, z\right) &= S\left(x_0,z\right) +
\int_0^t \mathscr{F}\left(z, S(x_{\lfloor sd\rfloor},z)\right) \mathrm{d}s +
\mathcal M_{\lfloor td\rfloor}^{\mathrm{Grad}}(S)
+
\mathcal M_{\lfloor td\rfloor}^{\mathrm{Hess}}(S)\\
&\quad
+
\sum_{j=0}^{\lfloor td\rfloor-1}
\mathbb E_{a_{j+1}}
\left[
\mathcal E_j^{\mathrm{Hess}}(S)
\,\middle|\,
\mathscr F_j
\right]
+
\sum_{j=0}^{\lfloor td\rfloor-1}
\mathbb E_{a_{j+1}}
\left[
\mathcal E_j^{\mathrm{High}}(S)
\,\middle|\,
\mathscr F_j
\right]
+
\xi_t^{\mathrm{mesh}}(S). 
\end{align*}
Thus to show that $S\left(x_{\lfloor td\rfloor}, z\right)$ is an approximate solution of the partial integro-differential equation \eqref{eq:  PDESystem} it
suffices to bound the martingales and error terms where $C$ is a positive constant independent
of $d$. We thus have for all $z\in \Gamma$, the estimate
\begin{align*}
\sup_{0\leqslant t \leqslant (\hat{\tau}_{M} \wedge T) } & \norm{S\left(x_{\lfloor td\rfloor}, z\right) - S\left(x_0,z\right) -
\int_0^t \mathscr{F}\left(z, S(x_{\lfloor sd\rfloor},z)\right) \mathrm{d}s} \\[10pt]
&\leqslant \sup_{0\leqslant t \leqslant (\hat{\tau}_{M} \wedge T) } \norm{\mathcal{M}_{\lfloor td\rfloor}^{\mathrm{Grad}}(S)} + \sup_{0\leqslant t \leqslant (\hat{\tau}_{M} \wedge T) } \norm{ \mathcal{M}_{\lfloor td\rfloor}^{\mathrm{Hess}}(S)} \\[10pt]
+ \sup_{0\leqslant t \leqslant (\hat{\tau}_{M} \wedge T) } &\norm{\sum_{j=0}^{\lfloor td\rfloor - 1} \EE_{a_{j+1}} \left[ \mathcal{E}_j^{\mathrm{Hess}}(S)\middle| \mathscr{F}_j \right]} + \sup_{0\leqslant t \leqslant (\hat{\tau}_{M} \wedge T) } \norm{ \sum_{j=0}^{\lfloor td\rfloor - 1} \EE_{a_{j+1}} \left[ \mathcal{E}_j^{\mathrm{High}}(S)\middle| \mathscr{F}_j \right]}\\[10pt]
+ \sup_{0\leqslant t \leqslant (\hat{\tau}_{M} \wedge T) } &\norm{\xi_t^{\mathrm{mesh}}(S)} .
\end{align*}
Next, fix a constant $\delta>0$. Let $\Gamma^\delta = \Gamma_1^\delta \times \Gamma_2^\delta \times \Gamma_3^\delta \times \Gamma_4^\delta$ where each  $\Gamma_i^{\delta}$ is a $d^{-\delta}$-mesh of $\Gamma_i$ with $|\Gamma^{\delta}| \leqslant C_\Gamma d^{4\delta}$ for positive $C_\Gamma > 0$ depending on $\norm{K}_{\mathrm{op}}$ and $\norm{\beta^*}_\infty$. For all $z\in \Gamma$, we note that for some constants $C,c>0$ we have $\vartheta_{c\cdot M} \leqslant \hat{\tau}_M \leqslant \vartheta_{C\cdot M}$ (see Lemma~\ref{lem: normEquivalence}). Consequently, we evaluate
the error with the stopped process  $x_{td}^\vartheta = x_{td\wedge \vartheta}$ instead of using $\hat{\tau}_{M}$. By the martingale error
propositions, Proposition~\ref{prop: gradientMartingale} and~\ref{prop: hessianMartingale}, the proofs of which are deferred to Section~\ref{subsubsec: boundsMartingales}, we have for any $\hat{\delta}>0$ the estimates
\begin{align*}
\sup_{z\in \Gamma^\delta}\sup_{0\leqslant t \leqslant T}  \norm{ \mathcal{M}_{\lfloor (t\wedge \vartheta_{C\cdot M})d\rfloor}^{\mathrm{Grad}}(S(\cdot, z))}  &< d^{-\frac{2+\alpha}{2} + \hat{\delta}} \quad \text{w.o.p.},\\[10pt] 
\text{and} \quad \sup_{z\in \Gamma^\delta} \sup_{0\leqslant t \leqslant T} \norm{ \mathcal{M}_{\lfloor (t\wedge \vartheta_{C\cdot M})d\rfloor}^{\mathrm{Hess}}(S(\cdot, z))}  &< d^{-(2+\alpha) + \hat{\delta}} \quad \text{w.o.p.}
\end{align*}

In addition, for the Hessian and higher order terms errors, by Propositions~\ref{prop: highOrderTerm} and~\ref{prop: hessianErrorTerm}, the proofs of which are deferred to Sections~\ref{subsubsec: highOrderTerm} and~\ref{subsubsec: hessianErrorTerm}, we have
\begin{align*}
\sup_{z\in \Gamma^\delta}\sup_{0\leqslant t \leqslant T}  \sum_{j=0}^{\lfloor (t\wedge \vartheta_{C\cdot M})d\rfloor -1} \norm{ \EE_{a_{k+1}} \left[ \mathcal{E}_j^{\mathrm{Hess}}(S)\middle| \mathscr{F}_j \right]}   &\leqslant C d^{-1} \quad \text{w.o.p.},\\[10pt]
\text{and} \quad \sup_{z\in \Gamma^\delta}\sup_{0\leqslant t \leqslant T}  \sum_{j=0}^{\lfloor (t\wedge \vartheta_{C\cdot M})d\rfloor -1} \norm{ \EE_{a_{k+1}} \left[ \mathcal{E}_j^{\mathrm{High}}(S)\middle| \mathscr{F}_j \right]}   &\leqslant C d^{-\frac{1}{2}} \quad \text{w.o.p.}
\end{align*}
The mesh error is the contribution of the last fractional time interval:
\[
\xi_t^{\mathrm{mesh}}(S)(z)
=
-\int_{t_d}^{t}
\mathscr F\bigl(z,S(x_{\lfloor sd\rfloor},\cdot)\bigr)\,\mathrm ds,
\qquad
t_d=\frac{\lfloor td\rfloor}{d}.
\]
Hence, since $0\leqslant t-t_d\leqslant 1/d$,
\[
\sup_{0\leqslant t\leqslant (\hat\tau_M\wedge T)}
\left\|
\xi_t^{\mathrm{mesh}}(S)(\cdot)
\right\|_{\Gamma}
\leqslant
\frac{1}{d}
\sup_{0\leqslant s\leqslant (\hat\tau_M\wedge T)}
\left\|
\mathscr F\bigl(\cdot,S(x_{\lfloor sd\rfloor},\cdot)\bigr)
\right\|_{\Gamma}.
\]
Last we show that in the stopped interval, the right-hand side is bounded by $Cd^{-1}$. For any $0\leqslant s\leqslant (\hat\tau_M\wedge T)$, we have
\begin{align*}
\norm{\mathscr{F}(z, S(x_{\lfloor sd\rfloor},z))}&\leqslant \bar{\gamma} C \norm{H\left( B(x_{\lfloor sd\rfloor})\right) } \cdot \norm{z^\zeta 
\left(\bigcirc_{i=1}^4 \mathscr{G}_i\right)
( S(x_{\lfloor sd\rfloor},z))} \\[10pt]
&+ \bar{\gamma}^2 C \norm{I\left( B(x_{\lfloor sd\rfloor})\right) } \cdot \norm{z^\zeta 
\left(\bigcirc_{i=1}^4 \mathscr{G}_i\right)
( S(x_{\lfloor sd\rfloor},z))} + \cdots
\end{align*}
such that $B(x_{\lfloor sd\rfloor}) = \frac{1}{(2\pi)^4} 
\oint_{\Gamma}
z_4\, S(x_{\lfloor sd\rfloor}, z)\,
\mathrm{d}z$. Next, 
plugging equations \eqref{eq: HBound}, \eqref{eq: compGBound}, \eqref{eq: IBound}, and 
\begin{equation*}
\norm{z^\zeta 
\left(\bigcirc_{i=1}^4 \mathscr{G}_i\right)
( S(x_{\lfloor sd\rfloor},z))} \leqslant C(\abs{\Gamma}) \cdot \norm{\left(\bigcirc_{i=1}^4 \mathscr{G}_i\right)
( S(x_{\lfloor sd\rfloor},z))},    
\end{equation*}
we know there is a positive constant $C=C(L(h), L(I), \bar{\gamma}, \norm{K}_{\mathrm{op}}, \norm{\beta^*}_\infty, M, \alpha)$, such that $\left\|
\mathscr F\bigl(\cdot,S(x_{\lfloor sd\rfloor},\cdot)\bigr)
\right\|_{\Gamma}
\leqslant C$. Therefore,
\begin{equation}
\sup_{0\leqslant t \leqslant (\hat{\tau}_{M} \wedge T) } \norm{\xi_t^{\mathrm{mesh}}(S)(\cdot)}_\Gamma \leqslant C d^{-1}.
\end{equation}
Consequently, combining all the errors, we deduce that for some $C > 0$, which does not depend on $d$,
\begin{align*}
\sup_{0\leqslant t \leqslant (\hat{\tau}_{M} \wedge T) }  \norm{S\left(x_{\lfloor td\rfloor}, z\right) - S\left(x_0,z\right) -
\int_0^t \mathscr{F}\left(z, S(x_{\lfloor sd\rfloor},z)\right) \mathrm{d}s}_{\Gamma^\delta} &\leqslant C d^{\hat{\delta}/2 - 1/2} \quad \text{w.o.p.}
\end{align*}
An application of the net argument, Lemma~\ref{lem: net_argument}, finishes the proof after setting $\hat{\delta}= 1-2\delta$ for $\delta \in (0,1/2)$.

The derivative regularity condition~\ref{cond:lipschitz} is automatic for the
continuous-time embedding $t\mapsto S(x_{\lfloor td\rfloor},\cdot)$: for each
fixed trajectory, the dependence on $z$ is through a finite product of resolvents
on the fixed contour $\Gamma$, and the resolvent derivative bounds from Remark~\ref{rem:Lipschitz} give the required
Lipschitz control up to $\hat\tau_M$. Thus the estimate above verifies
condition~\ref{cond:pde}, and hence $S(x_{\lfloor td\rfloor},\cdot)$ is an
$(d^{-\varepsilon},M,T)$-approximate solution.
\end{proof}

\subsection{Error Bounds}\label{subsec: errorBounds}
Recall that we are interested in the statistic
\[
S(x,z):=\frac1d W(x)^\top\Omega(x,z)W(x)\in\mathbb C^{3\times 3},
\]
where
\[
W(x):=[\,\psi(x)\mid \beta^*\mid \mathds 1_d\,] \in \R^{d\times 3}
\]
and $\Omega(x,z)$ is defined in \eqref{eq: Omega}. Throughout this section, the contour $\Gamma$ is as in Remark~\ref{rem: fixedContour}. 
We control the error terms arising in the comparison between SGD and homogenized SGD under $S$, with $\mathscr{F}$ given by the partial integro-differential equation~\eqref{eq: PDESystem}. 
All estimates are taken on the stopped event where $\|x_{\lfloor td\rfloor}\|_\infty$ and $\|\mathscr X_t\|_\infty$ remain uniformly bounded for $0\leqslant t\leqslant T$ (see Assumption~\ref{ass: nonexplosive}). 
Constants may depend on this bound, but not on $d$.

Before proceeding, we present some bounds on the derivatives of $S$.

\begin{lemma}\label{lem: derivativeSbound}
There exist constants $c, C > 0$ such that, for large enough $d$, the following hold:
\begin{align*}
\frac{c}{d}\norm{W}^2 \leqslant \norm{S(x, \cdot)}_\Gamma &\leqslant \frac{C}{d}\norm{W}^2,\\[10pt]
\norm{\nabla_x S(x, \cdot)}_\Gamma &\leqslant \frac{C(\norm{x}_\infty)}{d}\norm{W},\\[10pt]
\text{and} \quad \norm{\nabla^2_x S(x, \cdot)}_\Gamma &\leqslant \frac{C(\norm{x}_\infty)}{d}.
\end{align*} 
\end{lemma}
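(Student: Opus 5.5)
The plan is to treat the three bounds separately, working pointwise in $z\in\Gamma$ and using only two facts. First, every factor of $\Omega(x,z)$ is diagonal. Second, on the fixed contour each resolvent obeys the uniform bound \eqref{eq:resolventBound}, $\sup_{z_i\in\Gamma_i}\|R(z_i;D_i)\|_{\mathrm{op}}\leqslant C$, with $C$ depending only on $\|x\|_\infty$ (through $M$), $\|K\|_{\mathrm{op}}$ and $\|\beta^*\|_\infty$.

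\textbf{Norm equivalence.} The first line is essentially Lemma~\ref{lem: normEquivalence}.
\begin{itemize}
\item The upper bound follows from $\|W^\top\Omega W\|\leqslant \|\Omega\|_{\mathrm{op}}\|W\|^2$ together with the product of resolvent bounds.
\item The lower bound follows from the Cauchy representation $\frac1d\|W\|^2=\frac{1}{(2\pi)^4}\oint_\Gamma \operatorname{Tr} S(x,z)\,\mathrm dz$. This gives $\frac1d\|W\|^2\leqslant C|\Gamma|\,\|S(x,\cdot)\|_\Gamma$, since the contour has finite length.
\end{itemize}

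\textbf{First derivatives.} I will read the norm of $\nabla_x S(x,z)\in\mathbb C^{2d\times3\times3}$ as a Hilbert--Schmidt norm. Each term in the formulas of Lemma~\ref{lem: derivativeS} has the form $\frac1d D\,\Omega\, w\otimes E$, where:
\begin{itemize}
\item $D$ is a product of diagonal matrices, among $\nabla_u\psi$, $\operatorname{diag}(\psi)$, $\operatorname{diag}(\beta^*)$ and $R(z_1;\operatorname{diag}(u))$;
\item $w$ is a column of $W$, or $\beta^*$, or $\mathds 1_d$;
\item $E$ is a fixed $3\times3$ matrix unit.
\end{itemize}
By Lemma~\ref{lem: derivativepsi}, $\|\nabla_u\psi\|_{\mathrm{op}}\leqslant C(\|x\|_\infty)$, and likewise $\|\operatorname{diag}(\psi)\|_{\mathrm{op}}\leqslant C(\|x\|_\infty)$. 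Hence each term has Euclidean norm in $\mathbb C^d$ at most $\frac{C}{d}\|w\|\leqslant \frac Cd\|W\|$. Summing finitely many such terms gives the bound, and taking the maximum over the compact contour $\Gamma$ keeps the constants uniform.

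\textbf{Second derivatives.} This is the step where the claimed scaling $C/d$, with no factor of $\|W\|$, has to be checked carefully; I expect it to be the main obstacle. The entries of $\nabla_x^2 S$ listed in Lemma~\ref{lem: derivativeS} are $\frac1d$ times diagonal $d\times d$ matrices. Their diagonal entries are products of bounded scalars $u_i$, $v_i$, $\psi_i$, $\beta^*_i$, $1$ and resolvent entries. The Frobenius norm of such a matrix would be of order $\sqrt d/d$, which is too large.

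So the norm here must be the injective (operator-type) norm $\|\cdot\|_\sigma$ from \eqref{eq: tensornorms}, and I will use that convention. For a diagonal matrix, the operator norm is the maximal entry, which is bounded by $C(\|x\|_\infty)$; this yields $\|\nabla_x^2S\|_\sigma\leqslant C/d$.

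I will state explicitly that the Hessian bound is in the $\sigma$-norm, since that is how it will be used when it is contracted against rank-one or operator-bounded tensors in the Taylor remainder. The gradient bound is consistent under either convention, because in the HS norm the gradient bound already carries the factor $\|W\|/d\sim d^{-1/2}$. Finally, "for large enough $d$" is only needed so that the stopped bounds on $\|x\|_\infty$ from Assumption~\ref{ass: nonexplosive} apply, and so that the contour margins in Remark~\ref{rem: fixedContour} hold.
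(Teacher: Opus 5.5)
Your proposal is correct and follows the paper's argument. The first line is Lemma~\ref{lem: normEquivalence}, and the derivative bounds come from taking norms term by term in the explicit formulas of Lemma~\ref{lem: derivativeS}, using operator-norm bounds on $\operatorname{diag}(\psi)$, $\nabla_u\psi$, $\nabla_v\psi$ and the resolvents on $\Gamma$.

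Your remark about the Hessian is right and goes beyond the paper's one-line ``analogously'': the bound $C/d$ holds only in the operator/injective norm, since the Hilbert--Schmidt norm of these $\frac1d$-scaled diagonal blocks is of order $d^{-1/2}$. This is exactly the norm used later in Propositions~\ref{prop: hessianMartingale} and~\ref{prop: hessianErrorTerm}, where only $\|\nabla^2\varphi\|_{\mathrm{op}}$ appears.
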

\begin{proof}
The first result follows from the proof of Lemma~\ref{lem: normEquivalence}.

For the derivative, observe that 
\begin{equation}\label{eq: opNormBoundpsi}
\norm{\operatorname{diag}(\psi(x))}_{\mathrm{op}} = \norm{\psi(x)}_\infty 
= \max_{i} \abs{\psi_i(x)}
\leqslant 2 \norm{Q} \norm{x}_\infty^2 + \norm{l}_1 \norm{x}_\infty + \abs{c}.
\end{equation}

Moreover, by Lemma~\ref{lem: derivativepsi}, we have
\begin{align}\label{eq: opNormBoundNablapsi}
\begin{split}
\norm{\nabla_u \psi(x)}_{\mathrm{op}} &= \norm{2q_{11}\operatorname{diag}(u) + 2q_{12}\operatorname{diag}(v)+ l_1 I_d}_{\mathrm{op}} \leqslant 2\sqrt{2} \norm{x}_\infty + \abs{l_1},\\[10pt]
\norm{\nabla_v \psi(x)}_{\mathrm{op}} &= \norm{2q_{12}\operatorname{diag}(u) + 2q_{22}\operatorname{diag}(v)+ l_2 I_d}_{\mathrm{op}} \leqslant 2\sqrt{2} \norm{x}_\infty + \abs{l_2}.
\end{split}
\end{align}
Taking norms on Lemma~\ref{lem: derivativeS} and using that $\sup_{z_i \in \Gamma_i}\norm{R\left(z_i; D_i \right)}_{\mathrm{op}} \leqslant 2$, the second result follows.

Analogously, for the Hessian, the bound immediately follows.
\end{proof}
Consequently, the same bounds hold for the real-valued coordinate functions
$S_{ab}^{\operatorname{Re},z}$ and $S_{ab}^{\operatorname{Im},z}$, uniformly in
$a,b\in\{1,2,3\}$ and $z\in\Gamma$.

To control the errors, we will need to make an \textit{a priori} estimate that effectively shows that the iterates of homogenized SGD and SGD remain bounded. Thus, recall our definition, for fixed $M>0$, of the stopping times
\begin{align}
\begin{split}
\vartheta_M &= \inf \{t \geqslant 0 : \frac{1}{d} \norm{W_{\lfloor td \rfloor}}^2 > M \text{ or } B\left(x_{\lfloor td\rfloor}\right) \notin \mathcal{U}\} \quad \text{or}\\
\vartheta_M &= \inf \{t \geqslant 0 : \frac{1}{d}\norm{\mathscr{W}_t}^2 > M \text{ or } B\left(\mathscr{X}_t\right) \notin \mathcal{U} \},
\end{split}
\end{align}
depending on whether we are working with SGD iterates or homogenized SGD iterates. We often drop the $M$ so that $\vartheta := \vartheta_M$. It will be convenient to work with the stopped processes, $W_{\lfloor td \rfloor}^\vartheta := W_{\lfloor (t\wedge \vartheta)d \rfloor}$ and $\mathscr{W}_t^\vartheta := \mathscr{W}_{t\wedge \vartheta}$.
\begin{remark}\upright
The stopping time $\hat{\tau}_M = \inf \{ t\geqslant 0 : 
\norm{S(x_{\lfloor td \rfloor}, z)}_\Gamma> M \text{ or } B(x_{\lfloor td \rfloor}) \notin \mathcal{U} \}$ and $\hat{\tau}_M = \inf \{ t\geqslant 0 : 
\norm{S(\mathscr{X}_t, z)}_\Gamma> M \text{ or } B(\mathscr{X}_t) \notin \mathcal{U} \}$ are related to $\vartheta_M$ by positive constants $c,C>0$, $\vartheta_{c \cdot M} \leqslant \hat{\tau}_M \leqslant \vartheta_{C \cdot M}$ (see Lemma~\ref{lem: derivativeSbound}).
\end{remark}

In the remainder of this section, we establish a series of propositions that provide bounds on the martingale terms arising from both homogenized SGD and SGD. 
Throughout the proofs below, $C$ denotes a positive constant independent of $d$.
It may depend on $L(h)$, $L(I)$, $\bar\gamma$, $T$, $\|K\|_{\mathrm{op}}$,
$\|\beta^*\|_\infty$, $M$, and $\alpha$, and may change from line to line and not necessarily be the same as the constant $C$ in Lemma~\ref{lem: derivativeSbound}.

\subsubsection{Homogenized SGD Martingale Error}\label{subsubsec: homSGDmartingale}
In this section we control the martingale that arises in homogenized SGD, that is, for a test function $\varphi \colon \R^{2d} \to \R$, define
\begin{equation}
\mathcal{M}_t^\mathrm{HSGD}(\varphi) := \int_0^t \mathrm{d}\mathcal{M}_s^\mathrm{HSGD}(\varphi) = \int_0^t \gamma(s) \sqrt{I\left(B(\mathscr{X}_s) \right)}\inner{\left(\nabla \psi(\mathscr{X}_s)\right)^\top  \sqrt{K}, \nabla \varphi (\mathscr{X}_s) \left(\mathrm{d}\mathfrak{B}_s\right)^\top}.
\end{equation}
As introduced in Remark~\ref{rem: matrixEntriesV}, we are interested in controlling $\mathcal{M}_t^\mathrm{HSGD}(S(\cdot, z))$.

To control the fluctuations of this martingale, we need to control its quadratic variation, defined as follows. Consider a partition of time for $[0,t]$, that is, $0=t_0 < t_1 < \cdots < t_n = t$ such that the
size of the partition $\Delta t = \max_i \{ t_i - t_{i-1}\} \to 0$. We define for the continuous process $Y$,
\begin{equation*}
\left[Y_t(n) \right] = \sum_{k=1}^n \left(Y_{t_k} - Y_{t_{k-1}}\right)^2.
\end{equation*}
If, for every partition of time  $[0,t]$ such that $\Delta t \to 0$, the process $\left[Y_t(n)\right]$ converges in probability
to a process $\left[Y_t\right]$ as $n\to \infty$, we call $\left[Y_t\right]$ the \textit{quadratic variation} of $Y$. Using the quadratic variation of $\mathcal{M}_t^\mathrm{HSGD}$, we will show that the martingale arising from
homogenized SGD is small.
\begin{proposition}[Homogenized SGD martingale small]\label{prop: homSGDmartingale}
Suppose $f\colon \R^2 \to \R$ is $\alpha$-pseudo-\\ Lipschitz with constant $L(f)$ (see Assumption~\ref{ass: fPseudoLipschitz}). Let the statistic $S\colon \R^{2d} \times \Gamma \subset \R^{2d}\times \mathbb{C}^4 \to \mathbb{C}^{3 \times 3}$ be defined as in \eqref{eq: defS}. Then for each fixed $z\in\Gamma$ and each $T,\zeta>0$, there is some constant $C$ such that, with overwhelming probability,
\begin{equation}
\sup_{0\leqslant t \leqslant T} \norm{\mathcal{M}_{t\wedge \vartheta}^\mathrm{HSGD}(S(\cdot, z))} \leqslant C L(f) d^{\zeta/2 - 1/2}.
\end{equation}
\end{proposition}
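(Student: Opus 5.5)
The plan is the standard route for continuous martingales: bound the quadratic variation of the stopped martingale deterministically, then use an exponential martingale inequality (or Bernstein-type concentration for continuous martingales, equivalently a Dambis--Dubins--Schwarz time change plus Gaussian tail bounds for the supremum of Brownian motion). Fix $z\in\Gamma$ and work entrywise with the real-valued functions $S_{ab}^{\operatorname{Re},z}$ and $S_{ab}^{\operatorname{Im},z}$, as in Remark~\ref{rem: matrixEntriesV}. Since there are finitely many entries, it suffices to prove the bound for a single real scalar martingale $\mathcal M_{t\wedge\vartheta}^{\mathrm{HSGD}}(\varphi)$ with $\varphi=S_{ab}^{\operatorname{Re},z}$.

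\textbf{Step 1: bound the quadratic variation.} From the definition of $\mathrm d\mathcal M^{\mathrm{HSGD}}$, we have
\[
[\mathcal M^{\mathrm{HSGD}}(\varphi)]_{t\wedge\vartheta}=\int_0^{t\wedge\vartheta}\gamma(s)^2\, I(B(\mathscr X_s))\,\bigl\|\sqrt K\,\nabla\psi(\mathscr X_s)\nabla\varphi(\mathscr X_s)\bigr\|^2\,\mathrm ds .
\]
The factors are bounded as follows, all up to $\vartheta$:
\begin{itemize}
\item $\gamma\le\bar\gamma$ by Assumption~\ref{ass: learningRate}.
\item $I(B)\le L(I)(1+\|B\|^{\alpha+1})\le C$, because $\|B(\mathscr X_s)\|\le \|K\|_{\mathrm{op}}\frac1d\|\mathscr W_s\|^2\le CM$ (the same estimate as \eqref{eq: IBound}). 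For the $L(f)$ dependence, one can alternatively use Lemma~\ref{lem:growthGradf}, which gives $I(B)\le C L(f)^2$.
\item $\|\sqrt K\|_{\mathrm{op}}\le \bar K^{1/2}$, and $\|\nabla\psi\|_{\mathrm{op}}\le C(\|x\|_\infty)$ by \eqref{eq: opNormBoundNablapsi}, which is bounded on the non-explosive event of Assumption~\ref{ass: nonexplosive} (incorporated into the stopping).
\item $\|\nabla\varphi\|\le \frac{C}{d}\|\mathscr W_s\|\le \frac{C}{d}\sqrt{Md}=C d^{-1/2}$ by Lemma~\ref{lem: derivativeSbound}.
\end{itemize}
Together these give $[\mathcal M]_{T\wedge\vartheta}\le C L(f)^2\,T\,d^{-1}$ almost surely.

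\textbf{Step 2: concentration.} For a continuous local martingale with $[\mathcal M]_{T}\le \sigma^2$ almost surely, $\PP(\sup_{t\le T}|\mathcal M_t|\ge \lambda)\le 2e^{-\lambda^2/(2\sigma^2)}$. This follows from the exponential supermartingale $\exp(\theta\mathcal M_t-\tfrac{\theta^2}{2}[\mathcal M]_t)$ together with Doob's maximal inequality. Take $\sigma^2=CL(f)^2d^{-1}$ and $\lambda=CL(f)d^{\zeta/2-1/2}$. The failure probability is then at most $2\exp(-c\,d^{\zeta})$, and since $d^{\zeta}/\log d\to\infty$ the bound holds with overwhelming probability. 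Combining the real and imaginary parts of all nine entries through a union bound changes only constants.

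\textbf{Main obstacle.} The delicate point is ensuring that the bound on $\|\nabla\psi(\mathscr X_s)\|_{\mathrm{op}}$, which requires $\|\mathscr X_s\|_\infty$ control and not just $\frac1d\|\mathscr W_s\|^2\le M$, is valid up to the stopping time. I would handle this by enlarging $\vartheta$ to also stop when $\|\mathscr X_t\|_\infty$ exceeds the constant of Assumption~\ref{ass: nonexplosive}. This costs nothing in the final statement, since that event has overwhelming probability. I would also check that the stopped integrand is progressively measurable, so that the stopped stochastic integral is a genuine martingale with the stated quadratic variation.
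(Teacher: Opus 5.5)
Your proposal is correct and follows the paper's proof essentially step for step: you bound the quadratic variation of the stopped scalar martingale by $C L(f)^2\bar\gamma^2 d^{-1}$ via Lemma~\ref{lem:growthGradf}, \eqref{eq: opNormBoundNablapsi} and Lemma~\ref{lem: derivativeSbound}, then apply the exponential martingale tail bound with threshold of order $L(f)d^{\zeta/2-1/2}$ and take a finite union bound over the real and imaginary parts of the nine entries. Your enlargement of $\vartheta$ to also stop when $\|\mathscr X_t\|_\infty$ exceeds the non-explosiveness constant makes explicit a point the paper leaves implicit in its constants $C(\|\mathscr X_s\|_\infty)$.
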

\begin{proof}
Fix indices $i,j\in\{1,2,3\}$. Let $\varphi$ denote either
\[
\varphi(x)=S_{ij}^{\operatorname{Re},z}(x)
=
\operatorname{Re}S_{ij}(x,z)
\]
or
\[
\varphi(x)=S_{ij}^{\operatorname{Im},z}(x)
=
\operatorname{Im}S_{ij}(x,z).
\]
We prove the scalar estimate for this real-valued $\varphi$. The corresponding
matrix-valued estimate for $S(\cdot,z)$ follows by applying the same bound to
all real and imaginary parts and taking a finite union bound over the
$3\times 3$ entries. First, we rewrite the martingale increment, $\mathrm{d} \mathcal{M}_t^\mathrm{HSGD}$, as
\begin{equation}
\mathrm{d} \mathcal{M}_t^\mathrm{HSGD} \left(\varphi\right) = \gamma(t) \sqrt{I\left(B(\mathscr{X}_t) \right)}\inner{\left(\nabla \psi(\mathscr{X}_t)\right)^\top  \sqrt{K}, \nabla_x \varphi(\mathscr{X}_t, z)  \left(\mathrm{d}\mathfrak{B}_t\right)^\top}_{\R^{2d\times d}}.
\end{equation}
The quadratic variation of $\mathcal{M}_t^\mathrm{HSGD}$ is 
\begin{equation}\label{eq: quadraticVar}
\left[\mathcal{M}_t^\mathrm{HSGD}\left(\varphi\right)\right] = \int_0^t \gamma(s)^2 \, \abs{I\left(B(\mathscr{X}_s) \right)} \norm{\inner{\nabla_x \varphi(\mathscr{X}_s, z), \nabla \psi(\mathscr{X}_s)^\top  \sqrt{K}  }_{\R^{2d}}}^2 \, \mathrm{d}s.
\end{equation}
We need to compute $\sup_{0\leqslant t\leqslant T} \left[\mathcal{M}_t^\mathrm{HSGD}\left(\varphi\right)\right]$ and show that this quantity is small. In particular, we only need to show that the integrand is small. For this, we see that
\begin{align*}
\norm{\inner{\nabla_x \varphi(\mathscr{X}_s, z), \nabla \psi(\mathscr{X}_s)^\top  \sqrt{K}  }_{\R^{2d}}}^2 &\leqslant  \norm{K}_{\mathrm{op}} \norm{\nabla \psi(\mathscr{X}_s)}_{\mathrm{op}}^2 \norm{\nabla_x \varphi(\mathscr{X}_s, z)}^2.
\end{align*}
Using \eqref{eq: opNormBoundNablapsi}, we have that $\norm{\nabla \psi(\mathscr{X}_s)}_{\mathrm{op}} \leqslant C(\norm{\mathscr{X}_s}_\infty)$. By Lemma~\ref{lem: derivativeSbound}, we have a bound on $\norm{\nabla_x \varphi(\mathscr{X}_s, z)} \leqslant \norm{\nabla_x S(\mathscr{X}_s, \cdot)}_\Gamma \leqslant \frac{C(\norm{\mathscr{X}_s}_\infty)}{d}\norm{\mathscr{W}_s}$. From Lemma~\ref{lem:growthGradf},
the growth condition on $\abs{I\left(B(\mathscr{X}_s) \right)} = \EE_a[\abs{\nabla_{r_1} f(\rho_s) }^2]$ yields
\begin{align}\label{eq: integrandBound}
\begin{split}
&\abs{I\left(B(\mathscr{X}_s) \right)} \norm{\inner{\nabla_x \varphi(\mathscr{X}_s, z), \nabla \psi(\mathscr{X}_s)^\top  \sqrt{K}  }_{\R^{2d}}}^2 \\
& \hspace{0.2in}\leqslant \frac{C(\norm{\mathscr{X}_s}_\infty)}{d^2}(L(f))^2 \norm{\mathscr{W}_s}^2 \left(1 + \frac{1}{\sqrt{d}}\norm{K}^{1/2}_{\mathrm{op}} \norm{\mathscr{W}_s}\right)^{\max \{1, 2\alpha \}}\\
& \hspace{0.2in}\leqslant \frac{C(\norm{\mathscr{X}_s}_\infty)}{d} (L(f))^2 M\left(1 + \sqrt{M}\right)^{\max \{1, 2\alpha \}}.
\end{split}
\end{align}
Thus, \eqref{eq: quadraticVar} and \eqref{eq: integrandBound}, together
\begin{equation}
\sup_{0\leqslant t\leqslant T} \left[\mathcal{M}_{t\wedge \vartheta}^\mathrm{HSGD}\left(\varphi\right)\right] \leqslant C(\norm{\mathscr{X}_s}_\infty) (L(f))^2 \cdot \bar{\gamma}^2 \cdot d^{-1}.
\end{equation}
Using the fact that if $\sup_{0\leqslant t\leqslant T} \left[\mathcal{M}_{t\wedge \vartheta}^\mathrm{HSGD}\left(\varphi\right)\right] \leqslant b$ a.s. then $\PP\left[\sup_{0\leqslant t\leqslant T} \Big|\mathcal{M}_{t\wedge \vartheta}^\mathrm{HSGD}\left(\varphi\right)\Big|>p\right] \leqslant \exp(-p^2/2b)$, for 
any $\zeta>0$ and $p=\sqrt{C} L(f) d^{\zeta/2 - 1/2}$, 
\begin{equation*}
\PP\left[\sup_{0\leqslant t\leqslant T} \Big|\mathcal{M}_{t\wedge \vartheta}^\mathrm{HSGD}\left(\varphi\right)\Big|>p\right] \leqslant C\exp(-d^\zeta).    
\end{equation*}
Applying the scalar bound to the real and imaginary parts of all nine entries of
$S(\cdot,z)$ and taking a finite union bound gives the displayed matrix norm
bound. Since the matrix dimension is fixed, this only changes the constant.
\end{proof}

\subsubsection{\texorpdfstring{Bounds on the Martingales $\mathcal{M}_k^{\mathrm{Grad}}$ and $\mathcal{M}_k^{\mathrm{Hess}}$}{Bounds on the martingales MkGrad and MkHess}}\label{subsubsec: boundsMartingales}
Now we move on to the martingale increments coming from SGD applied to test functions $\varphi$. Recall, the expressions for the martingale increments for any quartic statistics $\varphi$
\begin{align*}
\Delta \mathcal{M}_k^{\mathrm{Grad}}(\varphi) &= \gamma_k \, \inner{\nabla \varphi (x_k), \nabla \mathcal{R}(x_k)} - 
\frac{\gamma_k}{\sqrt{d}} \, \inner{\nabla \varphi(x_k), \nabla_{r_1} f\left(  r_k \right) \cdot (\nabla \psi(x_k))^\top \cdot a_{k+1}}\\
\Delta \mathcal{M}_k^{\mathrm{Hess}}(\varphi) &= \frac{\gamma_k^2}{2d} \inner{\nabla^2 \varphi(x_k), \left(\nabla_{r_1} f\left(  r_k \right) \cdot (\nabla \psi(x_k))^\top \cdot a_{k+1}\right)^{\otimes 2}}\\
&- \frac{\gamma_k^2}{2d} \EE_{a_{k+1}} \left[ \inner{\nabla^2 \varphi(x_k), \left(\nabla_{r_1} f\left(  r_k \right) \cdot (\nabla \psi(x_k))^\top \cdot a_{k+1}\right)^{\otimes 2}} \middle| \mathscr{F}_k \right]
\end{align*}
with
\begin{equation*}
\mathcal{M}_k(\varphi) = \sum_{j=0}^{k - 1} \Delta \mathcal{M}_j(\varphi).
\end{equation*}
\begin{proposition}[Gradient martingale]\label{prop: gradientMartingale}
Suppose $f\colon \R^2 \to \R$ is $\alpha$-pseudo-Lipschitz with constant $L(f)$ (see Assumption~\ref{ass: fPseudoLipschitz}). Let the statistic $S\colon \R^{2d} \times \Gamma \subset \R^{2d}\times \mathbb{C}^4 \to \mathbb{C}^{3 \times 3}$ be defined as in \eqref{eq: defS}. Then for each fixed $z\in\Gamma$ and each $T, \zeta>0$, with overwhelming probability,
\begin{equation}
\sup_{0\leqslant t \leqslant T} \norm{ \mathcal{M}_{\lfloor d(t\wedge \vartheta) \rfloor}^\mathrm{Grad}(S(\cdot, z))} \leqslant  d^{-\frac{2+\alpha}{2} + \zeta}.
\end{equation}
\end{proposition}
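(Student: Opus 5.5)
The plan is to prove the bound entrywise, as in the proof of Proposition~\ref{prop: homSGDmartingale}. Fix $i,j\in\{1,2,3\}$ and let $\varphi$ be $S_{ij}^{\operatorname{Re},z}$ or $S_{ij}^{\operatorname{Im},z}$. I will prove the scalar estimate for the stopped martingale $\mathcal M^{\mathrm{Grad}}_{\lfloor d(t\wedge\vartheta)\rfloor}(\varphi)$ and then take a union bound over the eighteen real and imaginary parts, which only changes constants.

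Stopping at $\vartheta$ means that every increment with index $k<\lfloor d\vartheta\rfloor$ satisfies $\frac1d\|W_k\|^2\le M$ and $\|x_k\|_\infty\le C$. Lemma~\ref{lem: derivativeSbound} then gives $\|\nabla\varphi(x_k)\|\le C\|W_k\|/d\le C d^{-1/2}$, and \eqref{eq: opNormBoundNablapsi} gives $\|\nabla\psi(x_k)\|_{\mathrm{op}}\le C$. Setting $w_k:=\nabla\psi(x_k)\nabla\varphi(x_k)\in\R^d$, the increment is
\[
\Delta\mathcal M_k^{\mathrm{Grad}}(\varphi)=\gamma_k\Bigl(\EE\bigl[Y_k\mid\mathscr F_k\bigr]-Y_k\Bigr),\qquad Y_k:=\tfrac{1}{\sqrt d}\,\nabla_{r_1}f(r_k)\,\langle w_k,a_{k+1}\rangle .
\]

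\textbf{Step 1 (size of one increment).} To bound $Y_k$, I use the Gaussian conditioning of Lemma~\ref{lem: gaussianConditioning}. Write $a_{k+1}=\sqrt K\Pi_kv_k+\sqrt K(I-\Pi_k)g_k$. This splits $\langle w_k,a_{k+1}\rangle$ into two pieces:
\begin{itemize}
\item a component $\langle \Pi_k\sqrt K w_k,v_k\rangle$ that depends only on $r_k$;
\item an independent centered Gaussian with variance at most $\|K\|_{\mathrm{op}}\|w_k\|^2\le C/d$.
\end{itemize}
The factor $\nabla_{r_1}f(r_k)$ is controlled in $\psi_2$-type norms by Lemma~\ref{lem:growthGradf}, with polynomial degree $\max\{1,\alpha p\}$ in $1+\|r_k\|$. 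Multiplying the two pieces, $\|Y_k\|_{\psi_1}\le C d^{-1/2}\cdot d^{-1/2}$, up to the pseudo-Lipschitz factor.

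\textbf{Step 2 (from increments to the supremum).} I would apply a truncated Azuma/Freedman inequality. On an event of overwhelming probability, $\max_k|\Delta\mathcal M_k|\le d^{\zeta/4}\,\sigma_d$, where $\sigma_d$ is the $\psi_1$-scale from Step 1. This uses a union bound over the $\lfloor Td\rfloor$ indices and the sub-exponential tails. The truncation bias is handled as in \citep{collins-woodfin2024hitting}. Doob's maximal inequality, or Freedman applied directly to the running maximum, then gives
\[
\sup_{t\le T}\bigl|\mathcal M^{\mathrm{Grad}}_{\lfloor d(t\wedge\vartheta)\rfloor}(\varphi)\bigr|\le d^{\zeta}\,\sigma_d\sqrt{Td}\quad\text{w.o.p.}
\]

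\textbf{Step 3 (main obstacle).} The crude scale $\sigma_d\asymp d^{-1}$ from Step 1 only yields $d^{-1/2+\zeta}$. Reaching the stated exponent $-\tfrac{2+\alpha}{2}$ therefore needs an additional factor $d^{-(1+\alpha)/2}$ from the structure of $w_k$. This is the step I expect to be hardest.

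What I would try first is to exploit that $\nabla\varphi$ is built from the columns of $W_k$. From Lemma~\ref{lem: derivativeS}, $w_k$ is a sum of terms of the form $D_k\Omega W_k$ with bounded diagonal $D_k$. I would then show that, on the stopped event, the part of $\langle w_k,a_{k+1}\rangle$ orthogonal to $\Pi_k$ averages across successive steps; this would be done with a second-moment computation of the predictable quadratic variation $\sum_k\EE[\Delta\mathcal M_k^2\mid\mathscr F_k]$. I would also track the $\alpha$-dependence through the moment bound \eqref{eq: expGrowthGradf} with $p=2$, since that is where the exponent $\alpha$ enters the rate.

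If this refined quadratic-variation bound cannot be closed, the argument as written delivers only $d^{-1/2+\zeta}$. Note that $d^{-1/2+\zeta}$ is already what the proof of Proposition~\ref{prop: SGDapprox} actually uses to close, so that is the fallback for the downstream argument.
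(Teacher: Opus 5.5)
Your Steps 1--2 are, in substance, the truncation-plus-Azuma argument the paper intends, and they do prove $\sup_{t\le T}\norm{\mathcal M^{\mathrm{Grad}}_{\lfloor d(t\wedge\vartheta)\rfloor}(S(\cdot,z))}\le d^{-1/2+\zeta}$ w.o.p. One minor fix: for $\alpha>1$ the factor $\nabla_{r_1}f(r_k)$ grows like $(1+\norm{r_k})^{\alpha}$, so $Y_k$ is sub-Weibull rather than $\psi_1$. Stretched-exponential tails still survive the union bound over $\lfloor Td\rfloor$ indices.

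The gap is Step 3, and it cannot be closed, because the stated exponent is false. Take $f(r)=\tfrac12(r_1-r_2)^2$, which is $1$-pseudo-Lipschitz, so the claim reads $d^{-3/2+\zeta}$. Take also $K=I_d$, $\beta^*=\mathds{1}_d$, the squared parametrization and $u_0=v_0=\mathds{1}_d$, i.e.\ the setting of Theorem~\ref{thm:hp_decay_main}. Then $\psi(x_0)=0$ and $\nabla_{r_1}f(r_0)=-d^{-1/2}\langle\mathds{1}_d,a_1\rangle$. For the $(1,2)$ entry (recombining real and imaginary parts), your vector is $w_0=\tfrac{8}{d}\,\omega(z)\mathds{1}_d$ with $\omega(z)=\prod_{j=1}^4(z_j-1)^{-1}\neq 0$. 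Hence
\[
\Delta\mathcal M_0^{\mathrm{Grad}}\bigl(S_{12}(\cdot,z)\bigr)=\frac{8\gamma_0\,\omega(z)}{d}\bigl(\xi^2-1\bigr),\qquad \xi\sim\mathcal N(0,1).
\]
At $t=1/d$ the supremum in the statement is therefore at least $8\gamma_0|\omega(z)|\,|\xi^2-1|/d$. This exceeds $d^{-3/2+\zeta}$ (for $\zeta<1/2$) with probability bounded away from zero.

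More generally, each increment has conditional variance of order $d^{-2}$; this is the discrete counterpart of the $O(d^{-1})$ quadratic variation in Proposition~\ref{prop: homSGDmartingale}. So after $\lfloor Td\rfloor$ orthogonal increments the martingale genuinely fluctuates at scale $d^{-1/2}$, even when $\alpha=0$. The predictable quadratic-variation computation you propose would confirm this rather than improve it, since there is no cancellation between martingale increments beyond the $\sqrt{Td}$ already used. Tracking $\alpha$ through \eqref{eq: expGrowthGradf} cannot help either, since that factor is $O(1)$ on the stopped event. As a sanity check, an $\alpha$-pseudo-Lipschitz $f$ is also $\alpha'$-pseudo-Lipschitz for every $\alpha'\ge\alpha$ (with constant $3L(f)$). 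A rate that improves with $\alpha$ would therefore push the martingale below every power of $d$.

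The paper's proof reaches $d^{-(2+\alpha)/2}$ only through two slips.
\begin{enumerate}
\item It declares $r_k$ to be $d^{-1/2}$-subgaussian by invoking \eqref{eq: opNormBoundpsi}, which bounds $\norm{\psi(x_k)}_\infty$. The subgaussian scale of $r_k$ is in fact $d^{-1/2}$ times the Euclidean norms of $\sqrt K\psi(x_k)$ and $\sqrt K\beta^*$, which are typically of order $\sqrt d$ (e.g.\ $\norm{\beta^*}=\sqrt d$ in the example). So $r_k$ is of order one, and its truncation at radius $d^{-1/2}\beta$ is active with probability tending to one.
\item The two truncated factors are bounded by $L(f)C(1+d^{-1/2}\beta)^{\max\{1,\alpha\}}$ and $d^{-1/2}\beta$. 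From these the paper concludes a product bound $L(f)C\,d^{-(2+\alpha)/2}\beta^{2+\alpha}$. But the product of those two bounds is at least $L(f)C\,d^{-1/2}\beta$, because $1+d^{-1/2}\beta\ge 1$. This step manufactures exactly the factor $d^{-(1+\alpha)/2}$ you found missing.
\end{enumerate}
If instead you truncate $r_k$ at radius $\beta$ and the linear form at radius $d^{-1/2}\beta$, the truncated increments are $O(d^{-1}\beta^{1+\max\{1,\alpha\}})$. Azuma over $\lfloor Td\rfloor$ steps then yields $d^{-1/2+\zeta}$, which is your Steps 1--2. So state and prove the proposition with exponent $-\tfrac12+\zeta$ and drop Step 3. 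As you observe, that is precisely the rate the proof of Proposition~\ref{prop: SGDapprox} uses.
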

\begin{proof}
Fix indices $i,j\in\{1,2,3\}$. Let $\varphi$ denote either
\[
\varphi(x)=S_{ij}^{\operatorname{Re},z}(x)
=
\operatorname{Re}S_{ij}(x,z)
\]
or
\[
\varphi(x)=S_{ij}^{\operatorname{Im},z}(x)
=
\operatorname{Im}S_{ij}(x,z).
\]
We prove the scalar estimate for this real-valued $\varphi$. The corresponding
matrix-valued estimate for $S(\cdot,z)$ follows by applying the same bound to
all real and imaginary parts and taking a finite union bound over the
$3\times 3$ entries. Throughout the proof of this proposition,
we will be working on the stopped version of the martingale, $\mathcal{M}_{\lfloor d(t\wedge \vartheta) \rfloor}^\mathrm{Grad}$. However, to lighten the
notation, we will suppress the $\vartheta$ dependence in the subscript as well as the $\varphi$ and simply write $\mathcal{M}_{\lfloor td \rfloor}^\mathrm{Grad} := \mathcal{M}_{\lfloor d(t\wedge \vartheta) \rfloor}^\mathrm{Grad}(\varphi)$. We have the martingale increments
\begin{align*}
\Delta \mathcal{M}_k^{\mathrm{Grad}} &= \gamma_k \, \inner{\nabla \varphi (x_k), \nabla \mathcal{R}(x_k)} - 
\frac{\gamma_k}{\sqrt{d}} \, \inner{\nabla \varphi(x_k), \nabla_{r_1} f\left(  r_k \right) \cdot (\nabla \psi(x_k))^\top \cdot a_{k+1}}\\
&= \frac{\gamma_k}{\sqrt{d}} \EE_{a_{k+1}} \left[ \inner{\nabla \varphi(x_k), \nabla_{r_1} f\left(  r_k \right) \cdot (\nabla \psi(x_k))^\top \cdot a_{k+1}} \middle| \mathscr{F}_k \right] \\
&- 
\frac{\gamma_k}{\sqrt{d}} \, \inner{\nabla \varphi(x_k), \nabla_{r_1} f\left(  r_k \right) \cdot (\nabla \psi(x_k))^\top \cdot a_{k+1}}\\
&= \frac{\gamma_k}{\sqrt{d}} \EE_{a_{k+1}} \left[  \inner{\nabla \varphi(x_k),  (\nabla \psi(x_k))^\top \cdot a_{k+1}} \nabla_{r_1} f\left(  r_k \right) \middle| \mathscr{F}_k \right] \\
&- \frac{\gamma_k}{\sqrt{d}} \,  \inner{\nabla \varphi(x_k),  (\nabla \psi(x_k))^\top \cdot a_{k+1}} \nabla_{r_1} f\left(  r_k \right).
\end{align*}
We define $\mathcal{M}_k^{\mathrm{Grad,}\beta}$ to be a new martingale with increments
\begin{align*}
\Delta \mathcal{M}_k^{\mathrm{Grad,}\beta} &= \frac{\gamma_k}{\sqrt{d}} \EE_{a_{k+1}} \left[ \operatorname{Proj}_{d^{-\frac{1}{2}}\beta} \inner{\nabla \varphi(x_k),  (\nabla \psi(x_k))^\top \cdot a_{k+1}} \nabla_{r_1} f\circ \operatorname{Proj}_{d^{-\frac{1}{2}}\beta} \left(  r_k \right)  \middle| \mathscr{F}_k \right]\\
&- \frac{\gamma_k}{\sqrt{d}} \, \operatorname{Proj}_{d^{-\frac{1}{2}}\beta} \inner{\nabla \varphi(x_k),  (\nabla \psi(x_k))^\top \cdot a_{k+1}} \nabla_{r_1} f\circ \operatorname{Proj}_{d^{-\frac{1}{2}}\beta} \left(  r_k \right),
\end{align*}
where we note that there are two projections and the projection of $r_k$ is in both coordinates
of $\R^2$, even though the gradient $\nabla_{r_1} f$ is only with respect to the first coordinate. We take $\beta = d^\zeta$ in the projection radius for some $\zeta>0$ to be determined later. We will bound $\mathcal{M}_k^{\mathrm{Grad,}\beta}$ first, and then bound the difference between $\mathcal{M}_k^\mathrm{Grad}$ and $\mathcal{M}_k^{\mathrm{Grad,}\beta}$.

We begin by computing subgaussian bounds on the quantities that are going to be projected, namely $r_k$ and $\inner{\nabla \varphi(x_k),  (\nabla \psi(x_k))^\top \cdot a_{k+1}}$. For the purposes of this section, when we refer to a vector as \textit{subgaussian}, we mean that its entries individually satisfy the stated subgaussian concentration bound. We can rewrite the quantities $r_k$ and $\inner{\nabla \varphi(x_k),  (\nabla \psi(x_k))^\top \cdot a_{k+1}}$ as
\begin{align}
\begin{split}
r_k = \frac{1}{\sqrt{d}}\begin{pmatrix}
\psi(x_k) \\
\beta^*
\end{pmatrix}^{\!\top} a_{k+1} &= \frac{1}{\sqrt{d}} \begin{pmatrix}
\psi(x_k) \\
\beta^*
\end{pmatrix}^{\!\top} \sqrt{K}v_k  \quad \text{and}\\[5pt]
\inner{\nabla \varphi(x_k),  (\nabla \psi(x_k))^\top \cdot a_{k+1}} & = \inner{\sqrt{K} \cdot \nabla \psi(x_k) \cdot \nabla \varphi(x_k),  v_k},
\end{split}
\end{align}
so $r_k$ is $\frac{1}{\sqrt{d}} \norm{\sqrt{K}}_{\mathrm{op}} \cdot \norm{\begin{pmatrix}
\psi(x_k) \\
\beta^*
\end{pmatrix}}_{\mathrm{op}}$ - subgaussian and $\inner{\nabla \varphi(x_k),  (\nabla \psi(x_k))^\top \cdot a_{k+1}}$ is $\norm{\sqrt{K} }_{\mathrm{op}} \cdot \norm{\nabla \psi(x_k)}_{\mathrm{op}} \cdot \norm{\nabla \varphi(x_k)}_{\mathrm{op}}$ - subgaussian where 
\begin{equation*}
\norm{\nabla \varphi(x_k)}_{\mathrm{op}} = \norm{\nabla_x S_{ij}(x_k,z)}_{\mathrm{op}} \leqslant \norm{\nabla_x S(x_k,\cdot)}_\Gamma \leqslant \frac{C(\norm{x_k}_\infty)}{d}\norm{W_k}.
\end{equation*}
by Lemma~\ref{lem: derivativeSbound}, and $\norm{\begin{pmatrix}
\psi(x_k) \\
\beta^*
\end{pmatrix}}_{\mathrm{op}}, \norm{\nabla \psi(x_k)}_{\mathrm{op}} \leqslant C(\norm{x_k}_\infty)$ by \eqref{eq: opNormBoundpsi} and \eqref{eq: opNormBoundNablapsi}. Thus, since we are working on the stopped processes, 
\begin{align}
\norm{r_k}_{\psi_2} = d^{-\frac{1}{2}}C(\norm{x_k}_\infty) \quad \text{and} \quad \norm{\inner{\nabla \varphi(x_k),  (\nabla \psi(x_k))^\top \cdot a_{k+1}}}_{\psi_2} = d^{-\frac{1}{2}}C(\norm{x_k}_\infty).
\end{align}
These subgaussian bounds will be used to bound the difference between $\mathcal{M}_k^{\mathrm{Grad}}$ and $\mathcal{M}_k^{\mathrm{Grad,}\beta}$.

Furthermore, leveraging the projections and the fact that $f$ is $\alpha$-pseudo-Lipschitz, which implies the growth bound on $\nabla_{r_1} f(r)$ in Lemma~\ref{lem:growthGradf}, we can derive the norm bounds
\begin{align}
\abs{\nabla_{r_1} f \circ \operatorname{Proj}_{d^{-\frac{1}{2}}\beta} \left(r_k \right)} &\leqslant L(f) C (1+d^{-\frac{1}{2}}\beta)^{\max \{1,\alpha\}}, \\
\abs{\operatorname{Proj}_{d^{-\frac{1}{2}}\beta} \inner{\nabla \varphi(x_k),  (\nabla \psi(x_k))^\top \cdot a_{k+1}}} &\leqslant  d^{-\frac{1}{2}}\beta.
\end{align}
This gives us the bound 
\begin{align}
\abs{\operatorname{Proj}_{d^{-\frac{1}{2}}\beta} \inner{\nabla \varphi(x_k),  (\nabla \psi(x_k))^\top \cdot a_{k+1}}  \nabla_{r_1} f \circ \operatorname{Proj}_{d^{-\frac{1}{2}}\beta} \left(r_k \right) }\leqslant L(f) C d^{-\frac{2+\alpha}{2}}\beta^{2 + \alpha}
\end{align}
and, since this is an almost sure bound, it holds for the expectation as well, and we get
\begin{align*}
    \abs{\Delta \mathcal{M}_k^{\mathrm{Grad,}\beta}} \leqslant 2\gamma_k L(f) C  d^{-\frac{3+\alpha}{2}}\beta^{2 + \alpha}.
\end{align*}
Applying Azuma's inequality with the assumption $n=O(d)$, we obtain
\begin{equation*}
    \sup_{1\leqslant k \leqslant n} \PP\left[\abs{\mathcal{M}_k^{\mathrm{Grad,}\beta}}> t\right] < 2\exp \left( \frac{-t^2}{2n \cdot (C d^{-\frac{3+\alpha}{2}}\beta^{2+\alpha})^2} \right) \leqslant 2\exp \left( \frac{-t^2}{C'd^{-(2+\alpha)} \beta^{2(2+\alpha)}} \right).
\end{equation*}
Thus, with overwhelming probability, 
\begin{equation}
    \sup_{1\leqslant k \leqslant n} \abs{\mathcal{M}_k^{\mathrm{Grad,}\beta}} < d^{-\frac{2+\alpha}{2}}\beta^{3+\alpha}.
\end{equation}
Finally, we bound the difference between $\{\mathcal{M}_k^{\mathrm{Grad}} \}^n_{k=1}$ and $\{\mathcal{M}_k^{\mathrm{Grad,}\beta} \}^n_{k=1}$. For ease of notation, we write
\begin{align*}
    G_k &:= \frac{\gamma_k}{\sqrt{d}} \, \inner{\nabla \varphi(x_k),  (\nabla \psi(x_k))^\top \cdot a_{k+1}} \nabla_{r_1} f\left(  r_k \right)\\
    G_{k, \beta} &:= \frac{\gamma_k}{\sqrt{d}} \, \operatorname{Proj}_{d^{-\frac{1}{2}}\beta} \inner{\nabla \varphi(x_k),  (\nabla \psi(x_k))^\top \cdot a_{k+1}} \nabla_{r_1} f\circ \operatorname{Proj}_{d^{-\frac{1}{2}} \beta} \left( r_k \right).
\end{align*}
The quantity we are trying to bound is
\begin{align*}
    \abs{\Delta \mathcal{M}_k^{\mathrm{Grad}} - \Delta \mathcal{M}_k^{\mathrm{Grad,}\beta}} &= \abs{\left(G_k - \EE_{a_{k+1}} G_k\right) - \left(G_{k,\beta} - \EE_{a_{k+1}} G_{k,\beta} \right)} \\[10pt]
    &\leqslant \abs{G_k - G_{k,\beta}} + \abs{\EE_{a_{k+1}}\left(G_k - G_{k,\beta} \right)}.
\end{align*}
First, we will show that $G_k - G_{k,\beta} = 0$ with overwhelming probability. Using the subgaussian bounds on $r_k$ and $\inner{\nabla \varphi(x_k),  (\nabla \psi(x_k))^\top \cdot a_{k+1}}$, we have
\begin{align*}
    \PP\left[G_k \neq G_{k,\beta}\right] &\leqslant \PP\left[\norm{r_k} > d^{-\frac{1}{2}}\beta \right] + \PP\left[\abs{\inner{\nabla \varphi(x_k),  (\nabla \psi(x_k))^\top \cdot a_{k+1}}}>d^{-\frac{1}{2}}\beta \right] \\[10pt]
    &< 4 \exp \left( -\frac{\beta^2}{2C}\right).
\end{align*}
Since $\beta = d^\zeta$ for some $\zeta>0$, the probability bounds above imply that $G_k - G_{k,\beta} = 0$ with overwhelming probability, and it remains to bound the difference in their expectations. For this, we have
\begin{align*}
    \abs{\EE_{a_{k+1}}\left[G_k - G_{k,\beta} \right]} &= \abs{\EE_{a_{k+1}}\left[\left(G_k - G_{k,\beta} \right) \cdot \mathds{1}\{G_k \neq G_{k,\beta} \}\right]} \\[10pt]
    &\leqslant \abs{\EE_{a_{k+1}}\left[G_k  \cdot \mathds{1}\{G_k \neq G_{k,\beta} \}\right]} + \abs{\EE_{a_{k+1}}\left[ G_{k,\beta}  \cdot \mathds{1}\{G_k \neq G_{k,\beta} \}\right]}.
\end{align*}
For $\abs{\EE_{a_{k+1}}\left[ G_{k,\beta}  \cdot \mathds{1}\{G_k \neq G_{k,\beta} \}\right]}$, we have
\begin{align*}
    \abs{\EE_{a_{k+1}}\left[ G_{k,\beta}  \cdot \mathds{1}\{G_k \neq G_{k,\beta} \}\right]} &\leqslant \max \abs{G_{k,\beta}} \cdot \PP\left[G_k \neq G_{k,\beta}\right]\\
    &\leqslant L(f) C d^{-\frac{2+\alpha}{2}}\beta^{3+\alpha} \cdot 4 \exp \left( -\frac{\beta^2}{2C}\right).
\end{align*}
For $\abs{\EE_{a_{k+1}}\left[ G_k  \cdot \mathds{1}\{G_k \neq G_{k,\beta} \}\right]}$, we have
\begin{align*}
    \abs{\EE_{a_{k+1}}\left[ G_k  \cdot \mathds{1}\{G_k \neq G_{k,\beta} \}\right]} &\leqslant \EE_{a_{k+1}}  \abs{G_k \cdot \mathds{1}\{ E_1 \}} \\[2pt]
    &+ \EE_{a_{k+1}} \abs{G_k \cdot \mathds{1}\{ E_2 \}} \\[2pt]
    &+ \EE_{a_{k+1}}  \abs{G_k \cdot \mathds{1}\{ E_3 \}} ,\\[10pt]
    \text{where }  E_1 &:= \{\norm{r_k} \leqslant d^{-\frac{1}{2}}\beta \} \cap \{\abs{\inner{\nabla \varphi(x_k),  (\nabla \psi(x_k))^\top \cdot a_{k+1}}}>d^{-\frac{1}{2}}\beta \},\\
    E_2 &:= \{\norm{r_k} > d^{-\frac{1}{2}} \beta \} \cap \{\abs{\inner{\nabla \varphi(x_k),  (\nabla \psi(x_k))^\top \cdot a_{k+1}}}\leqslant d^{-\frac{1}{2}}\beta \},\\
    E_3 &:= \{\norm{r_k} > d^{-\frac{1}{2}}\beta \} \cap \{\abs{\inner{\nabla \varphi(x_k),  (\nabla \psi(x_k))^\top \cdot a_{k+1}}} > d^{-\frac{1}{2}}\beta \}.
\end{align*}
The term $\EE_{a_{k+1}}  \abs{G_k \cdot \mathds{1}\{ E_1 \}}$ can be bounded as
\begin{align*}
    &\EE_{a_{k+1}}  \abs{G_k \cdot \mathds{1}\{ E_1 \}} \leqslant L(f) C  (d^{-\frac{1}{2}}\beta)^{\max \{1,\alpha\}} \cdot\\[10pt]
    &\hspace{0.6in}\EE_{a_{k+1}}\left[  \abs{\inner{\nabla \varphi(x_k),  (\nabla \psi(x_k))^\top \cdot a_{k+1}}} \cdot \mathds{1}\{\abs{\inner{\nabla \varphi(x_k),  (\nabla \psi(x_k))^\top \cdot a_{k+1}}} > d^{-\frac{1}{2}}\beta \}  \right], 
\end{align*}
where the expectation on the right-hand side is exponentially small due to being a tail of a sub-gaussian first moment (where $\beta^2$ is larger than the sub-gaussian variance and grows with $d$). By similar reasoning, $\EE_{a_{k+1}}  \abs{G_k \cdot \mathds{1}\{ E_2 \}}$ is also exponentially small (using the growth bound on $\nabla_{r_1} f$). For $\EE_{a_{k+1}}  \abs{G_k \cdot \mathds{1}\{ E_3 \}}$, we have
\begin{align*}
    \EE_{a_{k+1}}  \abs{G_k \cdot \mathds{1}\{ E_3 \}} &\leqslant \frac{\gamma_k}{\sqrt{d}} \EE_{a_{k+1}}\Big[  \abs{\nabla_{r_1} f\left(r_k \right) \cdot \mathds{1}\{\norm{r_k} > d^{-\frac{1}{2}}\beta \}  } \cdot \\
    &\hspace{-0.2in}\abs{\inner{\nabla \varphi(x_k),  (\nabla \psi(x_k))^\top \cdot a_{k+1}} \cdot \mathds{1}\{\abs{\inner{\nabla \varphi(x_k),  (\nabla \psi(x_k))^\top \cdot a_{k+1}}} > d^{-\frac{1}{2}}\beta \}  }\Big]\\
    &\leqslant \frac{\gamma_k}{\sqrt{d}}\EE_{a_{k+1}} \abs{\nabla_{r_1} f\left(r_k \right) \cdot \mathds{1}\{\norm{r_k} > d^{-\frac{1}{2}}\beta \}  }^2 \cdot \\
    & \hspace{-0.5in}\EE_{a_{k+1}} \abs{\inner{\nabla \varphi(x_k),  (\nabla \psi(x_k))^\top \cdot a_{k+1}} \cdot \mathds{1}\{\abs{\inner{\nabla \varphi(x_k),  (\nabla \psi(x_k))^\top \cdot a_{k+1}}} > d^{-\frac{1}{2}}\beta \}  }^2.
\end{align*}
This is a product of tails of Gaussian moments, which is again exponentially small. Thus, we conclude that, with overwhelming probability, $\sup_{1\leqslant k \leqslant n} \abs{\Delta \mathcal{M}_k^{\mathrm{Grad}} - \Delta \mathcal{M}_k^{\mathrm{Grad,}\beta}}$ is exponentially small and thus, taking $\beta = d^\zeta$, we conclude that, with overwhelming probability, 
\begin{equation*}
    \sup_{1\leqslant k \leqslant n} \abs{\mathcal{M}_k^{\mathrm{Grad}}} <  d^{- \frac{2+\alpha}{2} +\zeta(3+\alpha)}.
\end{equation*}
Adjusting the value of $\zeta$, recalling that all of this has been proved on the stopped process, and applying the scalar bound to the real and imaginary parts of all nine entries of
$S(\cdot,z)$ and taking a finite union bound gives the displayed matrix norm
bound. Since the matrix dimension is fixed, this only changes the constant.
\end{proof}
\begin{proposition}[Hessian martingale]\label{prop: hessianMartingale}
Suppose $f\colon \R^2 \to \R$ is $\alpha$-pseudo-Lipschitz with constant $L(f)$ (see Assumption~\ref{ass: fPseudoLipschitz}). Let the statistic $S\colon \R^{2d} \times \Gamma \subset \R^{2d}\times \mathbb{C}^4 \to \mathbb{C}^{3 \times 3}$ be defined as in \eqref{eq: defS}. Then for each fixed $z\in\Gamma$ and each $T, \zeta>0$, with overwhelming probability,
\begin{equation}
\sup_{0\leqslant t \leqslant T} \big|\mathcal{M}_{\lfloor d(t\wedge \vartheta) \rfloor}^\mathrm{Hess}(S(\cdot,z))\big| \leqslant  d^{-(2+\alpha)+ \zeta}.
\end{equation}
\end{proposition}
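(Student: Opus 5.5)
We would mirror the proof of Proposition~\ref{prop: gradientMartingale}: truncate the random quantities entering each increment, bound the truncated martingale with Azuma's inequality, and then show that the truncated and untruncated martingales agree up to an exponentially small error. As there, we fix $i,j\in\{1,2,3\}$ and take $\varphi=S^{\operatorname{Re},z}_{ij}$ or $\varphi=S^{\operatorname{Im},z}_{ij}$. We work on the stopped process up to $\vartheta$ and at the end take a union bound over the nine entries and their real and imaginary parts. Writing $a_{k+1}=\sqrt K v_k$, each increment is
\[
\Delta\mathcal M_k^{\mathrm{Hess}}=\tfrac{\gamma_k^2}{2d}\bigl(X_k-\EE[X_k\mid\mathscr F_k]\bigr),
\qquad
X_k:=\nabla_{r_1}f(r_k)^2\, v_k^\top A_k v_k ,
\]
where $A_k:=\sqrt K\,\nabla\psi(x_k)\,\nabla^2\varphi(x_k)\,(\nabla\psi(x_k))^\top\sqrt K$. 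The factor $1/d$ in front already provides one power of $d$ beyond the gradient case, so the remaining work is to extract the rest of the claimed exponent from $X_k$.

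\textbf{Step 1: size of the ingredients.} By Lemma~\ref{lem: derivativeSbound}, together with \eqref{eq: opNormBoundNablapsi} and $\|K\|_{\mathrm{op}}\le\bar K$, we have $\|A_k\|_{\mathrm{op}}\le C/d$ and $\|A_k\|\le C d^{-1/2}$ on the stopped event. The Hanson--Wright inequality then shows that $v_k^\top A_kv_k-\operatorname{tr}A_k$ is subexponential at scale $Cd^{-1/2}$. As in the gradient proof, $r_k$ is subgaussian with $\|r_k\|_{\psi_2}\le C d^{-1/2}$.

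\textbf{Step 2: truncation.} We introduce the truncated martingale $\mathcal M_k^{\mathrm{Hess},\beta}$ obtained by replacing $r_k$ with $\operatorname{Proj}_{d^{-1/2}\beta}(r_k)$ and the quadratic form with its projection onto the interval of radius $\operatorname{tr}A_k$ plus $d^{-1/2}\beta^2$, taking $\beta=d^{\zeta}$. Following the bookkeeping of Proposition~\ref{prop: gradientMartingale}, Lemma~\ref{lem:growthGradf} applied at radius $d^{-1/2}\beta$ bounds the squared gradient factor by $L(f)^2C(d^{-1/2}\beta)^{2\max\{1,\alpha\}}$. Multiplying by $\gamma_k^2/(2d)$ and the truncated quadratic form gives an almost-sure bound on each increment of the form $|\Delta\mathcal M_k^{\mathrm{Hess},\beta}|\le C d^{-(5+2\alpha)/2}\beta^{c(\alpha)}$. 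Azuma's inequality over $n=O(Td)$ steps then gives, with overwhelming probability, $\sup_{k\le n}|\mathcal M_k^{\mathrm{Hess},\beta}|\le d^{-(2+\alpha)}\beta^{c'(\alpha)}$.

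\textbf{Step 3: removing the truncation.} Exactly as for the gradient martingale, we show that $\PP[X_k\neq X_{k,\beta}]\le 4e^{-c\beta^2}$ for the subgaussian variable $r_k$, and at most $2e^{-c\beta}$ for the subexponential quadratic form. This makes the difference of the conditional expectations exponentially small. For that step we use Cauchy--Schwarz and products of Gaussian and subexponential tail moments, splitting on the events $E_1,E_2,E_3$ as before. Finally we take a union bound over $k\le n$ and absorb $\beta^{c'(\alpha)}=d^{c'\zeta}$ by readjusting $\zeta$.

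\textbf{Main obstacle.} The hard step is Step~2, namely justifying the small factor coming from the gradient: Lemma~\ref{lem:growthGradf} really gives $(1+\|r\|)^{\max\{1,2\alpha\}}$, which does not decay. To get the stated rate we would exploit exactly the same projection convention as in the gradient proof. If that convention fails here, we would first try the centering trick: the $\operatorname{tr}A_k$ part of $X_k$ depends only on $r_k$, and we would check whether its conditional fluctuation, together with the Hanson--Wright fluctuation at scale $d^{-1/2}$, still yields the exponent $-(2+\alpha)$ after Azuma.
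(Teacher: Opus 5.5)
Your outline (entrywise reduction, truncation, Azuma, removal of the truncation) is the same as the paper's proof. The step you flag as the main obstacle is a genuine gap, and the paper's written proof does not close it either. That proof makes three moves. It asserts $\|r_k\|_{\psi_2}\le Cd^{-1/2}$. It asserts $|v_k^\top A_kv_k|\le d^{-1/2+\zeta}$ w.o.p., silently dropping $\operatorname{tr}A_k$, which you correctly keep. It then treats $(1+d^{-1/2}\beta)^{\max\{1,\alpha\}}$ as if it were the decaying factor $(d^{-1/2}\beta)^{1+\alpha}$. None of these holds, and no projection convention repairs them:
\begin{itemize}
\item[(i)] $r_k$ is of order one. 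Given $x_k$ it is a centered Gaussian whose covariance is the upper-left $2\times 2$ block of $B(x_k)$; for the square loss, $\operatorname{Var}(r_{k,1}-r_{k,2})=2\mathcal R(x_k)$. Lemma~\ref{lem:growthGradf} only gives $\bigl\|\,1+\|r_k\|\,\bigr\|_{\psi_2}\le C$ on the stopped event, which is the dimension-free scaling the paper stipulates after Assumption~\ref{ass: learningRate}. Hence $\operatorname{Proj}_{d^{-1/2}\beta}(r_k)\neq r_k$ with probability tending to one, so your Step~3 fails along with Step~2. The same defect is in the proof of Proposition~\ref{prop: gradientMartingale}, so there is nothing to import from there.
\item[(ii)] Consequently $\nabla_{r_1}f(r_k)^2$ is of order one.
\item[(iii)] $\operatorname{tr}A_k=\langle\nabla^2\varphi(x_k),(\nabla\psi(x_k))^\top K\nabla\psi(x_k)\rangle$ is of order one and generically nonzero. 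The bound $\|A_k\|_{\mathrm{op}}\le C/d$ only yields $|\operatorname{tr}A_k|\le C$. Moreover, $\frac{\gamma_k^2}{2}I(B(x_k))\operatorname{tr}A_k$ is exactly the It\^o-correction part of the SGD drift, i.e.\ the source of the $\gamma^2I(\mathscr B)$ terms in $\mathscr F$, and those are $O(1)$.
\end{itemize}
Even granting the convention, your own truncation level $|\operatorname{tr}A_k|+d^{-1/2}\beta^2$ is $O(1)$, so the claimed $d^{-(5+2\alpha)/2}$ does not follow from your Step~1.

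So each increment is genuinely of size $\gamma_k^2/(2d)$, with conditional variance of order $d^{-2}$. For the square loss, $X_k-\EE[X_k\mid x_k]\approx\operatorname{tr}A_k\bigl((r_{k,1}-r_{k,2})^2-2\mathcal R(x_k)\bigr)$, whose variance is $\approx 8\mathcal R(x_k)^2(\operatorname{tr}A_k)^2>0$; your centering trick just exhibits this fluctuation. Over $\lfloor Td\rfloor$ steps the predictable quadratic variation is of order $T/d$, so $\mathcal M^{\mathrm{Hess}}$ has typical size $d^{-1/2}$. The bound $d^{-(2+\alpha)+\zeta}$ w.o.p.\ therefore cannot hold for small $\zeta$ in general (e.g.\ square loss, positive risk, $\operatorname{tr}A_k\neq0$), and the exponent cannot be rescued.

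Your scheme does prove a weaker bound once the radii are corrected:
\begin{itemize}
\item[(a)] Project $r_k$ at radius $\beta=d^{\zeta}$ rather than $d^{-1/2}\beta$.
\item[(b)] Project the quadratic form onto radius $|\operatorname{tr}A_k|+d^{-1/2}\beta$; Hanson--Wright makes the truncation failure probability at most $2e^{-c\beta^2}$.
\end{itemize}
This gives $|\Delta\mathcal M_k^{\mathrm{Hess},\beta}|\le Cd^{-1}\beta^{2\max\{1,\alpha\}}$, so by Azuma $\sup_{k\le Td}|\mathcal M_k^{\mathrm{Hess},\beta}|\le d^{-1/2}\beta^{2\max\{1,\alpha\}+1}$ w.o.p. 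Your Step~3 then goes through with these radii. The correct statement is $\sup_{t\le T}|\mathcal M^{\mathrm{Hess}}_{\lfloor d(t\wedge\vartheta)\rfloor}|\le d^{-1/2+\zeta}$ w.o.p.; the gradient martingale obeys the same bound for the same reason. This weaker rate is all that Proposition~\ref{prop: SGDapprox} actually uses, since its conclusion is only $Cd^{\hat\delta/2-1/2}$.
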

\begin{proof}
As in the proof of the previous proposition, we will work on the stopped version of the
martingale but will suppress the $\vartheta$ dependence in the subscript in order to lighten the notation.
We also, as before, let $\varphi$ denote either
\[
\varphi(x)=S_{ij}^{\operatorname{Re},z}(x)
=
\operatorname{Re}S_{ij}(x,z)
\]
or
\[
\varphi(x)=S_{ij}^{\operatorname{Im},z}(x)
=
\operatorname{Im}S_{ij}(x,z). 
\]
We have the martingale increment
\begin{align*}
\Delta \mathcal{M}_k^{\mathrm{Hess}} &= \frac{\gamma_k^2}{2d} \inner{\nabla^2 \varphi(x_k), \left(\nabla_{r_1} f\left(  r_k \right) \cdot (\nabla \psi(x_k))^\top \cdot a_{k+1}\right)^{\otimes 2}}\\
&- \frac{\gamma_k^2}{2d} \EE_{a_{k+1}} \left[ \inner{\nabla^2 \varphi(x_k), \left(\nabla_{r_1} f\left(  r_k \right) \cdot (\nabla \psi(x_k))^\top \cdot a_{k+1}\right)^{\otimes 2}} \middle| \mathscr{F}_k \right]\\
&= \frac{\gamma_k^2}{2d} \, \inner{
\inner{\nabla^2 \varphi(x_k), \left[(\nabla \psi(x_k))^\top \right]^{\otimes 2}},  a_{k+1}^{\otimes 2}} \left(\nabla_{r_1} f\left(  r_k \right)\right)^2 \\
&- \frac{\gamma_k^2}{2d} \EE_{a_{k+1}} \left[ \inner{
\inner{\nabla^2 \varphi(x_k), \left[(\nabla \psi(x_k))^\top \right]^{\otimes 2}},  a_{k+1}^{\otimes 2}} \left(\nabla_{r_1} f\left(  r_k \right)\right)^2 \middle| \mathscr{F}_k \right].
\end{align*}
We define $\mathcal{M}_k^{\mathrm{Hess,} \beta}$ to be a new martingale with increments
\begin{align*}
&\Delta \mathcal{M}_k^{\mathrm{Hess,}\beta} = \frac{\gamma_k^2}{2d} \, \operatorname{Proj}_{d^{-\frac{1}{2}} \beta} \inner{
\inner{\nabla^2 \varphi(x_k), \left[(\nabla \psi(x_k))^\top \right]^{\otimes 2}},  a_{k+1}^{\otimes 2}}  \left( \nabla_{r_1} f \circ \operatorname{Proj}_{d^{-\frac{1}{2}} \beta} \left(r_k \right)\right)^2 \\
&\hspace{0.3in}- \frac{\gamma_k^2}{2d} \EE_{a_{k+1}} \left[ \operatorname{Proj}_{d^{-\frac{1}{2}} \beta} \inner{
\inner{\nabla^2 \varphi(x_k), \left[(\nabla \psi(x_k))^\top \right]^{\otimes 2}},  a_{k+1}^{\otimes 2}}  \left( \nabla_{r_1} f \circ \operatorname{Proj}_{d^{-\frac{1}{2}}\beta} \left(r_k \right)\right)^2   \middle| \mathscr{F}_k \right].
\end{align*}
The approach here is similar to the procedure for bounding  $\mathcal{M}_k^{\mathrm{Grad}}$. 
As we saw in the proof of the previous Proposition, $r_k$ is $\frac{1}{\sqrt{d}}\norm{\sqrt{K}}_{\mathrm{op}} \cdot \norm{\begin{pmatrix}
\psi(x_k) \\
\beta^*
\end{pmatrix}}_{\mathrm{op}}$--subgaussian in each entry. To obtain a concentration bound for $\inner{
\inner{\nabla^2 \varphi(x_k), \left[(\nabla \psi(x_k))^\top \right]^{\otimes 2}},  a_{k+1}^{\otimes 2}}$, we rewrite it as
\begin{align*}
    \inner{
    \inner{\nabla^2 \varphi(x_k), \left[(\nabla \psi(x_k))^\top \right]^{\otimes 2}},  a_{k+1}^{\otimes 2}} &= \inner{
    \inner{\nabla^2 \varphi(x_k), \left[(\nabla \psi(x_k))^\top \right]^{\otimes 2}},  \left(\sqrt{K}v_{k+1}\right)^{\otimes 2}} \\
    &= \inner{
    \inner{\nabla^2 \varphi(x_k), \left[(\nabla \psi(x_k))^\top \sqrt{K}\right]^{\otimes 2}},  v_{k+1}^{\otimes 2}},
\end{align*}
where the vector $v_k$ has independent standard gaussian entries. Since $\nabla^2 \varphi(x_k)\in \R^{2d \times 2d}$ and $\left[(\nabla \psi(x_k))^\top \sqrt{K}\right]^{\otimes 2} \in (\R^{2d \times d})^{\otimes 2}$, we get $\inner{\nabla^2 \varphi(x_k), \left[(\nabla \psi(x_k))^\top \sqrt{K}\right]^{\otimes 2}} \in \R^{d \times d}$.

Moreover, by Lemma~\ref{lem: derivativeSbound} and \eqref{eq: opNormBoundNablapsi}, we have
\begin{align*}
    \norm{\inner{\nabla^2 \varphi(x_k), \left[(\nabla \psi(x_k))^\top \sqrt{K}\right]^{\otimes 2}}}_{\mathrm{op}} \leqslant \, \norm{K}_{\mathrm{op}} \cdot \norm{\nabla \psi(x_k)}^2_{\mathrm{op}} \cdot \norm{\nabla^2 \varphi(x_k)}_{\mathrm{op}}  \leqslant \frac{C(\norm{x_k}_\infty)}{d},
\end{align*}
so by Hanson-Wright inequality we obtain that, for large enough $t$,
{\small
\begin{align*}
    &\PP\left[ \inner{
    \inner{\nabla^2 \varphi(x_k), \left[(\nabla \psi(x_k))^\top \sqrt{K}\right]^{\otimes 2}},  v_{k+1}^{\otimes 2}}  >t   \right]\\ 
    &\hspace{0.1in}< 2 \exp \left(  -C \min \biggl\{ \frac{t^2}{\norm{\inner{\nabla^2 \varphi(x_k), \left[(\nabla \psi(x_k))^\top \sqrt{K}\right]^{\otimes 2}}}^2}, \frac{t}{\norm{\inner{\nabla^2 \varphi(x_k), \left[(\nabla \psi(x_k))^\top \sqrt{K}\right]^{\otimes 2}}}_{\mathrm{op}}} \biggr\} \right).
\end{align*}
}
Using the fact that 
\begin{equation*}
\norm{\inner{\nabla^2 \varphi(x_k), \left[(\nabla \psi(x_k))^\top \sqrt{K}\right]^{\otimes 2}}}^2 \leqslant d \norm{\inner{\nabla^2 \varphi(x_k), \left[(\nabla \psi(x_k))^\top \sqrt{K}\right]^{\otimes 2}}}_{\mathrm{op}}^2 \leqslant \frac{C(\norm{x_k}_\infty)}{d}, 
\end{equation*}
we conclude that
\begin{align*}
    \PP\left[ \abs{\inner{
    \inner{\nabla^2 \varphi(x_k), \left[(\nabla \psi(x_k))^\top \sqrt{K}\right]^{\otimes 2}},  v_{k+1}^{\otimes 2}}} >t   \right] < 2 \exp \left(  -\frac{ d \cdot \min \{t^2, t \}}{C} \right).
\end{align*}
In particular, this tells us that, for any $\zeta>0$, 
\begin{equation*}
\abs{\inner{
    \inner{\nabla^2 \varphi(x_k), \left[(\nabla \psi(x_k))^\top \right]^{\otimes 2}},  a_{k+1}^{\otimes 2}}} \leqslant d^{-\frac{1}{2}+\zeta}    
\end{equation*}
with overwhelming probability. 

Having obtained concentration bounds for $r_k$ and $\inner{
\inner{\nabla^2 \varphi(x_k), \left[(\nabla \psi(x_k))^\top \right]^{\otimes 2}},  a_{k+1}^{\otimes 2}}$, we proceed to bound $\mathcal{M}_k^{\mathrm{Hess,} \beta}$ and show that it is close to $\mathcal{M}_k^{\mathrm{Hess}}$.  From the projections and the growth bound on $\nabla_{r_1} f$ in Lemma~\ref{lem:growthGradf}, we can derive the norm bounds
\begin{align}
    \big | \nabla_{r_1} f \circ \operatorname{Proj}_{d^{-\frac{1}{2}}\beta} \left(r_k \right)\big |^2 &\leqslant \left( L(f) C  (1+d^{-\frac{1}{2}}\beta)^{\max \{1, \alpha\}} \right)^2\\[5pt]
    \abs{\operatorname{Proj}_{d^{-\frac{1}{2}}\beta} \inner{
    \inner{\nabla^2 \varphi(x_k), \left[(\nabla \psi(x_k))^\top \right]^{\otimes 2}},  a_{k+1}^{\otimes 2}}} &\leqslant d^{-\frac{1}{2}}\beta,
\end{align}
and thus
\begin{align*}
&\abs{\operatorname{Proj}_{d^{-\frac{1}{2}}\beta} \inner{
\inner{\nabla^2 \varphi(x_k), \left[(\nabla \psi(x_k))^\top \right]^{\otimes 2}},  a_{k+1}^{\otimes 2}}  \left( \nabla_{r_1} f \circ \operatorname{Proj}_{d^{-\frac{1}{2}}\beta} \left(r_k \right)\right)^2} \\
&\hspace{4in}\leqslant \left( L(f) C\right)^2 d^{-\frac{3+2\alpha}{2}} \beta^{3+2\alpha}.
\end{align*}
Since this is an almost sure bound, it holds for the expectation as well and we get 
\begin{equation*}
    \abs{\Delta \mathcal{M}_k^{\mathrm{Hess,}\beta}} \leqslant \gamma_k^2 \left( L(f) C\right)^2 d^{-\frac{5+2\alpha}{2}} \beta^{3+2\alpha}.
\end{equation*}
Applying Azuma's inequality with $n=O(d)$, we obtain
\begin{equation*}
    \sup_{1\leqslant k \leqslant n} \PP \left[\big|\mathcal{M}_k^{\mathrm{Hess,}\beta}\big|> t \right] < 2\exp \left( \frac{-t^2}{2n \cdot \left(C d^{-\frac{5+2\alpha}{2}} \beta^{3+2\alpha}\right)^2} \right) \leqslant 2\exp \left( \frac{-t^2}{C' d^{-2(2+\alpha)}\beta^{2(3+2\alpha)}} \right)
\end{equation*}
so, with overwhelming probability, 
\begin{equation}
    \sup_{1\leqslant k \leqslant n} \abs{\mathcal{M}_k^{\mathrm{Hess,}\beta}} < d^{-(2+\alpha)} \beta^{4+2\alpha}.
\label{eq:  hessianBound}
\end{equation}
It remains only to bound the difference between $\{\mathcal{M}_k^{\mathrm{Hess}} \}^n_{k=1}$ and $\{\mathcal{M}_k^{\mathrm{Hess,}\beta} \}^n_{k=1}$. This follows a very similar argument to what was in the proof of Proposition~\ref{prop: gradientMartingale}, we write
\begin{align*}
    G_k &:= \frac{\gamma_k^2}{2d} \, \inner{
    \inner{\nabla^2 \varphi(x_k), \left[(\nabla \psi(x_k))^\top \right]^{\otimes 2}},  a_{k+1}^{\otimes 2}} \left(\nabla_{r_1} f\left(  r_k \right)\right)^2\\
    G_{k, \beta} &:= \frac{\gamma_k^2}{2d} \, \operatorname{Proj}_{d^{-\frac{1}{2}}\beta} \inner{
    \inner{\nabla^2 \varphi(x_k), \left[(\nabla \psi(x_k))^\top \right]^{\otimes 2}},  a_{k+1}^{\otimes 2}}  \left( \nabla_{r_1} f \circ \operatorname{Proj}_{d^{-\frac{1}{2}} \beta} \left(r_k \right)\right)^2.
\end{align*}
The quantity we are trying to bound is

\begin{align*}
    \abs{\Delta \mathcal{M}_k^{\mathrm{Hess}}- \Delta \mathcal{M}_k^{\mathrm{Hess,}\beta}} &= \abs{\left(G_k - \EE_{a_{k+1}} G_k\right) - \left(G_{k,\beta} - \EE_{a_{k+1}} G_{k,\beta} \right)} \\[10pt]
    &\leqslant \abs{G_k - G_{k,\beta}} + \abs{\EE_{a_{k+1}}\left(G_k - G_{k,\beta} \right)}.
\end{align*}

As in the proof of Proposition 
\ref{prop: gradientMartingale}, the first of the terms on the right-hand side is $0$ with overwhelming probability, while the second is exponentially small. Computing the bound for $\abs{\EE_{a_{k+1}}\left(G_k - G_{k,\beta} \right)}$ is similar to what was done in the previous proof and is not repeated here. To see that $\abs{G_k - G_{k,\beta}} = 0$ with overwhelming probability, we write
\begin{align*}
    \PP\left[G_k \neq G_{k,\beta}\right] &\leqslant \PP[\norm{r_k} > d^{-\frac{1}{2}}\beta] + \PP[\abs{\inner{
    \inner{\nabla^2 \varphi(x_k), \left[(\nabla \psi(x_k))^\top \right]^{\otimes 2}},  a_{k+1}^{\otimes 2}}}>d^{-\frac{1}{2}}\beta] \\[10pt]
    &< 2 \exp \left( -\frac{\beta^2}{2C}\right) + 2 \exp \left(  -\frac{\min\{\beta^2, d^{\frac{1}{2}} \beta\}}{2C} \right).
\end{align*}
Thus, $\abs{ \mathcal{M}_k^{\mathrm{Hess}} -  \mathcal{M}_k^{\mathrm{Hess,}\beta}}$ is exponentially small with overwhelming probability. Using \eqref{eq:  hessianBound} and setting $\beta$ to be an arbitrarily small power of $d$, we obtain that, with overwhelming probability, 
\begin{equation*}
    \sup_{1\leqslant k \leqslant n} \abs{\mathcal{M}_k^{\mathrm{Hess}}} < d^{-(2+\alpha)+\zeta(4+2\alpha)}.
\end{equation*}
Adjusting the value of $\zeta$, recalling that all of this has been proved on the stopped process, and applying the scalar bound to the real and imaginary parts of all nine entries of
$S(\cdot,z)$ and taking a finite union bound gives the displayed matrix norm
bound. Since the matrix dimension is fixed, this only changes the constant.
\end{proof}

\subsubsection{\texorpdfstring{Bounds on the Higher Order Error Term in the Taylor Expansion, $\mathcal{E}_t^{\mathrm{High}}$}{Bounds on the higher-order error term in the Taylor expansion, EtHigh}}
\label{subsubsec: highOrderTerm}
This section is devoted to showing that the high-order terms (third and higher-order terms derived in the Taylor expansion of the statistic $\varphi$) are negligible. Recall the third-order derivative in the Taylor expansion \eqref{eq: taylorExpansion} is
\begin{equation}\label{eq: thirdOrderTaylor}
-\frac{\gamma_k^3}{2} \int_0^1 (1-s)^2 \inner{\nabla^3 \varphi(x_k -\gamma_k\,  s \, \nabla_x \Psi(x_k; a_{k+1})), \left(\nabla_x \Psi(x_k;a_{k+1})\right)^{\otimes 3}} \, \mathrm{d}s.
\end{equation}
At first glance, these error terms look, in terms of $d$, large due to the tensor $\left(\nabla_x \Psi(x_k;a_{k+1})\right)^{\otimes 3}$, but the scaling will be sufficient to control this term.

\begin{proposition}[Higher-order error term]\label{prop: highOrderTerm}
Suppose $f\colon \R^2 \to \R$ is $\alpha$-pseudo-Lipschitz with constant $L(f)$ (see Assumption~\ref{ass: fPseudoLipschitz}). Let the statistic $S\colon \R^{2d} \times \Gamma \subset \R^{2d}\times \mathbb{C}^4 \to \mathbb{C}^{3 \times 3}$ be defined as in \eqref{eq: defS}. Then for each fixed $z\in\Gamma$ and each $T>0$, with
overwhelming probability,
\begin{equation}
\sup_{0\leqslant t \leqslant T }  \sum_{k=0}^{\lfloor (t\wedge \vartheta)d\rfloor -1} \Big| \EE_{a_{k+1}} \left[ \mathcal{E}_k^{\mathrm{High}}(S(\cdot,z))\big| \mathscr{F}_k \right] \Big| \leqslant C d^{-1/2}. 
\end{equation}
\end{proposition}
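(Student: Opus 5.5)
Fix $z\in\Gamma$ and a real scalar coordinate $\varphi\in\{S_{ab}^{\operatorname{Re},z},S_{ab}^{\operatorname{Im},z}\}$; the matrix statement then follows by a finite union bound over the nine entries. The plan is to bound each summand deterministically by $Cd^{-3/2}$ on the stopped event and sum over at most $Td$ steps. The key structural fact is that $\varphi$ is a rational function of $x$ whose dependence is coordinatewise. Along any segment where all coordinates stay bounded (so the resolvents stay bounded), each third derivative $\partial_{x_i}\partial_{x_j}\partial_{x_l}S$ carries a prefactor $1/d$ and is supported essentially on the "diagonal" $i\sim j\sim l$ (same coordinate index in $u$ or $v$). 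The only non-diagonal contributions come through the outer $W^\top(\cdot)W$ bilinear structure, and these factor as products of vectors of norm $O(\sqrt d)$ divided by $d$.

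Concretely, I would write the remainder in the integral form \eqref{eq: thirdOrderTaylor} with increment $\Delta_k:=-\gamma_k\nabla_x\Psi(x_k;a_{k+1})=-\frac{\gamma_k}{\sqrt d}\nabla_{r_1}f(r_k)(\nabla\psi(x_k))^\top a_{k+1}$. Since the Taylor expansion is exact only up to the cubic remainder (higher orders being absorbed into it), it suffices to bound
\[
\sup_{s\in[0,1]}\bigl|\langle\nabla^3\varphi(x_k+s\Delta_k),\Delta_k^{\otimes3}\rangle\bigr|.
\]
Using the coordinatewise structure from Lemma~\ref{lem: derivativeS} (differentiated once more), the diagonal part is bounded by $\frac{C}{d}\sum_i|\Delta_{k,i}|^3\leqslant \frac{C}{d}\|\Delta_k\|_\infty\|\Delta_k\|^2$. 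The off-diagonal parts, which arise from differentiating two factors of $W$, are bounded by $\frac{C}{d}\|\Delta_k\|^2\cdot\frac{|\langle\text{bounded vector},\Delta_k\rangle|}{1}$. Now $\|\Delta_k\|^2\leqslant \frac{C}{d}|\nabla_{r_1}f(r_k)|^2\|a_{k+1}\|^2=O(|\nabla_{r_1}f|^2)$. Moreover, $\|\Delta_k\|_\infty$ and inner products of $\Delta_k$ with fixed vectors having bounded entries are $O(d^{-1/2}\cdot\text{polylog})$ times $|\nabla_{r_1}f|$, with overwhelming probability.

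The main obstacle is that the intermediate point $x_k+s\Delta_k$ must keep the resolvents bounded, so that the contour $\Gamma$ still encloses the spectra of $\operatorname{diag}(u),\operatorname{diag}(v)$. Since $\|\Delta_k\|_\infty\lesssim d^{-1/2+\epsilon}$ with overwhelming probability by Gaussian tails and a union bound over $d$ coordinates, and $\|x_k\|_\infty\leqslant M$ on the stopped event, the intermediate point satisfies $\|x\|_\infty\leqslant M+1$. The margin in Remark~\ref{rem: fixedContour} is therefore preserved, and the constants of Lemma~\ref{lem: derivativeSbound} apply. To pass to the conditional expectation $\EE[\cdot\mid\mathscr F_k]$, I would split on this good event. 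On the good event, the bound above together with the moment bound \eqref{eq: expGrowthGradf} on $|\nabla_{r_1}f(r_k)|^p$ gives a conditional expectation of size $O(d^{-1}\cdot d^{-1/2})=O(d^{-3/2})$. On the complement, a crude polynomial bound is available, since resolvents are controlled once we also cap $\|a_{k+1}\|$. Where the contour fails I would truncate the Taylor expansion by the $\operatorname{Proj}$ device used in Proposition~\ref{prop: gradientMartingale}, and Cauchy--Schwarz against an exponentially small probability makes this contribution negligible.

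Summing $\lfloor (t\wedge\vartheta)d\rfloor\leqslant Td$ terms of size $Cd^{-3/2}$ gives $CTd^{-1/2}$, uniformly in $t\leqslant T$, which yields the claim.
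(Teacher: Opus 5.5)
Your skeleton (a per-step bound of order $d^{-3/2}$ on the stopped event, summed over at most $Td$ steps) and your estimate of the ``diagonal'' part are essentially what the paper does. The problem is your structural description of $\nabla^3 S$: there are no off-diagonal contributions from the bilinear form $W^\top(\cdot)W$, and the bound you propose for them would not hold. Since $K$, $\operatorname{diag}(u)$, $\operatorname{diag}(v)$ and $\operatorname{diag}(\beta^*)$ are all diagonal, $\Omega(x,z)$ is diagonal and $\Omega_{mm}$ depends only on $(u_m,v_m)$. Hence
\[
S(x,z)=\frac1d\sum_{m=1}^d \Omega_{mm}(x,z)\,W_{m\cdot}^\top W_{m\cdot},\qquad W_{m\cdot}=\bigl(\psi_m(u_m,v_m),\beta^*_m,1\bigr),
\]
is exactly separable across the coordinate pairs, and $\nabla^3_x S$ vanishes outside the blocks $\{m,m+d\}^3$. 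This is the key observation in the paper's proof; it reduces the contraction to $\sum_h\sum_{i,j\in\{h,h+d\}}$.

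The point matters for your argument. Take a vector $b$ with bounded entries. Then $\langle b,\Delta_k\rangle=-\frac{\gamma_k}{\sqrt d}\nabla_{r_1}f(r_k)\langle\nabla\psi(x_k)b,a_{k+1}\rangle$, and $\langle\nabla\psi(x_k)b,a_{k+1}\rangle$ is Gaussian with variance of order $d$. So $\langle b,\Delta_k\rangle$ is of order one, not $O(d^{-1/2}\,\mathrm{polylog}(d))$. A genuine term of size $\frac{C}{d}\|\Delta_k\|^2|\langle b,\Delta_k\rangle|$ would therefore contribute order $d^{-1}$ per step and order one after summing, and the proposition would not follow. Replace that part of your argument by the separability above.

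Two smaller remarks.

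First, bounding $\sum_i|\Delta_{k,i}|^3\le\|\Delta_k\|_\infty\|\Delta_k\|^2$ with a w.o.p.\ bound on $\|\Delta_k\|_\infty$ costs a $d^{\epsilon}$ or polylogarithmic factor. To get exactly $Cd^{-1/2}$, compute $\frac{C}{d}\sum_i\EE[|\Delta_{k,i}|^3\mid\mathscr F_k]$ coordinatewise, as the paper does. Each term is $O(d^{-3/2})$ by Cauchy--Schwarz with sixth moments of $\nabla_{r_1}f(r_k)$ and of the Gaussian coordinates.

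Second, your control of the intermediate point is more careful than the paper's. The paper only uses $\|(\nabla\psi(x_k))^\top a_{k+1}\|_\infty\le C\sqrt d$, which allows an order-one displacement that need not stay inside $C_{2M}(0)$. Your coordinatewise bound $\|\Delta_k\|_\infty\lesssim d^{-1/2+\epsilon}$ keeps the resolvents uniformly bounded along the segment on the good event. On the complement event, though, the crude bound should come from $|z_i-u_m|\ge|\operatorname{Im}z_i|$ for real $u_m$ (so for $z_1,z_2$ off the real axis), not from capping $\|a_{k+1}\|$. A single coordinate of $a_{k+1}$ of size $\sqrt d$ is compatible with such a cap and still pushes $u_m$ across the contour.
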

\begin{proof}
Fix indices $i,j\in\{1,2,3\}$. Let $\varphi$ denote either
\[
\varphi(x)=S_{ij}^{\operatorname{Re},z}(x)
=
\operatorname{Re}S_{ij}(x,z)
\]
or
\[
\varphi(x)=S_{ij}^{\operatorname{Im},z}(x)
=
\operatorname{Im}S_{ij}(x,z).
\]
We prove the scalar estimate for this real-valued $\varphi$. The corresponding
matrix-valued estimate for $S(\cdot,z)$ follows by applying the same bound to
all real and imaginary parts and taking a finite union bound over the
$3\times 3$ entries. First, because $\psi_i$ and the diagonal resolvents depend only on the coordinate pair
$(u_i,v_i)$, the third derivative of each entry of $S$ is supported only on
triples of derivative indices belonging to the same coordinate pair, so the inner product in \eqref{eq: thirdOrderTaylor} can be simplified as
\begin{align*}
& \inner{\nabla^3 \varphi(x_k -\gamma_k\,  s \, \nabla_x \Psi(x_k; a_{k+1})), \left(\nabla_x \Psi(x_k;a_{k+1})\right)^{\otimes 3}}  \\[5pt]
&\hspace{0.1in}= \sum_{h,i,j=1}^{2d} \left(\nabla^3 \varphi(x_k -\gamma_k\,  s \, \nabla_x \Psi(x_k; a_{k+1}))\right)_{hij} \left(\nabla_x \Psi(x_k)\right)_h \left(\nabla_x \Psi(x_k)\right)_i \left(\nabla_x \Psi(x_k)\right)_j\\
& \hspace{0.1in} = \sum_{\substack{h=1 \\ i,j\in \{h, h+d\}}}^{d} \left(\nabla^3 \varphi(x_k -\gamma_k\,  s \, \nabla_x \Psi(x_k; a_{k+1}))\right)_{hij} \left(\nabla_x \Psi(x_k)\right)_h \left(\nabla_x \Psi(x_k)\right)_i \left(\nabla_x \Psi(x_k)\right)_j \\
&\hspace{0.1in}+  \sum_{\substack{h=d+1 \\ i,j\in \{h, h-d\}}}^{2d} \left(\nabla^3 \varphi(x_k -\gamma_k\,  s \, \nabla_x \Psi(x_k; a_{k+1}))\right)_{hij} \left(\nabla_x \Psi(x_k)\right)_h \left(\nabla_x \Psi(x_k)\right)_i \left(\nabla_x \Psi(x_k)\right)_j.
\end{align*}
Moreover, a simple computation shows that, for any $0 < s < 1$, we have
\begin{align*}
\norm{x_k -\gamma_k\,  s \, \nabla_x \Psi(x_k; a_{k+1})}_\infty &= \norm{x_k -s \frac{\gamma_k}{\sqrt{d}} \nabla_{r_1} f\left(  r_k \right) (\nabla \psi(x_k))^\top a_{k+1}}_\infty \\[5pt]
&\leqslant \norm{x_k}_\infty + \frac{\bar{\gamma}}{\sqrt{d}} \abs{\nabla_{r_1}f(r_k)} \norm{\left(\nabla \psi(x_k)\right)^\top a_{k+1}}_\infty.
\end{align*}
If we write $a_{k+1} = \sqrt{K}v_k$ with $v_k \sim \mathcal{N}(0,I_d)$, then by \eqref{eq: opNormBoundNablapsi} we have that, with overwhelming probability, 
\begin{align*}
\norm{\left(\nabla \psi(x_k)\right)^\top a_{k+1}}_\infty &= \norm{\left(\nabla \psi(x_k)\right)^\top  \sqrt{K}v_k}_\infty \leqslant 
\norm{\nabla \psi(x_k)}_{\mathrm{op}}
\norm{\sqrt{K}}_{\mathrm{op}} \norm{v_k} \leqslant C(\norm{x_k}_\infty)  \sqrt{d}.
\end{align*}

Additionally, by Lemma~\ref{lem:growthGradf},
\begin{equation*}
\abs{\nabla_{r_1}f(r_k)} \leqslant C(\alpha) L(f) \left(1 + \frac{1}{\sqrt{d}}\norm{K}^{1/2}_{\mathrm{op}} \norm{W_k}\right)^{\max \{1, \alpha\}} \quad \text{w.o.p.}    
\end{equation*}
Thus, for any $k \leqslant (t \wedge \vartheta)d$, we get
\begin{equation*}
\norm{x_k -\gamma_k\,  s \, \nabla_x \Psi(x_k; a_{k+1})}_\infty \leqslant C(\norm{x_k}_\infty),
\end{equation*}
where $C$ is a constant depending on $\bar{\gamma}$, $\norm{K}_{\mathrm{op}}$, $L(f)$, and $\alpha$, but independent of $d$.

Next, we know
\begin{equation*}
\abs{\left(\nabla^3 \varphi(x_k -\gamma_k\,  s \, \nabla_x \Psi(x_k; a_{k+1}))\right)_{hij}}  \leqslant \frac{1}{d} C(\norm{x_k -\gamma_k\,  s \, \nabla_x \Psi(x_k; a_{k+1})}_\infty) \leqslant \frac{C(\norm{x_k}_\infty)}{d},
\end{equation*}
and so
\begin{align*}
&\abs{\inner{\nabla^3 \varphi(x_k -\gamma_k\,  s \, \nabla_x \Psi(x_k; a_{k+1})), \left(\nabla_x \Psi(x_k;a_{k+1})\right)^{\otimes 3}}}\\[0.5pt]
&\hspace{0.5in}\leqslant \frac{C(\norm{x_k}_\infty)}{d} \sum_{\substack{h=1 \\ i,j\in \{h, h+d\}}}^{d} \Big|\left(\nabla_x \Psi(x_k;a_{k+1})\right)_h \left(\nabla_x \Psi(x_k;a_{k+1})\right)_i \left(\nabla_x \Psi(x_k;a_{k+1})\right)_j\Big| \\[0.5pt]
&\hspace{0.5in}+  \frac{C(\norm{x_k}_\infty)}{d}\sum_{\substack{h=d+1 \\ i,j\in \{h, h-d\}}}^{2d} \Big|\left(\nabla_x \Psi(x_k;a_{k+1})\right)_h \left(\nabla_x \Psi(x_k;a_{k+1})\right)_i \left(\nabla_x \Psi(x_k;a_{k+1})\right)_j\Big|
\end{align*}
with overwhelming probability.

Furthermore, we can bound 
\begin{align*}
\EE_{a_{k+1}} \biggl[ &\Big|\left(\nabla_x \Psi(x_k;a_{k+1})\right)_h \left(\nabla_x \Psi(x_k;a_{k+1})\right)_i \left(\nabla_x \Psi(x_k;a_{k+1})\right)_j\Big| \big| \mathscr{F}_k \biggr] \\
&\leqslant \frac{1}{d\sqrt{d}} \norm{\nabla \psi(x_k)}_{\mathrm{op}}^3 \EE_{a_{k+1}} \left[\Big| \left(\nabla_{r_1} f\left(  r_k \right)\right)^3  \left(a_{k+1}\right)_h  \left(a_{k+1}\right)_i  \left(a_{k+1}\right)_j \Big| \big| \mathscr{F}_k \right],
\end{align*}
where
\begin{align*}
&\EE_{a_{k+1}} \left[\Big| \left(\nabla_{r_1} f\left(  r_k \right)\right)^3  \left(a_{k+1}\right)_h  \left(a_{k+1}\right)_i  \left(a_{k+1}\right)_j \Big| \big| \mathscr{F}_k \right]\\[10pt]
&\hspace{0.5in}\leqslant \sqrt{\EE_{a_{k+1}} \left[ \left(\nabla_{r_1} f\left(  r_k \right)\right)^6  \big| \mathscr{F}_k \right]} \cdot \sqrt{\EE_{a_{k+1}} \left[ \left(a_{k+1}\right)_h^2  \left(a_{k+1}\right)_i^2  \left(a_{k+1}\right)_j^2 \right]}\\[10pt]
&\hspace{0.5in}\leqslant \sqrt{C(\alpha) (L(f))^6 \left(1 + \frac{1}{\sqrt{d}}\norm{K}^{1/2}_{\mathrm{op}} \norm{W_k}\right)^{\max \{1, 6\alpha \}}} \cdot \norm{K}^{3/2}_{\mathrm{op}}.
\end{align*}

Therefore, noting that $k\leqslant (t\wedge \vartheta)d$ and using \eqref{eq: opNormBoundNablapsi}, yields
\begin{equation*}
\EE_{a_{k+1}} \biggl[\Big| \left(\nabla_x \Psi(x_k;a_{k+1})\right)_h \left(\nabla_x \Psi(x_k;a_{k+1})\right)_i \left(\nabla_x \Psi(x_k;a_{k+1})\right)_j\Big| \big| \mathscr{F}_k \biggr] \leqslant \frac{C(\norm{x_k}_\infty)}{d \sqrt{d}}.
\end{equation*}

Consequently, for any $k\leqslant (t\wedge \vartheta)d$, the quantity
\begin{align*}
\EE_{a_{k+1}}\left[\abs{-\frac{\gamma_k^3}{2} \int_0^1 (1-s)^2 \inner{\nabla^3 \varphi(x_k -\gamma_k\,  s \, \nabla_x \Psi(x_k; a_{k+1})), \left(\nabla_x \Psi(x_k;a_{k+1})\right)^{\otimes 3}} \, \mathrm{d}s } \big| \mathscr{F}_k \right]
\end{align*}
is bounded by $C(\norm{x_k}_\infty)/(d \sqrt{d})$. Summing over at most $Td$ indices gives
\[
\sup_{0\leqslant t\leqslant T}
\sum_{k=0}^{\lfloor (t\wedge\vartheta)d\rfloor-1} \Big| \EE_{a_{k+1}} \left[ \mathcal{E}_k^{\mathrm{High}}(\varphi)\big|
\mathscr F_k \right]
\Big|
\leqslant
C d^{-1/2}.
\]
Applying the scalar bound to the real and imaginary parts of all nine entries of
$S(\cdot,z)$ and taking a finite union bound gives the displayed matrix norm
bound. Since the matrix dimension is fixed, this only changes the constant. Thus the third-order Taylor remainder in \eqref{eq: taylorExpansion}
vanish when $d$ grows large after summing up over $k$.
\end{proof}
\subsubsection{\texorpdfstring{Bounds on the Lower Order Terms in the Hessian, $\mathcal{E}_t^{\mathrm{Hess}}$}{Bounds on the lower order terms in the Hessian, EtHess}}
\label{subsubsec: hessianErrorTerm}
We now bound the error term, $\sup_{0\leqslant t \leqslant T }  \sum_{k=0}^{\lfloor (t\wedge \vartheta)d\rfloor -1} \Big|\EE_{a_{k+1}} \left[ \mathcal{E}_k^{\mathrm{Hess}}\middle| \mathscr{F}_k \right]\Big|$, in \eqref{eq: SGDDoobContinuous}.  For this, we utilize the
operator norm and its dual norm, the nuclear norm.
\begin{proposition}[Hessian error term]\label{prop: hessianErrorTerm}
Suppose $f\colon \R^2 \to \R$ is $\alpha$-pseudo-Lipschitz with constant $L(f)$ (see Assumption~\ref{ass: fPseudoLipschitz}). Let the statistic $S\colon \R^{2d} \times \Gamma \subset \R^{2d}\times \mathbb{C}^4 \to \mathbb{C}^{3 \times 3}$ be defined as in \eqref{eq: defS}. Then for each fixed $z\in\Gamma$ and each $T>0$, with
overwhelming probability,
\begin{equation}
\sup_{0\leqslant t \leqslant T }  \sum_{k=0}^{\lfloor (t\wedge \vartheta)d\rfloor -1} \abs{\EE_{a_{k+1}} \left[ \mathcal{E}_k^{\mathrm{Hess}}(S(\cdot, z))\middle| \mathscr{F}_k \right]} \leqslant C (L(f))^4 d^{-1}. 
\end{equation}
\end{proposition}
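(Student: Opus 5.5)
We fix indices $i,j\in\{1,2,3\}$ and, as in the preceding propositions, work with the real-valued coordinate function $\varphi=S_{ij}^{\operatorname{Re},z}$ or $S_{ij}^{\operatorname{Im},z}$ on the stopped process. The matrix statement then follows by a finite union bound over entries. We treat the two pieces of
\[
\mathcal{E}_k^{\mathrm{Hess}}(\varphi)=\frac{\gamma_k^2}{2d}\Bigl(-\bigl\langle \mathcal{B}_k,\sqrt{K}\Pi_k\sqrt{K}\bigr\rangle+\bigl\langle \mathcal{B}_k,(\sqrt{K}\Pi_k v_k)^{\otimes 2}\bigr\rangle\Bigr),
\qquad
\mathcal{B}_k=\nabla_{r_1}f(r_k)^2\,\nabla\psi(x_k)\nabla^2\varphi(x_k)(\nabla\psi(x_k))^\top,
\]
separately. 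In both pieces we use operator/nuclear duality $|\langle A,P\rangle|\le \|A\|_{\mathrm{op}}\|P\|_*$.

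For the first term, $\sqrt{K}\Pi_k\sqrt{K}$ is positive semidefinite of rank at most $2$, so $\|\sqrt{K}\Pi_k\sqrt{K}\|_*\le 2\|K\|_{\mathrm{op}}$. Lemma~\ref{lem: derivativeSbound} and \eqref{eq: opNormBoundNablapsi} give, on the stopped event,
\[
\bigl\|\nabla\psi(x_k)\nabla^2\varphi(x_k)(\nabla\psi(x_k))^\top\bigr\|_{\mathrm{op}}\le C(\|x_k\|_\infty)/d.
\]
Taking the conditional expectation, the first piece is bounded by $\frac{\gamma_k^2}{2d}\cdot\frac{C}{d}\,\EE[\nabla_{r_1}f(r_k)^2\mid\mathscr F_k]$. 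By Lemma~\ref{lem:growthGradf} with $p=2$ this is $\le C L(f)^2 d^{-2}$, since $\frac1d\|W_k\|^2\le M$ before $\vartheta$.

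For the second term, $(\sqrt{K}\Pi_k v_k)^{\otimes 2}$ is rank one with nuclear norm $\|\sqrt{K}\Pi_k v_k\|^2\le \|K\|_{\mathrm{op}}\|Q_k^\top v_k\|^2$. So we need
\[
\frac{\gamma_k^2}{2d}\cdot\frac{C}{d}\,\EE\bigl[\nabla_{r_1}f(r_k)^2\,\|Q_k^\top v_k\|^2\mid\mathscr F_k\bigr].
\]
By Cauchy--Schwarz this is at most $\sqrt{\EE\nabla_{r_1}f(r_k)^4}\sqrt{\EE\|Q_k^\top v_k\|^4}$. Here $Q_k^\top v_k$ is a standard Gaussian in $\R^2$, so its fourth moment is an absolute constant. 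Lemma~\ref{lem:growthGradf} with $p=4$ controls the $f$-moment by $C L(f)^4(1+\sqrt{M\|K\|_{\mathrm{op}}})^{\max\{1,4\alpha\}}$. Each step's contribution is therefore $\le C L(f)^4 d^{-2}$; we use $L(f)^2\le C L(f)^4$, absorbing constants, or simply keep the larger power.

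We then sum over $k\le \lfloor(t\wedge\vartheta)d\rfloor\le Td$, which gives $C T L(f)^4 d^{-1}$ uniformly in $t\le T$. The bound is deterministic on the stopped process; overwhelming probability enters only through identifying $\vartheta$ with the non-explosive event.

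The main obstacle is making the operator-norm bound on $\nabla\psi\,\nabla^2\varphi\,\nabla\psi^\top$ genuinely $O(1/d)$ rather than $O(1)$. This rests on the structure of Lemma~\ref{lem: derivativeS}: every Hessian block is $\frac1d$ times diagonal matrices or rank-$\le 3$ terms built from $W$. The rank-one pieces, such as $\frac1d\,\Omega\,\mathrm{diag}(\psi)$ paired with $W$ columns, need care, since $\|W\|\sim\sqrt d$ could cost a factor. We would check that the contraction in the Hessian formula produces only diagonal-type blocks of operator norm $C/d$, which is what Lemma~\ref{lem: derivativeSbound} asserts, and otherwise bound the remaining low-rank part directly through its pairing with the rank-$2$ matrices above.
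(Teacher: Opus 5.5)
Your proposal is correct and follows the paper's proof essentially step for step: you use the same split of $\mathcal{E}_k^{\mathrm{Hess}}$, operator/nuclear duality against the rank-$\le 2$ matrices $\sqrt{K}\Pi_k\sqrt{K}$ and $(\sqrt{K}\Pi_k v_k)^{\otimes 2}$, Cauchy--Schwarz with Lemma~\ref{lem:growthGradf} at $p=2$ and $p=4$, and a sum of $O(d^{-2})$ per step over at most $Td$ steps. The obstacle you flag does not arise: because $\Omega$ is diagonal, each $S_{ab}(x,z)=\frac1d\sum_m W_{ma}\Omega_{mm}W_{mb}$ is a sum of functions of the single pairs $(u_m,v_m)$, so $\nabla^2\varphi$ is block diagonal with $O(1/d)$ entries and contains no low-rank pieces; its operator norm is therefore $O(1/d)$, although its Frobenius norm is only $O(d^{-1/2})$, which is why the operator/nuclear pairing is the right one to use.
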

\begin{proof}
Fix indices $i,j\in\{1,2,3\}$. Let $\varphi$ denote either
\[
\varphi(x)=S_{ij}^{\operatorname{Re},z}(x)
=
\operatorname{Re}S_{ij}(x,z)
\]
or
\[
\varphi(x)=S_{ij}^{\operatorname{Im},z}(x)
=
\operatorname{Im}S_{ij}(x,z).
\]
We prove the scalar estimate for this real-valued $\varphi$. The corresponding
matrix-valued estimate for $S(\cdot,z)$ follows by applying the same bound to
all real and imaginary parts and taking a finite union bound over the
$3\times 3$ entries. Define $\Pi_k := Q_k Q_k^\top$ and note that $\norm{\Pi_k}^2 = \text{rank}(\Pi_k) =2$. Recall

\begin{equation*}
\mathcal{E}_k^{\mathrm{Hess}}(\varphi) := -\frac{\gamma_k^2}{2 d} \inner{\mathcal{B}, \sqrt{K}\Pi_k \sqrt{K}} + \frac{\gamma_k^2}{2 d} \inner{\mathcal{B}, \left( \sqrt{K} \Pi_k v_k \right)^{\otimes 2}}
\end{equation*}

where $\mathcal{B}:= \nabla_{r_1} f\left(  r_k \right)^2 \nabla \psi(x_k)  \nabla^2 \varphi(x_k)\, (\nabla \psi(x_k))^\top$. First, we consider the following term

\begin{align*}
    \abs{\inner{\mathcal{B}, \sqrt{K}\Pi_k \sqrt{K}}} &=
    \abs{\inner{\nabla_{r_1} f\left(  r_k \right)^2 \nabla \psi(x_k)  \nabla^2 \varphi(x_k)\, (\nabla \psi(x_k))^\top, \sqrt{K}\Pi_k \sqrt{K}}}\\
    & \leqslant \norm{\sqrt{K}\Pi_k \sqrt{K}}_* \norm{\nabla_{r_1} f\left(  r_k \right)^2 \nabla \psi(x_k)  \nabla^2 \varphi(x_k)\, (\nabla \psi(x_k))^\top }_{\mathrm{op}}\\
    &\leqslant \norm{\sqrt{K}\Pi_k \sqrt{K}}_* \nabla_{r_1} f\left(  r_k \right)^2 \norm{\nabla \psi(x_k)}_{\mathrm{op}}^2 \norm{\nabla^2 \varphi(x_k)}_{\mathrm{op}} \\
    &\leqslant \norm{K}_{\mathrm{op}} \norm{\Pi_k}_* \nabla_{r_1} f\left(  r_k \right)^2 \norm{\nabla \psi(x_k)}_{\mathrm{op}}^2 \norm{\nabla^2 \varphi(x_k)}_{\mathrm{op}}.
\end{align*}

From Lemma~\ref{lem:growthGradf}, we have 
\begin{equation*}
\EE_{a_{k+1}}[\abs{\nabla_{r_1} f(r_k) }^2\big| \mathscr{F}_k] \leqslant C (L(f))^2 \left(1 + \frac{1}{\sqrt{d}}\norm{K}^{1/2}_{\mathrm{op}} \norm{W_k}\right)^{\max \{1, 2\alpha \}}.
\end{equation*}
Using \eqref{eq: opNormBoundNablapsi}, we have that $\norm{\nabla \psi(x_k)}_{\mathrm{op}} \leqslant C(\norm{x_k}_\infty)$. Moreover,  we also, by Lemma~\ref{lem: derivativeSbound}, have $\norm{\nabla^2 \varphi(x_k)}_{\mathrm{op}} = \norm{\nabla_x^2 S_{ij}(x_k,z)}_{\mathrm{op}} \leqslant \norm{\nabla_x^2 S(x_k,\cdot)}_\Gamma \leqslant \frac{C(\norm{x_k}_\infty)}{d}$. Since $k\leqslant (t\wedge \vartheta)d$, we get
\begin{equation}\label{eq: hessianFirstBound}
\EE_{a_{k+1}} \left[\abs{\inner{\mathcal{B}, \sqrt{K}\Pi_k \sqrt{K}}} \big| \mathscr{F}_k\right] \leqslant \frac{C(\norm{x_k}_\infty)}{d} (L(f))^2 \left(1 + \frac{1}{\sqrt{d}}\norm{K}^{1/2}_{\mathrm{op}} \norm{W_k}\right)^{\max \{1, 2\alpha \}}.
\end{equation}
Similarly we get that
\begin{align*}
\big| \inner{\mathcal{B}, \left( \sqrt{K} \Pi_k v_k \right)^{\otimes 2}}\big| &= \abs{\inner{\nabla_{r_1} f\left(  r_k \right)^2 \nabla \psi(x_k)  \nabla^2 \varphi(x_k)\, (\nabla \psi(x_k))^\top, \left( \sqrt{K} \Pi_k v_k \right)^{\otimes 2}}}\\
&\leqslant \norm{\left( \sqrt{K} \Pi_k v_k \right)^{\otimes 2}}_*  \norm{\nabla_{r_1} f\left(  r_k \right)^2 \nabla \psi(x_k)  \nabla^2 \varphi(x_k)\, (\nabla \psi(x_k))^\top }_{\mathrm{op}}\\
&\leqslant \norm{\sqrt{K} \Pi_k v_k }^2  \nabla_{r_1} f\left(  r_k \right)^2 \norm{\nabla \psi(x_k)}_{\mathrm{op}}^2 \norm{\nabla^2 \varphi(x_k)}_{\mathrm{op}}\\
&\leqslant \norm{K}_{\mathrm{op}}\norm{ \Pi_k v_k }^2  \nabla_{r_1} f\left(  r_k \right)^2 \norm{\nabla \psi(x_k)}_{\mathrm{op}}^2 \norm{\nabla^2 \varphi(x_k)}_{\mathrm{op}}.
\end{align*}

Upon taking expectations, using Cauchy--Schwarz we have
\[
\EE_{a_{k+1}}[\|\Pi_kv_k\|^2 \nabla_{r_1} f(r_k)^2  \big| \mathscr{F}_k] \leqslant \left(
\mathbb E[\|\Pi_kv_k\|^4 \big| \mathscr{F}_k]
\right)^{1/2} \left(
\mathbb E[|\nabla_{r_1}f(r_k)|^4 \big| \mathscr{F}_k]
\right)^{1/2}.
\]
Now by Lemma~\ref{lem:growthGradf} we know 
\begin{equation*}
\EE_{a_{k+1}}[\abs{\nabla_{r_1} f(r_k) }^4 \big| \mathscr{F}_k] \leqslant C (L(f))^4 \left(1 + \frac{1}{\sqrt{d}}\norm{K}^{1/2}_{\mathrm{op}} \norm{W_k}\right)^{\max \{1, 4\alpha \}},    
\end{equation*}
and $\EE_{a_{k+1}} \left[\norm{\Pi_k v_k}^4 \middle| \mathscr{F}_k \right] = 8$, as $\Pi_k$ is a projection. Using Lemma~\ref{lem: derivativeSbound} on the growth of $\varphi$, yields
\begin{equation}\label{eq: hessianSecondBound}
   \EE_{a_{k+1}} \left[\Big|\inner{\mathcal{B}, \left( \sqrt{K} \Pi_k v_k \right)^{\otimes 2}}\Big| \big| \mathscr{F}_k \right] \leqslant \frac{C(\norm{x_k}_\infty)}{d} (L(f))^4 \left(1 + \frac{1}{\sqrt{d}}\norm{K}^{1/2}_{\mathrm{op}} \norm{W_k}\right)^{\max \{1, 4\alpha \}}.
\end{equation}
As $k\leqslant (t\wedge \vartheta)d$, then $\frac{1}{\sqrt{d}}\norm{W_k} \leqslant \sqrt{M}$. The result then immediately follows by combining \eqref{eq: hessianFirstBound} and \eqref{eq: hessianSecondBound} and summing up with the extra factor $\gamma_k^2 / d$. Applying the scalar bound to the real and imaginary parts of all nine entries of
$S(\cdot,z)$ and taking a finite union bound gives the displayed matrix norm
bound. Since the matrix dimension is fixed, this only changes the constant.
\end{proof}

Note that since $|\Gamma^\delta|\leqslant C_\Gamma d^{4\delta}$, all of the same estimates hold
uniformly over $z\in\Gamma^\delta$ by a union bound.

\section{Entropy Barrier and High-probability Exponential Decay}
\label{app:entropy_barrier}
This section analyzes the homogenized dynamics in the isotropic squared
parameterization setting. We prove that, for sufficiently small constant
stepsize, the risk decays exponentially with high probability and the dynamics
exist globally. The argument is based on an entropy barrier tailored to the
coordinatewise structure of the SDE. First, we derive an exact product identity
for each coordinate pair $(\mathscr U_{t,i},\mathscr V_{t,i})$ and introduce
logarithmic coordinates in which the residual
$\mathscr U_{t,i}^2-\mathscr V_{t,i}^2-1$ has an entropy representation. We then
derive an exact SDE for the empirical entropy and prove deterministic barrier
estimates comparing the entropy to both the coordinate size and the risk. These
estimates yield an exponential supermartingale argument, giving explicit
confidence bounds and a high-probability exponential decay theorem. Finally, we
record dynamical consequences of this decay, including integrability of the risk
and uniform separation from the saddle at the origin.

In this section, we specialize to the isotropic setting
$\beta^*=\mathds{1}_d$ and $a\sim\mathcal{N}(0,I_d)$, so that the risk in
\eqref{eq: squaredParam} becomes
\begin{equation*}
\mathcal{R}(x)
=
\frac{1}{4d}
\sum_{i=1}^d
\left(u_i^2-v_i^2-1\right)^2.
\end{equation*}
We write $\mathscr X_t=(\mathscr U_t,\mathscr V_t)$ for the corresponding
homogenized process. For constant stepsize $\gamma>0$, the coordinatewise SDE
in \eqref{eq: sdeFormula} takes the form
\begin{align}
\mathrm{d} \mathscr{U}_{t,i}
&=
-\gamma \mathscr{U}_{t,i}
\left(\mathscr{U}_{t,i}^2-\mathscr{V}_{t,i}^2-1\right)\,\mathrm{d} t
+
2\gamma \sqrt{\mathcal{R}(\mathscr{X}_t)}\,
\mathscr{U}_{t,i}\,\mathrm{d} \mathfrak{B}_{t,i},
\label{eq:U_sde_appendix}
\\
\mathrm{d} \mathscr{V}_{t,i}
&=
\gamma \mathscr{V}_{t,i}
\left(\mathscr{U}_{t,i}^2-\mathscr{V}_{t,i}^2-1\right)\,\mathrm{d} t
-
2\gamma \sqrt{\mathcal{R}(\mathscr{X}_t)}\,
\mathscr{V}_{t,i}\,\mathrm{d} \mathfrak{B}_{t,i},
\label{eq:V_sde_appendix}
\end{align}
for $i=1,\ldots,d$, where $\mathfrak B_t$ is a standard Brownian motion in
$\mathbb{R}^d$. We initialize the dynamics at
\begin{equation*}
\mathscr{U}_{0,i}
=
\mathscr{V}_{0,i}
=
1,
\qquad i=1,\ldots,d.
\end{equation*}

Since the coefficients are locally Lipschitz, the SDE admits a unique maximal
local strong solution on a random interval $[0,\zeta)$, where
$\zeta\in(0,\infty]$ denotes the explosion time.

\subsection{Exact Product Identity and Logarithmic Coordinates}

We first record an exact product identity for each coordinate pair.

\begin{lemma}[Exact product identity]
\label{lem:product_identity}
For each $i=1,\dots,d$ and all $0\leqslant t<\zeta$, we have
\begin{equation*}
\mathscr{U}_{t,i}^2 \mathscr{V}_{t,i}^2
=
\exp\!\Big(-8\gamma^2 I_t\Big), \quad \text{where}
\quad
I_t:=\int_0^t \mathcal{R}(\mathscr{X}_s)\,\mathrm{d} s.
\end{equation*}
In particular, $\mathscr{U}_{t,i}^2>0$ and $\mathscr{V}_{t,i}^2>0$ for all $0\leqslant t<\zeta$.
\end{lemma}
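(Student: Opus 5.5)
We apply It\^o's formula to the product $P_t:=\mathscr U_{t,i}\mathscr V_{t,i}$, then square. Both coordinates are driven by the same scalar Brownian motion $\mathfrak B_{t,i}$. Write $e_t:=\mathscr U_{t,i}^2-\mathscr V_{t,i}^2-1$ and $\sigma_t:=2\gamma\sqrt{\mathcal R(\mathscr X_t)}$. Then
\[
\mathrm d\mathscr U=-\gamma\mathscr U e\,\mathrm dt+\sigma\mathscr U\,\mathrm d\mathfrak B, \qquad \mathrm d\mathscr V=\gamma\mathscr V e\,\mathrm dt-\sigma\mathscr V\,\mathrm d\mathfrak B .
\]
The product rule $\mathrm d(\mathscr U\mathscr V)=\mathscr U\,\mathrm d\mathscr V+\mathscr V\,\mathrm d\mathscr U+\mathrm d\langle\mathscr U,\mathscr V\rangle$ gives three contributions. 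The drift terms $\pm\gamma \mathscr U\mathscr V e$ cancel. The martingale terms $\pm\sigma\mathscr U\mathscr V\,\mathrm d\mathfrak B$ also cancel. The cross-variation equals $-\sigma^2\mathscr U\mathscr V\,\mathrm dt$. Hence $P_t$ solves the pathwise linear ODE
\[
\mathrm dP_t=-4\gamma^2\mathcal R(\mathscr X_t)P_t\,\mathrm dt .
\]
This equation holds on $[0,\zeta)$, or more carefully up to localizing stopping times $\zeta_n\uparrow\zeta$, where the coefficients are bounded.

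Since $t\mapsto\mathcal R(\mathscr X_t)$ is continuous on $[0,\zeta)$, the ODE has the unique solution $P_t=P_0\exp(-4\gamma^2 I_t)$. The initialization gives $P_0=1$. Squaring yields $\mathscr U_{t,i}^2\mathscr V_{t,i}^2=\exp(-8\gamma^2 I_t)$. We can also see this directly: $\mathrm d(P_t e^{4\gamma^2 I_t})=0$ by the ordinary product rule, because $I_t$ has finite variation. Positivity is then immediate. The right-hand side is strictly positive, since $I_t<\infty$ for $t<\zeta$, so neither factor can vanish.

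The only delicate point is the bookkeeping of the It\^o cross term, in particular its sign and the factor $\sigma^2=4\gamma^2\mathcal R$. It is cleaner to work with $P_t$ rather than with $\mathscr U^2$ and $\mathscr V^2$ separately. Doing so avoids the extra $\sigma^2\mathscr U^2$ corrections in $\mathrm d(\mathscr U^2)$ and $\mathrm d(\mathscr V^2)$, which would otherwise have to cancel against the $-4\sigma^2\mathscr U^2\mathscr V^2$ cross term. As a check, the squared version gives $\mathrm d(\mathscr U^2\mathscr V^2)=(\sigma^2+\sigma^2-4\sigma^2)\mathscr U^2\mathscr V^2\,\mathrm dt=-8\gamma^2\mathcal R\,\mathscr U^2\mathscr V^2\,\mathrm dt$, which is consistent with the formula above.
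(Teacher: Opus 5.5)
Your proof is correct and follows the paper's argument. You apply It\^o's product rule, observe that the drift and martingale parts cancel, and use the cross-variation to get a linear pathwise ODE solved by an exponential, with positivity following from the factors being nonnegative. The only cosmetic difference is that you work with $\mathscr U_{t,i}\mathscr V_{t,i}$ and then square, whereas the paper applies the product rule directly to $\mathscr U_{t,i}^2\mathscr V_{t,i}^2$; that direct computation is exactly your consistency check.
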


\begin{proof}
By It\^o's formula,
\begin{align*}
\mathrm{d} \mathscr{U}_{t,i}^2
&=
\mathscr{U}_{t,i}^2\Big(-2\gamma(\mathscr{U}_{t,i}^2-\mathscr{V}_{t,i}^2-1)
+4\gamma^2\mathcal{R}(\mathscr{X}_t)\Big)\,\mathrm{d} t
+4\gamma\sqrt{\mathcal{R}(\mathscr{X}_t)}\,\mathscr{U}_{t,i}^2\,\mathrm{d}\mathfrak{B}_{t,i},
\\
\mathrm{d} \mathscr{V}_{t,i}^2
&=
\mathscr{V}_{t,i}^2\Big(+2\gamma(\mathscr{U}_{t,i}^2-\mathscr{V}_{t,i}^2-1)
+4\gamma^2\mathcal{R}(\mathscr{X}_t)\Big)\,\mathrm{d} t
-4\gamma\sqrt{\mathcal{R}(\mathscr{X}_t)}\,\mathscr{V}_{t,i}^2\,\mathrm{d}\mathfrak{B}_{t,i}.
\end{align*}
Applying It\^o's product rule to $\mathscr{U}_{t,i}^2\mathscr{V}_{t,i}^2$, the martingale terms cancel and the quadratic covariation contributes
\[
\mathrm{d} \langle \mathscr{U}_{\cdot,i}^2,\mathscr{V}_{\cdot,i}^2\rangle_t
=
-16\gamma^2 \mathcal{R}(\mathscr{X}_t) \mathscr{U}_{t,i}^2\mathscr{V}_{t,i}^2\,\mathrm{d} t,
\]
so that
\begin{equation*}
\mathrm{d}(\mathscr{U}_{t,i}^2\mathscr{V}_{t,i}^2)
=
-8\gamma^2 \mathcal{R}(\mathscr{X}_t) \mathscr{U}_{t,i}^2\mathscr{V}_{t,i}^2\,\mathrm{d} t.
\end{equation*}
Solving this scalar ODE and using
$\mathscr U_{0,i}^2\mathscr V_{0,i}^2=1$ gives
\[
\mathscr U_{t,i}^2\mathscr V_{t,i}^2
=
\exp\!\Big(-8\gamma^2 I_t\Big)>0.
\]
Because both factors are nonnegative, each must be strictly positive.
\end{proof}

Define
\begin{equation*}
\rho(I_t):=\sqrt{1+4e^{-8\gamma^2 I_t}}.
\end{equation*}
Then
\begin{equation*}
\frac{\rho(I_t)+1}{2}>0,
\qquad
\frac{\rho(I_t)-1}{2}>0,
\qquad
\frac{\rho(I_t)+1}{2}\cdot \frac{\rho(I_t)-1}{2}
=
e^{-8\gamma^2 I_t}.
\end{equation*}
By Lemma~\ref{lem:product_identity}, the product of the two normalizing factors
matches $\mathscr U_{t,i}^2\mathscr V_{t,i}^2$. Hence, for each $i$, we may define a unique real-valued process
\begin{equation*}
\mathscr{Z}_{t,i}
:=
\log\!\Bigg(\frac{\mathscr{U}_{t,i}^2}{(\rho(I_t)+1)/2}\Bigg)
=
-\log\!\Bigg(\frac{\mathscr{V}_{t,i}^2}{(\rho(I_t)-1)/2}\Bigg),
\end{equation*}
such that
\begin{equation*}
\mathscr{U}_{t,i}^2
=
\frac{\rho(I_t)+1}{2}e^{\mathscr{Z}_{t,i}},
\qquad
\mathscr{V}_{t,i}^2
=
\frac{\rho(I_t)-1}{2}e^{-\mathscr{Z}_{t,i}}.
\end{equation*}

For $c>0$ and $x>0$, define
\[
f_c(x):=x-c-c\log(x/c),
\]
and for $\rho\geqslant 1$ define
\[
\Phi_\rho(z):=\rho(\cosh z-1)+\sinh z-z.
\]

\begin{lemma}[Logarithmic-coordinate identities]
\label{lem:log_coord_identities}
For every $0\leqslant t<\zeta$ and every $i=1,\dots,d$,
\begin{align}
\mathscr{U}_{t,i}^2-\mathscr{V}_{t,i}^2-1
&=
\rho(I_t)\sinh(\mathscr{Z}_{t,i})+\cosh(\mathscr{Z}_{t,i})-1,
\label{eq:diff_identity_appendix}\\
\mathscr{U}_{t,i}^2+\mathscr{V}_{t,i}^2
&=
\rho(I_t)\cosh(\mathscr{Z}_{t,i})+\sinh(\mathscr{Z}_{t,i}),
\label{eq:sum_identity_appendix}\\
\Phi_{\rho(I_t)}(\mathscr{Z}_{t,i})
&=
f_{\frac{\rho(I_t)+1}{2}}(\mathscr{U}_{t,i}^2)
+
f_{\frac{\rho(I_t)-1}{2}}(\mathscr{V}_{t,i}^2).
\label{eq:Phi_entropy_identity_appendix}
\end{align}
Moreover, $\mathscr{U}_{t,i}>0$ and $\mathscr{V}_{t,i}>0$ for all $0\leqslant t<\zeta$, and hence
\begin{align}
\mathrm{d} \log \mathscr{U}_{t,i}
&=
\Big(-\gamma(\mathscr{U}_{t,i}^2-\mathscr{V}_{t,i}^2-1)-2\gamma^2\mathcal{R}(\mathscr{X}_t)\Big)\,\mathrm{d} t
+2\gamma\sqrt{\mathcal{R}(\mathscr{X}_t)}\,\mathrm{d} \mathfrak{B}_{t,i},
\label{eq:logU_appendix}\\
\mathrm{d} \log \mathscr{V}_{t,i}
&=
\Big(+\gamma(\mathscr{U}_{t,i}^2-\mathscr{V}_{t,i}^2-1)-2\gamma^2\mathcal{R}(\mathscr{X}_t)\Big)\,\mathrm{d} t
-2\gamma\sqrt{\mathcal{R}(\mathscr{X}_t)}\,\mathrm{d} \mathfrak{B}_{t,i}.
\label{eq:logV_appendix}
\end{align}
\end{lemma}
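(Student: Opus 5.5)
The three algebraic identities are direct substitution, and the two SDEs come from It\^o's formula for $\log$ once positivity of the coordinates is known. Write $\rho=\rho(I_t)$, $a=(\rho+1)/2$, $b=(\rho-1)/2$, so that $a-b=1$, $a+b=\rho$ and $ab=e^{-8\gamma^2 I_t}$. By the construction following Lemma~\ref{lem:product_identity}, we have $\mathscr U_{t,i}^2=ae^{\mathscr Z_{t,i}}$ and $\mathscr V_{t,i}^2=be^{-\mathscr Z_{t,i}}$. Expanding $e^{\pm z}=\cosh z\pm\sinh z$ gives
\[
\mathscr U^2-\mathscr V^2=(a-b)\cosh z+(a+b)\sinh z=\cosh z+\rho\sinh z .
\]
Subtracting $1$ gives \eqref{eq:diff_identity_appendix}, and the same expansion of the sum gives \eqref{eq:sum_identity_appendix}. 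For \eqref{eq:Phi_entropy_identity_appendix}, I will evaluate $f_a(\mathscr U^2)=ae^z-a-az$ and $f_b(\mathscr V^2)=be^{-z}-b+bz$ and add them:
\[
ae^{z}+be^{-z}-(a+b)-(a-b)z=\rho\cosh z+\sinh z-\rho-z=\Phi_\rho(z).
\]
One point to check is that $\mathscr Z_{t,i}$ is well defined. The two expressions given for it agree exactly because $ab=\mathscr U^2\mathscr V^2$, which is Lemma~\ref{lem:product_identity}. This uses $b>0$, and that holds since $\rho>1$.

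Next, I will show positivity of the coordinates themselves, not just of their squares. Lemma~\ref{lem:product_identity} gives $\mathscr U_{t,i}^2>0$ on $[0,\zeta)$, so $\mathscr U_{t,i}$ never vanishes there. Its paths are continuous and it starts at $\mathscr U_{0,i}=1>0$, so by the intermediate value theorem it stays strictly positive. The same argument applies to $\mathscr V_{t,i}$.

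Finally, I will apply It\^o's formula to $\log$, which is smooth on $(0,\infty)$, to get the log SDEs; to be fully careful, this can be done up to the stopping times at which a coordinate leaves $[1/n,n]$, and then let $n\to\infty$. From \eqref{eq:U_sde_appendix}, the quadratic variation is $\mathrm d\langle \mathscr U_{\cdot,i}\rangle_t=4\gamma^2\mathcal R\,\mathscr U_{t,i}^2\,\mathrm dt$. Therefore
\[
\mathrm d\log\mathscr U=\frac{\mathrm d\mathscr U}{\mathscr U}-\frac{\mathrm d\langle\mathscr U\rangle}{2\mathscr U^2},
\]
which yields the drift $-\gamma(\mathscr U^2-\mathscr V^2-1)-2\gamma^2\mathcal R$ and the noise $2\gamma\sqrt{\mathcal R}\,\mathrm d\mathfrak B_{t,i}$. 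The computation for $\mathscr V$ is identical with the opposite signs. No step here is a real obstacle; the only delicate point is this positivity and localization, which justifies dividing by $\mathscr U$ and $\mathscr V$.
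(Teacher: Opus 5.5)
Your proposal is correct and takes the same route as the paper. The three identities follow by substituting $\mathscr U_{t,i}^2=\tfrac{\rho(I_t)+1}{2}e^{\mathscr Z_{t,i}}$ and $\mathscr V_{t,i}^2=\tfrac{\rho(I_t)-1}{2}e^{-\mathscr Z_{t,i}}$, positivity follows from Lemma~\ref{lem:product_identity} together with continuity and the initial value $1$, and the log-SDEs follow from It\^o's formula; you simply write out the algebra, the well-definedness of $\mathscr Z_{t,i}$, and the localization that the paper leaves implicit.
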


\begin{proof}
The identities \eqref{eq:diff_identity_appendix}--\eqref{eq:Phi_entropy_identity_appendix} follow directly from the parametrization
\[
\mathscr{U}_{t,i}^2=\frac{\rho(I_t)+1}{2}e^{\mathscr{Z}_{t,i}},
\qquad
\mathscr{V}_{t,i}^2=\frac{\rho(I_t)-1}{2}e^{-\mathscr{Z}_{t,i}}.
\]
Since $\mathscr{U}_{0,i}=\mathscr{V}_{0,i}=1$ and neither process can hit $0$ before $\zeta$ by Lemma~\ref{lem:product_identity}, continuity gives strict positivity. The logarithmic SDEs then follow from It\^o's formula applied to $\log \mathscr{U}_{t,i}$ and $\log \mathscr{V}_{t,i}$.
\end{proof}

For each coordinate, define the entropy density
\begin{equation}
h_{t,i}
:=
\Phi_{\rho(I_t)}(\mathscr Z_{t,i})
=
f_{\frac{\rho(I_t)+1}{2}}(\mathscr U_{t,i}^2)
+
f_{\frac{\rho(I_t)-1}{2}}(\mathscr V_{t,i}^2),
\qquad 0\leqslant t<\zeta.
\label{eq:coordinate_entropy_density}
\end{equation}
The empirical entropy is then the average of these coordinate densities:
\begin{equation}
H_t
:=
\frac1d\sum_{i=1}^d h_{t,i}.
\label{eq:empirical_entropy}
\end{equation}
Equivalently,
\begin{equation*}
H_t
=
\frac{1}{d}\sum_{i=1}^d
\Phi_{\rho(I_t)}(\mathscr Z_{t,i})
=
\frac{1}{d}\sum_{i=1}^d
\Bigg[
f_{\frac{\rho(I_t)+1}{2}}(\mathscr U_{t,i}^2)
+
f_{\frac{\rho(I_t)-1}{2}}(\mathscr V_{t,i}^2)
\Bigg].
\end{equation*}

At time $t=0$,
\begin{equation*}
\rho(I_0)=\sqrt5,
\qquad
\mathscr Z_{0,i}
=
-\log\!\Big(\tfrac{\sqrt5+1}{2}\Big),
\end{equation*}
and hence
\begin{equation*}
h_{0,i}
=
2-\sqrt5+\log\!\Big(\tfrac{\sqrt5+1}{2}\Big)
\qquad
\text{for every } i=1,\ldots,d.
\end{equation*}
Therefore,
\begin{equation*}
H_0
=
2-\sqrt5+\log\!\Big(\tfrac{\sqrt5+1}{2}\Big),
\end{equation*}
which is independent of $d$.

\subsection{Exact Dynamics of the Empirical Entropy}
Recall that
\[
H_t=\frac1d\sum_{i=1}^d h_{t,i},
\qquad
h_{t,i}:=\Phi_{\rho(I_t)}(\mathscr Z_{t,i}).
\]
We next derive the SDE satisfied by the empirical entropy $H_t$.

\begin{proposition}[Empirical entropy SDE]
\label{prop:entropy_sde}
For $0\leqslant t<\zeta$,
\begin{equation}
\mathrm{d} H_t
=
\Bigg(
-8\gamma \mathcal{R}(\mathscr{X}_t)
+
4\gamma^2 \mathcal{R}(\mathscr{X}_t)
\Bigg[
\frac1d\sum_{i=1}^d (\mathscr{U}_{t,i}^2+\mathscr{V}_{t,i}^2)
+\rho(I_t)
\Bigg]
\Bigg)\,\mathrm{d} t
+\mathrm{d} M_t,
\label{eq:Ht_sde_appendix}
\end{equation}
where
\begin{equation*}
M_t
=
4\gamma \frac1d\sum_{i=1}^d
\int_0^t \mathds{1}_{\{s<\zeta\}}
\sqrt{\mathcal{R}(\mathscr{X}_s)}\,
(\mathscr{U}_{s,i}^2-\mathscr{V}_{s,i}^2-1)\,\mathrm{d} \mathfrak{B}_{s,i},
\end{equation*}
and
\begin{equation}
\mathrm{d} \langle M\rangle_t
= \mathds{1}_{\{t<\zeta\}}
\frac{64\gamma^2}{d}\mathcal{R}(\mathscr{X}_t)^2\,\mathrm{d} t.
\label{eq:Mt_qv_appendix}
\end{equation}
\end{proposition}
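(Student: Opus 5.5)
The cleanest route is to avoid differentiating $\Phi_{\rho(I_t)}(\mathscr Z_{t,i})$ directly. Instead I apply It\^o's formula to the equivalent representation \eqref{eq:coordinate_entropy_density},
\[
h_{t,i}=f_{c_+(t)}(\mathscr U_{t,i}^2)+f_{c_-(t)}(\mathscr V_{t,i}^2),\qquad c_\pm(t):=\tfrac{\rho(I_t)\pm1}{2}.
\]
Here $c_\pm$ are absolutely continuous processes, since $I_t$ has derivative $\mathcal R(\mathscr X_t)$.

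\textbf{Step 1: expand each term.} Write $f_c(x)=x-c-c\log x+c\log c$. Then
\[
\mathrm d f_{c}(X_t)=\mathrm dX_t-c\,\mathrm d\log X_t+\partial_c f_c(X_t)\,\dot c\,\mathrm dt,
\qquad \partial_c f_c(x)=-\log(x/c).
\]
The inputs are as follows.
\begin{itemize}
\item $\mathrm d\mathscr U_{t,i}^2$ and $\mathrm d\mathscr V_{t,i}^2$ are given in the proof of Lemma~\ref{lem:product_identity}.
\item $\mathrm d\log\mathscr U_{t,i}^2=2\,\mathrm d\log\mathscr U_{t,i}$, and similarly for $\mathscr V$, from \eqref{eq:logU_appendix}--\eqref{eq:logV_appendix}.
\item $\log(\mathscr U_{t,i}^2/c_+)=\mathscr Z_{t,i}$ and $\log(\mathscr V_{t,i}^2/c_-)=-\mathscr Z_{t,i}$.
\end{itemize}
Write $D_i:=\mathscr U_{t,i}^2-\mathscr V_{t,i}^2-1$. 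The time-derivative terms combine to $-\mathscr Z_{t,i}(\dot c_+-\dot c_-)\,\mathrm dt$. Since $c_+-c_-=1$ is constant, $\dot c_+=\dot c_-$, so these terms vanish identically. This cancellation is the point of the product identity.

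\textbf{Step 2: collect the drift.} The drift of $h_{t,i}$ splits into three parts.
\begin{itemize}
\item From the $\mathrm dX$ terms:
\[
-2\gamma D_i(\mathscr U^2+\mathscr V^2)+4\gamma^2\mathcal R\,(\mathscr U^2+\mathscr V^2).
\]
\item From $-c_+\,\mathrm d\log\mathscr U^2-c_-\,\mathrm d\log\mathscr V^2$:
\[
2\gamma D_i(c_+-c_-)+4\gamma^2\mathcal R(c_++c_-)=2\gamma D_i+4\gamma^2\mathcal R\,\rho .
\]
\item Combining the $D_i$ pieces gives $-2\gamma D_i(\mathscr U^2+\mathscr V^2-1)$.
\end{itemize}
Now average over $i$. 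I would use the identity
\[
\tfrac1d\sum_i D_i(\mathscr U_{t,i}^2+\mathscr V_{t,i}^2-1)=\tfrac1d\sum_i D_i^2+\tfrac2d\sum_i D_i\mathscr V_{t,i}^2,
\]
together with $4\mathcal R=\tfrac1d\sum_iD_i^2$. This should produce the $-8\gamma\mathcal R$ term, provided the cross term $\tfrac2d\sum D_i\mathscr V^2$ is accounted for.

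\textbf{This is the main obstacle.} The cross term does not obviously vanish. It is quite possible that the intended statement absorbs it, or that I have mis-signed something. So I would first redo the computation in $\mathscr Z$ coordinates as an independent check. In those coordinates $\partial_z\Phi_\rho=\rho\sinh z+\cosh z-1=D_i$ by \eqref{eq:diff_identity_appendix}, and $\partial_z^2\Phi_\rho=\rho\cosh z+\sinh z=\mathscr U^2+\mathscr V^2$ by \eqref{eq:sum_identity_appendix}. The formula $\mathrm d\mathscr Z_{t,i}=\mathrm d\log\mathscr U_{t,i}^2-\mathrm d\log c_+$ gives:
\begin{itemize}
\item martingale part $4\gamma\sqrt{\mathcal R}\,\mathrm d\mathfrak B_{t,i}$;
\item drift $-2\gamma D_i-4\gamma^2\mathcal R-\dot c_+/c_+$.
\end{itemize}
I would then reconcile the two computations. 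The $\rho$-derivative term $\partial_\rho\Phi\,\dot\rho=(\cosh z-1)\dot\rho$ and the $\dot c_+/c_+$ term must be tracked carefully here. I expect them to combine into the $D_i$ cross term and reproduce exactly the stated drift $-8\gamma\mathcal R+4\gamma^2\mathcal R\bigl[\tfrac1d\sum(\mathscr U^2+\mathscr V^2)+\rho\bigr]$. The first-order pieces $D_i\cdot(-2\gamma D_i)$ average to $-8\gamma\mathcal R$, and the It\^o term $\tfrac12(\mathscr U^2+\mathscr V^2)\cdot16\gamma^2\mathcal R$ together with the $\rho$ pieces supplies the bracket.

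\textbf{Step 3: martingale part and localization.} The martingale part of $h_{t,i}$ is $\partial_z\Phi\cdot4\gamma\sqrt{\mathcal R}\,\mathrm d\mathfrak B_{t,i}=4\gamma\sqrt{\mathcal R}\,D_i\,\mathrm d\mathfrak B_{t,i}$, which matches $M_t$. Because the $\mathfrak B_{\cdot,i}$ are independent, the quadratic variation is
\[
\frac{16\gamma^2}{d^2}\mathcal R\sum_i D_i^2=\frac{64\gamma^2}{d}\mathcal R^2 ,
\]
again using $\sum_iD_i^2=4d\,\mathcal R$. Finally, everything is localized to $t<\zeta$ by stopping at the exit times of compact sets that avoid $\{\mathscr U\mathscr V=0\}$, which is legitimate by Lemma~\ref{lem:product_identity}. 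This localization is where the indicator $\mathds 1_{\{t<\zeta\}}$ comes from.
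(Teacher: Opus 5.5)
Your route is the paper's own. You apply It\^o's formula to $f_{c_+}(\mathscr U_{t,i}^2)+f_{c_-}(\mathscr V_{t,i}^2)$, cancel the moving-target terms using $\dot c_+=\dot c_-$ together with $\log(\mathscr U^2/c_+)=-\log(\mathscr V^2/c_-)$, and compute the martingale part and its bracket using independence. Step 3 is correct. As written, however, the proposal does not establish the drift, which is the substance of the proposition. Step 2 contains a sign error, and the ``main obstacle'' you flag is produced by that error. You then leave it unresolved, deferring to a second computation you do not carry out.

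\textbf{The sign error.} By the proof of Lemma~\ref{lem:product_identity}, the drift of $\mathscr V_{t,i}^2$ is $\mathscr V_{t,i}^2(+2\gamma D_i+4\gamma^2\mathcal R)$. The $\mathrm dX$ contributions are therefore
\[
-2\gamma D_i(\mathscr U_{t,i}^2-\mathscr V_{t,i}^2)+4\gamma^2\mathcal R(\mathscr U_{t,i}^2+\mathscr V_{t,i}^2),
\]
not $-2\gamma D_i(\mathscr U^2+\mathscr V^2)+\cdots$.

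\textbf{The corrected drift.} Add your (correct) second bullet, $2\gamma D_i+4\gamma^2\mathcal R\rho$. The $D_i$ pieces combine coordinatewise to $-2\gamma D_i(\mathscr U_{t,i}^2-\mathscr V_{t,i}^2-1)=-2\gamma D_i^2$. So the drift of $h_{t,i}$ is exactly $-2\gamma D_i^2+4\gamma^2\mathcal R(\mathscr U_{t,i}^2+\mathscr V_{t,i}^2+\rho)$. Averaging with $\frac1d\sum_iD_i^2=4\mathcal R$ gives the stated formula with no cross term. The identity involving $\frac2d\sum_iD_i\mathscr V_{t,i}^2$ is irrelevant, and the statement is not absorbing anything.

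\textbf{The $\mathscr Z$-coordinate check.} If you had executed it, it would have confirmed this. From $\rho^2=1+4e^{-8\gamma^2I_t}$ one gets $\dot\rho=-4\gamma^2\mathcal R(\rho^2-1)/\rho$ and $\dot c_+/c_+=-4\gamma^2\mathcal R(\rho-1)/\rho$. The drift terms other than $-2\gamma D_i^2$, divided by $4\gamma^2\mathcal R$, are then
\[
-\tfrac1\rho D_i+2(\mathscr U^2+\mathscr V^2)-\tfrac{\rho^2-1}{\rho}(\cosh\mathscr Z-1)
=\rho\cosh\mathscr Z+\sinh\mathscr Z+\rho=\mathscr U^2+\mathscr V^2+\rho .
\]
So the two computations agree exactly. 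Your expectation that the $\rho$-terms ``combine into the $D_i$ cross term'' cannot be right, because that term does not exist.
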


\begin{proof}
Since
\[
\partial_x f_c(x)=1-\frac{c}{x},
\qquad
\partial_{xx} f_c(x)=\frac{c}{x^2},
\qquad
\partial_c f_c(x)=\log(c/x),
\]
It\^o's formula gives
\begin{align*}
\mathrm{d} f_{\frac{\rho(I_t)+1}{2}}(\mathscr{U}_{t,i}^2)
&=
\Bigg[
\Big(\mathscr{U}_{t,i}^2-\tfrac{\rho(I_t)+1}{2}\Big)
\Big(-2\gamma(\mathscr{U}_{t,i}^2-\mathscr{V}_{t,i}^2-1)+4\gamma^2\mathcal{R}(\mathscr{X}_t)\Big)
+8\gamma^2\mathcal{R}(\mathscr{X}_t)\tfrac{\rho(I_t)+1}{2}
\Bigg]\mathrm{d} t
\nonumber\\
&\quad
+4\gamma\sqrt{\mathcal{R}(\mathscr{X}_t)}
\Big(\mathscr{U}_{t,i}^2-\tfrac{\rho(I_t)+1}{2}\Big)\mathrm{d} \mathfrak{B}_{t,i}
+\log\!\Bigg(\frac{(\rho(I_t)+1)/2}{\mathscr{U}_{t,i}^2}\Bigg)\mathrm{d}\Big(\tfrac{\rho(I_t)+1}{2}\Big),
\end{align*}
and similarly
\begin{align*}
\mathrm{d} f_{\frac{\rho(I_t)-1}{2}}(\mathscr{V}_{t,i}^2)
&=
\Bigg[
\Big(\mathscr{V}_{t,i}^2-\tfrac{\rho(I_t)-1}{2}\Big)
\Big(+2\gamma(\mathscr{U}_{t,i}^2-\mathscr{V}_{t,i}^2-1)+4\gamma^2\mathcal{R}(\mathscr{X}_t)\Big)
+8\gamma^2\mathcal{R}(\mathscr{X}_t)\tfrac{\rho(I_t)-1}{2}
\Bigg]\mathrm{d} t
\nonumber\\
&\quad
-4\gamma\sqrt{\mathcal{R}(\mathscr{X}_t)}
\Big(\mathscr{V}_{t,i}^2-\tfrac{\rho(I_t)-1}{2}\Big)\mathrm{d} \mathfrak{B}_{t,i}
+\log\!\Bigg(\frac{(\rho(I_t)-1)/2}{\mathscr{V}_{t,i}^2}\Bigg)\mathrm{d}\Big(\tfrac{\rho(I_t)-1}{2}\Big).
\end{align*}
Now we compute
\begin{align*}
\Big(\mathscr{U}_{t,i}^2-\tfrac{\rho(I_t)+1}{2}\Big)
-
\Big(\mathscr{V}_{t,i}^2-\tfrac{\rho(I_t)-1}{2}\Big)
&=
\mathscr{U}_{t,i}^2-\mathscr{V}_{t,i}^2-1,
\\
\Big(\mathscr{U}_{t,i}^2-\tfrac{\rho(I_t)+1}{2}\Big)
+
\Big(\mathscr{V}_{t,i}^2-\tfrac{\rho(I_t)-1}{2}\Big)
&=
\mathscr{U}_{t,i}^2+\mathscr{V}_{t,i}^2-\rho(I_t).
\end{align*}

Moreover, the two moving-target terms have the same finite-variation factor
\[
\mathrm d\left(\frac{\rho(I_t)+1}{2}\right)
=
\mathrm d\left(\frac{\rho(I_t)-1}{2}\right)
=
\frac12\,\mathrm d\rho(I_t),
\]
and also we have
\[
\mathscr{U}_{t,i}^2\mathscr{V}_{t,i}^2
=
\frac{\rho(I_t)+1}{2}\cdot \frac{\rho(I_t)-1}{2}.
\]
Therefore the two moving-target terms cancel:
\begin{equation*}
\log\!\Bigg(\frac{(\rho(I_t)+1)/2}{\mathscr{U}_{t,i}^2}\Bigg)
+
\log\!\Bigg(\frac{(\rho(I_t)-1)/2}{\mathscr{V}_{t,i}^2}\Bigg)
=
0.
\end{equation*}
Summing the two It\^o identities therefore yields
\begin{align*}
\mathrm{d} \Phi_{\rho(I_t)}(\mathscr{Z}_{t,i})
&=
\Big(
-2\gamma(\mathscr{U}_{t,i}^2-\mathscr{V}_{t,i}^2-1)^2
+
4\gamma^2\mathcal{R}(\mathscr{X}_t)(\mathscr{U}_{t,i}^2+\mathscr{V}_{t,i}^2+\rho(I_t))
\Big)\mathrm{d} t
\nonumber\\
&\quad
+
4\gamma\sqrt{\mathcal{R}(\mathscr{X}_t)}
(\mathscr{U}_{t,i}^2-\mathscr{V}_{t,i}^2-1)\mathrm{d} \mathfrak{B}_{t,i}.
\end{align*}
Averaging over $i$ and using
\[
\frac1d\sum_{i=1}^d (\mathscr{U}_{t,i}^2-\mathscr{V}_{t,i}^2-1)^2
=
4\mathcal{R}(\mathscr{X}_t)
\]
gives \eqref{eq:Ht_sde_appendix}. The bracket identity \eqref{eq:Mt_qv_appendix} follows from independence of the Brownian motions:
\[
\mathrm{d}\langle M\rangle_t
=
\frac{16\gamma^2}{d^2}\sum_{i=1}^d
\mathcal{R}(\mathscr{X}_t)(\mathscr{U}_{t,i}^2-\mathscr{V}_{t,i}^2-1)^2\,\mathrm{d} t
=
\frac{64\gamma^2}{d}\mathcal{R}(\mathscr{X}_t)^2\,\mathrm{d} t.
\]
\end{proof}

\subsection{Barrier Estimates}
We next derive deterministic barrier estimates. The first shows that the
empirical entropy controls the average size of
$\mathscr U_{t,i}^2+\mathscr V_{t,i}^2$. The second gives a dimension-free
comparison between the entropy density  $\Phi_{\rho(I_t)}(\mathscr Z_{t,i})$ and the squared residual, 
\[
\left(\rho(I_t)\sinh(\mathscr Z_{t,i})+\cosh(\mathscr Z_{t,i})-1\right)^2,
\]
provided we
work below a coordinatewise entropy barrier.

\begin{lemma}[Pointwise control of $\mathscr{U}^2+\mathscr{V}^2$ by the entropy]
\label{lem:UV_by_entropy}
For every $\rho\geqslant 1$ and every $z\in\R$,
\begin{equation}
\rho\cosh z+\sinh z
\leqslant
2\Phi_\rho(z)+2\rho\log 2.
\label{eq:pointwise_UV_entropy}
\end{equation}
Consequently, for all $0\leqslant t<\zeta$,
\begin{equation}
\frac1d\sum_{i=1}^d
(\mathscr{U}_{t,i}^2+\mathscr{V}_{t,i}^2)
\leqslant
2H_t+2\rho(I_t)\log 2
\leqslant
2H_t+2\sqrt5\log 2.
\label{eq:average_UV_entropy}
\end{equation}
\end{lemma}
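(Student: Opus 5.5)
The plan is to reduce the pointwise inequality \eqref{eq:pointwise_UV_entropy} to an elementary one-variable inequality. We then deduce \eqref{eq:average_UV_entropy} by substituting $\rho=\rho(I_t)$ and $z=\mathscr Z_{t,i}$ via the identities of Lemma~\ref{lem:log_coord_identities}.

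\textbf{Pointwise inequality.} Expanding $\Phi_\rho(z)=\rho(\cosh z-1)+\sinh z-z$, the claim $\rho\cosh z+\sinh z\leqslant 2\Phi_\rho(z)+2\rho\log 2$ is equivalent to
\[
0\leqslant \rho\cosh z+\sinh z-2\rho-2z+2\rho\log 2 .
\]
The right-hand side is increasing in $\rho$, since its $\rho$-derivative is $\cosh z-2+2\log 2\geqslant 2\log2-1>0$. It would therefore suffice to verify the case $\rho=1$, namely
\[
g(z):=e^{z}-2-2z+2\log 2\geqslant 0,
\]
using $\cosh z+\sinh z=e^z$.

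Now $g'(z)=e^z-2$ vanishes only at $z=\log 2$, and $g$ is convex. Hence the minimum is $g(\log 2)=2-2-2\log2+2\log2=0$. This gives \eqref{eq:pointwise_UV_entropy} for all $\rho\geqslant1$ and $z\in\R$.

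\textbf{Averaged bound.} Fix $t<\zeta$. By \eqref{eq:sum_identity_appendix}, $\mathscr U_{t,i}^2+\mathscr V_{t,i}^2=\rho(I_t)\cosh\mathscr Z_{t,i}+\sinh\mathscr Z_{t,i}$. Since $\rho(I_t)=\sqrt{1+4e^{-8\gamma^2I_t}}\geqslant1$, the pointwise bound applies and gives
\[
\mathscr U_{t,i}^2+\mathscr V_{t,i}^2\leqslant 2h_{t,i}+2\rho(I_t)\log 2 .
\]
Averaging over $i$ and using \eqref{eq:empirical_entropy} yields the first inequality in \eqref{eq:average_UV_entropy}.

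For the second inequality, note that $I_t\geqslant0$ because $\mathcal R\geqslant0$. Hence $\rho(I_t)\leqslant\sqrt5$, and since $\log2>0$ the bound follows.

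There is no real obstacle here. The only point needing care is confirming the monotonicity in $\rho$, specifically that the coefficient $\cosh z-2+2\log 2$ is positive for all $z$. This holds because $\cosh z\geqslant1$ and $2\log2>1$.
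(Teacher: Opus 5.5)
Your proof is correct, but it takes a different route from the paper's.

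\textbf{Your route.} You expand $\Phi_\rho$ and observe that the slack $\rho\cosh z+\sinh z-2\rho-2z+2\rho\log 2$ is affine in $\rho$ with slope $\cosh z-2+2\log 2>0$. This reduces everything to the one-variable convexity check $e^z-2-2z+2\log 2\geqslant 0$ at $\rho=1$.

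\textbf{The paper's route.} The paper works through the entropy decomposition \eqref{eq:Phi_entropy_identity_appendix}:
\[
\Phi_\rho(z)=f_{(\rho+1)/2}\bigl(\tfrac{\rho+1}{2}e^{z}\bigr)+f_{(\rho-1)/2}\bigl(\tfrac{\rho-1}{2}e^{-z}\bigr).
\]
It applies to each summand the bound $f_c(x)\geqslant x/2-c\log 2$, obtained by minimizing $f_c(x)-x/2$ at $x=2c$. Adding the two bounds gives the claim.

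\textbf{What each buys.} Since $f_c$ requires $c>0$, the paper proves the case $\rho>1$ first and reaches $\rho=1$ by a continuity limit. Your argument needs no such limit, because $\rho=1$ is exactly the case you check. It also makes the equality case ($\rho=1$, $z=\log 2$) explicit.

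The paper's version works directly in the original variables $\mathscr U_{t,i}^2,\mathscr V_{t,i}^2$ through the defining form of $h_{t,i}$. It also shows where $\log 2$ comes from: it is the cost of trading each entropy-type term $f_c$ for half its argument. Replacing $1/2$ by a weight $\lambda\in(0,1)$ gives the family
\[
\lambda(\rho\cosh z+\sinh z)\leqslant \Phi_\rho(z)+\rho\log\tfrac{1}{1-\lambda},
\]
which would be the natural tool if a different constant in front of $H_t$ were wanted.

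The passage to the averaged bound \eqref{eq:average_UV_entropy} is the same in both.
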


\begin{proof}
It suffices to prove the claim for $\rho>1$, since both sides of
\eqref{eq:pointwise_UV_entropy} are continuous in $\rho$ and the case
$\rho=1$ follows by taking the limit $\rho\downarrow 1$.

Fix $c>0$ and define
\[
g_c(x):=f_c(x)-\frac{x}{2}
=
\frac{x}{2}-c-c\log(x/c),
\qquad x>0.
\]
Then $g_c'(x)=\frac12-\frac{c}{x}$, so $g_c$ is minimized at $x=2c$, where
$g_c(2c)=-c\log 2$. Hence
\begin{equation*}
f_c(x)\geqslant \frac{x}{2}-c\log 2
\qquad \text{for all }x>0.
\end{equation*}
Applying this with
\[
(c,x)=\Big(\tfrac{\rho+1}{2},\,\tfrac{\rho+1}{2}e^z\Big),
\qquad
(c,x)=\Big(\tfrac{\rho-1}{2},\,\tfrac{\rho-1}{2}e^{-z}\Big),
\]
and using \eqref{eq:Phi_entropy_identity_appendix}, we obtain
\begin{align*}
\Phi_\rho(z)
&\geqslant
\frac12\Big(\tfrac{\rho+1}{2}e^z+\tfrac{\rho-1}{2}e^{-z}\Big)
-\Big(\tfrac{\rho+1}{2}+\tfrac{\rho-1}{2}\Big)\log 2
\\
&=
\frac12\big(\rho\cosh z+\sinh z\big)-\rho\log 2.
\end{align*}
Rearranging gives \eqref{eq:pointwise_UV_entropy}. Averaging with
$(\rho,z)=(\rho(I_t),\mathscr{Z}_{t,i})$ and using
$\rho(I_t)\leqslant \rho(0)=\sqrt5$ yields \eqref{eq:average_UV_entropy}.
\end{proof}

\begin{lemma}[Coercivity under a coordinatewise entropy barrier]
\label{lem:coercivity_barrier}
Fix $L_\ast>H_0$ and define
\begin{equation*}
K_{L_\ast}
:=
\left\{
(\rho,z):
1\leqslant \rho\leqslant \sqrt5,\;
\Phi_\rho(z)\leqslant L_\ast
\right\}.
\end{equation*}
Then $K_{L_\ast}$ is compact. Moreover, there exist constants
\begin{equation*}
0<m_{L_\ast}\leqslant M_{L_\ast}<\infty
\end{equation*}
depending only on $L_\ast$ such that, for every $(\rho,z)\in K_{L_\ast}$,
\begin{equation}
m_{L_\ast}\Phi_\rho(z)
\leqslant
\big(\rho\sinh z+\cosh z-1\big)^2
\leqslant
M_{L_\ast}\Phi_\rho(z).
\label{eq:coercivity_compact}
\end{equation}
\end{lemma}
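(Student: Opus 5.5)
We will prove compactness first and then obtain the two-sided bound \eqref{eq:coercivity_compact} by a local expansion at $z=0$ combined with a continuity argument away from $z=0$.

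For compactness, note that $\Phi_\rho(z)=\rho(\cosh z-1)+\sinh z-z$ is jointly continuous, so $K_{L_\ast}$ is closed. For boundedness, use $\rho\geqslant1$:
\[
\Phi_\rho(z)\geqslant \cosh z-1+\sinh z-z=e^z-1-z .
\]
This forces $z\geqslant -(L_\ast+1)$ whenever $\Phi_\rho(z)\leqslant L_\ast$, and $z$ is bounded above because $e^z-1-z$ grows exponentially. Hence $K_{L_\ast}$ is a closed subset of $[1,\sqrt5]\times[-C,C]$.

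The main step is a factorization. Write $g_\rho(z):=\rho\sinh z+\cosh z-1$. Then $\Phi_\rho'(z)=\rho\sinh z+\cosh z-1=g_\rho(z)$, with $\Phi_\rho(0)=0$, $g_\rho(0)=0$, $\Phi_\rho''(0)=\rho+1>0$ and $g_\rho'(0)=\rho$. We also claim $z=0$ is the unique zero of $g_\rho$. For $z>0$ every term is positive. For $z<0$, set $s=-z$; then $g_\rho=-\rho\sinh s+\cosh s-1<0$, because $\cosh s-1<\sinh s\leqslant\rho\sinh s$, the first inequality being equivalent to $e^{-s}<1$.

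Consequently $\Phi_\rho$ is strictly decreasing on $z<0$ and strictly increasing on $z>0$, so $\Phi_\rho(z)>0$ for $z\neq0$. Define
\[
q(\rho,z):=\frac{g_\rho(z)^2}{\Phi_\rho(z)}\quad (z\neq0),\qquad q(\rho,0):=\frac{2\rho^2}{\rho+1}.
\]
The value at $z=0$ comes from Taylor expansion: $g_\rho(z)^2=\rho^2z^2+O(z^3)$ and $\Phi_\rho(z)=\tfrac{\rho+1}{2}z^2+O(z^3)$, uniformly for $\rho\in[1,\sqrt5]$. Writing $g_\rho(z)=z\,a(\rho,z)$ and $\Phi_\rho(z)=z^2b(\rho,z)$, where $a,b$ are continuous (real-analytic) with $a(\rho,0)=\rho$ and $b(\rho,0)=\tfrac{\rho+1}{2}$, gives $q=a^2/b$. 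We have $b>0$ everywhere, since $\Phi_\rho>0$ off zero and $b(\rho,0)>0$. Also $a\neq0$ everywhere, since $g_\rho\neq0$ off zero and $a(\rho,0)=\rho\geqslant1$. So $q$ is continuous and strictly positive on the compact set $K_{L_\ast}$.

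It follows that $q$ attains a minimum $m_{L_\ast}>0$ and a maximum $M_{L_\ast}<\infty$ on $K_{L_\ast}$, which is exactly \eqref{eq:coercivity_compact}; the inequality is trivial at $z=0$. The main obstacle is the continuity of $q$ across $z=0$ uniformly in $\rho$. The division representation by the analytic functions $a$ and $b$ handles this cleanly. The constants depend only on $L_\ast$, because $K_{L_\ast}$ does.
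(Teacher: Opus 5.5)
Your argument follows the paper's proof. Both form the quotient $(\rho\sinh z+\cosh z-1)^2/\Phi_\rho(z)$, extend it continuously across $z=0$, show it is strictly positive because $z=0$ is the only zero of $\Phi_\rho'=\rho\sinh z+\cosh z-1$, and take its extrema on the compact set $K_{L_\ast}$. Your bound $\Phi_\rho(z)\geqslant e^z-1-z$ and the factorization $q=a^2/b$ simply make the compactness and the joint continuity in $(\rho,z)$ more explicit than the paper does.

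There is one computational slip to fix. In fact $\Phi_\rho''(0)=\rho\cosh 0+\sinh 0=\rho$, not $\rho+1$, because the term $\sinh z-z$ is $O(z^3)$ and contributes nothing at second order. Hence $\Phi_\rho(z)=\tfrac{\rho}{2}z^2+O(z^3)$, $b(\rho,0)=\rho/2$, and the continuous extension is $q(\rho,0)=2\rho$, as in the paper, rather than $2\rho^2/(\rho+1)$. As written, your explicit definition $q(\rho,0):=2\rho^2/(\rho+1)$ would make $q$ discontinuous at $z=0$, since $2\rho^2/(\rho+1)<2\rho$ for every $\rho\geqslant 1$. The rest of your argument only uses $q=a^2/b$ and $b(\rho,0)>0$, so it goes through unchanged once this value is corrected.
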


\begin{proof}
Since $\rho\in[1,\sqrt5]$ and $\Phi_\rho(z)\to\infty$ as $|z|\to\infty$
uniformly over that interval, $K_{L_\ast}$ is compact by Heine--Borel. Consider
\begin{equation*}
Q(\rho,z)
:=
\frac{(\rho\sinh z+\cosh z-1)^2}{\Phi_\rho(z)}
\qquad (z\neq 0).
\end{equation*}
As $z\to0$, we have
\begin{align*}
\rho\sinh z+\cosh z-1
&=
\rho z+\frac{z^2}{2}+O(z^3),\\
\Phi_\rho(z)
=
\rho(\cosh z-1)+\sinh z-z
&=
\frac{\rho}{2}z^2+O(z^3),
\end{align*}
and therefore
\begin{equation*}
Q(\rho,z)\to 2\rho
\qquad \text{as } z\to0.
\end{equation*}
Thus $Q$ extends continuously across $\{z=0\}$ by setting
$Q(\rho,0):=2\rho$. Next,
\begin{equation*}
\partial_z\big(\rho\sinh z+\cosh z-1\big)
=
\rho\cosh z+\sinh z>0
\qquad (\rho\geqslant 1),
\end{equation*}
so $\rho\sinh z+\cosh z-1$ has the unique zero $z=0$. Also,
\[
\Phi_\rho'(z)=\rho\sinh z+\cosh z-1,
\qquad
\Phi_\rho''(z)=\rho\cosh z+\sinh z,
\]
with $\Phi_\rho(0)=\Phi_\rho'(0)=0$ and $\Phi_\rho''(0)=\rho>0$.
Hence $\Phi_\rho(z)>0$ for all $z\neq0$. Therefore the continuous extension of $Q$ is strictly positive on the compact
set $K_{L_\ast}$, so it attains a positive minimum and finite maximum:
\[
0<m_{L_\ast}:=\min_{K_{L_\ast}}Q
\leqslant
\max_{K_{L_\ast}}Q=:M_{L_\ast}<\infty.
\]
This is exactly \eqref{eq:coercivity_compact}.
\end{proof}

\begin{lemma}[Risk--entropy comparison below the coordinate barrier]
\label{lem:risk_entropy_comparison}
Fix $L_\ast>H_0$. On any time interval on which
\[
\max_{1\leqslant i\leqslant d}h_{t,i}<L_\ast,
\]
we have
\begin{equation}
\frac{m_{L_\ast}}{4}H_t
\leqslant
\mathcal R(\mathscr X_t)
\leqslant
\frac{M_{L_\ast}}{4}H_t.
\label{eq:R_vs_H_barrier}
\end{equation}
\end{lemma}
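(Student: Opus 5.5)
The plan is to apply Lemma~\ref{lem:coercivity_barrier} coordinatewise and then average, using the exact identities already available. Fix $t$ in the interval and an index $i$. Set $\rho=\rho(I_t)$ and $z=\mathscr Z_{t,i}$.

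First I check that $(\rho,z)\in K_{L_\ast}$.
\begin{itemize}
\item Since $I_t\ge 0$ is nondecreasing, $\rho(I_t)=\sqrt{1+4e^{-8\gamma^2 I_t}}$ lies in $(1,\sqrt5]$.
\item By definition \eqref{eq:coordinate_entropy_density}, $\Phi_\rho(z)=h_{t,i}<L_\ast$ under the barrier hypothesis.
\end{itemize}
Hence \eqref{eq:coercivity_compact} gives
\[
m_{L_\ast}h_{t,i}\le(\rho\sinh z+\cosh z-1)^2\le M_{L_\ast}h_{t,i}.
\]

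Next I identify the middle term. By the logarithmic-coordinate identity \eqref{eq:diff_identity_appendix}, it equals $(\mathscr U_{t,i}^2-\mathscr V_{t,i}^2-1)^2$. I then average over $i=1,\dots,d$. The isotropic risk is
\[
\mathcal R(\mathscr X_t)=\frac1{4d}\sum_i(\mathscr U_{t,i}^2-\mathscr V_{t,i}^2-1)^2,
\]
so the averaged middle term is $4\mathcal R(\mathscr X_t)$. The averaged outer terms are $m_{L_\ast}H_t$ and $M_{L_\ast}H_t$ by \eqref{eq:empirical_entropy}. Dividing by $4$ yields \eqref{eq:R_vs_H_barrier}.

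There is no real obstacle; the only point needing care is confirming $\rho(I_t)\in[1,\sqrt5]$ so that the compact set of Lemma~\ref{lem:coercivity_barrier} applies. The constants $m_{L_\ast},M_{L_\ast}$ come from that lemma, are uniform in $i$, and are independent of $d$ and $t$.
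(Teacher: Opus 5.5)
Your proposal is correct and follows the paper's proof exactly: place $(\rho(I_t),\mathscr Z_{t,i})$ in $K_{L_\ast}$, apply the coercivity bounds together with the identity \eqref{eq:diff_identity_appendix} coordinatewise, then average using the formula for $\mathcal R(\mathscr X_t)$. Your explicit check that $\rho(I_t)\in(1,\sqrt5]$ (from $I_t\geqslant 0$) is a step the paper leaves implicit.
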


\begin{proof}
If $\max_i h_{t,i}<L_\ast$, then
\[
(\rho(I_t),\mathscr Z_{t,i})\in K_{L_\ast}
\qquad \text{for every } i.
\]
By Lemma~\ref{lem:coercivity_barrier} and
\eqref{eq:diff_identity_appendix},
\[
m_{L_\ast}h_{t,i}
\leqslant
(\mathscr U_{t,i}^2-\mathscr V_{t,i}^2-1)^2
\leqslant
M_{L_\ast}h_{t,i}.
\]
Averaging over $i$ and using
\[
\mathcal R(\mathscr X_t)
=
\frac{1}{4d}\sum_{i=1}^d
(\mathscr U_{t,i}^2-\mathscr V_{t,i}^2-1)^2
\]
gives \eqref{eq:R_vs_H_barrier}.
\end{proof}

\subsection{Exponential Decay with Explicit Confidence}

We now prove exponential decay of the empirical entropy, and hence of the risk,
using a coordinatewise entropy barrier. 

Fix $H_\ast>H_0$ and $L_\ast>H_0$, and define the stopping time
\begin{equation}
\tau_{H,L}
:=
\inf\left\{
t\in[0,\zeta):
H_t\geqslant H_\ast
\;\text{or}\;
\max_{1\leqslant i\leqslant d}h_{t,i}\geqslant L_\ast
\right\},
\label{eq:tau_HL_def}
\end{equation}
with the convention $\inf\emptyset=\infty$.

\begin{lemma}[The entropy barrier is hit before explosion]
\label{lem:tau_HL_before_explosion}
Almost surely,
\begin{equation*}
\tau_{H,L}\leqslant \zeta.
\end{equation*}
\end{lemma}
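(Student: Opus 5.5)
We argue by contradiction on the event $\{\zeta<\tau_{H,L}\}$. Note that $\tau_{H,L}$ is defined as an infimum over $[0,\zeta)$, so the claim $\tau_{H,L}\leqslant\zeta$ amounts to this: if explosion occurs at a finite time $\zeta<\infty$, then the barrier must already have been reached strictly before $\zeta$. If $\zeta=\infty$, there is nothing to prove. So we fix an outcome with $\zeta<\infty$ and suppose, for contradiction, that $H_t<H_\ast$ and $\max_i h_{t,i}<L_\ast$ for every $t\in[0,\zeta)$.

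\textbf{Step 1: a uniform upper bound on the coordinates.} Lemma~\ref{lem:UV_by_entropy} applies coordinatewise, since \eqref{eq:pointwise_UV_entropy} holds pointwise in $(\rho,z)$. Using $\rho(I_t)\leqslant\sqrt5$, it gives, for every $i$ and every $t<\zeta$,
\[
\mathscr U_{t,i}^2+\mathscr V_{t,i}^2\leqslant 2h_{t,i}+2\sqrt5\log 2<2L_\ast+2\sqrt5\log 2.
\]
Hence $\sup_{t<\zeta}\|\mathscr X_t\|_\infty$ is bounded by a deterministic constant. Only the coordinatewise barrier is used here; the dimension $d$ is fixed, so the Euclidean norm is bounded as well.

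\textbf{Step 2: contradiction with the explosion criterion.} The coefficients of \eqref{eq:U_sde_appendix}--\eqref{eq:V_sde_appendix} are locally Lipschitz. The only potentially delicate factor is $\sqrt{\mathcal R}$, which is not Lipschitz where $\mathcal R=0$. This is handled by observing that the diffusion coefficient is $2\gamma\sqrt{\mathcal R}\,\mathscr U_{t,i}$ and that the SDE is well posed for the given coefficients; alternatively, one can use the standard maximal-solution characterization stated just before Lemma~\ref{lem:product_identity}. Under that characterization, the maximal local strong solution satisfies $\limsup_{t\uparrow\zeta}\|\mathscr X_t\|=\infty$ on $\{\zeta<\infty\}$. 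This contradicts Step 1.

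We therefore conclude that either $\zeta=\infty$, in which case $\tau_{H,L}\leqslant\zeta$ trivially, or the barrier set is entered before $\zeta$, in which case the infimum is attained in $[0,\zeta)$. In both cases $\tau_{H,L}\leqslant\zeta$.

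The main point to check carefully is that \enquote{explosion} is understood in the norm sense, so that boundedness rules it out. If instead $\zeta$ were defined as an exit time from a localizing sequence of open sets that also excludes a neighborhood of the degenerate set, for instance $\mathscr U_{t,i}=0$ where the logarithmic coordinates break down, we would additionally use Lemma~\ref{lem:product_identity}. Combined with the Step 1 upper bound, it gives a strictly positive lower bound:
\[
\mathscr U_{t,i}^2=\frac{e^{-8\gamma^2I_t}}{\mathscr V_{t,i}^2}.
\]
Here $I_t\leqslant \zeta\sup_{s<\zeta}\mathcal R(\mathscr X_s)$, which is finite by Step 1. So the coordinates also stay away from zero on $[0,\zeta)$, and any such localization is exhausted only at a finite exit time that the barrier precedes.
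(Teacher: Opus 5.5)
Your proof is correct and follows the paper's argument: you argue by contradiction on $\{\zeta<\tau_{H,L}\}$, bound the coordinates uniformly below the coordinatewise barrier, and invoke the continuation (non-explosion) criterion. The only difference is that you get the bound directly from the pointwise inequality \eqref{eq:pointwise_UV_entropy}, with the explicit constant $2L_\ast+2\sqrt5\log 2$, whereas the paper bounds $\mathscr Z_{t,i}$ via compactness of $K_{L_\ast}$ and then uses the parametrization. One small correction: your worry in Step 2 is unfounded, because $\sqrt{\mathcal R(x)}=\tfrac{1}{2\sqrt d}\|u^2-v^2-\mathds{1}_d\|$ is the Euclidean norm of a smooth map and hence locally Lipschitz everywhere, including on $\{\mathcal R=0\}$, so the coefficients really are locally Lipschitz and the standard maximal-solution characterization applies directly.
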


\begin{proof}
Suppose, toward a contradiction, that $\zeta<\tau_{H,L}$ on some sample path.
Then $H_t<H_\ast$ and $h_{t,i}<L_\ast$ for every $i$ and all $t<\zeta$.
Hence
\[
(\rho(I_t),\mathscr Z_{t,i})\in K_{L_\ast}
\qquad \text{for all }i\text{ and all }t<\zeta.
\]
Since $K_{L_\ast}$ is compact, the logarithmic coordinates
$\mathscr Z_{t,i}$ remain bounded on $[0,\zeta)$. By the parametrization
\[
\mathscr{U}_{t,i}^2
=
\frac{\rho(I_t)+1}{2}e^{\mathscr{Z}_{t,i}},
\qquad
\mathscr{V}_{t,i}^2
=
\frac{\rho(I_t)-1}{2}e^{-\mathscr{Z}_{t,i}},
\]
the coordinates $\mathscr U_{t,i}$ and $\mathscr V_{t,i}$ remain bounded on
every finite interval $[0,T]\subset[0,\zeta)$. Therefore the coefficients in
\eqref{eq:U_sde_appendix}--\eqref{eq:V_sde_appendix} remain bounded and locally
Lipschitz on a neighborhood of the attained path. The standard continuation
criterion for SDEs then extends the solution beyond $\zeta$, contradicting
maximality. Thus $\tau_{H,L}\leqslant \zeta$ almost surely.
\end{proof}

On $[0,\tau_{H,L})$, Lemma~\ref{lem:risk_entropy_comparison} gives
\[
\frac{m_{L_\ast}}{4}H_t
\leqslant
\mathcal R(\mathscr X_t)
\leqslant
\frac{M_{L_\ast}}{4}H_t,
\]
and Lemma~\ref{lem:UV_by_entropy} gives
\begin{equation}
\frac1d\sum_{i=1}^d
(\mathscr U_{t,i}^2+\mathscr V_{t,i}^2)
\leqslant
2H_\ast+2\sqrt5\log2,
\qquad t<\tau_{H,L}.
\label{eq:UV_bound_before_tau_HL}
\end{equation}

Define
\begin{equation}
\lambda_{H,L}
:=
\gamma
\Big(
8-4\gamma(2H_\ast+2\sqrt5\log 2+\sqrt5)
\Big)
\frac{m_{L_\ast}}{4},
\label{eq:lambda_HL_def}
\end{equation}
and
\begin{equation}
\sigma_{H,L}^2
:=
\frac{4\gamma^2M_{L_\ast}^2}{d}.
\label{eq:sigma_HL_def}
\end{equation}

\begin{lemma}[Drift and quadratic-variation bounds before the barrier]
\label{lem:drift_qv_bounds}
Assume
\begin{equation}
0<\gamma<
\frac{2}{2H_\ast+2\sqrt5\log 2+\sqrt5},
\label{eq:small_gamma_condition_first}
\end{equation}
so that $\lambda_{H,L}>0$. Then, for $t<\tau_{H,L}$,
\begin{equation}
\mathrm{d}H_t
\leqslant
-\lambda_{H,L}H_t\,\mathrm{d}t+\mathrm{d}M_t,
\label{eq:H_supermartingale_drift}
\end{equation}
and
\begin{equation}
\mathrm{d}\langle M\rangle_t
\leqslant
\sigma_{H,L}^2H_t^2\,\mathrm{d}t.
\label{eq:M_qv_bound}
\end{equation}
\end{lemma}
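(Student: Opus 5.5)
The plan is to read both bounds directly off the exact entropy SDE of Proposition~\ref{prop:entropy_sde}, restricted to $t<\tau_{H,L}$ where the barrier estimates apply. Write the drift as $\mathcal R(\mathscr X_t)\,D_t$ with
\[
D_t:=-8\gamma+4\gamma^2\Big[\tfrac1d\textstyle\sum_i(\mathscr U_{t,i}^2+\mathscr V_{t,i}^2)+\rho(I_t)\Big].
\]
For $t<\tau_{H,L}$ we have $H_t<H_\ast$. Lemma~\ref{lem:UV_by_entropy}, in the form \eqref{eq:UV_bound_before_tau_HL}, bounds the average of $\mathscr U_{t,i}^2+\mathscr V_{t,i}^2$ by $2H_\ast+2\sqrt5\log2$. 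Since $I_t$ is nondecreasing, $\rho(I_t)\leqslant\rho(0)=\sqrt5$. Hence
\[
D_t\leqslant-\gamma\big(8-4\gamma(2H_\ast+2\sqrt5\log2+\sqrt5)\big)=:-\kappa.
\]
Condition \eqref{eq:small_gamma_condition_first} is exactly $4\gamma(2H_\ast+2\sqrt5\log2+\sqrt5)<8$, so $\kappa>0$.

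Because $D_t\leqslant-\kappa<0$ and $\mathcal R\geqslant0$, the drift satisfies $\mathcal R(\mathscr X_t)D_t\leqslant-\kappa\,\mathcal R(\mathscr X_t)$. For $t<\tau_{H,L}$ we also have $\max_i h_{t,i}<L_\ast$, so the lower bound in Lemma~\ref{lem:risk_entropy_comparison} gives $\mathcal R(\mathscr X_t)\geqslant \frac{m_{L_\ast}}{4}H_t$. Combining, the drift is at most $-\kappa\frac{m_{L_\ast}}{4}H_t=-\lambda_{H,L}H_t$, which is \eqref{eq:H_supermartingale_drift}, with the same martingale part $M_t$. The one point to check is the order of steps. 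The sign of $D_t$ must be established before the lower comparison is used, because replacing $\mathcal R$ by its lower bound is only legitimate once the coefficient is known to be negative. Along the way we also note $H_t\geqslant0$, since each $f_c\geqslant0$.

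For the quadratic variation, start from \eqref{eq:Mt_qv_appendix}: $\mathrm d\langle M\rangle_t=\frac{64\gamma^2}{d}\mathcal R(\mathscr X_t)^2\,\mathrm dt$. The upper bound in Lemma~\ref{lem:risk_entropy_comparison} gives $\mathcal R(\mathscr X_t)\leqslant\frac{M_{L_\ast}}{4}H_t$. Substituting, $\mathcal R(\mathscr X_t)^2\leqslant\frac{M_{L_\ast}^2}{16}H_t^2$, so
\[
\mathrm d\langle M\rangle_t\leqslant\frac{4\gamma^2M_{L_\ast}^2}{d}H_t^2\,\mathrm dt=\sigma_{H,L}^2H_t^2\,\mathrm dt,
\]
which is \eqref{eq:M_qv_bound}.

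Finally, everything holds before explosion because Lemma~\ref{lem:tau_HL_before_explosion} gives $\tau_{H,L}\leqslant\zeta$. The argument has no substantial obstacle: it is bookkeeping of signs plus the two barrier comparisons.
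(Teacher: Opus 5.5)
Your proposal is correct and follows the paper's proof essentially step for step. You bound the bracket in the entropy SDE via \eqref{eq:UV_bound_before_tau_HL} and $\rho(I_t)\leqslant\sqrt5$, factor out $-\gamma\big(8-4\gamma(\cdots)\big)\mathcal R(\mathscr X_t)$, then use the lower risk--entropy comparison for the drift and the upper one for the quadratic variation. Your explicit remark that the sign of the coefficient must be established before substituting the lower bound on $\mathcal R(\mathscr X_t)$ is a point the paper leaves implicit.
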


\begin{proof}
Starting from Proposition~\ref{prop:entropy_sde} and using
\eqref{eq:UV_bound_before_tau_HL},
\begin{align*}
\mathrm{d}H_t
&\leqslant
\Big(
-8\gamma\mathcal R(\mathscr X_t)
+
4\gamma^2\mathcal R(\mathscr X_t)
(2H_\ast+2\sqrt5\log2+\sqrt5)
\Big)\mathrm{d}t
+\mathrm{d}M_t
\\
&=
-\gamma
\Big(
8-4\gamma(2H_\ast+2\sqrt5\log2+\sqrt5)
\Big)
\mathcal R(\mathscr X_t)\,\mathrm{d}t
+\mathrm{d}M_t.
\end{align*}
Using the lower bound in \eqref{eq:R_vs_H_barrier} gives
\eqref{eq:H_supermartingale_drift}. Similarly, from
\eqref{eq:Mt_qv_appendix} and the upper bound in
\eqref{eq:R_vs_H_barrier},
\[
\mathrm{d}\langle M\rangle_t
=
\frac{64\gamma^2}{d}\mathcal R(\mathscr X_t)^2\,\mathrm{d}t
\leqslant
\frac{64\gamma^2}{d}
\left(\frac{M_{L_\ast}}{4}H_t\right)^2
\mathrm{d}t
=
\frac{4\gamma^2M_{L_\ast}^2}{d}H_t^2\,\mathrm{d}t.
\]
This is \eqref{eq:M_qv_bound}.
\end{proof}

The next result gives exponential decay up to the coordinatewise barrier.

\begin{theorem}[Stopped exponential decay with explicit confidence]
\label{thm:stopped_explicit_confidence_decay}
Let $H_\ast, L_\ast>H_0$, and define
$\lambda_{H,L}$ and $\sigma_{H,L}^2$ by
\eqref{eq:lambda_HL_def}--\eqref{eq:sigma_HL_def}. Assume
\[
0<\gamma<
\frac{2}{2H_\ast+2\sqrt5\log 2+\sqrt5}.
\]
Fix $\theta>0$ such that
\[
0<\theta<\frac{2\lambda_{H,L}}{\sigma_{H,L}^2},
\]
and define
\[
\mu_{H,L}(\theta)
:=
\lambda_{H,L}
-
\frac{\theta}{2}\sigma_{H,L}^2
>0.
\]
Then
\begin{equation}
\PP\!\left(
\sup_{0\leqslant s\leqslant \tau_{H,L}}
e^{\mu_{H,L}(\theta)s}H_s
\geqslant H_\ast
\right)
\leqslant
\left(\frac{H_0}{H_\ast}\right)^\theta.
\label{eq:stopped_exp_decay}
\end{equation}
\end{theorem}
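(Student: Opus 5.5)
The plan is to build an exponential supermartingale from $H$ and apply the maximal inequality for nonnegative supermartingales. Fix $\theta$ and $\mu:=\mu_{H,L}(\theta)$, and consider the stopped process
\[
Y_t:=\big(e^{\mu (t\wedge\tau_{H,L})}H_{t\wedge\tau_{H,L}}\big)^{\theta}.
\]
We have $H_t\geqslant 0$, because each $h_{t,i}=\Phi_{\rho}(\mathscr Z_{t,i})\geqslant 0$ (shown in the proof of Lemma~\ref{lem:coercivity_barrier}). Moreover $H_0>0$ and $Y_0=H_0^\theta$. Since $\tau_{H,L}\leqslant\zeta$ by Lemma~\ref{lem:tau_HL_before_explosion}, all It\^o computations are valid on $[0,\tau_{H,L})$.

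First, on $\{H_t>0\}$ we apply It\^o's formula to $\log H_t$, using Lemma~\ref{lem:drift_qv_bounds}:
\[
\mathrm d\log H_t=\frac{\mathrm dH_t}{H_t}-\frac{\mathrm d\langle M\rangle_t}{2H_t^2}
\leqslant -\lambda_{H,L}\,\mathrm dt+\frac{\mathrm dM_t}{H_t}-\frac{\mathrm d\langle M\rangle_t}{2H_t^2}.
\]
Write $N_t:=\int_0^{t\wedge\tau_{H,L}} H_s^{-1}\,\mathrm dM_s$. This is a local martingale with $\mathrm d\langle N\rangle_t\leqslant\sigma_{H,L}^2\,\mathrm dt$, and the last term above equals $-\tfrac12\mathrm d\langle N\rangle_t$. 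It follows that
\[
\theta\mu t+\theta\log H_t-\theta\log H_0\leqslant \theta N_t-\tfrac{\theta^2}{2}\langle N\rangle_t+\Big(\tfrac{\theta^2-\theta}{2}\langle N\rangle_t-\tfrac{\theta}{2}\sigma_{H,L}^2 t\Big).
\]
Since $\langle N\rangle_t\leqslant\sigma_{H,L}^2 t$, the parenthesis is at most $\frac{\theta^2-2\theta}{2}\sigma^2_{H,L}t$. This is $\leqslant0$ when $\theta\leqslant 2$. For $\theta>2$ the choice of $\mu$ must absorb the term, so here I would recheck the bookkeeping. Alternatively, I would apply It\^o directly to $e^{\theta\mu t}H_t^\theta$: its drift is $\theta H^{\theta}\big(\mu-\lambda+\tfrac{\theta-1}{2}\sigma^2\big)$ times $\mathrm dt$, and this is $\leqslant 0$ since $\mu=\lambda-\tfrac\theta2\sigma^2$. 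So I would take the direct route, which gives $\mathrm dY_t\leqslant(\text{local martingale})$.

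Next, $Y$ is a nonnegative local supermartingale, hence a true supermartingale by Fatou's lemma. Positivity of $H_t$ up to $\tau_{H,L}$ needs care. I would either note that $H_t=0$ forces $\mathcal R=0$, where the SDE stays at the minimizer, or run the argument with $H_t+\epsilon$ and let $\epsilon\downarrow0$. Ville's maximal inequality then gives
\[
\PP\Big(\sup_{t\geqslant0}Y_t\geqslant H_\ast^\theta\Big)\leqslant \frac{H_0^\theta}{H_\ast^\theta}.
\]
The event $\{\sup_{s\leqslant\tau_{H,L}}e^{\mu s}H_s\geqslant H_\ast\}$ is contained in this one by monotonicity of $x\mapsto x^\theta$, which yields \eqref{eq:stopped_exp_decay}. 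If $\tau_{H,L}=\infty$, I would take $\sup_{t\geqslant 0}$ of the stopped process, which is covered by the same inequality.

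The main obstacle is the local-to-true supermartingale step together with the behaviour at $H=0$ and at the stopping time boundary. The drift inequality itself is routine once Lemma~\ref{lem:drift_qv_bounds} is in hand. I would handle the boundary issues by localizing with $\tau_n:=\inf\{t: H_t\leqslant 1/n\}\wedge\tau_{H,L}\wedge n$, applying optional stopping for the bounded stopped process, and passing to the limit $n\to\infty$ by Fatou.
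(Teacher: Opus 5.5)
Your proposal is correct and is essentially the paper's proof: it uses the same drift and quadratic-variation bounds from Lemma~\ref{lem:drift_qv_bounds}, a nonnegative supermartingale started at $H_0^\theta$ that dominates $e^{\theta\mu_{H,L}(\theta)t}H_t^\theta$ up to $\tau_{H,L}$, Ville's maximal inequality, and a lower stopping time near $H=0$ removed via absorption at the minimizer, and the only difference is that the paper passes through $\log H_t$ and the stochastic exponential $\exp(\theta N_t-\tfrac{\theta^2}{2}\langle N\rangle_t)$ rather than applying It\^o's formula to $e^{\theta\mu t}H_t^\theta$ directly. Two small fixes: on the log route the term you need is $\theta(\mu_{H,L}(\theta)-\lambda_{H,L})t=-\tfrac{\theta^2}{2}\sigma_{H,L}^2t$, not $-\tfrac{\theta}{2}\sigma_{H,L}^2t$, so your parenthesis is really $\tfrac{\theta^2-\theta}{2}\langle N\rangle_t-\tfrac{\theta^2}{2}\sigma_{H,L}^2t\leqslant 0$ for every $\theta>0$ (which is exactly how the paper closes that route); and on the direct route the drift bound $\theta H^\theta(\mu-\lambda+\tfrac{\theta-1}{2}\sigma^2)$ is valid only for $\theta\geqslant 1$ --- for $\theta<1$ the It\^o correction is nonpositive and the drift is at most $\theta H^\theta(\mu-\lambda)\leqslant 0$, so your conclusion is unaffected.
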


\begin{proof}
Fix $0<\varepsilon<H_0$ and define
\[
\tau_\varepsilon
:=
\inf\{t\in[0,\zeta):H_t\leqslant\varepsilon\},
\qquad
\tau:=\tau_{H,L}\wedge\tau_\varepsilon.
\]
On $[0,\tau]$, we have $H_t\in[\varepsilon,H_\ast]$, so It\^o's formula gives
\[
\log H_{t\wedge\tau}
=
\log H_0
+
\int_0^{t\wedge\tau}\frac{1}{H_s}\,\mathrm{d}H_s
-
\frac12
\int_0^{t\wedge\tau}\frac{1}{H_s^2}\,\mathrm{d}\langle M\rangle_s.
\]
By Lemma~\ref{lem:drift_qv_bounds},
\[
\log H_{t\wedge\tau}
\leqslant
\log H_0
-\lambda_{H,L}(t\wedge\tau)
-\frac12
\int_0^{t\wedge\tau}\frac{1}{H_s^2}\,\mathrm{d}\langle M\rangle_s
+
N_t,
\]
where
\[
N_t
:=
\int_0^{t\wedge\tau}\frac{1}{H_s}\,\mathrm{d}M_s.
\]
Moreover,
\[
\langle N\rangle_t
=
\int_0^{t\wedge\tau}
\frac{1}{H_s^2}\,\mathrm{d}\langle M\rangle_s
\leqslant
\sigma_{H,L}^2(t\wedge\tau).
\]
Thus, for every $\theta>0$,
\[
\mathcal E_t
:=
\exp\!\left(
\theta N_t-\frac{\theta^2}{2}\langle N\rangle_t
\right)
\]
is a positive local martingale, hence a supermartingale. Since
\[
\mu_{H,L}(\theta)=\lambda_{H,L}-\frac{\theta}{2}\sigma_{H,L}^2,
\]
we obtain
\begin{align*}
e^{\theta\mu_{H,L}(\theta)(t\wedge\tau)}
H_{t\wedge\tau}^{\theta}
&\leqslant
H_0^\theta
\exp\!\Big(
\theta N_t-\theta(\lambda_{H,L}-\mu_{H,L}(\theta))(t\wedge \tau)
\Big)
\\
&=
H_0^\theta
\exp\!\Big(
\theta N_t-\frac{\theta^2}{2}\sigma_{H,L}^2(t\wedge \tau)
\Big)
\\
&\leqslant
H_0^\theta\mathcal E_t.
\end{align*}
Ville's maximal inequality gives
\[
\PP\!\left(
\sup_{0\leqslant s\leqslant \tau}
e^{\mu_{H,L}(\theta)s}H_s
\geqslant H_\ast
\right)
\leqslant
\left(\frac{H_0}{H_\ast}\right)^\theta.
\]
It remains to remove the auxiliary lower stopping time $\tau_\varepsilon$.
Define
\[
\tau_0
:=
\inf\{t\in[0,\zeta): H_t=0\},
\qquad
S_\varepsilon
:=
\tau_{H,L}\wedge\tau_\varepsilon,
\qquad
S_0
:=
\tau_{H,L}\wedge\tau_0.
\]
Since $H_t$ is continuous and nonnegative, $S_\varepsilon\uparrow S_0$
pathwise as $\varepsilon\downarrow0$. Therefore,
\[
\left\{
\sup_{0\leqslant s\leqslant S_\varepsilon}
e^{\mu_{H,L}(\theta)s}H_s
\geqslant H_\ast
\right\}
\uparrow
\left\{
\sup_{0\leqslant s\leqslant S_0}
e^{\mu_{H,L}(\theta)s}H_s
\geqslant H_\ast
\right\}.
\]
By continuity from below of probability, the estimate obtained above gives
\[
\PP\!\left(
\sup_{0\leqslant s\leqslant S_0}
e^{\mu_{H,L}(\theta)s}H_s
\geqslant H_\ast
\right)
\leqslant
\left(\frac{H_0}{H_\ast}\right)^\theta.
\]

Finally, $H_t=0$ is absorbing. Indeed, if $H_t=0$, then every coordinate entropy
density $h_{t,i}$ vanishes, hence $\mathscr Z_{t,i}=0$ for every $i$. Therefore
\[
\mathscr U_{t,i}^2-\mathscr V_{t,i}^2-1=0
\qquad
\text{for all }i,
\]
so $\mathcal R(\mathscr X_t)=0$. The drift and diffusion coefficients in
\eqref{eq:U_sde_appendix}--\eqref{eq:V_sde_appendix} then vanish, and by
pathwise uniqueness the solution remains constant thereafter. Thus
\[
\sup_{0\leqslant s\leqslant S_0}
e^{\mu_{H,L}(\theta)s}H_s
=
\sup_{0\leqslant s\leqslant \tau_{H,L}}
e^{\mu_{H,L}(\theta)s}H_s.
\]
This proves \eqref{eq:stopped_exp_decay}.
\end{proof}

We now give a pathwise criterion for removing the coordinatewise barrier. Let
\[
V_{H,L}(\theta)
:=
\frac{M_{L_\ast}}{4}
\frac{H_\ast}{\mu_{H,L}(\theta)}.
\]
For $\eta\in(0,1)$, define
\[
R_{H,L}(\theta,\eta)
:=
\sqrt{
2V_{H,L}(\theta)\log\!\left(\frac{4d}{\eta}\right)
},
\]
and
\[
p_{H,L}(\theta)
:=
\exp\!\left(-4\gamma^2V_{H,L}(\theta)\right).
\]
Set
\[
A_{H,L}(\theta)
:=
\operatorname{arcsinh}
\!\left(\frac{1}{2p_{H,L}(\theta)}\right),
\qquad
B_{H,L}(\theta,\eta)
:=
2\cosh\!\left(A_{H,L}(\theta)+8\gamma R_{H,L}(\theta,\eta)\right).
\]
Finally, define
\[
c_{H,L}(\theta,\eta)
:=
\frac{p_{H,L}(\theta)^2}{B_{H,L}(\theta,\eta)},
\qquad
\underline c_{H,L}(\theta)
:=
\frac{\sqrt{1+4p_{H,L}(\theta)^2}-1}{2},
\qquad
\overline c:=\frac{\sqrt5+1}{2},
\]
and
\begin{equation}
\mathfrak L_{H,L}(\theta,\eta)
:=
2
\sup_{\substack{
c\in[\underline c_{H,L}(\theta),\overline c]\\
x\in[c_{H,L}(\theta,\eta),B_{H,L}(\theta,\eta)]
}}
f_c(x).
\label{eq:self_consistency_L}
\end{equation}

\begin{lemma}[Removal of the coordinatewise entropy barrier]
\label{lem:remove_coordinate_barrier}
Assume the hypotheses of Theorem~\ref{thm:stopped_explicit_confidence_decay}.
Fix $\eta\in(0,1)$ and suppose that
\begin{equation}
L_\ast>\mathfrak L_{H,L}(\theta,\eta).
\label{eq:Lstar_self_consistency}
\end{equation}
Then
\begin{equation}
\PP\!\left(
\zeta=\infty
\ \text{and}\
H_t\leqslant H_\ast e^{-\mu_{H,L}(\theta)t}
\ \text{for all }t\geqslant0
\right)
\geqslant
1-
\left(\frac{H_0}{H_\ast}\right)^\theta
-\eta.
\label{eq:explicit_confidence_entropy}
\end{equation}
Moreover,
\begin{equation}
\PP\!\left(
\zeta=\infty
\ \text{and}\
\mathcal R(\mathscr X_t)
\leqslant
\frac{M_{L_\ast}}{4}H_\ast e^{-\mu_{H,L}(\theta)t}
\ \text{for all }t\geqslant0
\right)
\geqslant
1-
\left(\frac{H_0}{H_\ast}\right)^\theta
-\eta.
\label{eq:explicit_confidence_risk}
\end{equation}
\end{lemma}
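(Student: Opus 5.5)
The plan is to intersect the good event of Theorem~\ref{thm:stopped_explicit_confidence_decay} with a second event on which the per-coordinate Brownian martingales are uniformly small. On that intersection I will show deterministically that no coordinate entropy $h_{t,i}$ can reach $L_\ast$. Consequently $\tau_{H,L}=\infty$, and by Lemma~\ref{lem:tau_HL_before_explosion} together with the maximality of $\zeta$ this gives $\zeta=\infty$.

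Let $G:=\{\sup_{0\le s\le\tau_{H,L}}e^{\mu s}H_s<H_\ast\}$, where $\mu=\mu_{H,L}(\theta)$. By \eqref{eq:stopped_exp_decay}, $\PP(G^c)\le (H_0/H_\ast)^\theta$. On $G$, for $t<\tau_{H,L}$, the upper bound in Lemma~\ref{lem:risk_entropy_comparison} gives $\mathcal R(\mathscr X_t)\le \tfrac{M_{L_\ast}}{4}H_\ast e^{-\mu t}$. Hence $I_t\le V_{H,L}(\theta)$. Lemma~\ref{lem:product_identity} then gives
\[
\mathscr U_{t,i}^2\mathscr V_{t,i}^2=e^{-8\gamma^2I_t}\ge p_{H,L}(\theta)^2 .
\]
This in turn yields $\rho(I_t)\ge\sqrt{1+4p^2}$, so that $(\rho-1)/2\in[\underline c,\overline c]$ and $(\rho+1)/2\le\overline c+1$.

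For the noise, consider $N_{t,i}:=4\gamma\int_0^{t\wedge\tau_{H,L}}\sqrt{\mathcal R}\,\mathrm d\mathfrak B_{s,i}$, further stopped at the first time $\int_0^t\mathcal R\,\mathrm ds$ exceeds $V$. Its quadratic variation is at most $16\gamma^2V$. Apply the exponential martingale / Dambis--Dubins--Schwarz tail bound to $\pm N_{\cdot,i}$ and take a union bound over the $2d$ signed processes. This gives $\sup_t|N_{t,i}|\le 4\gamma R_{H,L}(\theta,\eta)$ for all $i$, off an event of probability at most $\eta$; the factor $\log(4d/\eta)$ in $R_{H,L}$ is chosen for exactly this. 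On $G$ the extra stopping is inactive, so the bound applies to the real martingale.

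The main step is a pathwise comparison for $\mathscr Z_{t,i}$. From \eqref{eq:logU_appendix},
\[
\mathrm d\mathscr Z_{t,i}=-2\gamma\bigl(\rho\sinh\mathscr Z+\cosh\mathscr Z-1\bigr)\mathrm dt-4\gamma^2\mathcal R\,\mathrm dt-\mathrm d\log\tfrac{\rho+1}{2}+\mathrm dN_{t,i}.
\]
The residual $\rho\sinh z+\cosh z-1$ is strictly increasing with its only zero at $z=0$, so the leading drift is mean-reverting. The other finite-variation terms are monotone and have bounded total variation, controlled by $\gamma^2V$ and by $\log\frac{\sqrt5+1}{2}$ respectively.

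I would run an excursion argument. Whenever $|\mathscr Z_{t,i}|$ exceeds the threshold $A_{H,L}(\theta)$, the restoring drift dominates, so during an excursion above that level the increment of $\mathscr Z$ is at most the oscillation of $N_{\cdot,i}$ plus the bounded drifts. The oscillation of $N_{\cdot,i}$ is at most twice its supremum, i.e.\ $8\gamma R$. This yields $|\mathscr Z_{t,i}|\le A+8\gamma R$. The choice $A=\operatorname{arcsinh}(1/(2p))$ is designed so that at the start of an excursion $\mathscr U^2$ and $\mathscr V^2$ are comparable to $p$, and it also absorbs the bounded drift terms. I expect this bookkeeping to be the main obstacle, and I would first verify it separately for upward and downward excursions.

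Through the parametrization, the bound on $\mathscr Z$ gives $\mathscr U_{t,i}^2,\mathscr V_{t,i}^2\le B_{H,L}$. The product identity then gives the lower bounds $\mathscr U_{t,i}^2,\mathscr V_{t,i}^2\ge p^2/B=c_{H,L}$. By \eqref{eq:coordinate_entropy_density} and \eqref{eq:self_consistency_L}, this gives $h_{t,i}\le\mathfrak L_{H,L}<L_\ast$ for all $i$ and all $t<\tau_{H,L}$.

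It remains to conclude. By continuity, $\tau_{H,L}<\infty$ would force either $H_{\tau}=H_\ast$, which is excluded on $G$, or $\max_i h_{\tau,i}=L_\ast$, which is excluded by the bound above. Hence $\tau_{H,L}=\infty$, so $\zeta=\infty$ and $H_t\le H_\ast e^{-\mu t}$ for all $t\ge0$. This proves \eqref{eq:explicit_confidence_entropy}. Finally, Lemma~\ref{lem:risk_entropy_comparison} holds for all times, since the coordinate barrier is never reached, and it converts \eqref{eq:explicit_confidence_entropy} into \eqref{eq:explicit_confidence_risk}.
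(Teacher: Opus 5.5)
Your architecture matches the paper's. It uses the good event from Theorem~\ref{thm:stopped_explicit_confidence_decay}, the bound $I_t\le V_{H,L}(\theta)$, and the product identity giving $\mathscr U_{t,i}\mathscr V_{t,i}\ge p_{H,L}(\theta)$. It then takes a union bound over coordinates for the noise; your extra stopping at the time $\int_0^t\mathcal R\,\mathrm ds$ exceeds $V_{H,L}(\theta)$ is a cleaner justification of the tail bound than the paper's. It closes with the same continuity argument.

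The gap is in the pathwise step, which is the heart of the lemma. You work in the recentred coordinate $\mathscr Z_{t,i}$. Its restoring drift vanishes at $0$, but it carries the extra drift $-4\gamma^2\mathcal R\,\mathrm dt-\mathrm d\log\frac{\rho(I_t)+1}{2}$, which you propose to control by its total variation. Two things go wrong.

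First, $|\mathscr Z_{t,i}|\le A+8\gamma R$ does not follow from that bookkeeping, since it would carry an additive drift error. Even granting it, the parametrization does not turn it into $\mathscr U_{t,i}^2\le B_{H,L}(\theta,\eta)$. From $|\mathscr Z|\le\Lambda:=A+8\gamma R$ alone you only get $\mathscr U^2\le\frac{\rho(I_t)+1}{2}e^{\Lambda}$. Now $\overline c\,e^{\Lambda}\le 2\cosh\Lambda$ holds iff $\Lambda\le\frac12\log\overline c$, while $\Lambda\ge A\ge\operatorname{arcsinh}(1/2)=\log\overline c$. So this upper bound exceeds $B=2\cosh\Lambda$ whenever $\rho(I_t)$ is close to $\sqrt5$.

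Second, even running the excursion from the true equilibrium $\mathscr Z=0$, the total-variation bound $\log\overline c$ you state only yields $\mathscr U^2\le\overline c^{\,2}e^{8\gamma R}$. For small $V_{H,L}(\theta)$ this is about $2.62$, while $B\to\sqrt5\approx 2.24$. The hypothesis $L_\ast>\mathfrak L_{H,L}(\theta,\eta)$ is built from exactly $B$ and $c=p^2/B$, so the step $h_{t,i}\le\mathfrak L_{H,L}$ is not justified.

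A minor point: to stay inside the supremum defining $\mathfrak L$ you need $\frac{\rho+1}{2}\le\overline c$, not $\overline c+1$. This holds since $\rho\le\sqrt5$.

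The missing idea is the right coordinate. With $P_t=\mathscr U_{t,i}\mathscr V_{t,i}=e^{-4\gamma^2 I_t}$ one has $\operatorname{arcsinh}(1/(2P_t))=\log\frac{\rho(I_t)+1}{2}+4\gamma^2 I_t$. So your extra drift is exactly $-\mathrm d\operatorname{arcsinh}(1/(2P_t))$, and $\mathscr Z_{t,i}+\operatorname{arcsinh}(1/(2P_t))=\log(\mathscr U_{t,i}/\mathscr V_{t,i})=:\widetilde{\mathscr Z}_{t,i}$, which is the log-ratio the paper uses.

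This coordinate satisfies $\mathrm d\widetilde{\mathscr Z}_{t,i}=(-4\gamma P_t\sinh\widetilde{\mathscr Z}_{t,i}+2\gamma)\,\mathrm dt+\mathrm dN_{t,i}$ (your normalization of $N$), with no extra term, and $\widetilde{\mathscr Z}_{0,i}=0$. Its drift is negative when $\widetilde{\mathscr Z}>A$, because $P_t\ge p$, and positive when $\widetilde{\mathscr Z}<0$. Comparing the $C^1$ process $\widetilde{\mathscr Z}-N$ with the levels $A+4\gamma R$ and $-4\gamma R$ gives $-8\gamma R\le\widetilde{\mathscr Z}_{t,i}\le A+8\gamma R$. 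Then $\mathscr U^2+\mathscr V^2=2P_t\cosh\widetilde{\mathscr Z}\le 2\cosh(A+8\gamma R)=B$, using $P_t\le 1$, and the lower bound $c=p^2/B$ follows as you wrote.

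So $A$ is not a device for absorbing drift errors. It is the uniform bound on the moving equilibrium $\operatorname{arcsinh}(1/(2P_t))$ of the log-ratio, and $B$ is tailored to the identity $\mathscr U^2+\mathscr V^2=2\mathscr U\mathscr V\cosh\widetilde{\mathscr Z}$. Staying in your coordinate but keeping the extra drift exactly, rather than by its total variation, amounts to the same computation.
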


\begin{proof}
Let
\[
E_H
:=
\left\{
\sup_{0\leqslant s\leqslant \tau_{H,L}}
e^{\mu_{H,L}(\theta)s}H_s
<
H_\ast
\right\}.
\]
By Theorem~\ref{thm:stopped_explicit_confidence_decay},
\[
\PP(E_H)
\geqslant
1-
\left(\frac{H_0}{H_\ast}\right)^\theta.
\]
On $E_H$, for all $t<\tau_{H,L}$,
\[
H_t\leqslant H_\ast e^{-\mu_{H,L}(\theta)t}.
\]
Using Lemma~\ref{lem:risk_entropy_comparison}, we get
\[
\int_0^{t\wedge\tau_{H,L}}
\mathcal R(\mathscr X_s)\,\mathrm ds
\leqslant
V_{H,L}(\theta).
\]

For each coordinate define the martingale
\[
N_{t,i}
:=
\int_0^{t\wedge\tau_{H,L}}
\sqrt{\mathcal R(\mathscr X_s)}\,\mathrm d\mathfrak B_{s,i}.
\]
On $E_H$, its quadratic variation is bounded by $V_{H,L}(\theta)$. Therefore, by the reflection principle and a union bound over $i=1,\ldots,d$,
\[
\PP\!\left(
E_H
\cap
\left\{
\max_{1\leqslant i\leqslant d}
\sup_{t\geqslant0}|N_{t,i}|
>
R_{H,L}(\theta,\eta)
\right\}
\right)
\leqslant \eta.
\]
Let $E_N$ denote the complementary martingale event. On $E_H\cap E_N$, define
\[
\widetilde{\mathscr Z}_{t,i}
:=
\log\!\left(\frac{\mathscr U_{t,i}}{\mathscr V_{t,i}}\right).
\]
Since
\[
\mathscr U_{t,i}\mathscr V_{t,i}
=
\exp\!\left(-4\gamma^2 I_t\right),
\]
we have, for $t<\tau_{H,L}$,
\[
\mathscr U_{t,i}\mathscr V_{t,i}
\geqslant
p_{H,L}(\theta).
\]
Moreover, It\^o's formula gives
\[
\mathrm d\widetilde{\mathscr Z}_{t,i}
=
\left(
-4\gamma\mathscr U_{t,i}\mathscr V_{t,i}
\sinh(\widetilde{\mathscr Z}_{t,i})
+
2\gamma
\right)\mathrm dt
+
4\gamma\sqrt{\mathcal R(\mathscr X_t)}\,\mathrm d\mathfrak B_{t,i}.
\]
Thus the noise-compensated process
\[
\mathscr Y_{t,i}
:=
\widetilde{\mathscr Z}_{t,i}
-
4\gamma N_{t,i}
\]
satisfies
\[
\frac{\mathrm d}{\mathrm dt}\mathscr Y_{t,i}
=
-4\gamma\mathscr U_{t,i}\mathscr V_{t,i}
\sinh(\widetilde{\mathscr Z}_{t,i})
+
2\gamma.
\]
A deterministic barrier argument gives, for every $t<\tau_{H,L}$,
\[
|\widetilde{\mathscr Z}_{t,i}|
\leqslant
A_{H,L}(\theta)+8\gamma R_{H,L}(\theta,\eta).
\]
Consequently,
\[
\mathscr U_{t,i}^2+\mathscr V_{t,i}^2
=
2\mathscr U_{t,i}\mathscr V_{t,i}
\cosh(\widetilde{\mathscr Z}_{t,i})
\leqslant
B_{H,L}(\theta,\eta),
\]
and, since
\[
\mathscr U_{t,i}\mathscr V_{t,i}
\geqslant p_{H,L}(\theta),
\]
we also have
\[
\mathscr U_{t,i}^2,\mathscr V_{t,i}^2
\geqslant
c_{H,L}(\theta,\eta).
\]
Moreover,
\[
\frac{\rho(I_t)-1}{2}
\geqslant
\underline c_{H,L}(\theta),
\qquad
\frac{\rho(I_t)+1}{2}
\leqslant
\overline c.
\]
Therefore, by the definition of $\mathfrak L_{H,L}(\theta,\eta)$, we have
\[
h_{t,i}
=
f_{\frac{\rho(I_t)+1}{2}}(\mathscr U_{t,i}^2)
+
f_{\frac{\rho(I_t)-1}{2}}(\mathscr V_{t,i}^2)
\leqslant
\mathfrak L_{H,L}(\theta,\eta)
<
L_\ast.
\]
Thus the coordinatewise entropy barrier cannot be hit on $E_H\cap E_N$.
The entropy barrier $H_t=H_\ast$ cannot be hit either, because on $E_H$ we have
$H_t<H_\ast e^{-\mu_{H,L}(\theta)t}<H_\ast$ for all $t<\tau_{H,L}$.
Hence $\tau_{H,L}=\infty$ on $E_H\cap E_N$. By
Lemma~\ref{lem:tau_HL_before_explosion}, this also implies $\zeta=\infty$.

The entropy decay estimate follows from the definition of $E_H$, and the risk
decay follows from the upper bound in \eqref{eq:R_vs_H_barrier}. The probability
bound follows from
\[
\PP(E_H\cap E_N)
\geqslant
1-
\left(\frac{H_0}{H_\ast}\right)^\theta
-\eta.
\]
\end{proof}

\subsection{High-probability Decay at Prescribed Confidence}

The previous theorem gives an explicit confidence estimate. For fixed
$d$, $H_\ast$, $L_\ast$, and $\gamma$, the admissible range of $\theta$ is
finite. By choosing the stepsize sufficiently small, we can achieve a prescribed
confidence level.

\begin{corollary}[High-probability decay for sufficiently small stepsize]
\label{cor:delta_high_probability}
Fix $H_\ast, L_\ast>H_0$, and $\delta\in(0,1)$. Define
\begin{equation*}
\theta_\delta
:=
\frac{\log(2/\delta)}{\log(H_\ast/H_0)}
\end{equation*}
and
\begin{equation*}
\bar\gamma_{H,L,\delta}
:=
\frac{2}{
2H_\ast+2\sqrt5\log 2+\sqrt5
+
\frac{2M_{L_\ast}^2}{d\,m_{L_\ast}}\theta_\delta
}.
\end{equation*}
Assume
\begin{equation*}
0<\gamma<\bar\gamma_{H,L,\delta}.
\end{equation*}
Then
\begin{equation*}
\mu_{H,L}(\delta)
:=
m_{L_\ast}\gamma
\Big(2-\gamma(2H_\ast+2\sqrt5\log 2+\sqrt5)\Big)
-
\frac{2\gamma^2M_{L_\ast}^2}{d}\theta_\delta
>0.
\end{equation*}
Suppose further that the coordinatewise barrier is chosen so that
\begin{equation}
L_\ast>
\mathfrak L_{H,L}\!\left(\theta_\delta,\frac{\delta}{2}\right),
\label{eq:Lstar_delta_self_consistency}
\end{equation}
where $\mathfrak L_{H,L}$ is defined in \eqref{eq:self_consistency_L}. Then
\begin{equation}
\PP\!\left(
\zeta=\infty
\ \text{and}\
\mathcal{R}(\mathscr{X}_t)
\leqslant
\frac{M_{L_\ast}}{4}H_\ast e^{-\mu_{H,L}(\delta)t}
\ \text{for all } t\geqslant 0
\right)
\geqslant
1-\delta.
\label{eq:delta_high_prob_result}
\end{equation}
\end{corollary}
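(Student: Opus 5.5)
The corollary is a parameter specialization of Lemma~\ref{lem:remove_coordinate_barrier}. We apply that lemma with $\theta=\theta_\delta$ and $\eta=\delta/2$, and then check that its hypotheses hold under the stated assumptions.

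First, check the stepsize condition. Since the additional term $\frac{2M_{L_\ast}^2}{d\,m_{L_\ast}}\theta_\delta$ in the denominator of $\bar\gamma_{H,L,\delta}$ is nonnegative, we get $\gamma<\bar\gamma_{H,L,\delta}\leqslant 2/(2H_\ast+2\sqrt5\log2+\sqrt5)$. This is exactly the hypothesis of Lemma~\ref{lem:drift_qv_bounds} and Theorem~\ref{thm:stopped_explicit_confidence_decay}, so $\lambda_{H,L}>0$.

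Next, compute the rate. By \eqref{eq:lambda_HL_def},
\[
\lambda_{H,L}=\gamma\bigl(8-4\gamma(2H_\ast+2\sqrt5\log2+\sqrt5)\bigr)\tfrac{m_{L_\ast}}{4}=m_{L_\ast}\gamma\bigl(2-\gamma(2H_\ast+2\sqrt5\log2+\sqrt5)\bigr),
\]
and by \eqref{eq:sigma_HL_def}, $\frac{\theta_\delta}{2}\sigma_{H,L}^2=\frac{2\gamma^2M_{L_\ast}^2}{d}\theta_\delta$. Hence $\mu_{H,L}(\theta_\delta)$ coincides with the displayed $\mu_{H,L}(\delta)$.

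The main algebraic step is positivity of this rate. Write $A:=2H_\ast+2\sqrt5\log2+\sqrt5$ and $b:=\frac{2M_{L_\ast}^2}{d\,m_{L_\ast}}\theta_\delta$. Dividing by $m_{L_\ast}\gamma>0$, we have $\mu>0$ if and only if $2-\gamma A-\gamma b>0$, i.e.\ $\gamma<2/(A+b)=\bar\gamma_{H,L,\delta}$, which holds by assumption. Equivalently, $\theta_\delta<2\lambda_{H,L}/\sigma_{H,L}^2$, so $\theta_\delta$ lies in the admissible range of Theorem~\ref{thm:stopped_explicit_confidence_decay}.

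Finally, with $\eta=\delta/2$, the barrier hypothesis \eqref{eq:Lstar_self_consistency} is exactly the assumption \eqref{eq:Lstar_delta_self_consistency}. So \eqref{eq:explicit_confidence_risk} holds with failure probability at most $(H_0/H_\ast)^{\theta_\delta}+\delta/2$. Because $H_\ast>H_0$, the definition of $\theta_\delta$ gives
\[
(H_0/H_\ast)^{\theta_\delta}=\exp\bigl(-\theta_\delta\log(H_\ast/H_0)\bigr)=\delta/2 .
\]
The total failure probability is therefore at most $\delta$, which yields \eqref{eq:delta_high_prob_result} with rate $\mu_{H,L}(\delta)$.

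The positivity equivalence $\mu>0\iff\gamma<\bar\gamma_{H,L,\delta}$ and the bookkeeping of constants are routine. The only genuinely delicate input is the self-consistency condition on $L_\ast$, since $\mathfrak L_{H,L}$ itself depends on $\gamma$, $\theta$ and $\eta$. That condition is imposed here as a hypothesis rather than proved, so no further work is required for it.
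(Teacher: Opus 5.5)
Your proposal is correct and follows the paper's own proof. Both apply Lemma~\ref{lem:remove_coordinate_barrier} with $\theta=\theta_\delta$ and $\eta=\delta/2$, use $(H_0/H_\ast)^{\theta_\delta}=\delta/2$, and observe that $\gamma<\bar\gamma_{H,L,\delta}$ is equivalent to $\theta_\delta<2\lambda_{H,L}/\sigma_{H,L}^2$, with $\mu_{H,L}(\delta)=\lambda_{H,L}-\tfrac{\theta_\delta}{2}\sigma_{H,L}^2$; your explicit algebra (the factorization $\mu=m_{L_\ast}\gamma(2-\gamma A-\gamma b)$ and the bound $\bar\gamma_{H,L,\delta}\leqslant 2/A$) fills in what the paper simply asserts as ``exactly equivalent,'' and it checks out.
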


\begin{proof}
By construction,
\[
\left(\frac{H_0}{H_\ast}\right)^{\theta_\delta}
=
\frac{\delta}{2}.
\]
The condition $\gamma<\bar\gamma_{H,L,\delta}$ is exactly equivalent to
\[
\theta_\delta
<
\frac{2\lambda_{H,L}}{\sigma_{H,L}^2},
\]
where $\lambda_{H,L}$ and $\sigma_{H,L}^2$ are defined in
\eqref{eq:lambda_HL_def}--\eqref{eq:sigma_HL_def}. Moreover,
\[
\mu_{H,L}(\delta)
=
\lambda_{H,L}
-
\frac{\theta_\delta}{2}\sigma_{H,L}^2.
\]
Thus Lemma~\ref{lem:remove_coordinate_barrier} applies with
$\theta=\theta_\delta$ and $\eta=\delta/2$. Substituting these values into
\eqref{eq:explicit_confidence_risk} gives
\eqref{eq:delta_high_prob_result}.
\end{proof}

\begin{figure}[t!]
    \centering
    \includegraphics[width=0.9\textwidth]{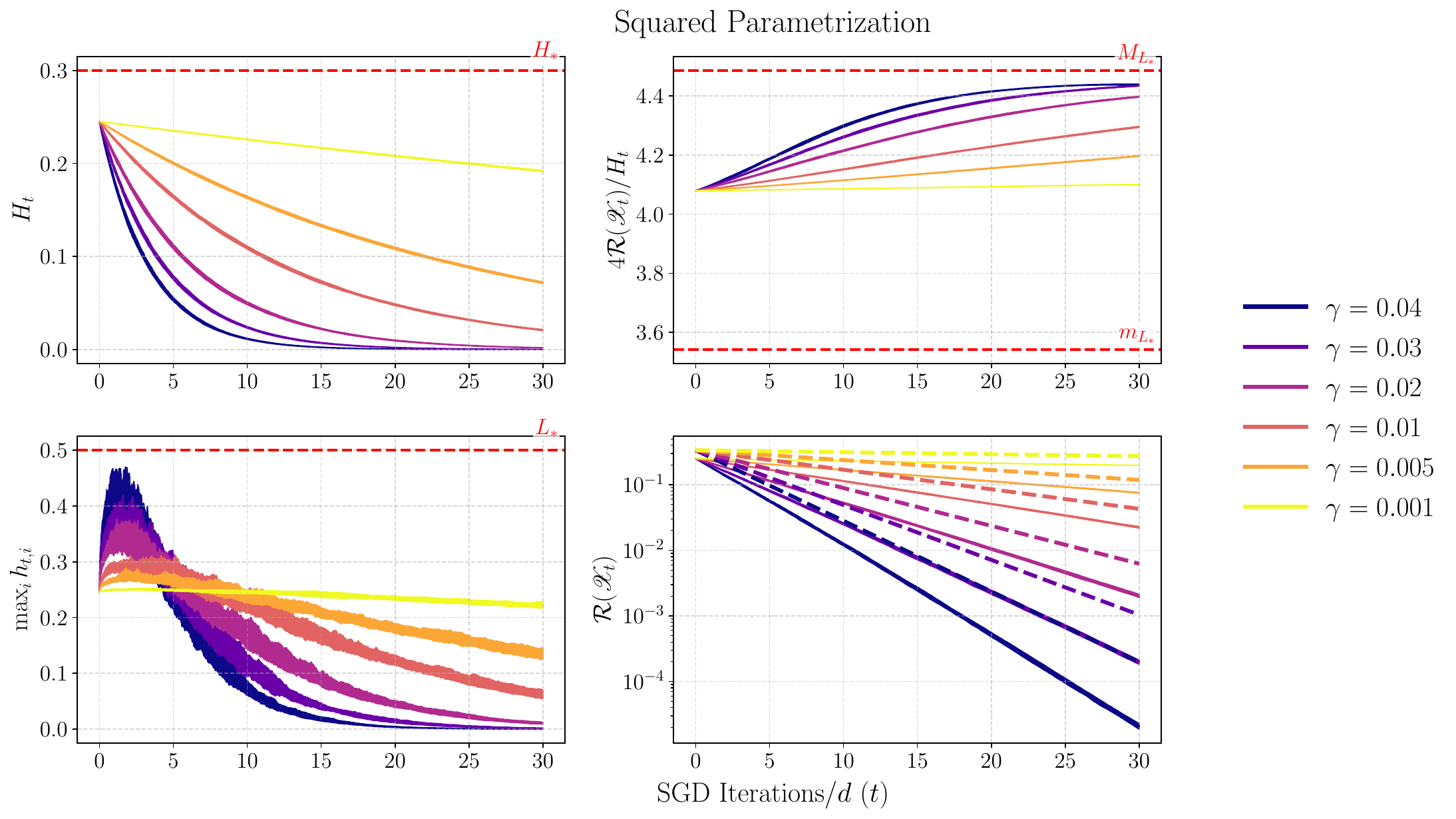}
    
    \caption{\textbf{Coordinatewise entropy barriers and exponential risk decay on a diagonal linear network.}
The figure illustrates the entropy-barrier mechanism from Appendix~\ref{app:entropy_barrier}. The top-left panel shows the empirical entropy $H_t$, while the bottom-left panel shows the largest coordinatewise entropy density $\max_i h_{t,i}$; the red dashed lines mark the barriers $H_\ast$ and $L_\ast$. The top-right panel plots the risk--entropy ratio $4\mathcal R(\mathscr X_t)/H_t$, with red dashed lines marking empirical coordinatewise coercivity constants $m_{L_\ast}$ and $M_{L_\ast}$ estimated from the coordinate quotients $q_{t,i}$. The bottom-right panel shows the risk $\mathcal R(\mathscr X_t)$ on a logarithmic scale, together with exponential envelopes of the form $(M_{L_\ast}/4)H_\ast e^{-\mu t}$. We plot these envelopes for all tested stepsizes, including those beyond the theorem's certified small-stepsize regime, illustrating that the predicted exponential decay persists empirically beyond the sufficient condition.}
\label{fig: risk_exponential_decay}
\end{figure}

\subsection{Risk Integrability and Dynamical Consequences}

We now record two consequences of the high-probability exponential decay:
integrability of the risk and uniform separation from the saddle.

\begin{lemma}[Finite integral of the risk]
\label{lem:finite_risk_integral}
On the event in Corollary~\ref{cor:delta_high_probability},
\begin{equation}
\int_0^\infty \mathcal R(\mathscr X_s)\,\mathrm ds
\leqslant
\frac{M_{L_\ast}}{4}
\frac{H_\ast}{\mu_{H,L}(\delta)}
<\infty.
\label{eq:risk_integral_bound}
\end{equation}
\end{lemma}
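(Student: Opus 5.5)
On the event of Corollary~\ref{cor:delta_high_probability} the bound is a direct integration of the pathwise exponential envelope. That event includes $\zeta=\infty$, so $\mathscr X_t$ is defined for all $t\geqslant 0$. It also gives, for every $t\geqslant 0$, the pointwise estimate
\[
\mathcal R(\mathscr X_t)\leqslant \frac{M_{L_\ast}}{4}H_\ast e^{-\mu_{H,L}(\delta)t},
\]
where $\mu_{H,L}(\delta)>0$ by the stepsize condition $\gamma<\bar\gamma_{H,L,\delta}$.

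The risk $\mathcal R(\mathscr X_s)$ is nonnegative and continuous in $s$, since it is a continuous function of the continuous process. Hence $s\mapsto \mathcal R(\mathscr X_s)$ is measurable, and its improper integral over $[0,\infty)$ is a well-defined element of $[0,\infty]$. By monotonicity of the integral, applied pathwise on the event,
\[
\int_0^\infty \mathcal R(\mathscr X_s)\,\mathrm ds \leqslant \frac{M_{L_\ast}}{4}H_\ast\int_0^\infty e^{-\mu_{H,L}(\delta)s}\,\mathrm ds = \frac{M_{L_\ast}}{4}\frac{H_\ast}{\mu_{H,L}(\delta)}.
\]
The right-hand side is finite because $M_{L_\ast}<\infty$ by Lemma~\ref{lem:coercivity_barrier}, $H_\ast<\infty$ by choice, and $\mu_{H,L}(\delta)>0$. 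This gives \eqref{eq:risk_integral_bound}.

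No step is a genuine obstacle. The only points to check are that the event really contains $\zeta=\infty$, so the integral is over all of $[0,\infty)$ rather than a stopped interval, and that $\mu_{H,L}(\delta)$ is strictly positive. Both are stated explicitly in Corollary~\ref{cor:delta_high_probability}.
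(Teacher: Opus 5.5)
Your proof is correct and matches the paper's argument: on the event of Corollary~\ref{cor:delta_high_probability}, you integrate the pointwise envelope $\mathcal R(\mathscr X_t)\leqslant \frac{M_{L_\ast}}{4}H_\ast e^{-\mu_{H,L}(\delta)t}$ over $[0,\infty)$. Your added checks, that the event contains $\zeta=\infty$ and that $\mu_{H,L}(\delta)>0$, are exactly the facts the paper's one-line computation uses implicitly.
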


\begin{proof}
On the event in Corollary~\ref{cor:delta_high_probability},
\[
\mathcal R(\mathscr X_t)
\leqslant
\frac{M_{L_\ast}}{4}H_\ast e^{-\mu_{H,L}(\delta)t}
\qquad \text{for all } t\geqslant0.
\]
Therefore
\[
\int_0^\infty \mathcal R(\mathscr X_s)\,\mathrm ds
\leqslant
\frac{M_{L_\ast}}{4}H_\ast
\int_0^\infty e^{-\mu_{H,L}(\delta)s}\,\mathrm ds
=
\frac{M_{L_\ast}}{4}
\frac{H_\ast}{\mu_{H,L}(\delta)}.
\]
\end{proof}

\subsubsection{Non-explosion via Risk Integrability}

Although Corollary~\ref{cor:delta_high_probability} already includes
$\zeta=\infty$, the following pathwise argument explains why risk
integrability prevents finite-time explosion.

For each coordinate define
\[
\widetilde{\mathscr Z}_{t,i}
:=
\log\!\left(\frac{\mathscr U_{t,i}}{\mathscr V_{t,i}}\right),
\qquad 0\leqslant t<\zeta.
\]
By Lemma~\ref{lem:product_identity},
\[
\mathscr U_{t,i}\mathscr V_{t,i}
=
\exp\!\left(-4\gamma^2 I_t\right).
\]
Applying It\^o's formula to
$\widetilde{\mathscr Z}_{t,i}=\log\mathscr U_{t,i}-\log\mathscr V_{t,i}$ gives
\begin{equation}
\mathrm d\widetilde{\mathscr Z}_{t,i}
=
\left(
-4\gamma \mathscr U_{t,i}\mathscr V_{t,i}
\sinh(\widetilde{\mathscr Z}_{t,i})
+
2\gamma
\right)\mathrm dt
+
4\gamma\sqrt{\mathcal R(\mathscr X_t)}\,\mathrm d\mathfrak B_{t,i}.
\label{eq:log_ratio_sde}
\end{equation}

\begin{lemma}[Pathwise coordinate bound from risk integrability]
\label{lem:pathwise_coordinate_bound}
Assume
\[
\int_0^\infty \mathcal R(\mathscr X_s)\,\mathrm ds<\infty.
\]
Then, for every coordinate $i$,
\[
\sup_{0\leqslant t<\zeta}
(\mathscr U_{t,i}^2+\mathscr V_{t,i}^2)<\infty
\qquad \text{almost surely}.
\]
Consequently, finite risk integral is incompatible with finite-time explosion.
\end{lemma}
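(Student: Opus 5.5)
The plan is to work in the log-ratio coordinates $\widetilde{\mathscr Z}_{t,i}$ together with the product identity of Lemma~\ref{lem:product_identity}. The identity
\[
\mathscr U_{t,i}^2+\mathscr V_{t,i}^2 = 2\,\mathscr U_{t,i}\mathscr V_{t,i}\cosh(\widetilde{\mathscr Z}_{t,i}) = 2e^{-4\gamma^2 I_t}\cosh(\widetilde{\mathscr Z}_{t,i}) \leqslant 2\cosh(\widetilde{\mathscr Z}_{t,i})
\]
shows that it suffices to prove $\sup_{t<\zeta}|\widetilde{\mathscr Z}_{t,i}|<\infty$ almost surely for each fixed $i$. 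Set $I_\infty:=\int_0^\infty\mathcal R(\mathscr X_s)\,\mathrm ds<\infty$. Then the product $p_t:=\mathscr U_{t,i}\mathscr V_{t,i}$ satisfies $p_t\geqslant p_\ast:=e^{-4\gamma^2 I_\infty}>0$ for all $t<\zeta$.

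Next I would split \eqref{eq:log_ratio_sde} into a martingale part and a drift part. The martingale is $N_{t,i}:=\int_0^{t\wedge\zeta}\sqrt{\mathcal R(\mathscr X_s)}\,\mathrm d\mathfrak B_{s,i}$, with $\langle N_{\cdot,i}\rangle_t\leqslant I_\infty$. By the Dambins--Dubins--Schwarz time change, $N_{\cdot,i}$ is a Brownian motion run up to the time $\langle N\rangle$, which stays bounded. Hence $\sup_t|N_{t,i}|=:R_i<\infty$ almost surely, and this holds pathwise on the event that the risk integral is finite. The compensated process $\mathscr Y_{t,i}:=\widetilde{\mathscr Z}_{t,i}-4\gamma N_{t,i}$ is then $C^1$ in $t$, with
\[
\dot{\mathscr Y}_{t,i}=-4\gamma p_t\sinh(\mathscr Y_{t,i}+4\gamma N_{t,i})+2\gamma .
\]

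The core step is a deterministic barrier argument for $\mathscr Y$. Choose $A$ with $\sinh(A)>1/(2p_\ast)$, so that $A=\operatorname{arcsinh}(1/(2p_\ast))$ plus a margin works. Whenever $\mathscr Y_{t,i}>A+4\gamma R_i$, we have $\widetilde{\mathscr Z}_{t,i}>A$, and therefore $\dot{\mathscr Y}_{t,i}<-4\gamma p_\ast\sinh A+2\gamma<0$. Symmetrically, whenever $\mathscr Y_{t,i}<-A-4\gamma R_i$, we have $\widetilde{\mathscr Z}_{t,i}<-A$, so $\sinh(\widetilde{\mathscr Z}_{t,i})<0$ and $\dot{\mathscr Y}_{t,i}>2\gamma>0$. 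Here $p_t\leqslant1$ is not needed; only the sign of $\sinh$ enters. A standard comparison (first-exit) argument then gives
\[
|\mathscr Y_{t,i}|\leqslant\max\{|\mathscr Y_{0,i}|,\,A+4\gamma R_i\},
\]
and since $\widetilde{\mathscr Z}_{0,i}=0$,
\[
|\widetilde{\mathscr Z}_{t,i}|\leqslant A+8\gamma R_i \qquad\text{for all } t<\zeta .
\]
This yields the claimed coordinate bound.

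Finally, for the consequence about explosion: if $\zeta<\infty$, then the bounded coordinates, combined with the lower bound $p_t\geqslant p_\ast$, keep each pair $(\mathscr U_{t,i},\mathscr V_{t,i})$ inside a compact subset of $(0,\infty)^2$ on $[0,\zeta)$. The coefficients are locally Lipschitz there, so the continuation criterion used in Lemma~\ref{lem:tau_HL_before_explosion} extends the solution past $\zeta$, which is a contradiction.

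The main obstacle I expect is making the martingale bound genuinely pathwise. The hypothesis $I_\infty<\infty$ is random, so the time-change argument must be applied on that event rather than via a deterministic quadratic-variation bound, and the stopped local martingale must be handled up to $\zeta$ carefully.
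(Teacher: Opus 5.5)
Your proposal is correct and follows the paper's proof essentially step for step: the product identity giving $\mathscr U_{t,i}\mathscr V_{t,i}\geqslant e^{-4\gamma^2 I_\infty}$, the noise-compensated log-ratio $\mathscr Y_{t,i}=\widetilde{\mathscr Z}_{t,i}-4\gamma N_{t,i}$, the deterministic barrier bound $|\widetilde{\mathscr Z}_{t,i}|\leqslant \operatorname{arcsinh}(1/(2p_\ast))+8\gamma\sup_t|N_{t,i}|$, the identity $\mathscr U_{t,i}^2+\mathscr V_{t,i}^2=2\mathscr U_{t,i}\mathscr V_{t,i}\cosh(\widetilde{\mathscr Z}_{t,i})$, and the continuation criterion. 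The differences are only cosmetic. You bound $\sup_t|N_{t,i}|$ via the Dambis--Dubins--Schwarz time change, whereas the paper cites almost-sure convergence of a continuous local martingale with finite quadratic variation, which is the same fact. You also spell out the first-exit comparison that the paper leaves implicit, and you add a margin to $A$ that is not needed.
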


\begin{proof}
Define the continuous martingale
\[
N_{t,i}
:=
\int_0^t
\sqrt{\mathcal R(\mathscr X_s)}\,\mathrm d\mathfrak B_{s,i}.
\]
Its quadratic variation satisfies
\[
\langle N_i\rangle_\infty
=
\int_0^\infty \mathcal R(\mathscr X_s)\,\mathrm ds<\infty.
\]
Hence $N_{t,i}$ converges almost surely as $t\to\infty$, and therefore
\[
N_i^\ast:=\sup_{t\geqslant0}|N_{t,i}|<\infty
\qquad\text{almost surely}.
\]

Let
\[
P_t:=\mathscr U_{t,i}\mathscr V_{t,i}
=
\exp(-4\gamma^2 I_t).
\]
Since $I_\infty<\infty$, there exists
\[
p_\infty:=\exp(-4\gamma^2 I_\infty)>0
\]
such that $P_t\geqslant p_\infty$ for all $t$. Define
\[
\mathscr Y_{t,i}
:=
\widetilde{\mathscr Z}_{t,i}-4\gamma N_{t,i}.
\]
By \eqref{eq:log_ratio_sde},
\[
\frac{\mathrm d}{\mathrm dt}\mathscr Y_{t,i}
=
-4\gamma P_t\sinh(\widetilde{\mathscr Z}_{t,i})+2\gamma.
\]
A deterministic barrier argument gives
\[
|\widetilde{\mathscr Z}_{t,i}|
\leqslant
8\gamma N_i^\ast
+
\operatorname{arcsinh}\!\left(\frac{1}{2p_\infty}\right)
\qquad \text{for all }t.
\]
Therefore,
\[
\mathscr U_{t,i}^2+\mathscr V_{t,i}^2
=
2P_t\cosh(\widetilde{\mathscr Z}_{t,i})
\leqslant
2\cosh\!\left(
8\gamma N_i^\ast
+
\operatorname{arcsinh}\!\left(\frac{1}{2p_\infty}\right)
\right)
<\infty.
\]
Thus all coordinates remain bounded on finite time intervals. Since the SDE
coefficients are locally Lipschitz, the standard continuation criterion rules
out finite-time explosion.
\end{proof}

\subsubsection{Uniform Separation from the Saddle via Risk Integrability}

We now show that risk integrability also prevents the dynamics from approaching
the saddle at the origin.

\begin{lemma}[Uniform separation from the saddle]
\label{lem:away_from_saddle}
If
\[
\int_0^\infty \mathcal R(\mathscr X_s)\,\mathrm ds<\infty,
\]
then, for every coordinate $i$ and all $t\geqslant0$,
\begin{equation*}
\mathscr U_{t,i}^2+\mathscr V_{t,i}^2
\geqslant
m,
\end{equation*}
where
\begin{equation*}
m
:=
2\exp\!\left(
-4\gamma^2
\int_0^\infty \mathcal R(\mathscr X_s)\,\mathrm ds
\right)
>0.
\end{equation*}
\end{lemma}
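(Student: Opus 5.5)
The plan is to use only the exact product identity of Lemma~\ref{lem:product_identity} together with the AM--GM inequality, so no probabilistic estimate is needed beyond the assumed finiteness of the risk integral. First, by Lemma~\ref{lem:pathwise_coordinate_bound}, finiteness of $\int_0^\infty \mathcal R(\mathscr X_s)\,\mathrm ds$ rules out finite-time explosion. Hence $\zeta=\infty$, and the identity of Lemma~\ref{lem:product_identity} holds for every $t\geqslant 0$.

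The core step is the following chain of inequalities. For each coordinate $i$ and time $t$, AM--GM gives
\[
\mathscr U_{t,i}^2+\mathscr V_{t,i}^2\geqslant 2\,|\mathscr U_{t,i}\mathscr V_{t,i}| = 2\sqrt{\mathscr U_{t,i}^2\mathscr V_{t,i}^2}.
\]
By Lemma~\ref{lem:product_identity} the right-hand side equals $2\exp(-4\gamma^2 I_t)$. Since $\mathcal R\geqslant 0$, the integral $I_t=\int_0^t\mathcal R(\mathscr X_s)\,\mathrm ds$ is nondecreasing in $t$. It is therefore bounded by $I_\infty=\int_0^\infty\mathcal R(\mathscr X_s)\,\mathrm ds<\infty$, which gives $2\exp(-4\gamma^2 I_t)\geqslant 2\exp(-4\gamma^2 I_\infty)=m$. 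Positivity $m>0$ follows directly from $I_\infty<\infty$.

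The only point needing care is the first step, namely that the product identity is valid for all $t\geqslant0$ rather than only on $[0,\zeta)$. This is exactly what Lemma~\ref{lem:pathwise_coordinate_bound} provides. If one prefers not to invoke it, the bound still holds for all $t<\zeta$ by the same argument, which is the natural reading of the statement on the solution's lifetime.
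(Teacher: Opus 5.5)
Your proposal is correct and is essentially the paper's proof: AM--GM, then the product identity of Lemma~\ref{lem:product_identity}, then monotonicity of $I_t$ to pass to $I_\infty$. Writing $2|\mathscr U_{t,i}\mathscr V_{t,i}|=2\sqrt{\mathscr U_{t,i}^2\mathscr V_{t,i}^2}$ and citing Lemma~\ref{lem:pathwise_coordinate_bound} to get $\zeta=\infty$ are minor refinements. They make explicit two points the paper leaves implicit: the positivity of $\mathscr U_{t,i},\mathscr V_{t,i}$, and the claim that the identity holds for all $t\geqslant0$.
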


\begin{proof}
By the arithmetic--geometric mean inequality,
\[
\mathscr U_{t,i}^2+\mathscr V_{t,i}^2
\geqslant
2\mathscr U_{t,i}\mathscr V_{t,i}.
\]
Using Lemma~\ref{lem:product_identity},
\[
\mathscr U_{t,i}\mathscr V_{t,i}
=
\exp\!\left(
-4\gamma^2
\int_0^t \mathcal R(\mathscr X_s)\,\mathrm ds
\right).
\]
Therefore
\[
\mathscr U_{t,i}^2+\mathscr V_{t,i}^2
\geqslant
2\exp\!\left(
-4\gamma^2
\int_0^t \mathcal R(\mathscr X_s)\,\mathrm ds
\right)
\geqslant
2\exp\!\left(
-4\gamma^2
\int_0^\infty \mathcal R(\mathscr X_s)\,\mathrm ds
\right)
=:m.
\]
Since the integral is finite, $m>0$.
\end{proof}

Combining Corollary~\ref{cor:delta_high_probability} with
Lemma~\ref{lem:away_from_saddle}, we obtain that, with probability at least
$1-\delta$, the dynamics exist globally, the risk decays exponentially, and the
coordinates remain uniformly separated from the saddle:
\[
\mathscr U_{t,i}^2+\mathscr V_{t,i}^2
\geqslant
2\exp\!\left(
-\gamma^2
\frac{M_{L_\ast}H_\ast}{\mu_{H,L}(\delta)}
\right)
\qquad
\text{for all }t\geqslant0,\ i=1,\ldots,d.
\]

\section{Examples}\label{app: examples}
In this section, we provide further key learning problems within our family of models.

\subsection{Mean-squared Error}\label{subsec: meanSquaredSetting}
One canonical example of \eqref{eq: riskSetting} is the outer function $f\colon \R^2 \to \R$ given by $f(r) = \frac{1}{2} (r_1-r_2)^2$. 

In this case, the problem is
\begin{equation}
\min_{x\in \R^{2d}} \biggl\{ \mathcal{R}(x) = \frac{1}{2d} \EE_a \inner{\psi(x)- \beta^*, a}^2 = \frac{1}{2d} (\psi(x)- \beta^*)^\top K (\psi(x)- \beta^*) \biggr\},
\end{equation}
and satisfies
\begin{equation*}
I\left(B(x)\right) =
\EE_a [\nabla_{r_1} f(r)^2] =  2 \mathcal{R}(x) = 2 h\left(B(x)\right) = B(x)_{11} - B(x)_{12} - B(x)_{21} + B(x)_{22}.
\end{equation*}

Moreover, 
\begin{align*}
\frac{1}{d} \tr \left(\nabla^2 \mathcal{R}(x) \right) &= \frac{1}{d} \sum_{i=1}^d K_{ii} \left[ \left( 2q_{11} u_i + 2q_{12} v_i + l_1\right)^2 + \left( 2q_{12} u_i + 2q_{22} v_i + l_2\right)^2\right]\\[5pt]
&+ \frac{2}{d}(q_{11} + q_{22}) \sum_{i=1}^d K_{ii} \left(\psi(x) -\beta^* \right)_i.
\end{align*}

\begin{remark}\upright
Note that both $\mathcal{R}(x)$ and $\frac{1}{d}\tr \left(\nabla^2 \mathcal{R}(x) \right)$ satisfy Assumption~\ref{ass: statistic}. Consequently, their SGD dynamics concentrate and can be analyzed via Theorem~\ref{thm: statistic}. 
\end{remark}
\begin{example}\upright
The simplest parametrization is to consider $\psi(x) = u$ which corresponds to the classical leas-squares framework,
\begin{equation}
    \min_{u\in \R^d} \frac{1}{2d} \EE_a \inner{u - \beta^*, a}^2 .
\end{equation}
Here, $\psi(u,v) = u$ for $\mathcal{Q} = \begin{bmatrix}
    0 & 0\\
    0 & 0
\end{bmatrix}$, $l = \begin{pmatrix}
    1\\
    0
\end{pmatrix}$, and $c=0$.
\end{example}

\begin{example}[Diagonal Linear Network, Standard Parametrization]\upright\label{ex: hadamardParam}
This formulation arises in the problem of minimizing the expected squared risk over a standard depth-two diagonal linear neural network, represented by
\begin{equation}
    \min_{u,v\in \R^d} \frac{1}{2d} \EE_a \inner{u\odot v - \beta^*, a}^2.
\end{equation}
Here, $\psi(u,v) = u\odot v$ for $\mathcal{Q} = \begin{pmatrix}
    0 & \frac{1}{2}\\
    \frac{1}{2} & 0
\end{pmatrix}$, $l=\begin{pmatrix}
    0\\
    0
\end{pmatrix}$, and $c=0$.
\end{example}
\begin{example}[Diagonal Linear Network, Squared Parametrization]\upright\label{ex: squaredParam}
The most relevant example is the squared parametrization of the linear regression model. The optimization problem associated with this parametrization is formulated as
\begin{equation}
    \min_{u,v\in \R^d} \frac{1}{4d}\EE_a  \inner{u^2 - v^2 - \beta^*, a}^2 .
\end{equation}
Here, $\psi(u,v) = u^2 -v^2$ for $\mathcal{Q} = \begin{pmatrix}
    1 & 0\\
    0 & -1
\end{pmatrix}$, $l=\begin{pmatrix}
    0\\
    0
\end{pmatrix}$, and $c=0$.
\end{example}

\subsection{Binary Logistic Regression}
We consider the setting of \eqref{eq: riskSetting} with outer function $f\colon \R^2 \to \R$ given by 
\begin{equation*}
f(r) = - r_1 \cdot \frac{\exp(r_2)}{\exp(r_2) + 1} + \log (\exp(r_1) + 1).
\end{equation*}
In this case, the problem takes the form
\begin{equation*}
\min_{x\in \R^{2d}} \biggl\{ \mathcal{R}(x) = - \EE_a \inner{\psi(x), a} / \sqrt{d} \cdot \frac{\exp(\inner{\beta^*, a}/\sqrt{d})}{\exp(\inner{\beta^*, a}/\sqrt{d}) + 1} + \log (\exp(\inner{\psi(x), a}/\sqrt{d}) + 1)\biggr\},
\end{equation*}
and, using a multivariate normal distribution as in
\cite{collins-woodfin2024hitting}, satisfies
\begin{align*}
\mathcal{R}(x) &= h\left(B(x)\right) = \frac{1}{\pi} \int_{\R^2} \left(- \tilde{u} \cdot \frac{\exp(\tilde{v})}{\exp(\tilde{v}) + 1} + \log (\exp(\tilde{u}) + 1) \right) \exp \left( - \begin{pmatrix}
    u \\ 
    v
\end{pmatrix}^\top  \begin{pmatrix}
    u \\ 
    v
\end{pmatrix} \right) \mathrm{d}u \mathrm{d}v
\end{align*}
where $\begin{pmatrix}
    \tilde{u} \\ 
    \tilde{v}
\end{pmatrix} = \sqrt{2} L \begin{pmatrix}
    u \\ 
    v
\end{pmatrix}$ and $B(x) = L L^\top$.

Analogously, since
\begin{equation*}
\nabla_{r_1} f(r) = - \frac{\exp(r_2)}{\exp(r_2) + 1} + \frac{\exp(r_1)}{\exp(r_1) + 1},
\end{equation*}
we obtain
\begin{equation*}
I\left(B(x)\right) =
\EE_a [\nabla_{r_1} f(r)^2] =  \frac{1}{\pi} \int_{\R^2} \left(-  \frac{\exp(\tilde{v})}{\exp(\tilde{v}) + 1} + \frac{\exp(\tilde{u})}{\exp(\tilde{u}) + 1} \right)^2 \exp \left( - \begin{pmatrix}
    u \\ 
    v
\end{pmatrix}^\top  \begin{pmatrix}
    u \\ 
    v
\end{pmatrix} \right) \mathrm{d}u \mathrm{d}v.
\end{equation*}

\section{Numerical Simulation Details}\label{app: numerics}

Here we provide more details for the figures that appear in the main paper.

\paragraph{Figure~\ref{fig: risk}:} 
\textbf{Three views of empirical risk dynamics for SGD on a diagonal linear network:}
concentration, spectral effects, and regime transitions.
\emph{Left:} Identity covariance $K=I_d$. As the dimension $d$ increases, the risk trajectory of SGD concentrates around a deterministic limit (shown in red), as characterized in Theorem~\ref{thm: statistic}.
\emph{Middle:} Power-law covariance spectrum. Our homogenized SGD prediction (transparent) from Theorem~\ref{thm: statistic} closely tracks SGD (opaque) over a range of power-law exponents $\beta$, at fixed dimension $d=10^3$.
\emph{Right:} Identity covariance $K=I_d$ at fixed dimension $d=10^3$. Varying the stepsize $\gamma$ reveals distinct convergence/divergence regimes; the homogenized prediction remains accurate even for stepsizes above the convergence threshold.

The left and right panels use the squared parametrization (Example~\ref{ex: squaredParam}), whereas the middle panel uses the standard parametrization (Example~\ref{ex: hadamardParam}).
For the power-law spectrum, eigenvalues are generated via inverse-CDF sampling: draw $u_i\sim\mathcal{U}(0,1)$ and set $\lambda_i=u_i^{1/(1-\beta)}$, yielding i.i.d.\ $\lambda_i\in[0,1]$ with density $p(\lambda)=(1-\beta)\lambda^{-\beta}$.
We set $\beta^*=\mathds{1}_d$ and initialize SGD at $u_0=v_0=\alpha\,\mathds{1}_d$ with $\alpha=0.6$ (left and right), and at $u_0=2\cdot \mathds{1}_d$, $v_0=0$ (middle).
Curves are aggregated over $30$ independent stochastic runs. The stepsize is fixed to $\gamma=0.1$ (left) and $\gamma=1.1$ (middle); in the right panel, $\gamma$ is swept across the displayed range at fixed $d=10^3$.

\paragraph{Numerical construction of the theoretical curve:} The contour-integral evaluation used to recover the theory curve is very sensitive to the choice of contour radius.
Under symmetric initialization ($u_0=v_0$), a single contour $C_M(0)$ suffices; we empirically verify that $\|x_k\|_\infty<M$ along the trajectory and use $M=1.3$ with $N=100$ contour discretization points.
If $M$ is too small, the condition $\|x_k\|_\infty<M$ is violated, leading to incorrect contour integrals and a mismatch (notably at initialization) between SGD and the PDE prediction; if $M$ is too large, the PDE solver may overflow.
Accordingly, $M$ must be calibrated for each initialization.

\paragraph{Figure~\ref{fig: statistics}:} \textbf{Curvature dynamics for SGD on a diagonal linear network.} \emph{Left:} The evolution of the curvature measured by the scaled trace of the Hessian $\frac{1}{d}\tr(\nabla^2\mathcal{R})$ is shown alongside the empirical risk $\mathcal{R}$, illustrating ``flat'' progress in which the risk increases sharply accompanied by a marked drop in curvature as we vary the stepsize $\gamma$.
    \emph{Right:} As the dimension $d$ increases, the curvature dynamics of SGD concentrate around a deterministic limit (shown in red), as proven in Theorem~\ref{thm: statistic}.
    The left panel corresponds to the squared parametrization (Example~\ref{ex: squaredParam}), while the right one corresponds to the standard parametrization (Example~\ref{ex: hadamardParam}).
    
    Note that in the standard parametrization, the scaled Hessian trace simplifies to
    $\tr(\nabla^2 \mathcal{R}(x))=\frac{1}{d}\sum_{i=1}^d K_{ii}(u_i^2+v_i^2)$.
    We set $\beta^*=\mathds{1}_d$ and $K=I_d$, and initialize SGD at $u_0=v_0=\alpha\,\mathds{1}_d$ with $\alpha=1.0$ (left) and $\alpha=0.6$ (right).
    Curves are aggregated over $30$ independent stochastic runs.
    \emph{Left:} fixed $d=10^3$ with a sweep over $\gamma$.
    \emph{Right:} fixed $\gamma=0.1$ with varying $d$. We use $M=1.0$ and $N=300$ for the contour construction.

\paragraph{Figure~\ref{fig: nondiagK}:}
\textbf{Risk concentration of SGD and the homogenized SDE under non-diagonal covariance on a diagonal linear network.}
As the dimension $d$ increases, the risk trajectories of SGD (opaque) concentrate around the prediction of the non-diagonal homogenized SDE \eqref{eq:nonDiagonalSquareLossSDE} (transparent), suggesting that the same high-dimensional concentration phenomenon persists beyond the diagonal covariance setting. The covariance matrix $K$ is sampled from a Marchenko--Pastur ensemble. 

The left panel uses the standard parametrization (Example~\ref{ex: hadamardParam}) and is initialized at $u_0=2\cdot\mathds{1}_d$, while the right panel uses the squared parametrization (Example~\ref{ex: squaredParam}) with initialization $u_0=v_0=\alpha\,\mathds{1}_d$ and $\alpha=0.6$. In both panels, we set $\beta^\star=\mathds{1}_d$ and fix the stepsize to $\gamma=0.2$. Curves are aggregated over $30$ independent stochastic runs. The covariance matrix $K$ is generated by drawing a Gaussian random matrix $X\in\mathbb R^{d\times d}$ with i.i.d.\ standard normal entries and setting $K=\frac{1}{d}XX^\top$.

\paragraph{Figure~\ref{fig: sdes}:} 
\textbf{Risk discrepancy between SGD and its continuous-time approximations on a diagonal linear network.}
    For each stepsize $\gamma$, we report the absolute difference between the empirical risk of SGD after $T\!\cdot\! d$ iterations (with $T=20$) and two approximations: (i) homogenized SGD (HSGD) \eqref{eq: homogenizedSGD} (blue), and (ii) stochastic gradient flow (SGF) \eqref{eq: homogenizedSGF} (pink). As $\gamma$ increases, the proposed HSGD---which captures large stepsize effects---provides a progressively more accurate approximation of SGD, whereas SGF is derived under a vanishing stepsize regime and thus degrades for larger $\gamma$.
    Initialization scale $\alpha$ controls proximity to the saddle point $x=0$: smaller $\alpha$ corresponds to a longer transient before learning accelerates. 
    
    We consider the squared parametrization (Example~\ref{ex: squaredParam}) with mean-squared error, in the sparse recovery setting of~\cite{pesme2021implicit}. The ground-truth signal satisfies $\beta^*_i=1$ for $1\leqslant i\leqslant 5$ and $\beta^*_i=0$ for $i>5$, with covariance $K=I_d$ and dimension $d=100$.
    SGD is initialized at $u_0=v_0=\alpha\,\mathds{1}_d$, with $\alpha=0.01$ (left) and $\alpha=0.1$ (right), and results are aggregated over $30$ independent stochastic runs.
    Both HSGD and SGF trajectories are simulated using the Euler--Maruyama method with time step $\mathrm{d}t=2^{-8}$.

\paragraph{Figure~\ref{fig: risk_exponential_decay}:}
\textbf{Coordinatewise entropy barriers and exponential risk decay on a diagonal linear network.}
The figure illustrates the entropy-barrier mechanism developed in Appendix~\ref{app:entropy_barrier}. 
The left panels track the two quantities that define the controlled regime of the proof: the empirical entropy $H_t$ and the largest coordinatewise entropy density $\max_i h_{t,i}$. The red dashed lines mark the corresponding barriers $H_\ast$ and $L_\ast$. Across the tested stepsizes, the trajectories remain below these barriers, indicating that the dynamics stay in the regime where the coordinatewise entropy-barrier argument applies.

The top-right panel displays the risk--entropy ratio $4\mathcal R(\mathscr X_t)/H_t$. The red dashed lines mark empirical estimates of the coordinatewise coercivity constants $m_{L_\ast}$ and $M_{L_\ast}$. These constants are estimated from the coordinate quotients
\[
q_{t,i}
=
\frac{(\mathscr U_{t,i}^2-\mathscr V_{t,i}^2-1)^2}{h_{t,i}},
\]
rather than directly from the averaged ratio $4\mathcal R(\mathscr X_t)/H_t$. Specifically, we pool the lower and upper coordinate-quotient summaries across all stepsizes and stochastic runs, restricted to times satisfying $H_{\min}\leqslant H_t\leqslant H_\ast$ and $\max_i h_{t,i}\leqslant L_\ast$, and then take robust quantiles to estimate $m_{L_\ast}$ and $M_{L_\ast}$. The panel visualizes the theorem’s risk--entropy comparison: while the coordinatewise barrier holds, the averaged ratio $4\mathcal R(\mathscr X_t)/H_t$ should remain controlled by the same coercivity constants.

The bottom-right panel shows the risk $\mathcal R(\mathscr X_t)$ on a logarithmic scale. The dashed curves are the exponential envelopes suggested by the theorem, of the form
\[
\frac{M_{L_\ast}}{4}H_\ast e^{-\mu t},
\]
using the empirical constants $m_{L_\ast}$ and $M_{L_\ast}$ from the coordinatewise quotient estimates. In the theorem, this bound is guaranteed under a sufficient small-stepsize condition ensuring a positive decay rate. In the experiment, we plot the same exponential envelope for all tested stepsizes, rather than suppressing curves outside the certified small-stepsize range. This highlights that the predicted exponential decay behavior appears to persist empirically even for stepsizes larger than those directly covered by the sufficient condition.

Shaded regions denote the $10$th--$90$th percentile range over independent runs, and solid curves denote the median trajectory. We simulate the isotropic squared-parametrization setting with $\beta^*=\mathds{1}_d$, covariance $K=I_d$, Gaussian covariates $a\sim\mathcal N(0,I_d)$, and initialization $u_0=v_0=\mathds{1}_d$. The dimension is $d=10^3$, the time horizon is $T=30$, and results are aggregated over $30$ independent stochastic runs for each constant stepsize $\gamma\in\{0.04, 0.03, 0.02, 0.01, 0.005, 0.001\}$. The horizontal axis is the rescaled time $t=k/d$, i.e., SGD iterations divided by the dimension.

\end{document}